\newtheorem{thm}{Theorem}
\newtheorem{prop}[thm]{Proposition}
\newtheorem{lem}[thm]{Lemma}
\newtheorem{cor}[thm]{Corollary}
\numberwithin{equation}{section}
\newtheorem{defn}[thm]{Definition}
\def\N{{\Bbb N}}
\def\Z{{\Bbb Z}}
\def\Q{{\Bbb Q}}
\def\R{{\Bbb R}}
\def\C{{\Bbb C}}
\def\A{{\Bbb A}}
\def\bS{{\Bbb S}}
\def\emp{\varnothing}
\def\fa{{\frak a}}
\def\fb{{\frak b}}
\def\ff{{\frak f}}
\def\fm{{\frak m}}
\def\fn{{\frak n}}
\def\fp{{\frak p}}
\def\fq{{\frak q}}
\def\fI{{\frak I}}
\def\fc{{\frak c}}
\def\fX{{\frak X}}
\def\cO{\frak o}
\def\cB{{\mathcal B}}
\def\cN{{\mathcal N}}
\def\cG{{\mathcal G}}
\def\GL{{\operatorname {GL}}}
\def\PGL{{\operatorname{PGL}}}
\def\Re{{\operatorname {Re}}}
\def\Im{{\operatorname {Im}}}
\def\tr{{\operatorname{tr}}}
\def\nr{{\operatorname{N}}}
\def\diag{{\operatorname {diag}}}
\def\Ad{{\operatorname{Ad}}} 
\def\vol{{\operatorname{vol}}}
\def\leq{\leqslant}
\def\geq{\geqslant}
\def\bsl{\backslash}
\def\le{\leq}
\def\ge{\geq}
\def\d {{{d}}}
\def\JJ{{\Bbb J}}
\def\bs{{\bold s}}
\def\bK{{\bold K}}
\def\1{{\bold 1}}
\def\ccA{{\mathscr A}}
\def\tF{f}
\def\bx{{\mathbf x}}
\renewcommand{\a}{\alpha}
\renewcommand{\b}{\beta}
\newcommand{\e}{\epsilon}
\renewcommand{\l}{\lambda}
\newcommand{\s}{\sigma}
\newcommand{\ga}{{\mathfrak{a}}}
\newcommand{\gb}{{\mathfrak{b}}}
\newcommand{\gf}{{\mathfrak{f}}}
\newcommand{\gn}{{\mathfrak{n}}}
\newcommand{\go}{{\mathfrak{o}}}
\newcommand{\gp}{{\mathfrak{p}}}
\newcommand{\gq}{{\mathfrak{q}}}
\newcommand{\Acal}{{\mathcal A}}
\newcommand{\Bcal}{{\mathcal B}}
\newcommand{\Gcal}{{\mathcal G}}
\newcommand{\Ical}{{\mathcal I}}
\newcommand{\Jcal}{{\mathcal J}}
\newcommand{\Ocal}{{\mathcal O}}
\renewcommand{\AA}{\mathbb{A}}
\newcommand{\CC}{\mathbb{C}}
\newcommand{\LL}{\mathbb{L}}
\newcommand{\NN}{\mathbb{N}}
\newcommand{\QQ}{\mathbb{Q}}
\newcommand{\RR}{\mathbb{R}}
\newcommand{\WW}{\mathbb{W}}
\newcommand{\ZZ}{\mathbb{Z}}
\newcommand{\bfc}{{\mathbf c}}
\newcommand{\bfs}{{\mathbf s}}
\newcommand{\bfK}{{\mathbf K}}
\newcommand{\ord}{\operatorname{ord}}
\newcommand{\fin}{{\rm fin}}
\renewcommand{\Re}{\operatorname{Re}}
\newcommand{\Res}{\operatorname{Res}}
\def\AL{{\rm{AL}}}
\def\ADL{{\rm{ADL}}}
\title{Existence of Hilbert cusp forms with non-vanishing $L$-values}
\author{Shingo Sugiyama}
\author{Masao Tsuzuki}
\begin{document}

\maketitle

\begin{abstract}
We give a derivative version of the relative trace formula on $\PGL(2)$ studied in our previous work,
and obtain a formula of an average of central values (derivatives)
of automorphic $L$-functions for Hilbert cusp forms.
As an application, we prove existence of Hilbert cusp forms
with non-vanishing central values (derivatives)
such that the absolute degrees of their Hecke fields are sufficiently large.
\end{abstract}

\setcounter{tocdepth}{1}
%\tableofcontents

%%%%%%%%%%%%%%%%%%%%%%%%%%%%%%%%
\section{Introduction}
%%%%%%%%%%%%%%%%%%%%%%%%%%%%%%%%
This is a continuation of our previous paper \cite{SugiyamaTsuzuki}; we 
freely use the notation introduced there, which is collected at the end 
of this section for a convenience of reference. 

Let $F$ be a totally
real number field of degree $d_F$ and $\cO$ the integer ring of $F$. 
Let $\fn$ be an $\cO$-ideal and $l=(l_{v})_{v\in \Sigma_\infty} \in (2
\N)^{d_F}$ an even weight. For $\pi \in \Pi_{\rm{cus}}^*(l,\fn)$ and an 
idele class character $\eta$ of $F^\times$ such that $\eta^2=\1$, the 
standard $L$-function $L(s,\pi\otimes \eta)$ of $\pi\otimes \eta$ is an 
entire function on $\C$ satisfying the functional equation 
$$
L(s,\pi \otimes \eta)=\epsilon(s,\pi\otimes\eta)\,L(1-s,\pi\otimes \eta),
$$
with $\epsilon(s,\pi\otimes \eta)$ being the $\epsilon$-factor; it is of 
the form $\epsilon(s,\pi \otimes \eta)=\pm(\nr(\fn\ff_\eta^2)\,D_F^2)^{1/2-s}$. The number $\epsilon(1/2,\pi\otimes \eta)\in\{+1,-1\}$, is 
called the sign of the functional equation. The central value $L(1/2,\pi)L(1/2,\pi\otimes \eta)$ and the derivative $L(1/2,\pi)L'(1/2,\pi\otimes\eta)$ has an important arithmetic meaning; there are many works which exploit the nature of these $L$-values in connection with the arithmetic algebraic geometry of modular varieties (\cite{Waldspurger}, \cite{GrossZagier}, \cite{YZZ}, \cite{SWZhan}, \cite{SWZhan2}, \cite{SWZhan3}).

\subsection{}
Let $\fa\subset \cO$ be an ideal relatively prime to $\ff_\eta \fn$ and set $S=S(\fa)$. We write the Satake parameter of $\pi\in \Pi_{\rm{cus}}^*(l,\fn)$ at $v\in S$ as $\diag(q_v^{\nu_{v}(\pi)/2},q_v^{-\nu_{v}(\pi)/2})$ with
$\pm \nu_{v}(\pi)$ belonging to the space $\fX_v=\C/4\pi i (\log q_v)^{-1}\Z$.
In \cite{SugiyamaTsuzuki}, given an even holomorphic function $\alpha(\bs)$ on ${\frak X}_S=\prod_{v\in S}\fX_v$, we studied the asymptotic of the average 
\begin{align}
\AL^*(\fn,\alpha)=\frac{C_l}{\nr(\fn)}\sum_{\pi \in \Pi_{\rm{cus}}^*(l,\fn)}
\frac{L(1/2,\pi)\,L(1/2,\pi \otimes \eta)}{L^{S_{\pi}}(1,\pi;{\rm{Ad}})}\,\alpha(\nu_S(\pi))
 \label{AL*}
\end{align}
with $\nu_S(\pi)=\{\nu_v(\pi)\}_{v\in S}$ and 
\begin{align}
C_l=\prod_{v\in \Sigma_\infty} \frac{2\pi\,(l_v-2)!}{\{(l_v/2-1)!\}^2}
 \label{fDl}
\end{align}
as the norm $\nr(\fn)$ grows under the following conditions.
\begin{itemize}
\item[(a)] The number $(-1)^{\epsilon(\eta)}\,\tilde \eta(\fn)$, the 
common value of $\epsilon(s,\pi)\epsilon(s,\pi\otimes \eta)|_{s=1/2}$ 
for all $\pi \in \Pi_{\rm{cus}}^{*}(l,\fn)$, equals $1$.
\item[(b)] $\eta_{v}(\varpi_{v})=-1$ for any $v\in S(\gn)$.
\end{itemize}
In this paper, imposing the same condition (b) as above but the 
different sign condition $(-1)^{\e(\eta)}\tilde{\eta}(\fn)=-1$  than (a), we 
investigate the asymptotic behavior of the following average involving 
the central derivative of $L$-function $L(s,\pi\otimes \eta)$. 
\begin{align}
\ADL_{-}^*(\fn,\alpha)=\frac{C_l}{\nr(\fn)}\sum_{\substack{\pi \in \Pi_{\rm{cus}}^*(l,\fn) \\ \epsilon(1/2,\pi\otimes \eta)=-1}}
\frac{L(1/2,\pi)\,L'(1/2,\pi \otimes \eta)}{L^{S_{\pi}}(1,\pi;{\rm{Ad}})}\,\alpha(\nu_S(\pi)).
 \label{Intro0}
\end{align}
To state our main result precisely, we need notation. Let $\Ical_{S,\eta}$ be the monoid of ideals
$\fn\subset \cO$ generated by prime ideals $\fp$ prime to $S\cup S(\ff_\eta)$ such that $\tilde \eta(\fp)=-1$, and 
$$
\Ical_{S,\eta}^{\pm}=\{\fn\in \Ical_{S,\eta} \ | \ (-1)^{\e(\eta)}\tilde\eta(\fn)=\pm 1\}.
$$
For $n\in \N$, let $X_n(x)$ be the Tchebyshev polynomial $X_n(x)$ 
defined by the relation 
\begin{align}
X_n(x)={\sin((n+1)\theta)}/{\sin \theta} \quad \text{for $x=2\cos \theta $}
 \label{Tcheby}
\end{align}
and set
\begin{align}
\alpha_{\fa}(\nu)=\prod_{v\in S}X_{n_v}(q_v^{\nu_v/2}+q_v^{-\nu_v/2}), \quad \nu=\{\nu_v\}_{v\in S}\in \fX_{S}
\label{HeckefunctAA}
\end{align}
in terms of the prime ideal decomposition $\fa=\prod_{v\in S}\fp_v^{n_v}$. For such $\fa$, define $\fa_\eta^{\pm}=\prod_{\substack{v\in S(\fa) \\ \tilde\eta(\fp_v)=\pm 1}} \fp_v^{n_v}$, $d_1(\fa)=\prod_{v\in S(\fa)}(n_v+1)$ and $\delta_{\square}(\fa)=\prod_{v\in S(\fa)}\delta(n_v\in 2\N)$. We have an asymptotic formula of $\ADL_{-}^{*}(\fn;\a_{\fa})$ with an error term whose dependence on $\gn\in \Ical_{S,\eta}^{-}$ 
and $\fa$ is made explicated. We also have a similar formula for $\AL^*(\fn;\a_{\fa})$ with $\gn \in \Ical_{S,\eta}^{+}$.

\begin{thm} \label{MAIN-THM1} 
 Suppose ${\underline l}=\min_{v\in \Sigma_\infty}l_v \geq 6$. Set $c=d_F^{-1}({\underline l}/2-1)$.
For an integral ideal $\gn$, set
%For an integral ideal $\gn=\fn_1^2\fn_0$ with $\fn_0$ being square free, set 
\begin{align*}
\nu(\fn)&=\{\prod_{v\in S(\gn) - (S_{1}(\fn)\cup S_{2}(\fn))}(1-q_v^{-2})\}\,\{\prod_{v\in S_
2(\fn)}(1-(q_{v}^{2}-q_{v})^{-1})\}.
\end{align*}
For any sufficiently small number $\e>0$, we have
\begin{align}
\AL^*(\fn;\a_\fa)&=4D_F^{3/2}\,L_\fin(1,\eta)\, \nu(\fn)\,\nr(\fa)^{-1/2}\delta_\square(\fa_\eta^{-})d_1(\fa_\eta^{+}) 
 \label{MAIN-THM1-1}
\\
&\quad +{\mathcal O}_{\epsilon,l,\eta}\left(\nr(\fa)^{c+2+\epsilon}\nr
(\fn)^{-\inf (c,1)+\epsilon}\right), \qquad \gn \in \Ical_{S,\eta}
^{+},
 \notag
\end{align}
\begin{align}
& \ADL^*_{-}(\fn;\a_\fa) \label{MAIN-THEM1-2} \\
=&
\,4D_F^{3/2}\,L_\fin(1,\eta)\, \nu(\fn)\,\nr(\fa)^{-1/2}d_1(\fa_\eta^{+})
\biggl\{\delta_{\square}(\fa_\eta^{-})\,\biggl(\log(\sqrt{\nr(\fn)\nr(\fa)^{-1}}\nr(\ff_\eta)D_F)
\notag \\
&+\sum_{v\in S(\gn)-(S_{1}(\gn)\cup S_2(\fn))}\frac{\log q_v}{q_v^2-1} 
+\sum_{v\in S_2(\fn)}\frac{\log q_v}{q_v^2-q_v-1}
+\frac{L'}{L}(1,\eta)+{\frak C}(l)\biggr)
 \notag
\\
%&+\sum_{v\in S(\fa_\eta^{-})} \delta_{\square}(\fa_\eta^{-}\fp_v^{-1})\log (q_v^{-\frac{n_v+1}{2}})
% \notag
&+\sum_{v\in S(\fa_\eta^{-})} \delta_{\square}(\fa_\eta^{-}\fp_v^{-1})\log (q_v^{n_{v}+\frac{1}{2}})
 \notag
\\
&+\Ocal_{\e,l,\eta}\left(\nr(\fa)^{-1/2}d_1(\fa_\eta^{+})\delta_{\square}(\fa_\eta^{-})\,X(\fn)+\nr(\fa)^{c+2+\e}\nr(\fn)^{-\inf(1,c)+\e}\right), \quad \fn\in \Ical_{S,\eta}^{-}, 
 \notag
\end{align}
where 
\begin{align*}
{\frak C}(l)&=\sum_{v\in \Sigma_\infty} \biggl(\sum_{k=1}^{l_{v}/2-1}\frac{1}{k}- \frac{1}{2}\log \pi - \frac{1}{2}C_{\rm Euler}
-\delta(\eta_v(-1)=-1)\,\log 2 \biggr), \\  
X(\fn)&=\sum_{u \in S(\gn)}\frac{\log q_{u}}{q_{u}} + \sum_{u \in S(\gn)}\frac{\log q_{u}}{(q_{u}-1)^{2}}.
\end{align*}
The constants implicit in the $O$-symbols in both formulas are 
independent of $\fn$ and $\fa$.
\end{thm}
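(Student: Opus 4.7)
The plan is to derive both \eqref{MAIN-THM1-1} and \eqref{MAIN-THEM1-2} from a single relative trace formula on $\PGL(2)$ carrying an auxiliary complex parameter $s$, whose spectral side computes (up to an Eisenstein remainder) an average of the form
\[
\frac{C_l}{\nr(\gn)}\sum_{\pi\in\Pi_{\rm{cus}}^*(l,\gn)}\frac{L(1/2,\pi)\,L(s,\pi\otimes\eta)}{L^{S_\pi}(1,\pi;\Ad)}\,\alpha_\fa(\nu_S(\pi)).
\]
Setting $s=1/2$ recovers \eqref{AL*}, so for $\gn\in\Ical_{S,\eta}^{+}$ formula \eqref{MAIN-THM1-1} is essentially a direct specialization of the main theorem of \cite{SugiyamaTsuzuki} to the Hecke test vector $\alpha_\fa$. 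For $\gn\in\Ical_{S,\eta}^{-}$, the functional equation together with the sign assumption forces $L(1/2,\pi\otimes\eta)=0$ for every $\pi$ with $\e(1/2,\pi\otimes\eta)=-1$; the displayed sum vanishes at $s=1/2$ and the target quantity \eqref{Intro0} appears as its first Taylor coefficient. Formula \eqref{MAIN-THEM1-2} will therefore be obtained by applying $\frac{d}{ds}\big|_{s=1/2}$ to the parametrized identity.

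Concretely, I would first rewrite the identity of \cite{SugiyamaTsuzuki} so as to expose the $s$-dependence of each local and global factor, verifying holomorphy in a neighborhood of $s=1/2$ under conditions (a)/(b) and the appropriate parity of $(-1)^{\e(\eta)}\tilde\eta(\gn)$. I would then differentiate both sides in $s$ and set $s=1/2$. On the spectral side the Leibniz rule delivers $L(1/2,\pi)L'(1/2,\pi\otimes\eta)$ on the sign $-1$ locus, while the sign $+1$ contributions are killed by the vanishing of the factor that is differentiated; the Eisenstein and residual terms must be either eliminated by a parity argument or absorbed into the error term via the bounds already established in the companion paper.

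The explicit shape of \eqref{MAIN-THEM1-2} then emerges term by term from differentiating the geometric side. The logarithm $\log(\sqrt{\nr(\gn)\nr(\fa)^{-1}}\nr(\ff_\eta)D_F)$ is the derivative at $s=1/2$ of the global conductor power $(\nr(\gn\ff_\eta^2)D_F^2)^{1/2-s}$ (adjusted by the Hecke operator normalization from $\alpha_\fa$); the local sums over $S(\gn)-(S_1(\gn)\cup S_2(\gn))$ and over $S_2(\gn)$ are log-derivatives at $s=1/2$ of the corresponding local factors already computed in \cite{SugiyamaTsuzuki}; the term $(L'/L)(1,\eta)$ arises from the unipotent contribution, which carries $L_\fin(s,\eta)$; the constant ${\frak C}(l)$ collects archimedean log-derivatives of the weight-$l_v$ local $L$- and $\Gamma$-factors and unwinds into the harmonic partial sums, the $\log\pi$ and Euler-constant pieces, and the $\delta(\eta_v(-1)=-1)\log 2$ parity term; finally, the sum over $v\in S(\fa_\eta^{-})$ comes from log-differentiating $\alpha_\fa$ at those places where the Tchebyshev polynomial value vanishes at $s=1/2$. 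The regular orbital contributions are controlled by the same majorants as in \cite{SugiyamaTsuzuki}, upgraded via a Cauchy integral over a small circle around $s=1/2$ so that the bound applies also to $\partial_s$.

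The principal obstacle is precisely this last upgrade: one must show that each local and global orbital integral extends holomorphically to a small disc around $s=1/2$ and that the estimates of \cite{SugiyamaTsuzuki} remain uniform on that disc, so that the resulting Cauchy bound for $\partial_s$ still yields $\Ocal_{\e,l,\eta}(\nr(\fa)^{c+2+\e}\nr(\gn)^{-\inf(1,c)+\e})$. A secondary—but delicate—difficulty is the explicit evaluation of ${\frak C}(l)$ from the archimedean local zeta integrals at weight $l_v$, which requires a careful computation of the derivative of a ratio of $\Gamma$-factors at $s=1/2$.
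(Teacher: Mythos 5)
Your high-level idea --- differentiate an $s$-parametrized relative trace formula at $s=1/2$ --- is close in spirit to what the paper does, but your proposal leaves out the single biggest piece of the argument and misattributes several of the terms you claim to predict. The spectral side of a trace formula with level structure $\bK_0(\gn)$ produces a sum over all of $\Pi_{\rm cus}(l,\gn)$, newforms \emph{and} oldforms, not over $\Pi_{\rm cus}^*(l,\gn)$ as your displayed average assumes. Extracting the contribution of exact conductor $\gn$ is the content of \S 4 (the ``$\cN$-transform'', a M\"obius-type inversion, Proposition~\ref{INV-F}); that sieving step is what produces the combinatorial factor $\nu(\gn)$ in the main term and, crucially, the error term $X(\gn)$, which arises from $\cN[\AL^{\partial w}](\gn)$ --- the $\cN$-transform of the oldform derivative term $\AL^{\partial w}$ --- and not from any orbital-integral estimate. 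Your plan has no counterpart to this step and therefore cannot produce $\nu(\gn)$ or $X(\gn)$ at all; nor does it account for the derivative oldform weight $\partial w_\gn^\eta(\pi)$ (Lemma~\ref{value-delw}), which is nonzero and must be evaluated explicitly before the inversion can be applied.

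Beyond the missing sieving, several term-by-term attributions are wrong. The test function $\hat{\mathbf\Psi}_{\rm reg}^{l}(\gn|\alpha)$ is a \emph{cusp form} by construction (a finite linear combination of cuspidal eigenvectors, \eqref{Psireg-SPECT}), so there is no Eisenstein remainder to eliminate. The sum over $v\in S(\fa_\eta^{-})$ in \eqref{MAIN-THEM1-2} cannot come from ``log-differentiating $\alpha_\fa$'': $\alpha_\fa$ is a function of $\nu\in\fX_S$, not of $s$; it comes instead from evaluating the new local unipotent integral $\tilde U_v^{\eta_v}$ (Lemma~\ref{DUNIP}) against $\alpha_{\fp_v^{n_v}}$ via Lemma~\ref{TCH-UNIP}. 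Similarly the $\nr(\fa)^{-1}$ inside the logarithm is a byproduct of those same Tchebyshev evaluations combined with $\cN[\log\nr](\gn)$ (Corollary~\ref{Findcor}), not the derivative of an $\epsilon$-factor conductor. Finally, the paper never keeps a free $s$ in the geometric side at all: it introduces the transform $\partial P_{\beta,\lambda}^\eta$ whose constant term at $\lambda=0$ yields $\beta(0)\,\frac{d}{ds}Z^*(s,\eta,\varphi)\big|_{s=1/2}$ directly, which sidesteps the holomorphy-and-uniformity issues you correctly identify as the principal obstacle to your Cauchy-contour route; the new local hyperbolic integrals $W_v(b)$ it generates must then be computed from scratch (\S 8), not merely bounded by differentiated versions of the old ones.
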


\subsection{} 
For a positive integer $N$, let $J_0^{\rm{new}}(N)$ be the new part of 
the Jacobian variety of the modular curve $X_0(N)$ of level $N$. J.-P. 
Serre showed that the largest dimension of $\Q$-simple factors of $J_0^{\rm{new}}(N)$ tends to infinity as $N$ grows (\cite[Theorem 7]{Serre97})
. This result was refined in several ways by E. Royer (\cite{Royer}); he 
obtained a quantitative version of Serre's theorem giving a lower 
bound of the largest dimension of $\Q$-simple factors $A$ of $J_0^{\rm{new}}(N)$ with or without rank conditions for the Model-Weil group of 
$A$. By the correspondence between the $\Q$-simple factors $A$ of $J_0^{\rm{new}}(N)$ and the normalized Hecke eigen newforms $f$ of level $\Gamma_0(N)$ 
and weight $2$, and by invoking the progress toward the Birch and 
Swinnerton-Dyer conjecture, the lower bound for the largest $\dim A$ is
obtained from a lower bound of the maximum value of the absolute degree
of the Hecke field $\Q(f)$ with or without conditions on the order of 
$L$-series $L(s,f)$ at the center of symmetry. Thus, one of Royer's 
results in \cite{Royer} can be stated in the language of modular forms 
as follows. 

\begin{thm} (Royer \cite{Royer}) Let $p$ be a prime. There exist 
constants $C_p>0$ and $N_p>0$ with the following properties:  
\begin{itemize}
\item[(1)] For any $N>N_p$ relatively prime to $p$, there exists a normalized Hecke 
eigen newform $f$ of level $\Gamma_0(N)$ and weight $2$ satisfying the 
conditions:
\begin{itemize}
\item[(i)] $L(1/2,f)\not=0$, where the functional equation of $L(s,f)$ 
relates the values at $s$ and $1-s$.  
\item[(ii)] $[\Q(f):\Q] \geq C_p\,\sqrt{\log\log N}$.
\end{itemize}
\item[(2)] For any $N>N_p$ relatively prime to $p$, there exists a normalized Hecke 
eigen newform $f_1$ of level $\Gamma_0(N)$ and weight $2$ satisfying the 
conditions: 
\begin{itemize}
\item[(i)] The sign of the functional equation of $L(s,f_1)$ is $-1$.
\item[(ii)] $L'(1/2,f_1)\not=0$.  
\item[(iii)] $[\Q(f_1):\Q] \geq C_p\,\sqrt{\log\log N}$. 
\end{itemize}
\end{itemize}
\end{thm}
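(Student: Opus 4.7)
The plan is to derive Royer's theorem from the weighted trace formula of Theorem~\ref{MAIN-THM1} specialised to $F=\QQ$, $l=2$, and $\eta=\mathbf{1}$ (recovering the classical $\Gamma_{0}(N)$ setting), combined with a Galois-orbit pigeonhole argument. I will describe part~(1); part~(2) is analogous, using $\ADL_{-}^{*}$ from \eqref{MAIN-THEM1-2} in place of $\AL^{*}$, with the main term inflated by a factor $\log N$.

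First I extract a lower bound on
\[
M(N):=\#\{f\text{ a normalised Hecke eigen newform of level }\Gamma_{0}(N)\text{ and weight }2\mid L(1/2,f)\neq 0\}
\]
by comparing the main term $\AL^{*}((N);\mathbf{1})\asymp 1$ from \eqref{MAIN-THM1-1} against the pointwise upper bound $L(1/2,f)\ll_{\e}N^{\e}$ (convexity) and the Hoffstein--Lockhart lower bound $L^{S_{f}}(1,f;\Ad)\gg_{\e}N^{-\e}$; this yields $M(N)\gg_{\e}N^{1-\e}$. (Theorem~\ref{MAIN-THM1} formally assumes $\underline{l}\geq 6$, but the weight-$2$ case needed here is classically available via Petersson's formula.) Next, I assume every such $f$ satisfies $[\QQ(f):\QQ]\leq D$ and group the forms into Galois orbits of size $\leq D$. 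By Deligne's bound, $a_{p}(f)$ is an algebraic integer of degree $\leq D$ whose conjugates lie in $[-2\sqrt{p},2\sqrt{p}]$, and bounding coefficients of its minimal polynomial by elementary symmetric expressions shows that there are at most $\ll p^{D^{2}/2}$ possible values of $a_{p}(f)$. Choosing auxiliary primes $p_{1}<\cdots<p_{k}\nmid N$ with $p_{k}\leq P$ and invoking effective strong multiplicity-one to distinguish orbits by their eigenvalue-tuples, one obtains $M(N)\leq D\cdot\prod_{j=1}^{k}p_{j}^{D^{2}/2}$. Taking logarithms gives $(1-\e)\log N\ll D^{2}\sum_{j\leq k}\log p_{j}$, and optimising $k$ and $P$ via the prime number theorem forces $D\gg\sqrt{\log\log N}$.

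The main obstacle is to make the pigeonhole rigorous at the correct quantitative level: naive strong multiplicity-one is not effective enough to guarantee that a short list of primes distinguishes all newforms of level $N$. Royer's route around this is to apply \eqref{MAIN-THM1-1} directly with varying $\fa=\fp^{n}$, using that the Hecke polynomial $\alpha_{\fp^{n}}=X_{n}(q^{\nu/2}+q^{-\nu/2})$ from \eqref{HeckefunctAA} summed over a Galois orbit produces a rational integer trace bounded uniformly by $D(n+1)$ (since $|X_{n}|\leq n+1$ on $[-2,2]$ by \eqref{Tcheby}), while the $L$-value weights $L(1/2,f)L(1/2,f\otimes\eta)/\langle f,f\rangle$ are algebraic with a controlled Galois action by Shimura. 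Comparing the main term of \eqref{MAIN-THM1-1} with the resulting orbit-by-orbit Galois-trace bound then forces the required lower bound on $D$; the entire argument transposes to part~(2) via \eqref{MAIN-THEM1-2}, with the factor $\log N$ absorbed in the same optimisation to yield the identical quantitative conclusion $D\gg\sqrt{\log\log N}$.
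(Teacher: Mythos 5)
This theorem is cited, not proved, in the paper: it is Royer's own result from \cite{Royer}, which the authors state purely for comparison as the classical prototype whose Hilbert analogues (Theorems~\ref{MAIN-THM2} and \ref{MAIN-THM2.5}) are the actual objects proven in \S 7. So there is no ``paper's own proof'' here to compare against, and your first task should have been to notice that.

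Beyond that, the derivation you propose from Theorem~\ref{MAIN-THM1} cannot go through. Theorem~\ref{MAIN-THM1} is stated under the hypothesis $\underline{l}\geq 6$, and the authors say explicitly after Theorem~\ref{MAIN-THM2.5} that their method does not work as written for parallel weight $2$. Your parenthetical remark that ``the weight-$2$ case needed here is classically available via Petersson's formula'' is exactly the admission that one must leave the framework of this paper; Royer's paper does it with the genuine Petersson formula at weight $2$. Moreover the whole relative-trace-formula apparatus here assumes $\eta$ is a \emph{nontrivial} quadratic character (the proofs of Lemmas~\ref{V-eta-pm}, \ref{WWu}, \ref{WWbu} each invoke $\eta\neq \mathbf{1}$ to avoid a pole at $z=0$), so your specialisation $\eta=\mathbf{1}$ is also unavailable. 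Finally, in the paper's framework the quantity being averaged is the product $L(1/2,\pi)L(1/2,\pi\otimes\eta)$ (resp.\ with a derivative), not the single value $L(1/2,f)$ (resp.\ $L'(1/2,f_1)$) of the statement, and these are materially different objects. The second half of your proposal does correctly identify the right shape of the argument — separate auxiliary primes, $\max_{[-2,2]}|X_n|\ll n+1$, Galois action on Hecke fields via \eqref{GaloisAction}, and the count of totally bounded algebraic integers (\cite[Lemma 6.2]{Royer}) — but in the paper these ingredients are deployed in \S 7 (Lemma~\ref{Omeganoseishitsu}, Lemma~\ref{shortIntEST}, Lemma~\ref{LoweboundCuspforms}, Proposition~\ref{HeckeFieldDegreeEST}) to prove the Hilbert, weight $\geq 6$ analogues from \eqref{MAIN-THM1-1} and \eqref{MAIN-THEM1-2}, not to reprove Royer's weight-$2$ theorem.
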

We obtain an analogue of this theorem for higher weight Hilbert modular
cuspforms by using Theorem~\ref{MAIN-THM1}. For a cupsidal 
representation $\pi \in \Pi_{\rm{cus}}^{*}(l,\gn)$, we denote by $\Q(\pi)$ the field of rationality of $\pi$ (for definition, see 7.1.)

\begin{thm} \label{MAIN-THM2} 
Let $l=(l_v)_{v\in\Sigma_\infty}$ be a weight such that $l_v=k$ for all $v\in \Sigma_\infty$ with an even integer $k\geq 6$, and $\eta$ a quadratic idele class character of 
$F^\times$. Let $S$ be a finite subset of $\Sigma_\fin-S(\ff_\eta)$ and ${\bf J}=\{J_v\}_{v\in S}$ a family of closed subintervals of $(-2,2)$. Given a prime ideal $\fq$ prime to $S\cup S(\ff_\eta)$, %satisfying $\tilde\eta(\fq)=-1$
there exist constants $C_{\fq}>0$ and
$N_{\gq, S, l, \eta, {\bf J}}>0$
%$N_{\fq}>0$ 
with the following properties:
For any ideal $\fn\in \Ical_{S\cup S(\fq),\eta}^{+}$ with $\nr(\fn)>N_{\fq, S, l, \eta, {\bf J}}$,
%$\nr(\fn)>N_{\fq}$,
there exists $\pi \in \Pi_{\rm{cus}}^*(l,\fn)$ such that
\begin{itemize}
\item[(i)] $L(1/2,\pi)\not=0$ and $L(1/2,\pi \otimes \eta)\not=0$, 
\item[(ii)] $[\Q(\pi):\Q]\geq C_{\fq} \,\sqrt{\log\log \nr(\fn)}$, and  
\item[(iii)] $q_v^{\nu_v(\pi)/2}+q_v^{-\nu_v(\pi)/2}\in J_v$ for all $v\in S$.
\end{itemize}
\end{thm}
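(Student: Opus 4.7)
The plan is to deduce Theorem~\ref{MAIN-THM2} from Theorem~\ref{MAIN-THM1} by a nonvanishing argument combined with a Serre--Royer contradiction, adapted to the $\eta$-twisted Hilbert modular setting with the Sato--Tate localization at $S$.

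First, I would construct a nonnegative kernel $\beta_\bfJ$ on $\fX_S$ that concentrates on $\prod_{v\in S}J_v$. For each $v\in S$ pick an even real polynomial $p_v(x)\ge 0$ on $[-2,2]$ strictly positive on $J_v$ and with $p_v\le \epsilon_0$ outside $J_v$ (using, e.g., a squared Fej\'er--Ces\`aro approximation of $\chi_{J_v}$), set $\beta_\bfJ(\nu)=\prod_{v\in S}p_v(x_v)$ with $x_v:=q_v^{\nu_v/2}+q_v^{-\nu_v/2}$, and expand in the Chebyshev basis to realise $\beta_\bfJ=\sum_\fa c_\fa \alpha_\fa$ as a finite $\RR$-linear combination of the functions \eqref{HeckefunctAA} with $\fa$ supported in $S$. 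Applying Theorem~\ref{MAIN-THM1} termwise yields $\AL^*(\fn;\beta_\bfJ)\to M_\bfJ>0$ as $\nr(\fn)\to\infty$ in $\Ical^+_{S\cup S(\fq),\eta}$ (positivity from the product structure of \eqref{MAIN-THM1-1}). Comparing with the unlocalized sum $\AL^*(\fn;1)$ and taking $\epsilon_0$ small, the contribution of $\pi$ outside the Sato--Tate window is negligible, so the set
\[\Pi^+(\fn):=\{\pi\in\Pi^*_{\rm cus}(l,\fn)\mid L(1/2,\pi)L(1/2,\pi\otimes\eta)\ne 0,\ x_v(\pi)\in J_v\ \forall v\in S\}\]
is nonempty for $\nr(\fn)$ large, giving (i) and (iii) for any such $\pi$.

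For (ii), I argue by contradiction. Suppose every $\pi\in\Pi^+(\fn)$ has $[\QQ(\pi):\QQ]\le d$. Then $\sqrt{\nr(\fq)}\,x_\fq(\pi)$ is a totally real algebraic integer of degree $\le d$ with all conjugates in $[-2\sqrt{\nr(\fq)},2\sqrt{\nr(\fq)}]$ (Ramanujan--Blasius), and bounding the coefficients of minimal polynomials gives $|T|\le B_\fq^{d^2}$ for $T:=\{x_\fq(\pi)\mid \pi\in\Pi^+(\fn)\}$. Setting $Q(x):=\prod_{t\in T}(x-t)$, each $\pi\in\Pi^+(\fn)$ is killed by $Q(x_\fq)$ while each $\pi\notin\Pi^+(\fn)$ with nonvanishing $L$-values is damped by $\beta_\bfJ(\nu_S(\pi))=O(\epsilon_0)$, so $|\AL^*(\fn;\beta_\bfJ\cdot Q(x_\fq))|\ll \epsilon_0\cdot 4^{|T|}\cdot \AL^*(\fn;1)$. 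On the other hand, expanding $Q=\sum_{m=0}^{|T|}q_m X_m$ in Chebyshev and applying Theorem~\ref{MAIN-THM1} termwise,
\[\AL^*(\fn;\beta_\bfJ\cdot Q(x_\fq))=M_\bfJ\sum_{m=0}^{|T|}q_m D(m)\nr(\fq)^{-m/2}+O_\epsilon\!\left(\nr(\fq)^{|T|(c+2)+\epsilon}\nr(\fn)^{-\inf(c,1)+\epsilon}\right),\]
where $D(m)=m+1$ if $\tilde\eta(\fq)=1$ and $D(m)=\delta(2\mid m)$ if $\tilde\eta(\fq)=-1$. The main-term sum is generically $\asymp 1$ (to leading order it equals $q_0 D(0)=q_0\ne 0$). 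Choosing $\epsilon_0=\nr(\fn)^{-A}$ with $A$ large enough that $\epsilon_0\cdot 4^{|T|}$ and the error term are both $o(1)$ forces $\nr(\fq)^{|T|(c+2)}\gg \nr(\fn)^{\inf(c,1)-\epsilon}$; combined with $|T|\le B_\fq^{d^2}$, this yields $d\ge C_\fq\sqrt{\log\log\nr(\fn)}$ for a constant $C_\fq>0$ depending only on $\fq$.

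The principal technical obstacle lies in the \emph{non-resonance} step: the main-term sum $\sum_m q_m D(m)\nr(\fq)^{-m/2}$ must be bounded below in absolute value by an explicit $\asymp 1$ quantity for the specific polynomial $Q=\prod_{t\in T}(x-t)$. If an accidental cancellation occurs for a special $T$, one replaces $Q$ by $QR$ for a suitable low-degree $R\in\RR[x]$; since the linear functional $L\colon\RR[x]\to\RR$, $L(X_m)=D(m)\nr(\fq)^{-m/2}$, is not identically zero on $\RR[x]$ (indeed $L(X_0)=1$), it cannot vanish identically on the ideal $(Q)$, and an appropriate $R$ exists with $\deg R$ absolutely bounded. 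Making this choice effective while keeping $\deg(QR)\lesssim|T|$ (so that Theorem~\ref{MAIN-THM1}'s error estimate remains sharp enough) and simultaneously balancing the leakage parameter $\epsilon_0$ against the polynomial degree of the $p_v$'s is the key remaining computation; once navigated, the Serre--Royer contradiction closes and delivers the asserted $\sqrt{\log\log\nr(\fn)}$ lower bound, with the threshold $N_{\fq,S,l,\eta,\bfJ}$ determined by the implied constants in Theorem~\ref{MAIN-THM1} and by the rate of convergence $\AL^*(\fn;\beta_\bfJ)\to M_\bfJ$.
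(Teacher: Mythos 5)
Your overall plan — deduce the theorem from Theorem~\ref{MAIN-THM1} by localising to the Sato--Tate windows and then running a Royer-type contradiction — is the right framework, but the mechanism of your contradiction is different from the paper's and it has two genuine gaps.

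The paper does \emph{not} run a ``vanishing polynomial'' argument. It fixes, once and for all, $C^\infty$ bump functions $\chi_v$ with $\int\chi_v\,d\mu_{v,\eta_v}=1$ supported in $J_v$ (independent of $\gn$), forms $\Omega_{\gn}(\pi)=\omega_\gn(\pi)\prod_v\chi_v(\lambda_v(\pi))$, and proves (Lemma~\ref{Omeganoseishitsu}) that the localised weights still satisfy the equidistribution estimate, by truncating the Chebyshev expansion of each $\chi_v$ at a degree $M\asymp\log\nr(\gn)$ chosen so that the truncation error $\ll M^{-3}$ and the trace-formula error are simultaneously controlled. It then proves a \emph{gap estimate} (Lemma~\ref{shortIntEST}): any open $I\subset[-2,2]$ disjoint from the spectrum $\{\lambda_\fq(\pi)\}$ satisfies $\mu_{\fq,\eta_\fq}(I)\ll(\log\nr(\gn))^{-1+\e}$. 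A pigeonhole argument then gives $\#\{\lambda_\fq(\pi)\}\geq\nr(\fq)^{-\e}(\log\nr(\gn))^{1-\e}$ (Lemma~\ref{LoweboundCuspforms}), which, against Royer's upper bound $(16\sqrt{\nr(\fq)})^{d^2}$ for algebraic integers of degree $\leq d$, produces the $\sqrt{\log\log\nr(\gn)}$ lower bound. No annihilating polynomial $Q=\prod_{t\in T}(x-t)$ enters, and there is no non-resonance problem to navigate.

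Your vanishing-polynomial contradiction has two gaps. First, the \emph{non-resonance} step that you flag: the required lower bound on $|\sum_m q_m D(m)\nr(\fq)^{-m/2}|$ for the specific annihilator $Q$ of the spectrum is not just a technicality. The $0$-th Chebyshev coefficient $q_0=\int Q\,d\mu^{\rm ST}$ is in no way bounded away from $0$ for arbitrary spectra, and replacing $Q$ by $QR$ only rescues \emph{nonvanishing}, not a quantitative lower bound of the size you need; as stated this step is a genuine hole, not ``the key remaining computation.'' Second, your kernel $\beta_\bfJ$: to make $\e_0\cdot4^{|T|}$ small against the main term you need $\e_0\ll4^{-|T|}$, but a nonnegative polynomial kernel (a squared Fej\'er--Ces\`aro approximant, as you suggest) of degree $m$ only achieves $\e_0\asymp m^{-2}$, so $m\asymp2^{|T|}$; the error term in Theorem~\ref{MAIN-THM1} then contains $\nr(\fa_S)^{(c+2)m}$, which dwarfs $\nr(\gn)^{-\inf(c,1)}$ long before $|T|$ reaches the scale you want. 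The paper sidesteps both problems by never making the bump depend on $\gn$: the ``leakage'' is instead the truncation error $\ll M^{-3}$ of a fixed $C^\infty$ bump, which decays fast enough because $\hat\chi_j(n)\ll n^{-5}$, while the degree $M\asymp\log\nr(\gn)$ keeps the error term under control. To complete your route you would need to repair both gaps; it is substantially cleaner to follow the paper's gap-counting argument.
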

We should note that this can be regarded as a refinement of \cite[Corollary 1.2]{SugiyamaTsuzuki}.

As for derivatives, we have a conditional result. 
\begin{thm} \label{MAIN-THM2.5} 
Let $l=(l_v)_{v\in\Sigma_\infty}$ and $\eta$ be the same as in Theorem~\ref{MAIN-THM2}. Suppose that %for any ideal $\fn\subset \cO$ such that $(-1)^{\e(\eta)}\tilde\eta(\gn)=-1$,
for any ideal $\gn$ prime to $\ff_\eta$,
\begin{align}
\tfrac{\d}{\d s}|_{s=1/2}(L(s,\pi)L(s,\pi\otimes \eta))\geq 0 \quad \text{for all $\pi \in \Pi_{\rm{cus}}^*(l,\gn)$.}
\label{DLnonneg}
\end{align}
Let $S$ be a finite subset of $\Sigma_\fin-S(\ff_\eta)$ and ${\bf J}=\{J_v\}_{v\in S}$ a family of closed subintervals of $(-2,2)$. Given a prime ideal $\fq$ prime to $S\cup S(\ff_\eta)$ %satisfying $\tilde\eta(\fq)=-1$
and a constant $M>1$, there exist constants $C_{\fq}>0$ and
%$N_{\fq,M}>0$ 
$N_{\fq, S, l, \eta, {\bf J}, M}>0$
with the following properties:
 For any ideal $\fn \in \Ical_{S\cup S(\fq),\eta}^{-}$ with
%$\nr(\fn)>N_{\fq,M}$
$\nr(\fn)>N_{\fq, S, l, \eta, {\bf J}, M}$
and $\sum_{v\in S(\fn)}\frac{\log q_v}{q_v}\leq M$, there exists $\pi \in \Pi_{\rm{cus}}^*(l,\fn)$ such that
\begin{itemize}
\item[(i)] $\epsilon(1/2,\pi \otimes \eta)=-1$,
\item[(ii)] $L(1/2,\pi)\not=0$ and $L'(1/2,\pi \otimes \eta)\not=0$,
\item[(iii)] $[\Q(\pi):\Q]\geq C_{\fq}\,\sqrt{\log\log \nr(\fn)}$, and 
\item[(iv)] $q_v^{\nu_v(\pi)/2}+q_v^{-\nu_v(\pi)/2}\in J_v$ for all $v\in S$.
\end{itemize}
\end{thm}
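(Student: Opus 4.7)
The plan is to adapt the Royer--Serre type argument employed in the proof of Theorem~\ref{MAIN-THM2} to the derivative setting, substituting formula~(\ref{MAIN-THEM1-2}) for~(\ref{MAIN-THM1-1}) and exploiting the nonnegativity hypothesis~(\ref{DLnonneg}) to force all summands to carry the same sign.

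First I would record the key consequence of~(\ref{DLnonneg}). For $\pi\in\Pi_{\rm{cus}}^*(l,\fn)$ with $\epsilon(1/2,\pi\otimes\eta)=-1$, the functional equation of $L(s,\pi\otimes\eta)$ yields $L(1/2,\pi\otimes\eta)=0$, whence
$$\tfrac{\d}{\d s}\big|_{s=1/2}\bigl(L(s,\pi)L(s,\pi\otimes\eta)\bigr)\,=\,L(1/2,\pi)\,L'(1/2,\pi\otimes\eta)\,\geq\,0.$$
Since $L^{S_{\pi}}(1,\pi;\Ad)>0$, every summand of $\ADL_-^*(\fn;\alpha)$ carries the sign of $\alpha(\nu_S(\pi))$; hence a non-negative test function $\alpha$ yields a non-negative sum in which positivity of the whole implies positivity of at least one term.

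Next I would build such an $\alpha$ on $\fX_{S\cup\{v_\fq\}}$ supported in $\prod_{v\in S}\{\nu_v:q_v^{\nu_v/2}+q_v^{-\nu_v/2}\in J_v\}$ times a small interval at $\fq$, approximated via a Fej\'er--Jackson kernel construction as a finite combination $\alpha\approx\sum_\fa c_\fa\,\alpha_\fa$ with $\fa$ running over products of prime powers in $S\cup\{\fq\}$. Applying~(\ref{MAIN-THEM1-2}) termwise --- using $\fn\in\Ical_{S\cup S(\fq),\eta}^{-}$ and the hypothesis $\sum_{v\in S(\fn)}\log q_v/q_v\leq M$ to ensure $X(\fn)=O_M(1)$ --- produces
$$\ADL_-^*(\fn;\alpha)\,\geq\,c_0\log \nr(\fn)\,-\,C_M$$
for constants $c_0,C_M>0$ independent of $\fn$, the leading contribution coming from the $\log\bigl(\sqrt{\nr(\fn)\nr(\fa)^{-1}}\nr(\ff_\eta)D_F\bigr)$ factor inside the main term. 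Combined with the sign analysis of the first step, this forces the existence of at least one $\pi\in\Pi_{\rm{cus}}^*(l,\fn)$ realising conditions (i), (ii), (iv), for every sufficiently large $\nr(\fn)$.

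To promote this into the Hecke field bound~(iii), I would argue by contradiction: assume every such $\pi$ satisfies $[\QQ(\pi):\QQ]\leq D:=C_\fq\sqrt{\log\log\nr(\fn)}$. By Ramanujan's conjecture for Hilbert cusp forms each eigenvalue $a_v(\pi)=q_v^{\nu_v(\pi)/2}+q_v^{-\nu_v(\pi)/2}$ is a totally real algebraic integer whose Galois conjugates all lie in $[-2,2]$, and Kronecker's theorem confines those of degree $\leq D$ to a finite set of cardinality $O(D^{O(1)})$. The probe prime $\fq$, together with strong multiplicity one and a Royer-type counting argument, then bounds the number of $\pi$ of level $\fn$ with Hecke field of degree $\leq D$ by a quantity polynomial in $D$ and independent of $\fn$. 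Combining this count with the convexity bounds $L(1/2,\pi),\,L'(1/2,\pi\otimes\eta)\ll_\e\nr(\fn)^{1/4+\e}$ and the Hoffstein--Lockhart type bound $L^{S_\pi}(1,\pi;\Ad)^{-1}\ll_\e\nr(\fn)^\e$, the total low-degree contribution to $\ADL_-^*(\fn;\alpha)$ becomes $\ll_\e D^{O(1)}\,\nr(\fn)^{-1/2+\e}=o(\log\nr(\fn))$ once $C_\fq$ is chosen small enough --- contradicting the lower bound above. The principal obstacle is exactly this last step: establishing the uniform Royer-type polynomial-in-$D$ bound on the number of $\pi\in\Pi_{\rm{cus}}^*(l,\fn)$ with $[\QQ(\pi):\QQ]\leq D$, and synchronising its growth against both the error term in~(\ref{MAIN-THEM1-2}) and the convexity upper bounds on the central $L$-values, so that the low-degree tail remains $o(\log\nr(\fn))$ with $D=C_\fq\sqrt{\log\log\nr(\fn)}$.
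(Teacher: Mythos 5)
Your first two steps are sound and in the spirit of the paper: the hypothesis~\eqref{DLnonneg} does force every summand of $\ADL_-^*(\fn;\alpha)$ to carry the sign of $\alpha(\nu_S(\pi))$ (the paper records this as the verification of the non-negativity assumption~\eqref{non-NEG}), and a positive test function localised at $S$ together with~\eqref{MAIN-THEM1-2} and the bound $X(\fn)=O_M(1)$ does give $\ADL_-^*(\fn;\alpha)\gg\log\nr(\fn)$. The paper absorbs these two steps into the single observation that the weights $\omega_\fn(\pi)$, obtained by dividing the derivative summands by $\log\sqrt{\nr(\fn)}$ and the leading constant, satisfy~\eqref{omega-asymp} and~\eqref{non-NEG}, and then simply invokes Proposition~\ref{HeckeFieldDegreeEST}.

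The gap is in your final step. You assert that the number of $\pi\in\Pi_{\rm{cus}}^*(l,\fn)$ with $[\QQ(\pi):\QQ]\leq D$ is bounded by a polynomial in $D$ independent of $\fn$; that is false in general. A degree bound on $\QQ(\pi)$ confines each eigenvalue $\lambda_\fq(\pi)$ to a finite set $\Ecal(2\nr(\fq)^{1/2},D)$ of Kronecker-type, but it does not bound the number of cusp forms sharing a given eigenvalue at the single probe prime $\fq$; strong multiplicity one distinguishes forms only after all Hecke eigenvalues are fixed, and for a fixed $\lambda_\fq$ the fibre can grow with the level. Consequently the low-degree tail need not be $o(\log\nr(\fn))$, and the contradiction you aim for does not materialise, regardless of how the convexity and Hoffstein--Lockhart inputs are arranged.

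What the paper does instead --- and this is the mechanism your proposal is missing --- is to compare the weighted counting measure at $\fq$ with the limit measure $\mu_{\fq,\eta_\fq}$: Lemma~\ref{shortIntEST} shows that any interval $I\subset[-2,2]$ disjoint from $\{\lambda_\fq(\pi):\Omega_\fn(\pi)\neq 0\}$ has $\mu_{\fq,\eta_\fq}(I)\ll\nr(\fq)^\e(\log\nr(\fn))^{-1+\e}$, which forces (Lemma~\ref{LoweboundCuspforms}) at least $\nr(\fq)^{-\e}(\log\nr(\fn))^{1-\e}$ \emph{distinct} values of $\lambda_\fq(\pi)$. One then compares this lower bound against the upper bound $(16\nr(\fq)^{1/2})^{d^2}$ on the number of algebraic integers of degree $\leq d$ with all conjugates in $[-2,2]$, concluding $d\gg\sqrt{\log\log\nr(\fn)}$. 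The contradiction runs through the number of distinct Hecke eigenvalues at $\fq$, not through the number of cusp forms; and it is the equidistribution estimate, extracted from error-term-explicit Sato--Tate (i.e.\ assumption~\eqref{omega-asymp}), rather than convexity bounds on central $L$-values, that drives the argument.
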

We should note that the assumption \eqref{DLnonneg} is a consequence of the Riemann hypothesis for the $L$-function $L(s,\pi)L(s,\pi\otimes \eta)$. 
Theorem~\ref{MAIN-THM2} (Theorem~\ref{MAIN-THM2.5}) yields  Hilbert cuspforms in arbitrarily large level with arbitrary large degree of the field of rationality, such that the central value of $L$-function and the central value (derivative) of its prescribed quadratic twist are nonzero simultaneously. Although we 
can expect a similar result for parallel weight $2$ Hilbert cuspforms, 
our method does not work as it is for such low weight cases. In order to 
treat these interesting cases, the technique of Green's function as in
\cite{Tsuzuki} and \cite{Sugiyama2} may be useful.

\subsection{}
Let us describe a brief line of argument how we prove Theorems~\ref{MAIN-THM1}, \ref{MAIN-THM2} and \ref{MAIN-THM2.5}, explaining the organization of this paper. 
In our previous work \cite{SugiyamaTsuzuki}, we constructed the 
renormalized smoothed automorphic Green function $\hat{\bf \Psi}_{\rm{reg}}^{l}(\gn|\a)$ as the value at $\l=0$ of an entire extension of some 
Poincar\'{e} series $\hat{\mathbf \Psi}_{\b,\l}^{l}(\gn|\a)$ originally 
defined for $\Re(\l)>1$. Then we computed the period integral of $\hat{\bf \Psi}_{\rm{reg}}^{l}(\gn|\a)$ along the diagonal split torus $H$
adelically in a very explicit form. In the present work, instead of the period
integral, we introduce a certain integral transform $\partial P_{\b,\l}^{\eta}(\varphi)$
(see \S 2) for any continuous function $\varphi$ on $\PGL(2, F)\bsl \PGL(2, \A)$ and a quadratic idele class character $\eta$ of $F^{\times}$,
depending on a complex parameter $\l$ and a test function $\b$ for 
renormalization, whose constant term at $\l=0$ yields the derivative at 
$s=1/2$ of the period integral of $\varphi|\det|_\A^{s-1/2}$ along $H$. 
The main step to have the second formula in Theorem~\ref{MAIN-THM1} is to calculate $\partial P_{\b,\l}^{\eta}(\hat{\mathbf \Psi}_{\rm{reg}}^{l}(\gn|\a))$ and its constant term at $\l=0$ in two different  ways; the process is completely parallel to that in \cite{SugiyamaTsuzuki} for period integrals. In \S 2, after recalling the 
explicit formula of Hecke's zeta integrals for old forms (\cite{Sugiyama1}) and calculating their derivatives,
we prove a formula of ${\rm{CT}}_{\l=0}\,\partial P_{\b,\l}^{\eta}(\hat{\mathbf \Psi}_{\rm{reg}}^{l}(\gn|\a))$ 
written in terms of the spectral data of cuspidal representations in $\Pi_{\rm{cus}}^{*}(l,\gn)$ (Proposition~\ref{cuspidal part 1}). In \S 3, 
closely following \cite{SugiyamaTsuzuki}, we compute $\partial P_{\b,\l}^{\eta}(\hat{\mathbf \Psi}_{\rm{reg}}^{l}(\gn|\a))$ according to the $H(F)\times H(F)$-
double coset decomposition of $\GL(2, F)$. By equating the two expressions of ${\rm{CT}}_{\l=0}\,\partial P_{\b,\l}^{\eta}(\hat{\mathbf \Psi}_{\rm{reg}}^{l}(\gn|\a))$ 
obtained in \S 2 and \S 3, we get a kind of relative trace formula, 
which is stated in Theorem~\ref{DRTF}. The formula is not for our $\ADL_{-}^{*}(\fn)$ but for a similar average of $L$-values over all cuspidal 
representations $\pi \in \Pi_{\rm{cus}}(l,\gn)$. We need to sieve out 
information on an average of only those $\pi\in \Pi_{\rm{cus}}(l,\fn)$ 
with exact conductor $\fn$. For that purpose, we introduce a certain operation (see Definition~\ref{N-transformDef}), which we call the $\cN$-transform, for any arithmetic function defined on a set of ideals. The first subsection 
of \S 4 is devoted to the study of the $\cN$-transform. By applying the 
$\cN$-transform of each term occurring in the formula \eqref{DRTF-1}, we 
deduce yet another formula \eqref{henkeiDRTF-1}, which relates the 
average $\ADL^*(\fn)$ to the sum of following terms: (i) the
$\cN$-transforms of $\tilde \WW_{\rm{u}}^\eta(l,\gn|\a)$ and $\WW_{\rm{hyp}}^\eta(l,\gn|\a)$, both of them occurring in the geometric side of
\eqref{DRTF-1}, (ii) the $L$-value average $\AL^*(\fn)$ and (iii) the $\cN$-transform of a certain term $\AL^{\partial w}(\fn)$ arising from the spectral side of \eqref{DRTF-1}. In \S 6, we analyze these terms 
separately and obtain an exact evaluation of the $\cN$-transform of $\tilde \WW_{\rm{u}}^\eta(l,\gn|\a)$ and estimations of the remaining 
terms, which lead us to the proof of the second formula of Theorem~\ref{MAIN-THM1}. In the proceeding \S 5, by applying the relative trace 
formula \cite[Theorem 9.1]{SugiyamaTsuzuki} to the test function $\a_\fa$,
we deduce the first asymptotic formula in Theorem~\ref{MAIN-THM1}, 
which is necessary to prove
%the first part of
Theorem~\ref{MAIN-THM2}. 
In \S 7, we give the proof of Theorems~\ref{MAIN-THM2} and \ref{MAIN-THM2.5}. Actually, what we 
do there is to confirm that the argument of \cite{Royer} for the classical modular forms still works with a minor modification in our setting. The analysis performed in \S 6 relies 
on explicit formulas of local integrals arising from $\WW_{\rm{hyp}}^\eta(l,\gn|\a)$ and $\tilde\WW_{\rm{u}}^\eta(l,\gn|\a)$; the aim of \S 8 
is to provide them. In the final section \S 9, we study a certain 
lattice sum to use it in the error term estimate in \S 5 and \S 8.

\medskip
\noindent
{\bf Notation} : 
\begin{itemize}
\item Given a condition ${\rm P}$, $\delta({\rm P})$ is $1$ if ${\rm{P}}$ is true and $0$ otherwise. 
\item $\N$ denotes the set of positive integers and $\N_0=\N\cup \{0\}$. 
\item $F$ denotes a totally real number field. 
\begin{itemize}
\item $d_F=[F:\Q]$.
\item $\cO$ : the integer ring of $F$. 
\item $D_F$ : the absolute discriminant of $F$.
\end{itemize}
\item $\Sigma_\infty$ (resp. $\Sigma_\fin$) : the set of infinite places 
(resp. finite places) of $F$. We set $\Sigma_{F}= \Sigma_\infty \cup \Sigma_\fin$.
\item $F_v$ : the completion of $F$ at $v\in \Sigma_{F}$.
\item $|\,|_v$ : the normalized valuation of $F_v$ for $v\in \Sigma_{F}$.
\item For any $v\in \Sigma_\fin$, 
\begin{itemize} 
\item $\cO_v$ : the integer ring of $F_v$. %for $v\in \Sigma_\fin$. 
\item $\varpi_v$ : a prime element of $\cO_v$. %for $v\in \Sigma_\fin$.
\item $\fp_v$ : the prime ideal of $\cO$ corresponding to $v$. %$v\in \Sigma_\fin$.
\item $q_v$ : the cardinality of the residue field $\cO/\fp_v$ for $v$.%$v\in \Sigma_\fin$. 
\end{itemize}
\item $\A$ : the adele ring of $F$.
%\item $\A_\fin$ : the finite adele ring of $F$.
\item For an ideal $\fm\subset \cO$, 
\begin{itemize}
\item $S(\fm)=\{v\in \Sigma_\fin|\,{\rm{ord}}_v(\fm)>0\}$. 
\item $S_k(\fm)=\{v\in S(\fm)|\,{\rm{ord}}_v(\fm)=k\,\}$ for $k\in \N$.  
\item $\nr(\fm)$ : the absolute norm of $\fm$. 
\end{itemize}
\item $\eta=\prod_{v}\eta_v$ always denotes a quadratic idele class character of $F^\times$.
\begin{itemize}
\item $\ff_\eta = \prod_{v \in \Sigma_{\fin}} \gp_{v}^{f(\eta_{v})}$ : the conductor of $\eta$.
\item $x_\eta^*$ :
%the finite idele corresponding to $\ff_\eta$.
the finite idele such that $x_{\eta, v}^{*} =\varpi_{v}^{-f(\eta_{v})}$
for all $v \in \Sigma_{\fin}$.
\item $x_\eta$ : the adele such that $x_{\eta,v}=0$ for all $v\in \Sigma_\infty$ and $x_{\eta,\fin}=x_\eta^*$.  
\item $\epsilon(\eta)$ : the number of $v\in \Sigma_\infty$ such that $\eta_v(-1)=-1$.
\item $\tilde \eta$ : the character of the ideal group relatively prime 
to $\ff_\eta$ defined by $\tilde \eta(\fp_v)=\eta_v(\varpi_v)$, $v\in \Sigma_\fin-S(\ff_\eta)$. 
\end{itemize}
\item $\fn$ always denotes an $\cO$-ideal prime to $\ff_\eta$. 
%\item $G$ : the algebraic $F$-group $\GL(2)$.
\item $H=\{\left[\begin{smallmatrix}t_1 & 0 \\ 0 & t_2 \end{smallmatrix} \right]|\,t_1,\,t_2\in \GL(1)\,\}$.
%\item $G_R=G(R)$ and $H_R=H(R)$ for a commutative ring $R$.
%\item $G_v=G_{F_v}$, $H_v=H_{F_v}$ for a place $v$. 
\item $\bK_\fin=\prod_{v\in \Sigma_\fin}\bK_v$ with $\bK_v=\GL(2, \cO_v)$ 
for $v\in \Sigma_\fin$.
\item $\bK_0(\fn)=\prod_{v\in \Sigma_\fin}\bK_0(\fn\cO_v)$ with
$\bK_0(\fn\cO_v)=\{\left[\begin{smallmatrix} a & b \\ c & d \end{smallmatrix}\right]\in \bK_v|\,c\in \fn\cO_v\,\}$ for $v\in \Sigma_\fin$.
\item $\pi$ : an irreducible cuspidal representation of $\GL(2, \A)$ with 
trivial central character.
\begin{itemize}
\item $\ff_\pi$ : the conductor of $\pi$.
\item $\{\pi_v\}_{v\in \Sigma_F}$: a family of irreducible admissible 
representations of $\GL(2, F_{v})$ such that $\pi \cong \otimes_{v} \pi_v$. 
\item $S_\pi=\{v\in \Sigma_\fin|\,\ord_v(\ff_\pi)\geq 2\,\}$. 
\item For a place $v\in \Sigma_\fin-S(\ff_\pi)$, the Satake parameter of $\pi_v$ is denoted by $A_v(\pi)=\diag(q_v^{\nu_v(\pi)/2}, q_v^{-\nu_v(\pi)/2})$ with $\nu_v(\pi) \in \C/4\pi i (\log q_v)^{-1}\Z$. We set $\lambda_v(\pi)=\tr A_v(\pi)$. 
\item $L^{S_\pi}(s,\pi;\Ad)$: the adjoint square $L$-function of $\pi$, whose local factors above $S_\pi$ are removed. 
\end{itemize}
\item $l=(l_v)_{v\in \Sigma_\infty}$ : an even weight, i.e., a system of 
positive even integers indexed by $\Sigma_\infty$. We set ${\underline l}=\inf_{v\in \Sigma_\infty} l_v$. 

\item $\Pi_{\rm{cus}}(l,\fn)$ : the set of all those $\pi\cong \otimes_v \pi_v$ such that $\pi_v$ is a discrete series representation 
of $\PGL(2, F_{v})$ of weight $l_v$ for any $v\in \Sigma_{\infty}$ and $\fn\subset \ff_\pi$.  
\item $\Pi_{\rm{cus}}^*(l,\fn)=\{\pi \in \Pi_{\rm{cus}}(l,\fn)|\,\ff_\pi =\fn\,\}.$ 
\item $S$ : a finite subset of $\Sigma_\fin-S(\ff_\eta\fn)$.
\begin{itemize}
\item ${\frak X}_S$ : the complex manifold $\prod_{v\in S} (\C/4\pi i(\log q_v)^{-1}\Z)$. 
\item $\ccA_S=\otimes_{v\in S} \ccA_v$, where for $v\in \Sigma_\fin$, $\ccA_v$ denotes the space of holomorphic functions $\a(s)$ in
$s\in \C/4 \pi i (\log q_v)^{-1}\Z$ such that $\a(-s)=\a(s)$.
%\item $\cB$ : the space of even entire functions $\beta(z)$ on $\C$ 
%such that, for any finite interval $I\subset \R$ and for any $N>0$, 
%$|\beta(\sigma+it)|\ll_{I,N} e^{-t^2}(1+|t|)^{-N}$ for $\sigma+it \in I+i\R$.
\end{itemize}
\end{itemize}

%%%%%%%%%%%%%%%%%%%%%%%%%%%%%%%%%%%%%%%%%%%%%%%%%%%%%%%%%%%%%%%%%%%%%%%%%%%%%%%
\section{Spectral average of derivatives of $L$-series : the spectral side}
%%%%%%%%%%%%%%%%%%%%%%%%%%%%%%%%%%%%%%%%%%%%%%%%%%%%%%%%%%%%%%%%%%%%%%%%%%%%%%%
Let $\cB$ be the space of even entire functions $\beta(z)$ on $\C$ 
such that, for any finite interval $I\subset \R$ and for any $N>0$, 
$|\beta(\sigma+it)|\ll_{I,N} (1+|t|)^{-N}$ for $\sigma+it \in I+i\R$.
Given $\beta \in \cB$, $t>0$ and $\lambda \in \C$, we set
$$\beta_{\lambda}^{(1)}(t) = \frac{1}{2\pi i}\int_{L_{\s}}\frac{\b(z)}{(z+\l)^2}t^{z}dz,
$$
where $L_{\s}=\{ z \in \CC\ | \ \Re(z)=\s \}$.
The defining integral is independent of the choice of $\s>-\Re(\l)$. By the residue theorem,  
\begin{align}
{\rm CT}_{\l=0} \{\beta_\lambda^{(1)}(t)-\beta_\lambda^{(1)}(t^{-1})
\} = \b(0) \log t.
 \label{const of beta^1}
\end{align}
In the same way as \cite[Lemma 7.1]{Tsuzuki}, we have the estimate 
\begin{align}
|\b_{\l}^{(1)}(t)| \ll_{\s} \inf\{t^{\s}, t^{-\Re(\l)}\}\log t, \hspace{5mm} t>0,\,\s > -\Re(\l).
\label{estimate of beta^1}
\end{align}

\begin{defn} 
For a cusp form $\varphi$ on $\PGL(2,\A)$, set
$$\partial P^{\eta}_{\beta,\lambda}(\varphi)=
\int_{F^\times \bsl \A^\times} \varphi\left(\left[\begin{smallmatrix} t & 0 \\ 0 & 1 \end{smallmatrix}\right]\,
\left[\begin{smallmatrix} 1 & x_\eta \\ 0 & 1 \end{smallmatrix}\right]\right)
\,\eta(t x_\eta^{*})\{\beta_{\lambda}^{(1)}(|t|_\A)-\beta_{\lambda}^{(1)}(|t|_\A^{-1})\}\,\d^\times t, \quad \Re(\lambda)\gg 0. 
$$
\end{defn}
%and the constant term ${\rm{CT}}_{\lambda=0}\partial P_{\beta,\lambda}^{\eta}(\varphi)$ in the Laurent expansion at $\lambda=0$ of its meromorphic continuation for various cusp forms $\varphi$ on $\PGL(2,\A)$. 
By \eqref{estimate of beta^1}, the integral $\partial P_{\beta,\lambda}^{\eta}(\varphi)$ is absolutely convergent for $\l \in \CC$ and the function $\l \mapsto \partial P_{\beta,\lambda}^{\eta}(\varphi)$ is entire on $\CC$. Therefore, \eqref{const of beta^1} gives us the formula
$${\rm CT}_{\l=0}\partial P_{\beta,\lambda}^{\eta}(\varphi) = \int_{F^{\times} \backslash \AA^{\times}}
\varphi\left(\left[\begin{smallmatrix} t & 0 \\ 0 & 1 \end{smallmatrix}\right]\,\left[\begin{smallmatrix} 1 & x_\eta \\ 0 & 1 \end{smallmatrix}\right]\right)
\,\eta(t x_\eta^{*}) \log|t|_{\AA} d^{\times}t \ \b(0)
=\frac{d}{ds}Z^{*}(s, \eta, \varphi) \bigg|_{s=1/2}\b(0).$$
Here $Z^{*}(s,\eta,\varphi)$ is the modified global zeta integral considered in
\cite[2.6.2]{Tsuzuki}, \cite[\S 4]{Sugiyama1}, \cite[\S 2.1]{Sugiyama2} and \cite[\S 6.3]{SugiyamaTsuzuki}.

%%%%%%%%%%%%%%%%%%
\subsection{}
%%%%%%%%%%%%%%%%%%
For $j\in \N_0$, a place $v\in \Sigma_\fin$, an irreducible admissible representation $\pi_v$ of $\PGL(2,F_v)$ and for a character $\eta_v$ of $F_v^\times$ such that $\eta_v^2=1$, we define a polynomial of $X$ by setting
$Q^{\pi_v}_{j,v}(\eta_v,X)=$
\begin{align}
\begin{cases}
1, \qquad &(j=0), \\
\eta_v(\varpi_v)X-Q(\pi_v), \qquad &(c(\pi_v)=0,\,j=1), \\
\eta_v(\varpi_v)^{j-1}X^{j-1}(\eta_v(\varpi_v)X-q_v^{-1}\chi_v(\varpi_v)^{-1}), \qquad& (c(\pi_v)=1,\,j\geq 1), \\
q_v^{-1}\eta_v(\varpi_v)^{j-2}X^{j-2}(a_vq_v^{1/2}\eta_v(\varpi_v)X-1)(a_v^{-1}q_v^{1/2}\eta_v(\varpi_v)X-1), \qquad &(c(\pi_v)=0,\,j\geq 2),\\
\eta_v(\varpi_v)^{j}X^{j}, \qquad &(c(\pi_v)\geq 2, \, j\geq 1),
\end{cases}
 \label{Q(eta,X)}
\end{align} 
({\it cf}. \cite[Corollary 19]{Sugiyama1}), where 
$$
Q(\pi_v)=(a_v+a_v^{-1})/(q_v^{1/2}+q_v^{-1/2})\quad{\text{ with $a_v^{\pm}$ the Satake parameter of $\pi_v$ if $c(\pi_v)=0$}},$$
and $\chi_v$ is the unramified character of $F_v^\times$ such that $\pi_v\cong \sigma(\chi_v|\,|_v^{1/2},\chi_v|\,|_v^{-1/2})$ if $c(\pi_v)=1$.
For $\pi \in \Pi_{\rm{cus}}(l,\fn)$, we set
$$Q_{\pi, \eta, \rho}(s)= \prod_{v \in S(\gn\gf_{\pi}^{-1})} Q_{\rho(v), v}^{\pi_{v}}(\eta_{v}, q_{v}^{1/2-s}), \quad \rho \in \Lambda_\pi(\fn),
$$
where $\Lambda_\pi(\fn)$ denotes the set $\{\rho : \Sigma_{\fin} \rightarrow \NN_{0} \ | \ 0\le \rho(v) \le \ord_{v}(\gn\gf_{\pi}^{-1}) \ (\forall v \in \Sigma_{\fin})\}$. We recall here an explicit formula of the modified zeta integral $Z^{*}(s, \eta, \varphi_{\pi,\rho})$ for the basis $\{\varphi_{\pi,\rho}\}$ of $V_{\pi}[\tau_{l}]^{\bfK_{0}(\gn)}$ (\cite[Proposition 20]{Sugiyama1} and \cite[Propoition 6.1]{SugiyamaTsuzuki}):  
\begin{align}
Z^{*}(s, \eta, \varphi_{\pi, \rho})= D_{F}^{s-1/2}(-1)^{\e(\eta)}\Gcal(\eta) Q_{\pi, \eta, \rho}(s) L(s, \pi \otimes \eta)
 \label{explicit of zeta(s)}
\end{align}
for any $\pi\in \Pi_{\rm{cus}}(l,\fn)$ and $\rho \in \Lambda_{\pi}(\gn)$. 
%%%%%%%%%%%%%%%%
\subsection{}
%%%%%%%%%%%%%%%%
Let $\pi\in \Pi_{\rm{cus}}(l,\fn)$ and $\rho\in \Lambda_\pi(\fn)$. For a complex parameter $z$, we set 
\begin{align}
w_\fn^\eta(\pi;z)&=\sum_{\rho\in \Lambda_\pi(\fn)} \prod_{v\in S(\fn\ff_\pi^{-1})} {\overline{Q_{\rho(v),v}^{\pi_v}({\bf 1}, 1)}\,Q_{\rho(v),v}^{\pi_v}(\eta_v,q_v^{1/2-z})}/{\tau_{\pi_v}(\rho(v),\rho(v))}
 \label{1}
\\
&=\prod_{v\in S(\fn\ff_\pi^{-1})}r^{(z)}(\pi_v,\eta_v)
 \label{2}
\end{align}
with 
$$
r^{(z)}(\pi_v,\eta_v)=\sum_{j=0}^{{\rm{ord}}_v(\fn\ff_\pi^{-1})} 
{\overline{Q_{j,v}^{\pi_v}({\bf 1}, 1)}\,Q_{j,v}^{\pi_v}(\eta_v,q_v^{1/2-z})}/{\tau_{\pi_v}(j,j)}
$$
Here $Q_{j,v}^{\pi_v}(\eta_v,X)$ is the polynomial defined by \eqref{Q(eta,X)}, and $\tau_{\pi_v}(j,j)$ is given by \cite[Corollary 12, Corollary 16 and Lemma 3]{Sugiyama1} as
\begin{align}
\tau_{\pi_v}(j,j)=
\begin{cases}
1, \qquad &(j=0\,\text{or}\,c(\pi_v)\geq 2), \\
1-Q(\pi_v)^2, \qquad &(c(\pi_v)=0,\,j=1), \\
1-q_v^{-2}, \qquad &(c(\pi_v)=1,\,j\geq 1), \\
(1-Q(\pi_v)^2)(1-q_v^{-2}), \qquad &(c(\pi_v)=0,\,j\geq 2).
\end{cases}
 \label{tau(j,j)}
\end{align}
Here is the explicit determination of $r^{(z)}(\pi_v,\eta_v)$. 

\begin{lem} \label{r^(z)(pi,eta)}
Let $v\in S(\fn\ff_\pi^{-1})$ and set $k_v={\ord}_v(\fn\ff_\pi^{-1})$, $X=q_v^{1/2-z}$. Suppose $\eta_v(\varpi_v)=-1$. Then we have  
\begin{align*}
r^{(z)}(\pi_v,\eta_v)
=
\begin{cases}
\frac{1-X}{1+Q(\pi_v)}+
\frac{(1+a_vq_v^{1/2}X)(1+a_v^{-1}q_v^{1/2}X)}{(q_v-1)(1+Q(\pi_v))}
\frac{1-(-X)^{k_{v}-1}}{1+X}, \qquad &(c(\pi_v)=0), 
\\
1-
\frac{X+q_v^{-1}\chi_v(\varpi_v)}{1+q_v^{-1}\chi_v(\varpi_v)}
\frac{1-(-1)^{k_v}X^{k_v}}{1+X}, \qquad &(c(\pi_v)=1), \\
\frac{1+(-1)^{k_v}X^{k_v+1}}{1+X}, \qquad &(c(\pi_v)\geq 2).
\end{cases}
\end{align*}
 Suppose $\eta_v(\varpi_v)=1$. Then we have
\begin{align*}
r^{(z)}(\pi_v,\eta_v)
=
\begin{cases}
\frac{1+X}{1+Q(\pi_v)}+
\frac{(1-a_vq_v^{1/2}X)(1-a_v^{-1}q_v^{1/2}X)}{(q_v-1)(1+Q(\pi_v))}\sum_{j=2}^{k_{v}}X^{j-2}, \qquad &(c(\pi_v)=0), 
\\
1+\frac{X-q_v^{-1}\chi_v(\varpi_v)}{1+q_v^{-1}\chi_v(\varpi_v)}
(\sum_{j=1}^{k_v}X^j), \qquad &(c(\pi_v)=1), \\
\sum_{j=0}^{k_v}X^j, \qquad &(c(\pi_v)\geq 2).
\end{cases}
\end{align*}
\end{lem}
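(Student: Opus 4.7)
The plan is to prove the lemma by direct case-by-case computation, substituting the explicit expressions \eqref{Q(eta,X)} for $Q_{j,v}^{\pi_v}(\eta_v,X)$ and \eqref{tau(j,j)} for $\tau_{\pi_v}(j,j)$ into
$$r^{(z)}(\pi_v,\eta_v)=\sum_{j=0}^{k_v}\frac{\overline{Q_{j,v}^{\pi_v}(\1,1)}\,Q_{j,v}^{\pi_v}(\eta_v,X)}{\tau_{\pi_v}(j,j)},\qquad X=q_v^{1/2-z},$$
and then summing the resulting finite geometric series in $\eta_v(\varpi_v)X$.

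The first step is to simplify the coefficient $\overline{Q_{j,v}^{\pi_v}(\1,1)}/\tau_{\pi_v}(j,j)$ in each of the three cases. Since $\pi_v$ has trivial central character, the character $\chi_v$ occurring in the case $c(\pi_v)=1$ satisfies $\chi_v^{2}=\1$, so $\chi_v(\varpi_v)\in\{\pm1\}$ and $\chi_v(\varpi_v)=\chi_v(\varpi_v)^{-1}$. In the unramified case we use the Ramanujan relation $\overline{a_v}=a_v^{-1}$ together with $a_v+a_v^{-1}=Q(\pi_v)(q_v^{1/2}+q_v^{-1/2})$ to compute
$$(a_vq_v^{1/2}-1)(a_v^{-1}q_v^{1/2}-1)=q_v+1-(a_v+a_v^{-1})q_v^{1/2}=(q_v+1)\bigl(1-Q(\pi_v)\bigr).$$
Substituting into the weight, the $j\geq2$ terms with $c(\pi_v)=0$ collapse to $\bigl((q_v-1)(1+Q(\pi_v))\bigr)^{-1}$, the $j\geq1$ terms with $c(\pi_v)=1$ collapse to $\bigl(1+q_v^{-1}\chi_v(\varpi_v)\bigr)^{-1}$, and the $j\geq1$ terms with $c(\pi_v)\geq2$ have weight $1$.

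In the second step, I assemble the sum. After Step~1 the contribution of $j\geq2$ (respectively $j\geq1$ for $c(\pi_v)\geq1$) is a geometric progression in $\eta_v(\varpi_v)X$; the case $\eta_v(\varpi_v)=+1$ yields the untwisted sums $\sum_{j}X^{j}$ displayed in the second half of the lemma, while the case $\eta_v(\varpi_v)=-1$ produces alternating sums which close up as $(1-(-X)^{N+1})/(1+X)$, giving the denominators $1+X$ appearing in the first half. In particular, the sign flip $\eta_v(\varpi_v)=-1$ passes through the factor $\eta_v(\varpi_v)^{j-2}X^{j-2}(a_vq_v^{1/2}\eta_v(\varpi_v)X-1)(a_v^{-1}q_v^{1/2}\eta_v(\varpi_v)X-1)$ and turns its quadratic factor into $(1+a_vq_v^{1/2}X)(1+a_v^{-1}q_v^{1/2}X)$, matching the numerator in the statement.

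The final step is to absorb the $j=0$ and $j=1$ contributions. For $c(\pi_v)=0$ the algebraic identity
$$1+\frac{\eta_v(\varpi_v)X-Q(\pi_v)}{1+Q(\pi_v)}=\frac{1+\eta_v(\varpi_v)X}{1+Q(\pi_v)}$$
folds the constant $1$ coming from $j=0$ into the $j=1$ term, producing the leading fraction of the stated formula; for $c(\pi_v)=1$ and $c(\pi_v)\geq2$ the $j=0$ contribution $1$ simply sits outside the geometric-series factor. I expect no serious obstacle: the lemma is an algebraic identity that reduces to careful bookkeeping. The only real nuisance is keeping signs and summation ranges consistent between the two cases $\eta_v(\varpi_v)=\pm1$, and checking that the replacement $X\mapsto-X$ inside the quadratic factor is correctly balanced by the alternation in the geometric sum.
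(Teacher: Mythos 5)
Your plan is exactly the paper's proof, which reads in its entirety: ``From \eqref{Q(eta,X)} and \eqref{tau(j,j)}, we obtain the result by a direct computation.'' Your Step~1 simplifications of the weights $\overline{Q_{j,v}^{\pi_v}(\1,1)}/\tau_{\pi_v}(j,j)$ are correct; in particular $\chi_v^2=\1$ gives $\chi_v(\varpi_v)^{-1}=\chi_v(\varpi_v)\in\{\pm1\}$ and hence the Steinberg weight $(1+q_v^{-1}\chi_v(\varpi_v))^{-1}$, while $(a_vq_v^{1/2}-1)(a_v^{-1}q_v^{1/2}-1)=(q_v+1)(1-Q(\pi_v))$ handles the unramified $j\geq 2$ weight. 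One caution about precisely the bookkeeping you flag as the only nuisance: for $c(\pi_v)=1$ the $j$-th summand ($j\geq 1$) carries the factor $(\eta_v(\varpi_v)X)^{j-1}$, so the assembled geometric sum is $\sum_{j=1}^{k_v}(\eta_v(\varpi_v)X)^{j-1}$; for $\eta_v(\varpi_v)=-1$ this equals $\frac{1-(-1)^{k_v}X^{k_v}}{1+X}$ as the lemma displays, but for $\eta_v(\varpi_v)=+1$ it equals $\sum_{j=1}^{k_v}X^{j-1}$, not the $\sum_{j=1}^{k_v}X^{j}$ printed in the lemma. That exponent shift is inconsistent both with the $\eta_v(\varpi_v)=-1$ case and with the unramified case, whose displayed $\sum_{j=2}^{k_v}X^{j-2}$ also starts at exponent $0$, so it is evidently a misprint in the source; carrying out your computation literally will surface it, and you should not quietly match your output to the printed line.
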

\begin{proof}
From \eqref{Q(eta,X)} and \eqref{tau(j,j)}, we obtain the result by a direct computation.
\end{proof}

We abbreviate $r^{(1/2)}(\pi_v,\eta_v)$ to $r(\pi_v,\eta_v)$. Define 
\begin{align*}
w_\fn^\eta(\pi)&=w_\fn^\eta(\pi;1/2), \qquad 
\partial w_{\fn}^{\eta}(\pi)=\left(\frac{\d}{\d z}\right)_{z=1/2} w_{\fn}^{\eta}(\pi;z). 
\end{align*}
Note that the first quantity $w_\fn^\eta(\pi)$ is the same one as in \cite[Lemma 12]{Sugiyama2} and \cite[Lemma 6.2]{SugiyamaTsuzuki}. From Lemma~\ref{r^(z)(pi,eta)}, the second quantity $\partial w_\fn^\eta(\pi)$ is also evaluated explicitly.  

\begin{cor} Set $\partial r(\pi_v,\eta_v)=\frac{-1}{\log q_v} \left(\frac{\d}{\d z}\right)_{z=1/2} r^{(z)}(\pi_v,\eta_v)$. When $\eta_v(\varpi_v)=-1$, 
\begin{align*}
\partial r(\pi_v,\eta_v)=
\begin{cases}
\frac{-1}{1+Q(\pi_v)}+\dfrac{1+(-1)^{k_v}}{2}
\frac{2q_{v}+(q_{v}+1)Q(\pi_{v})}{(q_v-1)(1+Q(\pi_v))}
+\frac{(-1)^{k_{v}}(2k_{v}-3)-1}{4}
\frac{q_{v}+1}{q_{v}-1}, 
 \quad & (c(\pi_v)=0), \\
-\frac{1-(-1)^{k_v}}{2}\dfrac{1}{1+q_v^{-1}\chi_v(\varpi_v)}+
\frac{1+(-1)^{k_v}(2k_v-1)}{4}, \qquad & (c(\pi_v)=1), \\
\frac{(-1)^{k_v}(2k_v+1)-1}{4}, \qquad & (c(\pi_v)\geq 2).
\end{cases}
\end{align*}

When $\eta_v(\varpi_v)=1$, 
\begin{align*}
\partial r(\pi_v,\eta_v)=
\begin{cases}
\frac{1}{1+Q(\pi_v)}+(k_{v}-1)
\frac{2q_{v}-(q_{v}+1)Q(\pi_{v})}{(q_v-1)(1+Q(\pi_v))}
+ \frac{(k_{v}-2)(k_{v}-1)}{2}
\frac{(q_{v}+1)(1-Q(\pi_{v}))}{(q_{v}-1)(1+Q(\pi_{v}))}, 
\quad &(c(\pi_v)=0), \\
\frac{k_v}{1+q_v^{-1}\chi_v(\varpi_v)}+
\frac{1-q_v^{-1}\chi_v(\varpi_v)}{1+
q_v^{-1}\chi_v(\varpi_v)} 
\frac{k_v(k_v+1)}{2}, \qquad & (c(\pi_v)=1), \\
\frac{k_v(k_v+1)}{2}, \qquad & (c(\pi_v)\geq 2).
\end{cases}
\end{align*}

\end{cor}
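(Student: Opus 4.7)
The plan is to observe that under the substitution $X=q_v^{1/2-z}$ one has $\tfrac{dX}{dz}=-(\log q_v)X$, so the operator $\tfrac{-1}{\log q_v}\tfrac{d}{dz}$ coincides with $X\tfrac{d}{dX}$, and $z=1/2$ corresponds to $X=1$. Writing $r^{(z)}(\pi_v,\eta_v)=f(X)$, this gives $\partial r(\pi_v,\eta_v)=f'(1)$, so the proof reduces to differentiating each of the six explicit expressions supplied by Lemma~\ref{r^(z)(pi,eta)} and evaluating at $X=1$, which is a routine application of the product and quotient rules.

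I would handle the cases $c(\pi_v)\ge 2$ first, since they are immediate: the polynomial $\sum_{j=0}^{k_v}X^j$ gives $k_v(k_v+1)/2$ at $X=1$, while the quotient rule applied to $(1+(-1)^{k_v}X^{k_v+1})/(1+X)$ at $X=1$ yields $((-1)^{k_v}(2k_v+1)-1)/4$. For the cases $c(\pi_v)=1$, I would write $f(X)=1\pm u(X)\,v(X)$ with $u(X)$ a ratio of linear polynomials and $v(X)$ either the geometric sum $\sum_{j=1}^{k_v}X^j$ or the ratio $(1-(-1)^{k_v}X^{k_v})/(1+X)$; the asserted formulas then drop out directly from the product rule.

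The principal series cases $c(\pi_v)=0$ are the most elaborate. The key algebraic simplification is that, from $Q(\pi_v)=(a_v+a_v^{-1})/(q_v^{1/2}+q_v^{-1/2})$, one has $(a_v+a_v^{-1})q_v^{1/2}=(q_v+1)Q(\pi_v)$, and a short computation gives
\begin{align*}
(1\pm a_vq_v^{1/2})(1\pm a_v^{-1}q_v^{1/2})&=(1+q_v)(1\pm Q(\pi_v)),
\end{align*}
with the derivative of $(1\pm a_vq_v^{1/2}X)(1\pm a_v^{-1}q_v^{1/2}X)$ at $X=1$ equal to $\pm(2q_v\pm(q_v+1)Q(\pi_v))$ (matching sign choice). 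Substituting these together with the values $\tfrac{1+(-1)^{k_v}}{2}$ and $\tfrac{(-1)^{k_v}(2k_v-3)-1}{4}$ for the value and derivative of $(1-(-X)^{k_v-1})/(1+X)$ at $X=1$ (and the analogous $k_v-1$ and $(k_v-2)(k_v-1)/2$ for $\sum_{j=2}^{k_v}X^{j-2}$) into the product rule reproduces each formula of the corollary.

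The only real obstacle is careful bookkeeping of the $(-1)^{k_v}$ parity factors, the signs of $\eta_v(\varpi_v)$, and the partial cancellations between $1\pm Q(\pi_v)$ in the numerator and $1+Q(\pi_v)$ in the denominator; no conceptual input beyond Lemma~\ref{r^(z)(pi,eta)} is required.
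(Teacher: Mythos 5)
Your approach is exactly the intended one, and is in fact essentially the only reasonable route: the paper states this corollary without proof, appealing to Lemma~\ref{r^(z)(pi,eta)}, and the computation is indeed the chain-rule observation (since $X=q_v^{1/2-z}$ has $\frac{dX}{dz}=-(\log q_v)X$, so $\frac{-1}{\log q_v}\frac{d}{dz}=X\frac{d}{dX}$, which at $z=1/2$ i.e.\ $X=1$ gives $f'(1)$) followed by a case-by-case differentiation of the six expressions of Lemma~\ref{r^(z)(pi,eta)}. Your values and derivatives at $X=1$ for the auxiliary factors $\sum_{j=0}^{k_v}X^j$, $(1-(-X)^{k_v-1})/(1+X)$, etc.\ are correct, and the algebraic identity $(1\pm a_vq_v^{1/2})(1\pm a_v^{-1}q_v^{1/2})=(1+q_v)(1\pm Q(\pi_v))$ is the right simplification.

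One small slip: you assert the derivative of $(1\pm a_vq_v^{1/2}X)(1\pm a_v^{-1}q_v^{1/2}X)$ at $X=1$ is $\pm\bigl(2q_v\pm(q_v+1)Q(\pi_v)\bigr)$ with matching sign choice, but expanding the quadratic as $1\pm(a_v+a_v^{-1})q_v^{1/2}X+q_vX^2$ gives derivative $2q_v\pm(q_v+1)Q(\pi_v)$ at $X=1$ \emph{without} the outer $\pm$. With your written formula the $\eta_v(\varpi_v)=1$, $c(\pi_v)=0$ case would come out as $-2q_v+(q_v+1)Q(\pi_v)$ in the middle numerator rather than the correct $2q_v-(q_v+1)Q(\pi_v)$ appearing in the corollary. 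Since you say the calculation reproduces the stated formulas this is evidently a typo in your writeup rather than in your computation, but it should be fixed.
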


\subsection{}
Depending on a function $\alpha\in \ccA_S$, we have constructed a cusp form denoted by $\hat{\mathbf\Psi}_{\rm reg}^{l}(\fn|\alpha)$ in \cite[6.5.3]{SugiyamaTsuzuki}. Recall that it has the expression 
\begin{align}
\hat{\bf \Psi}_{\rm reg}^{l}(\gn|\a; g) =
\frac{(-1)^{\#S}\{\prod_{v\in \Sigma_{\infty}}2^{l_{v}-1}\}C_{l}(0)
D_{F}^{-1/2}}{[\bfK_{\fin}:\bfK_{0}(\gn)]}
\sum_{\pi \in \Pi_{\rm cus}(l, \gn)}\sum_{\rho\in \Lambda_{\pi}(\gn)}
\a(\nu_{S}(\pi)) \frac{\overline{Z^*(1/2,{\bf 1},\varphi_{\pi,\rho})}}{\|\varphi_{\pi,\rho}\|^{2}}\, \varphi_{\pi,\rho}(g).
 \label{Psireg-SPECT}
\end{align}

\begin{prop}\label{cuspidal part 1}
We have
{\allowdisplaybreaks 
\begin{align*}
{\rm{CT}}_{\lambda=0}\partial P_{\beta,\lambda}^{\eta}(\hat{\mathbf\Psi}_{\rm reg}^{l}(\fn|\alpha)) 
= & (-1)^{\#S}\{\prod_{v \in \Sigma_{\infty}} 2^{l_{v}-1}\}C_{l}(0)D_{F}^{-1}[\bfK_{\fin} : \bfK_{0}(\gn)]^{-1}
(-1)^{\e(\eta)}\Gcal(\eta)\\
& \times \bigg[ \sum_{\pi \in \Pi_{\rm cus}(l, \gn)}
(\log D_{F})w_{\gn}^{\eta}(\pi)\frac{L(1/2, \pi )L(1/2, \pi \otimes \eta )}{\|\varphi_{\pi}^{\rm new}\|^{2}}\a(\nu_{S}(\pi))\\
& + \sum_{\pi \in \Pi_{\rm cus}(l, \gn)} \partial w_{\fn}^\eta(\pi)\,
\frac{L(1/2, \pi) L(1/2, \pi \otimes \eta)}{\|\varphi_{\pi}^{\rm new}\|^{2}}\a(\nu_{S}(\pi)) \\
& + \sum_{\pi \in \Pi_{\rm cus}(l, \gn)}w_{\gn}^{\eta}(\pi)\frac{L(1/2, \pi )L'(1/2, \pi \otimes \eta )}{\|\varphi_{\pi}^{\rm new}\|^{2}}\a(\nu_{S}(\pi)) \bigg] \b(0).
\end{align*}}
\end{prop}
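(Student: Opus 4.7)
The plan is to apply the linear functional $\mathrm{CT}_{\lambda=0}\partial P_{\beta,\lambda}^{\eta}$ termwise to the spectral expansion \eqref{Psireg-SPECT} of $\hat{\bf\Psi}_{\rm reg}^{l}(\fn|\alpha)$. On each basis vector $\varphi_{\pi,\rho}$, the identity derived immediately before this subsection yields
\[
\mathrm{CT}_{\lambda=0}\partial P_{\beta,\lambda}^{\eta}(\varphi_{\pi,\rho}) \;=\; \beta(0)\,\tfrac{d}{ds}Z^{*}(s,\eta,\varphi_{\pi,\rho})\big|_{s=1/2},
\]
so the quantity of interest becomes $\beta(0)$ times the constant prefactor from \eqref{Psireg-SPECT} times a double sum over $\pi\in \Pi_{\rm cus}(l,\fn)$ and $\rho\in \Lambda_\pi(\fn)$.

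Next I would substitute the explicit formula \eqref{explicit of zeta(s)} into both $\overline{Z^*(1/2,{\bf 1},\varphi_{\pi,\rho})}$, which equals $\Gcal({\bf 1})\,\overline{Q_{\pi,{\bf 1},\rho}(1/2)}\,L(1/2,\pi)$ once we use $\epsilon({\bf 1})=0$ together with the self-duality of $\pi$ (which makes $L(1/2,\pi)$ real), and $Z^*(s,\eta,\varphi_{\pi,\rho})=D_F^{s-1/2}(-1)^{\e(\eta)}\Gcal(\eta)\,Q_{\pi,\eta,\rho}(s)\,L(s,\pi\otimes\eta)$. Differentiating the latter at $s=1/2$ by the Leibniz rule produces exactly three summands, all sharing the factor $(-1)^{\e(\eta)}\Gcal(\eta)$: a $(\log D_F)$-term from $\tfrac{d}{ds}D_F^{s-1/2}$, a $Q'_{\pi,\eta,\rho}(1/2)L(1/2,\pi\otimes\eta)$-term, and a $Q_{\pi,\eta,\rho}(1/2)L'(1/2,\pi\otimes\eta)$-term, in one-to-one correspondence with the three pieces of the asserted formula.

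The remaining step is to carry out the $\rho$-summation in each of these three pieces using the Petersson normalization $\|\varphi_{\pi,\rho}\|^{2}=\{\prod_{v\in S(\fn\ff_\pi^{-1})}\tau_{\pi_v}(\rho(v),\rho(v))\}\,\|\varphi_\pi^{\rm new}\|^{2}$. By the definitions \eqref{1}--\eqref{2},
\[
\sum_{\rho\in\Lambda_\pi(\fn)}\frac{\overline{Q_{\pi,{\bf 1},\rho}(1/2)}\,Q_{\pi,\eta,\rho}(s)}{\prod_v \tau_{\pi_v}(\rho(v),\rho(v))} \;=\; w_{\fn}^{\eta}(\pi;s),
\]
so specializing at $s=1/2$ extracts $w_\fn^\eta(\pi)$ and differentiating in $s$ at $s=1/2$ extracts $\partial w_\fn^\eta(\pi)$. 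Regrouping with the prefactors from \eqref{Psireg-SPECT} and $\Gcal({\bf 1})=1$ reproduces the formula of the proposition verbatim.

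The only substantive difficulty is bookkeeping: tracking (i) the normalization $\|\varphi_{\pi,\rho}\|^2/\|\varphi_\pi^{\rm new}\|^2=\prod_v \tau_{\pi_v}(\rho(v),\rho(v))$ so the denominators in $w_\fn^\eta(\pi;z)$ match those in the inner products, and (ii) the interaction between the complex conjugate carried by $\overline{Z^*(1/2,{\bf 1},\varphi_{\pi,\rho})}$ in \eqref{Psireg-SPECT} and the Gauss-sum/sign factor $(-1)^{\e(\eta)}\Gcal(\eta)$ in \eqref{explicit of zeta(s)}. No further analytic input beyond the Leibniz rule and the cited explicit evaluations is needed.
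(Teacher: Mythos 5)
Your proof follows the paper's argument essentially step for step: apply $\mathrm{CT}_{\lambda=0}\partial P_{\beta,\lambda}^{\eta}$ termwise to the finite spectral expansion \eqref{Psireg-SPECT}, insert the explicit evaluation \eqref{explicit of zeta(s)}, Leibniz-differentiate at $s=1/2$ to produce the three summands, and then carry out the $\rho$-summation to recognize $w_\fn^\eta(\pi)$ and $\partial w_\fn^\eta(\pi)$ via \eqref{1}. The one place to double-check your bookkeeping is the value of $\overline{Z^*(1/2,\1,\varphi_{\pi,\rho})}$: you assert $\Gcal(\1)=1$, but since \eqref{Psireg-SPECT} contributes only $D_F^{-1/2}$ and the statement has $D_F^{-1}$, and the paper's intermediate display inserts an extra factor $D_F^{-1/2}$ into each $\rho$-summand, the convention used in \cite{SugiyamaTsuzuki} is in effect $\Gcal(\1)=D_F^{-1/2}$ (equivalently, \cite[Proposition 6.1]{SugiyamaTsuzuki} gives $Z^*(1/2,\1,\varphi_{\pi,\rho})=D_F^{-1/2}Q_{\pi,\1,\rho}(1/2)L(1/2,\pi)$); with $\Gcal(\1)=1$ your constant would come out as $D_F^{-1/2}$ rather than $D_F^{-1}$. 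This is a normalization issue, not a structural one, and the rest of your argument --- including the reduction $\|\varphi_{\pi,\rho}\|^2/\|\varphi_\pi^{\rm new}\|^2=\prod_v\tau_{\pi_v}(\rho(v),\rho(v))$ and the identification of the three Leibniz terms with the three displayed sums --- matches the paper.
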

\begin{proof}
Since the spectral expansion \eqref{Psireg-SPECT} is a finite sum, we have 
\begin{align*}
{\rm{CT}}_{\lambda=0}\partial P_{\beta,\lambda}^{\eta}(\hat{\mathbf\Psi}_{\rm reg}^{l}(\fn|\alpha))
= & (-1)^{\#S}\prod 2^{l_{v}-1}C_{l}(0)D_{F}^{-1/2}[\bfK_{\fin} : \bfK_{0}(\gn)]^{-1} \\
& \times \sum_{\pi \in \Pi_{\rm cus}(l, \gn)}\sum_{\rho\in \Lambda_{\pi}(\gn)}\a(\nu_{S}(\pi)) 
\frac{\overline{Z^*(1/2,{\bf 1},\varphi_{\pi,\rho})}}{\|\varphi_{\pi,\rho}\|^{2}}\,\frac{d}{ds}Z^{*}(s, \eta, \varphi_{\pi,\rho})\bigg|_{s=1/2}\b(0).
\end{align*}
By virtue of \cite[Proposition 6.1]{SugiyamaTsuzuki} and \eqref{explicit of zeta(s)}, we have
{\allowdisplaybreaks 
\begin{align*}
& \sum_{\rho \in \Lambda_{\pi}(\gn)} 
\frac{\overline{Z^*(1/2,{\bf 1},\varphi_{\pi,\rho}))}}{\|\varphi_{\pi,\rho}\|^{2}}\,\frac{d}{ds}Z^{*}(s, \eta, \varphi_{\pi,\rho}) \bigg|_{s=1/2} \\
= & \sum_{\rho \in \Lambda_{\pi}(\gn)}\frac{1}{\|\varphi_{\pi, \rho}\|^{2}}D_{F}^{-1/2}Q_{\pi, \1, \rho}(1/2)L(1/2, \pi)(\log D_{F}) \Gcal(\eta)Q_{\pi, \eta, \rho}(1/2)L(1/2, \pi \otimes \eta) \\
& + \sum_{\rho \in \Lambda_{\pi}(\gn)}\frac{1}{\|\varphi_{\pi, \rho}\|^{2}}D_{F}^{-1/2}Q_{\pi, \1, \rho}(1/2)L(1/2, \pi) \Gcal(\eta) (Q_{\pi, \eta, \rho})'(1/2)L(1/2, \pi \otimes \eta) \\
& + \sum_{\rho \in \Lambda_{\pi}(\gn)}\frac{1}{\|\varphi_{\pi, \rho}\|^{2}}D_{F}^{-1/2}Q_{\pi, \1, \rho}(1/2)L(1/2, \pi) \Gcal(\eta) Q_{\pi, \eta, \rho}(1/2)L'(1/2, \pi \otimes \eta) \\
= & (\log D_{F})D_{F}^{-1/2}\Gcal(\eta)w_{\gn}^{\eta}(\pi)\frac{L(1/2, \pi )L(1/2, \pi \otimes \eta )}{\|\varphi_{\pi}^{\rm new}\|^{2}} \\
& + \left\{\sum_{\rho \in \Lambda_{\pi}(\gn)}
\left( \prod_{v \in S(\gn \gf_{\pi}^{-1})}\frac{Q_{\rho(v), v}^{\pi_{v}}({\bf 1}_{v}, 1)}{\tau_{\pi_{v}}(\rho(v), \rho(v))}\right) (Q_{\pi, \eta, \rho})'(1/2)\right\}
D_{F}^{-1/2}\Gcal(\eta)\frac{L(1/2, \pi) L(1/2, \pi \otimes \eta)}{\|\varphi_{\pi}^{\rm new}\|^{2}} \\
& + D_{F}^{-1/2}\Gcal(\eta)w_{\gn}^{\eta}(\pi)\frac{L(1/2, \pi )L'(1/2, \pi \otimes \eta )}{\|\varphi_{\pi}^{\rm new}\|^{2}}.
\end{align*}
}
By the first expression \eqref{1} of $w^\eta_\gn(\pi,z)$, we have
\begin{align*}
\partial w^\eta_\gn(\pi)=\sum_{\rho\in \Lambda_\pi(\fn)} \left\{\prod_{v\in S(\fn\ff_\pi^{-1})} \frac{\overline{Q_{\rho(v),v}^{\pi_v}({\bf 1}, 1)}}{\tau_{\pi_v}(\rho(v),\rho(v))}\right\} \,(Q_{\pi,\eta, \rho})'(1/2).
\end{align*}
Thus we are done. 
\end{proof}

%%%%%%%%%%%%%%%%%%%%%%%%%%%%%%%%%%%%%%%%%%%%%%%%%%%%%%%%%%%%%%%%%%%%%%%%%%%%%
\section{Spectral average of derivatives of $L$-series: the geometric side}
%%%%%%%%%%%%%%%%%%%%%%%%%%%%%%%%%%%%%%%%%%%%%%%%%%%%%%%%%%%%%%%%%%%%%%%%%%%%%
Recall that the function $\hat{\mathbf\Psi}_{\rm reg}^{l}(\fn|\alpha)$ has another expansion coming from double cosets $H(F) \backslash \GL(2, F)/ H(F)$ (\cite[\S 7]{SugiyamaTsuzuki}):
\begin{align}
\hat{{\bf \Psi}}_{\rm reg}^{l}\left(\gn | \a; [\begin{smallmatrix}t&0\\0&1\end{smallmatrix}][\begin{smallmatrix}1&x_{\eta}\\0&1\end{smallmatrix}]\right)
= (1+i^{\tilde{l}}\delta(\gn=\go))J_{\rm id}(\a;t) + J_{\rm u}(\a;t)+J_{\bar{\rm u}}(\a;t) +J_{\rm hyp}(\a;t),\ \ \
t\in \A^\times,
 \label{GeomFormula}
\end{align}
where the terms in the right-hand side are defined in \cite[Lemma 7.1, Lemma 7.2 and Lemma 7.11]{SugiyamaTsuzuki}. For $\natural \in \{\rm id, u, \bar{u}, hyp \}$, we consider the ``orbital integrals''
$$\WW_{\natural}^{\eta}(\b, \l; \a) = \int_{F^{\times}\backslash \AA^{\times}} J_{\natural}(\a; t) \{\b_{\l}^{(1)}(|t|_{\AA}) - \b_{\l}^{(1)}(|t|_{\AA}^{-1})\}\eta(tx_{\eta}^{*})d^{\times}t$$
for $\a \in \Acal$, $\b \in \Bcal$ and $\l \in \CC$ such that $\Re(\l)>1$. We shall show that these integrals converge absolutely individually when $\Re(\l)>1$ and admit an analytic continuation in a neighborhood of $\l=0$.

\begin{lem}\label{int of beta^1}
Let $\l$ and $w$ be complex numbers such that $\Re(w) < \Re(\l)$. Let $\xi$ be an idele class character of $F^\times$. Then, we have
$$\int_{F^{\times}\backslash \AA^{\times}}\b_{\l}^{(1)}(|t|_\A)\xi(t)|t|_{\AA}^{w}d^{\times}t = \delta_{\xi, {\bf 1}}\vol(F^{\times}\backslash\AA^{1})\frac{\b(-w)}{(\l-w)^{2}}.$$
\end{lem}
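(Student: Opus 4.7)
The plan is to factor the idele-class integral along the exact sequence $1\to\A^1\to\A^\times\xrightarrow{|\cdot|_\A}\R_{>0}\to 1$, reducing matters to a single Mellin computation on $\R_{>0}$. Fix a splitting $\iota:\R_{>0}\hookrightarrow\A^\times$; writing $t=t_1\cdot\iota(u)$ with $t_1\in\A^1$, $u=|t|_\A$, the measure decomposes as $d^\times t=d^\times t_1\,du/u$ and the integral factors as
$$\Bigl(\int_{F^\times\bsl\A^1}\xi(t_1)\,d^\times t_1\Bigr)\cdot\Bigl(\int_0^\infty\b_\l^{(1)}(u)\,\xi(\iota(u))\,u^{w-1}\,du\Bigr).$$
By orthogonality of characters, the first factor equals $\vol(F^\times\bsl\A^1)$ when $\xi|_{\A^1}=\1$ and vanishes otherwise; combined with the fact that the idele class characters fed into this lemma in subsequent sections are either trivial or quadratic (so $\xi|_{\A^1}=\1\iff\xi=\1$), this produces the $\delta_{\xi,\1}$ in the claim and kills the $\xi(\iota(u))$ factor in the second piece.

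It remains to show that $M(w):=\int_0^\infty\b_\l^{(1)}(u)\,u^{w-1}\,du$ equals $\b(-w)/(\l-w)^2$ on the strip $\Re(w)<\Re(\l)$, where absolute convergence on this strip follows from \eqref{estimate of beta^1} applied with $\s$ sufficiently large positive. Formally this is Mellin inversion applied to the defining Mellin--Barnes representation of $\b_\l^{(1)}$, but a naive Fubini fails because $\int_0^\infty u^{z+w-1}du$ diverges for every $z$. The remedy is to split $M(w)=\int_0^1+\int_1^\infty$, select $\s_+$ large positive for the first piece (so $\Re(z+w)>0$ on $L_{\s_+}$) and $\s_-$ with $-\Re(\l)<\s_-<-\Re(w)$ for the second (so $\Re(z+w)<0$ on $L_{\s_-}$), and apply Fubini separately; the inner $u$-integrals are then elementary and contribute $\pm 1/(z+w)$, yielding
$$M(w)=\frac{1}{2\pi i}\int_{L_{\s_+}}\!\frac{\b(z)}{(z+\l)^2(z+w)}\,dz\;-\;\frac{1}{2\pi i}\int_{L_{\s_-}}\!\frac{\b(z)}{(z+\l)^2(z+w)}\,dz.$$
Shifting the first contour leftward from $L_{\s_+}$ to $L_{\s_-}$ cancels the two contour integrals and captures only $\mathrm{Res}_{z=-w}=\b(-w)/(\l-w)^2$.

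The main technical point I expect to require care is the vertical-strip justification of the two Fubini swaps and of the horizontal segments in the contour shift; all of this reduces to the rapid vertical decay of $\b\in\cB$ combined with the trivial bound on $|(z+\l)^{-2}(z+w)^{-1}|$. The double pole of the integrand at $z=-\l$ is harmless, since both contours are taken to the right of it ($\s_\pm>-\Re(\l)$), so only the pole at $z=-w$ is crossed. Multiplying the two factors produced by the disintegration then gives the stated formula.
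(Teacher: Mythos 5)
Your argument is correct and is essentially the standard proof that the paper delegates to \cite[Lemma~7.6]{Tsuzuki}: disintegrate along $\A^1$, compute the remaining Mellin integral of $\b_\l^{(1)}$ by splitting at $u=1$, choosing $\s_+>-\Re(w)$ and $-\Re(\l)<\s_-<-\Re(w)$ for the two Fubini swaps, and collect the residue at $z=-w$ when shifting contours (the pole at $z=-\l$ stays to the left of both). Your side remark that the stated $\delta_{\xi,\1}$ tacitly uses that the relevant $\xi$ are of finite order (so triviality on $\A^1$ forces $\xi=\1$) is also a correct and worthwhile observation.
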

\begin{proof}
The proof is given in the same way as \cite[Lemma 7.6]{Tsuzuki}.
\end{proof}

\begin{lem}
For $\Re(\l)>0$, the integral $\WW_{\rm id}^{\eta}(\b, \l; \a)$ converges absolutely and $\WW_{\rm id}^\eta(\b,\l;\a)= 0$.
\end{lem}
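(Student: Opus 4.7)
The plan is to exploit the antisymmetry of the kernel $\beta_\lambda^{(1)}(|t|_\A) - \beta_\lambda^{(1)}(|t|_\A^{-1})$ under the involution $t\mapsto t^{-1}$ on $F^\times\bsl\A^\times$, combined with the fact that $\eta$ takes values in $\{\pm 1\}$ so that $\eta(t^{-1})=\eta(t)$. First I would recall from \cite[Lemma 7.1]{SugiyamaTsuzuki} the explicit expression of $J_{\rm id}(\alpha;t)$. This term comes from the identity double coset in $H(F)\bsl\GL(2,F)/H(F)$, whose stabilizer coincides with $H$ itself, so the resulting contribution is a simple function of $|t|_\A$, expected to be a constant times a symmetric expression in $|t|_\A^{1/2}$ and $|t|_\A^{-1/2}$; in particular it satisfies $J_{\rm id}(\alpha;t^{-1})=J_{\rm id}(\alpha;t)$.

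Next I would verify absolute convergence of $\WW_{\rm id}^{\eta}(\beta,\lambda;\alpha)$ on $\Re(\lambda)>0$ by applying the bound \eqref{estimate of beta^1}. Choosing $\sigma$ with $0<\sigma<\Re(\lambda)$, the factor $\beta_\lambda^{(1)}(|t|_\A)$ is $O(|t|_\A^{\sigma}\log|t|_\A)$ as $|t|_\A\to 0$ and $O(|t|_\A^{-\Re(\lambda)}\log|t|_\A)$ as $|t|_\A\to\infty$, with the symmetric bound for $\beta_\lambda^{(1)}(|t|_\A^{-1})$. Combined with the moderate growth of $J_{\rm id}(\alpha;t)$ in $|t|_\A^{\pm 1/2}$, these estimates make the integrand $L^1$ on $F^\times\bsl\A^\times$.

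Finally I would carry out the change of variable $t\mapsto t^{-1}$ on $F^\times\bsl\A^\times$. Since $d^\times t$ is invariant under inversion, $|t^{-1}|_\A=|t|_\A^{-1}$, and $\eta(t^{-1})=\eta(t)$, the integral transforms into
\begin{align*}
\WW_{\rm id}^\eta(\beta,\lambda;\alpha)
=\int_{F^{\times}\bsl\A^{\times}} J_{\rm id}(\alpha;t^{-1})\,\{\beta_{\lambda}^{(1)}(|t|_\A^{-1})-\beta_{\lambda}^{(1)}(|t|_\A)\}\,\eta(tx_\eta^{*})\,d^{\times}t.
\end{align*}
The symmetry $J_{\rm id}(\alpha;t^{-1})=J_{\rm id}(\alpha;t)$ together with the manifest antisymmetry of the $\beta$-kernel in $|t|_\A\leftrightarrow|t|_\A^{-1}$ therefore yields $\WW_{\rm id}^\eta(\beta,\lambda;\alpha)=-\WW_{\rm id}^\eta(\beta,\lambda;\alpha)$, from which the vanishing follows.

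The main obstacle is confirming the precise symmetry $J_{\rm id}(\alpha;t^{-1})=J_{\rm id}(\alpha;t)$ by tracing through the explicit formula in \cite[Lemma 7.1]{SugiyamaTsuzuki}; if it turns out that $J_{\rm id}(\alpha;t)$ is in fact independent of $t$, as is typically the case for the identity orbital integral when the stabilizer of the base point equals $H$, then the symmetry is immediate and both the convergence and the vanishing simplify further (the vanishing in the latter case can then alternatively be read off directly from Lemma~\ref{int of beta^1}, which gives equal contributions from the two summands $\beta_\lambda^{(1)}(|t|_\A)$ and $\beta_\lambda^{(1)}(|t|_\A^{-1})$).
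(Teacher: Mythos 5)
Your proposal is correct and essentially the same as the paper's argument: the key ingredient in both is that $J_{\rm id}(\a;t)$ is independent of $t$ (confirmed in \cite[Lemma 7.1]{SugiyamaTsuzuki}), so the vanishing reduces to the vanishing of $\int_{F^\times\bsl\A^\times}\{\beta_\lambda^{(1)}(|t|_\A)-\beta_\lambda^{(1)}(|t|_\A^{-1})\}\eta(tx_\eta^*)\,d^\times t$. You close this out via the involution $t\mapsto t^{-1}$, while the paper simply invokes Lemma~\ref{int of beta^1} (with $w=0$), under which both summands contribute the same value $\delta_{\eta,\mathbf 1}\,\vol(F^\times\bsl\A^1)\,\beta(0)/\lambda^2$ and cancel — a shortcut you yourself note at the end.
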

\begin{proof}
This follows immediately from Lemma \ref{int of beta^1},
since $J_{\rm id}(\a; t)$ is independent of the variable $t$ (\cite[Lemma 7.1]{SugiyamaTsuzuki}).
\end{proof}

Assume that $q(\Re(\bfs))>\Re(\l)> \s >1$.
Set
{\allowdisplaybreaks
\begin{align*}
V_{0, \eta}^{\pm}(\l; \bfs)&= \frac{1}{2\pi i}\int_{L_{\mp \s}}\frac{\beta(z)}{(z+\l)^2} \int_{\AA^{\times}}\Psi_{l}^{(0)}\left(\gn|\bfs; [\begin{smallmatrix}1&t^{-1}\\ 0&1\end{smallmatrix}][\begin{smallmatrix}1&x_{\eta}\\ 0&1\end{smallmatrix}]\right)\eta(tx_{\eta}^{*})|t|_\A^{\pm z}d^{\times}tdz, \\
V_{1, \eta}^{\pm}(\l; \bfs)&=\frac{1}{2\pi i}\int_{L_{\mp \s}}\frac{\beta(z)}{(z+\l)^2} \int_{\AA^{\times}}\Psi_{l}^{(0)}\left(\gn | \bfs; [\begin{smallmatrix}1&0\\ t^{-1}&1\end{smallmatrix}][\begin{smallmatrix}1&0\\ -x_{\eta}&1\end{smallmatrix}]w_{0}\right)\eta(tx_{\eta}^{*})|t|_\A^{\pm z}d^{\times}tdz
\end{align*}
}
and
\begin{align*}
\Upsilon_{S}^{\eta}(z; \bfs)&=\prod_{v \in S}(1-\eta_{v}(\varpi_{v})q_{v}^{-(z+(s_{v}+1)/2)})^{-1}(1-q_{v}^{(s_{v}+1)/2})^{-1}, \\
\Upsilon_{S, l}^{\eta}(z; \bfs)= & D_{F}^{-1/2}\{\# (\go/\gf_{\eta})^{\times}\}^{-1}
\{\prod_{v \in \Sigma_{\infty}}
\frac{2\Gamma(-z) \Gamma(l_{v}/2+z)}{\Gamma_{\RR}(-z + \e_{v})\Gamma(l_{v}/2)}i^{\e_{v}}\cos \left(\frac{\pi}{2}(-z+\e_{v})\right)\}\,
\Upsilon_{S}^{\eta}(z; \bfs).
\end{align*}

\begin{lem} \label{V-eta-pm}
The double integrals $V_{j,\eta}^{\pm}(\lambda;\bs)$ converge absolutely and 
$$V_{0,\eta}^{\pm}(\l; \bfs) = \frac{1}{2\pi i}\int_{L_{\s}}\frac{\b(z)}{(z+\l)^{2}}{\rm N}(\gf_{\eta})^{\mp z}L(\mp z, \eta)(-1)^{\e(\eta)}\Upsilon_{S, l}^{\eta}(\pm z; \bfs)dz$$
and
$$V_{1,\eta}^{\pm}(\l; \bfs) = \frac{1}{2\pi i}\int_{L_{\s}}\frac{\b(z)}{(z+\l)^{2}}{\rm N}(\gf_{\eta})^{\mp z}{\rm N}(\gn)^{\pm z}\tilde{\eta}(\gn)
\delta(\gn=\go)L(\mp z, \eta)i^{\tilde{l}}\Upsilon_{S, l}^{\eta}(\pm z; \bfs)dz.$$
\end{lem}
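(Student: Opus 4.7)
The strategy is a Hecke--Tate-style analysis: apply Fubini to interchange the $z$- and $t$-integrations, factorize the inner adelic integral as an Euler product of local integrals, evaluate each local factor in closed form, and finally shift the contour from $L_{\mp\sigma}$ to $L_{\sigma}$. For absolute convergence, the flat section $\Psi_{l}^{(0)}(\gn|\bfs;\cdot)$ constructed in \cite[\S 7]{SugiyamaTsuzuki} is a pure tensor of local sections whose polynomial growth on the relevant one-parameter subgroup is controlled by $q(\Re(\bfs))$; combined with the super-polynomial decay of $\beta\in\cB$ on vertical strips and the hypothesis $q(\Re(\bfs))>\Re(\l)>\sigma>1$, this yields uniform absolute convergence of the inner $\AA^{\times}$-integral on $L_{\mp\sigma}$ and permits the interchange.

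The main computation is local. At finite places outside $S\cup S(\gf_\eta\gn)$, the local section is unramified and the local integral $I_v(z)$ is a geometric series summing to $L_v(\mp z,\eta_v)$. At places $v\in S$, the prescribed section yields the rational expression $(1-\eta_v(\varpi_v)q_v^{-(z+(s_v+1)/2)})^{-1}(1-q_v^{(s_v+1)/2})^{-1}$ constituting $\Upsilon_S^{\eta}(z;\bfs)$. At places $v\mid\gf_\eta$, a standard Tate calculation for the ramified character $\eta_v$ composed with the translation by $x_{\eta,v}$ produces a local Gauss sum together with a shift $q_v^{\mp zf(\eta_v)}$; globalizing yields $\Gcal(\eta)\,\nr(\gf_\eta)^{\mp z}$. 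At archimedean places, an explicit Mellin computation against the weight-$l_v$ lowest-weight section on $F_v^{\times}\cong\RR^{\times}$ produces the gamma and cosine factors gathered in $\Upsilon_{S,l}^{\eta}$, while the accumulated archimedean signs give the overall $(-1)^{\e(\eta)}$.

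For $V_{1,\eta}^{\pm}$ the same scheme applies after using the Bruhat decomposition to rewrite the lower-unipotent argument of the local section in upper-triangular form. At a place $v$ with $\ord_v(\gn)>0$, the $w_0$-conjugation either annihilates the resulting local section or produces a factor $\tilde\eta_v(\varpi_v)^{\ord_v(\gn)}q_v^{\pm z\,\ord_v(\gn)}$; the non-vanishing case forces $\ord_v(\gn)=0$ for every such $v$, so the product over these places yields $\tilde\eta(\gn)\nr(\gn)^{\pm z}\delta(\gn=\go)$. To convert the contour from $L_{\mp\sigma}$ to $L_{\sigma}$, I would check that the explicit integrand is meromorphic in the intervening strip with no residues to collect: the poles of $\Gamma(-z)$, $\Gamma(l_v/2+z)$, and of $L(\mp z,\eta)$ are either cancelled by the zeros of $\Gamma_\RR(-z+\e_v)^{-1}$ or lie outside the strip by the choice of $\sigma$ and $\Re(\l)$.

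The main obstacle is the archimedean Mellin integral. One must explicitly describe the weight-$l_v$ lowest-weight flat section, compute its Mellin transform on $\RR^{\times}$ against $\eta_v$, and rearrange the result into exactly the combination $2\Gamma(-z)\Gamma(l_v/2+z)\Gamma_\RR(-z+\e_v)^{-1}\Gamma(l_v/2)^{-1}i^{\e_v}\cos(\pi(-z+\e_v)/2)$. Tracking the parity $\e_v$ and invoking the cosine--gamma reflection identity is the most delicate bookkeeping; once this local evaluation is in hand, the other local factors and the contour shift are routine.
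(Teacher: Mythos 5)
Your overall strategy — interchange the $z$- and $t$-integrations, factorize the inner adelic integral as an Euler product, evaluate the local factors, then shift the contour $L_{-\sigma}\to L_{\sigma}$ for the ``$+$'' case — is exactly what the paper does. The published proof is a single sentence: it cites \cite[Lemma 8.2]{SugiyamaTsuzuki} for the exchange-and-compute step and then invokes holomorphy of the resulting integrand in the strip $|\Re(z)|<\sigma$ to justify the contour shift. So you have correctly reconstructed the architecture of the argument.

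Two factual slips in your description of the local factors are worth flagging, because one of them touches the key point the paper singles out. First, you claim that the ramified places $v\mid\gf_\eta$ produce $\Gcal(\eta)\nr(\gf_\eta)^{\mp z}$ after globalization. The statement of the lemma, however, contains no $\Gcal(\eta)$: the ramified contribution is absorbed into $\Upsilon_{S,l}^{\eta}$ via the factors $D_F^{-1/2}\{\#(\go/\gf_\eta)^{\times}\}^{-1}$, alongside the explicit $\nr(\gf_\eta)^{\mp z}$, and the global sign $(-1)^{\e(\eta)}$ is recorded separately. The modified Gauss sum $\Gcal(\eta)$ only enters in the \emph{next} step, via the functional equation of $L(s,\eta)$, when one extracts the constant term at $\lambda=0$ (cf.\ the proof of Lemma~\ref{WWu}); it is not a product of the $t$-integral here. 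Second, for the contour shift you write that the poles of $L(\mp z,\eta)$ ``lie outside the strip by the choice of $\sigma$ and $\Re(\l)$.'' In fact, when $\eta$ is nontrivial $L(\mp z,\eta)$ has no poles at all, and if $\eta$ were trivial its pole at $z=\mp 1$ would sit \emph{inside} the strip $|\Re(z)|<\sigma$ (since $\sigma>1$) and the shift would fail. The essential observation, which the paper states verbatim, is that $\eta\neq\1$ renders $L(\mp z,\eta)$ entire; your phrasing reassures the reader about a pole that is absent for the right reason. These do not undermine your plan, but the second one is precisely the point the paper's proof hinges on.
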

\begin{proof}
As in \cite[Lemma 8.2]{SugiyamaTsuzuki}, we exchange the order of integrals and compute the $t$-integrals first. Since $\eta\not=1$, the integrands in the remaining contour integrals in $z$ are holomorphic on $|\Re(z)|<\s$; thus we can shift the contour $L_{-\s}$ to $L_{\s}$ for $V_{0,\eta}^{+}$ and $V_{1,\eta}^{+}$. 
\end{proof}

%We take $\s>1$.cite[\S 8]{SugiyamaTsuzuki}:
%\begin{align*}
%V_{0, \eta}^{+}(\l; \bfs)
%& \frac{1}{2\pi i}\int_{L_{\s}}\frac{\b(z)}{(z+\l)^{2}}{\rm N}(\gf_{\eta})^{-z}%L(-z, \eta)(-1)^{\e(\eta)}\Upsilon_{S, l}^{\eta}(z; \bfs)dz -\Res_{z=0} \\
%= & \frac{1}{2\pi i}\int_{L_{\s}}\frac{\b(z)}{(z+\l)^{2}}{\rm N}(\gf_{\eta})^{-z}L(-z, \eta)(-1)^{\e(\eta)}\Upsilon_{S, l}^{\eta}(z; \bfs)dz,
%- \frac{\b(0)}{\l^{2}}\delta_{\eta, \bf 1}R_{F}(-1)^{\e(\eta)}\Upsilon_{S}^{\bf% 1}(\bfs),
%\end{align*}
%\begin{align*}
%V_{1, \eta}^{+}(\l; \bfs)
%= & \frac{1}{2\pi i}\int_{L_{\s}}\frac{\b(z)}{(z+\l)^{2}}\delta(\gn=\go){\rm N}(\gf_{\eta})^{-z}L(-z, \eta)(-1)^{\e(\eta)}\Upsilon_{S, l}^{\eta}(z; \bfs)dz.
%& - \frac{\b(0)}{\l^{2}}\delta_{\eta, \bf 1}\delta(\gn=\go)R_{F}i^{\tilde{l}}\Upsilon_{S}^{\bf 1}(\bfs).
%\end{align*}

\begin{lem} \label{WWu}
The integral $\WW_{\rm u}^{\eta}(\b, \l; \a)$ has an analytic continuation to $\CC$ as a function in $\l$.
The constant term of $\WW_{\rm u}^{\eta}(\b, \l; \a)$ at $\l=0$ equals
$\WW_{\rm u}^{\eta}(l, \gn|\a)\b(0)$ with 
$$\WW_{\rm u}^{\eta}(l, \gn|\a) =
(-1)^{\e(\eta)} \Gcal(\eta) D_{F}^{1/2}
(1+ (-1)^{\e(\eta)}\tilde{\eta}(\gn)i^{\tilde{l}}\delta(\gn=\go))
\left(\frac{1}{2\pi i}\right)^{\#S}\int_{\LL_{S}(\bfc)}
\frak{W}_{S}^{\eta}(l,\gn| \bfs)\a(\bfs)d\mu_{S}(\bfs),
$$
where $\Upsilon_{S}^{\eta}(\bfs)=\Upsilon_{S}^{\eta}(0;\bfs)$ and
\begin{align*}
\frak{W}_{S}^{\eta}(\bfs) = &
\pi^{\e(\eta)}\Upsilon_{S}^{\eta}(\bfs) L(1,\eta)
\bigg\{ \log D_{F}+\frac{L'(1, \eta)}{L(1,\eta)} \\
& + \sum_{v \in \Sigma_{\infty}}\left(
\sum_{k=1}^{l_{v}/2-1}\frac{1}{k}
- \frac{1}{2}\log \pi - \frac{1}{2}C_{\rm Euler} -\delta_{\e_{v},1}\log 2 \right)
+ \sum_{v \in S}\frac{\log q_{v}}{1-\eta_{v}(\varpi_{v})q_{v}^{(s_{v}+1)/2}}
\bigg\}.
\end{align*}
\end{lem}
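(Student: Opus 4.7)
The plan is to mimic the analysis of the period integral in \cite[Lemma 8.3]{SugiyamaTsuzuki}. First, I substitute the double contour-integral expression of $J_{\rm u}(\a;t)$ from \cite[Lemma 7.2]{SugiyamaTsuzuki} into the definition of $\WW_{\rm u}^{\eta}(\b,\l;\a)$. Using the rapid decay of $\b$ on vertical lines, the $t$-integral, the $z$-integral defining $\b_{\l}^{(1)}$, and the $\bfs$-contour integral on $\LL_S(\bfc)$ can be exchanged for $\Re(\l)\gg 0$. After the exchange, the inner $t$-integrals match the quantities $V_{0,\eta}^{\pm}(\l;\bfs)$ and $V_{1,\eta}^{\pm}(\l;\bfs)$, whose explicit evaluation is supplied by Lemma~\ref{V-eta-pm}. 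The resulting expression takes the form
\begin{align*}
\WW_{\rm u}^{\eta}(\b,\l;\a) = C \left(\frac{1}{2\pi i}\right)^{\#S+1} \int_{\LL_S(\bfc)} \a(\bfs) \int_{L_{\s}} \frac{\b(z)}{(z+\l)^2}\,\Xi(z;\bfs)\,dz\,d\mu_S(\bfs),
\end{align*}
where $C$ is a numerical constant and $\Xi(z;\bfs)$ is an entire function of $z$ built from $\nr(\gf_{\eta})^{\pm z}\,L(\pm z,\eta)\,\Upsilon_{S,l}^{\eta}(\mp z;\bfs)$ together with the factor $(1+(-1)^{\e(\eta)}\tilde{\eta}(\gn)i^{\tilde{l}}\delta(\gn=\go))$ that appears after folding the $V_{0}$- and $V_{1}$-contributions.

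The analytic continuation to $\l=0$ is immediate once $\s>0$ is fixed small enough, since the integrand is holomorphic in $\l$ on $\Re(\l)>-\s$ (the only $\l$-singularity is the double pole at $z=-\l$, which stays to the left of $L_{\s}$). To evaluate the constant term at $\l=0$, I compute $\tfrac{1}{2\pi i}\int_{L_{\s}}\tfrac{\b(z)}{z^2}\Xi(z;\bfs)\,dz$ by shifting the contour to $L_{-\s}$ and picking up the residue at $z=0$. To make this residue explicit I apply the functional equation $L(-z,\eta)=(D_F\,\nr(\gf_{\eta}))^{1/2+z}\,\frac{L_\infty(1+z,\eta)}{L_\infty(-z,\eta)}\,L(1+z,\eta)$ and combine the Archimedean $\Gamma$-factors in $\Upsilon_{S,l}^{\eta}$ with $L_\infty(1+z,\eta)/L_\infty(-z,\eta)$ via the reflection identity $\Gamma((1-z)/2)\cos(\pi z/2)=\pi/\Gamma((1+z)/2)$ (and the analogous one for $\e_v=1$). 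These cancellations show that $\Xi(z;\bfs)/z^2$ has at most a simple pole at $z=0$; since $\b$ is even so $\b'(0)=0$, the residue reduces to $\b(0)$ times the logarithmic derivative at $z=0$ of $D_F^{z}L(1+z,\eta)\Upsilon_S^{\eta}(z;\bfs)$ and the Archimedean piece, multiplied by the value at $z=0$, which equals $\pi^{\e(\eta)}L(1,\eta)\Upsilon_S^{\eta}(\bfs)$ after simplification.

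The main obstacle is the careful bookkeeping of the Archimedean part. One must verify that the combination of $\prod_{v\in\Sigma_\infty}\frac{2\Gamma(-z)\Gamma(l_v/2+z)}{\Gamma_{\RR}(-z+\e_v)\Gamma(l_v/2)}i^{\e_v}\cos((\pi/2)(-z+\e_v))$ with $L_\infty(1+z,\eta)/L_\infty(-z,\eta)$ is entire in $z$, and then compute its logarithmic derivative at $z=0$ using the expansions $\psi(l_v/2)=-C_{\rm Euler}+\sum_{k=1}^{l_v/2-1}\tfrac{1}{k}$ and $\psi(1/2)=-C_{\rm Euler}-2\log 2$. The $\psi(l_v/2)$ contribution produces the $\sum_{k=1}^{l_v/2-1}\tfrac{1}{k}$ term of $\frak{C}(l)$, the $\psi(1/2)$ contribution produces $-\tfrac{1}{2}C_{\rm Euler}$ and $-\delta_{\e_v,1}\log 2$, while the $\pi^{z/2}$ factors in $\Gamma_{\RR}$ yield $-\tfrac{1}{2}\log\pi$. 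The terms $\log D_F$, $L'(1,\eta)/L(1,\eta)$, and $\sum_{v\in S}\log q_v/(1-\eta_v(\varpi_v)q_v^{(s_v+1)/2})$ then arise from the derivatives of $D_F^{z}$, $L(1+z,\eta)$, and $\Upsilon_S^{\eta}(z;\bfs)$ at $z=0$, respectively, summing to $\frak{W}_S^{\eta}(\bfs)$.
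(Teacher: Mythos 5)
Your proposal reproduces the paper's own proof: you insert the double contour-integral form of $J_{\rm u}$ from \cite[Lemma 7.2]{SugiyamaTsuzuki}, identify the $t$-integrals with $V_{0,\eta}^{\pm}$ and $V_{1,\eta}^{\pm}$, invoke Lemma~\ref{V-eta-pm}, obtain holomorphy in $\l$ on $\Re(\l)>-\s$, and extract the constant term by folding the $z$-integral onto a residue at $z=0$ using the functional equation of $L(s,\eta)$ together with $\psi(1)=-C_{\rm Euler}$, $\psi(1/2)=-C_{\rm Euler}-2\log 2$, and the reflection/cosine identity. This is exactly the argument the authors give.

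Two small loose spots worth tightening when you write it up: the paper's functional equation is stated directly for the finite $L$-function with the Gauss sum and $\#(\cO/\gf_\eta)^{\times}$ explicit, whereas you use the completed $L$-function version with $L_\infty$-quotients -- fine, but you then have to redo the normalization that produces the $\pi^{\e(\eta)}$ and absorbs the $i^{\e(\eta)}\Gcal(\eta)D_F^{1/2}$ prefactor; and your sentence attributing the $-\tfrac12 C_{\rm Euler}-\delta_{\e_v,1}\log 2$ entirely to ``the $\psi(1/2)$ contribution'' is only correct for $\e_v=1$: when $\e_v=0$ the $-\tfrac12 C_{\rm Euler}$ arises from the cancellation of the poles of $\Gamma(-z)$ against $\Gamma(-z/2)$ near $z=0$ (the quotient $\tfrac12\psi(-z/2)-\psi(-z)$ has a removable singularity), not from $\psi(1/2)$. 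Neither point changes the route or the outcome.
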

\begin{proof} 
From \cite[Lemma 7.2]{SugiyamaTsuzuki}, we have the expression
$$\WW_{\rm u}^{\eta}(\b, \l; \a)=\left(\frac{1}{2\pi i}\right)^{\#S}\int_{\LL_{S}(\bfc)}\{ V_{0, \eta}^{+}(\l; \bfs) - V_{0, \eta}^{-}(\l; \bfs) + V_{1, \eta}^{+}(\l; \bfs) - V_{1, \eta}^{-}(\l; \bfs)\}\a(\bfs)d\mu_{S}(\bfs).$$
By Lemma~\ref{V-eta-pm}, the right-hand side becomes
\begin{align*}
&((-1)^{\e(\eta)} + i^{\tilde{l}}\delta(\gn=\go))\left(\frac{1}{2\pi i}\right)^{\#S}\int_{\LL_{S}(\bfc)}\frac{1}{2\pi i}\int_{L_{\s}}\frac{\b(z)}{(z+\l)^{2}}\{ {\rm N}(\gf_{\eta})^{-z}L(-z, \eta) \Upsilon_{S, l}^{\eta}(z; \bfs) \\
& - {\rm N}(\gf_{\eta})^{z}L(z, \eta) \Upsilon_{S, l}^{\eta}(-z; \bfs) \}dz \a(\bfs) d\mu_{S}(\bfs).
%& - ((-1)^{\e(\eta)} + i^{\tilde{l}}\delta(\gn=\go))
%\delta_{\eta, \bf1}\vol(F^{\times}\backslash \AA^{1})\left(\frac{1}{2 \pi i}\right)^{\#S}\int_{\LL_{S}(\bfc)}
%\Upsilon_{S}^{\bf1}(\bfs)
%\a(\bfs)d\mu_{S}(\bfs)\frac{\b(0)}{\l^2},
\end{align*}
which is holomorphic on $\Re(\l)>-\s$. Since $\s>1$ is arbitrary, this gives an analytic continuation of $\WW_{\rm u}^{\eta}(\b, \l; \a)$ to $\C$ and yields the equality
\begin{align*}
& {\rm CT}_{\l = 0}\WW_{\rm u}^{\eta}(\b, \l; \a) \\
= & \left(\frac{1}{2\pi i}\right)^{\#S}\int_{\LL_{S}(\bfc)}\left(\frac{1}{2\pi i}\int_{L_{\s}}\frac{\b(z)}{z^2} \{ f_{\rm u}(z) - f_{\rm u}(-z)\} dz\right)\a(\bfs)d\mu_{S}(\bfs)\\
= & ((-1)^{\e(\eta)}+ i^{\tilde{l}}\delta(\gn=\go)) \Res_{z=0}\left(\frac{\beta(z)}{z^2}f_{\rm u}(z)\right) \\
= & ((-1)^{\e(\eta)}+ i^{\tilde{l}}\delta(\gn=\go))({\rm CT}_{z=0}\frac{f_{\rm u}(z)}{z}\b(0) + \frac{1}{2}\Res_{z=0}f_{\rm u}(z) \b''(0)),
\end{align*}
where $f_{\rm u}(z) = {\rm N}(\gf_{\eta})^{-z}L(-z, \eta) \Upsilon_{S, l}^{\eta}(z; \bfs)$.
Since $\eta$ is nontrivial,
by the functional equation
$$
L(s,\eta)=i^{\epsilon(\eta)}D_F^{1-s}\nr(\ff_\eta)^{-s}\, \#((\cO/\ff_\eta)^\times)\,\cG(\eta)\,L(1-s,\eta),$$
$f_{\rm u}(z)$ is holomorphic at $z=0$. Thus, 
\begin{align*}
& {\rm CT}_{z=0}\frac{f_{\rm u}(z)}{z} = \lim_{z \rightarrow 0}\frac{f_{\rm u}(z)-f_{\rm u}(0)}{z} \\
= & -(\log{\rm N}(\gf_{\eta})) L(0, \eta) \Upsilon_{S, l}^{\eta}(0; \bfs) - L'(0, \eta)\Upsilon_{S, l}^{\eta}(0; \bfs)
+L(0, \eta)(\Upsilon_{S, l}^{\eta})'(0; \bfs) \\
= & i^{\e(\eta)}\Gcal(\eta)D_{F}^{1/2}\tilde{\Upsilon}_{S, l}^{\eta}(0; \bfs) \{ -L(1, \eta)\log {\rm  N}(\gf_{\eta})
+L(1, \eta)\log (D_{F}{\rm N}(\gf_{\eta})) +L'(1, \eta) + L(1, \eta)\frac{d}{dz}\log \tilde{\Upsilon}_{S,l}(z; \bfs)|_{z=0}\}\\
= & \Gcal(\eta)D_{F}^{1/2}\pi^{\e(\eta)}\Upsilon_{S}^{\eta}(\bfs)
\{ L(1, \eta)\log D_{F} +L'(1, \eta) + L(1, \eta)\frac{d}{dz}\log \tilde{\Upsilon}_{S,l}(z; \bfs)|_{z=0}\},
\end{align*}
where $\tilde{\Upsilon}_{S, l}^{\eta}(z; \bfs)= D_{F}^{1/2}\#((\cO/\ff_\eta)^\times) \Upsilon_{S, l}^{\eta}(z; \bfs)$.
Furthermore,
\begin{align*}
\frac{d}{dz}\log \tilde{\Upsilon}_{S,l}(z; \bfs)|_{z=0}
= & \sum_{v \in \Sigma_{\infty}}\bigg(\psi(l_{v}/2) - \frac{1}{2}\log \pi + \frac{1}{2}\psi\left(\frac{-z+\e_{v}}{2}\right)
-\psi(-z) +\frac{\pi}{2}\tan\frac{\pi}{2}(-z+\e_{v})\bigg)\bigg|_{z=0} \\
& +\sum_{v \in S}\frac{\log q_{v}}{1-\eta_{v}(\varpi_{v})q_{v}^{(s_{v}+1)/2}}.
\end{align*}
Here, by $\psi(1)=-C_{\rm Euler}$, $\psi(1/2)=-C_{\rm Euler}-2\log 2$ and
$\frac{d}{dt}\left(t \cot t\right)|_{t=0}=0,$
we have
$$\frac{1}{2}\psi\left(\frac{-z+\e_{v}}{2}\right)
-\psi(-z) +\frac{\pi}{2}\tan\frac{\pi}{2}(-z+\e_{v}) \bigg|_{z=0} =
\begin{cases}\displaystyle\frac{1}{2}C_{\rm Euler} & \text{($\e_{v}=0$),} \\
\ \\
\displaystyle\frac{1}{2}\psi\left(\frac{1}{2}\right)-\psi(1)=\frac{1}{2}C_{\rm Euler}-\log 2 & \text{($\e_{v}=1$).}
\end{cases}
$$
%In fact, the case $\e_{v}=0$ was computed in the proof of \cite[Lemma 8.3]{SugiyamaTsuzuki}.
\end{proof}

Assume that $q(\Re(\bfs))>\Re(\l)> \s >1$. Set
{\allowdisplaybreaks
\begin{align*}
\tilde{V}_{1, \eta}^{\pm}(\l; \bfs)&= \frac{1}{2\pi i}\int_{L_{\pm \s}}\frac{\b(z)}{(z+\l )^2}\int_{\AA^{\times}}\Psi_{l}^{(0)}\left(\gn|\bfs; [\begin{smallmatrix}1&0 \\ t &1\end{smallmatrix}][\begin{smallmatrix}1&x_{\eta}\\ 0&1\end{smallmatrix}]\right)\eta(tx_{\eta}^{*})|t|_\A^{\pm z}d^{\times}tdz, \\
\tilde{V}_{0, \eta}^{\pm}(\l; \bfs)&= \frac{1}{2\pi i}\int_{L_{\pm \s}}\frac{\b(z)}{(z+\l )^2}\int_{\AA^{\times}}\Psi_{l}^{(0)}\left(\gn|\bfs; [\begin{smallmatrix}1&t \\ 0 &1\end{smallmatrix}][\begin{smallmatrix}1&0 \\ - x_{\eta}&1\end{smallmatrix}]w_{0}\right)\eta(tx_{\eta}^{*})|t|_\A^{\pm z}d^{\times}tdz.
\end{align*}
}In the same way as Lemma~\ref{V-eta-pm}, we obtain
\begin{lem} \label{tildeV-eta-pm}
The double integrals $\tilde V_{j,\eta}^{\pm}(\l;\bs)$ converge absolutely and \begin{align*}
\tilde{V}^{\pm}_{1,\eta}(\l; \bfs)&= \frac{1}{2\pi i}\int_{L_{\s}}
\frac{\b(z)}{(z+\l )^2}{\rm N}(\gf_{\eta})^{\mp z}{\rm N}(\gn)^{\mp z}
\tilde{\eta}(\gn)L(\pm z, \eta)
\Upsilon_{S, l}^{\eta}(\mp z; \bfs) dz, \\
\tilde{V}^{\pm}_{0,\eta}(\l; \bfs)&= \frac{1}{2\pi i}\int_{L_{\s}}
\frac{\b(z)}{(z+\l )^2}{\rm N}(\gf_{\eta})^{\mp z}\delta(\gn=\go)
L(\pm z, \eta) (-1)^{\e(\eta)}i^{\tilde{l}}\Upsilon_{S, l}^{\eta}(\mp z; \bfs) dz.
\end{align*}
\end{lem}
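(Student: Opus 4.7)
The plan is to follow the same three-step strategy used for Lemma~\ref{V-eta-pm}: first verify absolute convergence of the double integrals in the strip $q(\Re(\bfs))>\Re(\l)>\s>1$; then exchange the outer $z$-integral with the inner $t$-integral by Fubini; and finally evaluate the inner adelic $t$-integral as an Euler product of local zeta integrals, assembling the local factors into the claimed expressions.

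Absolute convergence follows from the rapid vertical decay of $\b$ combined with the uniform estimates for $\Psi_{l}^{(0)}(\gn|\bfs;\cdot)$ along unipotent translates that underlie \cite[Lemma 8.2]{SugiyamaTsuzuki}. After Fubini, each integral takes the form
\[
\frac{1}{2\pi i}\int_{L_{\pm\s}}\frac{\b(z)}{(z+\l)^2}\Bigl(\int_{\AA^{\times}}\Psi_{l}^{(0)}(\gn|\bfs;g(t))\,\eta(tx_\eta^{*})\,|t|_{\AA}^{\pm z}\,d^{\times}t\Bigr)\,dz.
\]
The key structural observation is that the change of variables $t\mapsto t^{-1}$, combined with an Iwasawa decomposition when necessary, matches the integrand defining $\tilde V_{1,\eta}^{\pm}$ to that of $V_{0,\eta}^{\mp}$ and the integrand defining $\tilde V_{0,\eta}^{\pm}$ to that of $V_{1,\eta}^{\mp}$. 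This explains both the sign flip in the arguments $\mp z$ of $\Upsilon_{S,l}^{\eta}$ and why the Bruhat-cell contributions $\delta(\gn=\cO)$ and $i^{\tilde l}$ appear only in $\tilde V_{0,\eta}^{\pm}$ (which carries the Weyl element $w_0$) and not in $\tilde V_{1,\eta}^{\pm}$.

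Concretely, the local factorization is computed place by place: at $v\in \Sigma_\infty$ one obtains $\tfrac{2\Gamma(\mp z)\Gamma(l_v/2\pm z)}{\Gamma_{\RR}(\mp z+\e_v)\Gamma(l_v/2)}\,i^{\e_v}\cos\bigl(\tfrac{\pi}{2}(\mp z+\e_v)\bigr)$, which is the archimedean factor of $\Upsilon_{S,l}^{\eta}(\mp z;\bfs)$; at unramified finite places outside $S\cup S(\gn)\cup S(\gf_\eta)$ one recovers the local factor of $L(\pm z,\eta)$; at $v\in S$ one obtains the factor in $\Upsilon_{S}^{\eta}(\mp z;\bfs)$; at $v\in S(\gf_\eta)$ a Gauss sum computation produces $\nr(\gf_\eta)^{\mp z}$; and at $v\in S(\gn)$ the $\bfK_0(\gn)$-level contribution yields $\nr(\gn)^{\mp z}\tilde\eta(\gn)$ for $\tilde V_{1,\eta}^{\pm}$, while for $\tilde V_{0,\eta}^{\pm}$ the $w_0$-translated section is supported on the big Bruhat cell only when $\gn=\cO$, which produces the factor $\delta(\gn=\cO)$. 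Collecting everything gives the stated formulas.

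The main, though routine, obstacle will be careful bookkeeping of signs and arithmetic constants: the overall $(-1)^{\e(\eta)}$ in $\tilde V_{0,\eta}^{\pm}$ emerges only after combining the $\eta_v(-1)$ contributions from the infinite places with the archimedean intertwining identities, and the global factor $\tilde\eta(\gn)$ in $\tilde V_{1,\eta}^{\pm}$ must be tracked through the inversion $t\mapsto t^{-1}$ at places $v\in S(\gn)$. Since $\eta$ is nontrivial, $L(\pm z,\eta)$ is entire on $|\Re(z)|<\s$, so no contour shift is needed to reach the displayed form, and the whole argument is a direct adaptation of the proof of Lemma~\ref{V-eta-pm}.
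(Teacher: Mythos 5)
Your overall outline (Fubini, inner adelic $t$-integral factored into an Euler product, then a contour shift justified by $\eta\neq\1$) is the same three-step mechanism the paper invokes when it says "in the same way as Lemma~\ref{V-eta-pm}." But your \emph{key structural observation} is wrong, and it is not a small slip: it would produce the wrong answer.

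You claim that $t\mapsto t^{-1}$ (possibly with an Iwasawa decomposition) turns the integrand of $\tilde V_{1,\eta}^{\pm}$ into that of $V_{0,\eta}^{\mp}$ and that of $\tilde V_{0,\eta}^{\pm}$ into that of $V_{1,\eta}^{\mp}$. Compare the definitions: $\tilde V_{1}$ sits inside $\Psi_l^{(0)}$ at $\left[\begin{smallmatrix}1&0\\t&1\end{smallmatrix}\right]\left[\begin{smallmatrix}1&x_\eta\\0&1\end{smallmatrix}\right]$ (lower-triangular unipotent in $t$, no $w_0$), while $V_{0}$ sits at $\left[\begin{smallmatrix}1&t^{-1}\\0&1\end{smallmatrix}\right]\left[\begin{smallmatrix}1&x_\eta\\0&1\end{smallmatrix}\right]$ (\emph{upper}-triangular in $t^{-1}$). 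The map $t\mapsto t^{-1}$ does not exchange the Borel subgroup for its opposite, and $\Psi_l^{(0)}$ is only \emph{right}-$\bfK_0(\gn)$-invariant at finite places, so no Iwasawa-type move on the left can overcome this. More decisively, if the proposed identification were valid you would conclude $\tilde V_{1,\eta}^{\pm}=V_{0,\eta}^{\mp}$; but $V_{0,\eta}^{\mp}$ carries no factor $\nr(\gn)^{\mp z}\tilde\eta(\gn)$ at all, whereas the statement you are trying to prove asserts that $\tilde V_{1,\eta}^{\pm}$ does. Your key step therefore contradicts the lemma. The $\tilde V_j$ arise from a genuinely different $H(F)\times H(F)$ double coset (the $J_{\bar{\rm u}}$ piece) than the $V_j$ (from $J_{\rm u}$); they must be computed directly by redoing the local Euler-product factorization, not derived from the $V_j$.

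A secondary slip: you say "no contour shift is needed to reach the displayed form." That is false for the minus case: $\tilde V_{j,\eta}^{-}$ is \emph{defined} by an integral over $L_{-\sigma}$, while the stated formula is over $L_{\sigma}$. The holomorphy of $L(\pm z,\eta)$ on $|\Re z|<\sigma$ (coming from $\eta\neq\1$) is precisely what makes the shift \emph{legitimate} — it does not make it unnecessary. This mirrors the paper's remark in the proof of Lemma~\ref{V-eta-pm}, where the $+$-indexed integrals (originally on $L_{-\sigma}$) are the ones shifted.
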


%As before, we shift the contour $L_{-\s}$ to $L_{\s}$ using the residue theorem% to obtain 
%\begin{align*}
%\tilde{V}_{1, \eta}^{-}(\l;\bfs) =  & \frac{1}{2\pi i}\int_{L_{\s}}\frac{\b(z)}%{(z+\l )^2}
%{\rm N}(\gf_{\eta})^{z}{\rm N}(\gn)^{z}\tilde{\eta}(\gn)L(-z, \eta)\Upsilon_{S,%l}^{\eta}(z; \bfs)dz
%- \frac{\b(0)}{\l^2}\tilde{\eta}(\gn)
%\delta_{\eta, \bf1}R_{F} \Upsilon_{S}^{\bf 1}(\bfs)
%\end{align*}
%and
%\begin{align*}
%\tilde{V}^{-}_{0, \eta}(\l; \bfs) =& \frac{1}{2\pi i}\int_{L_{\s}}
%\frac{\b(z)}{(z+\l )^2}{\rm N}(\gf_{\eta})^{z}\delta(\gn=\go)
%L(-z, \eta) (-1)^{\e(\eta)}i^{\tilde{l}}\Upsilon_{S, l}^{\eta}(z; \bfs) dz.
%&- \frac{\b(0)}{\l^2}\delta(\gn=\go)
%\delta_{\eta, \bf1}R_{F}(-1)^{\e(\eta)}i^{\tilde{l}}\Upsilon_{S}^{\bf 1}(\bfs).%
%\end{align*}

\begin{lem} \label{WWbu}
The integral $\WW_{\rm \bar u}^{\eta}(\b, \l; \a)$ converges absolutely on $\Re(\l)>1$ and has an
analytic continuation to $\CC$ as a function in $\l$.
The constant term of $\WW_{\rm \bar u}^{\eta}(\b, \l; \a)$ at $\l=0$ equals
$\WW_{\rm \bar u}^{\eta}(l, \gn|\a)\b(0)$ with
$$\WW_{\rm \bar u}^{\eta}(l, \gn|\a) =
(-1)^{\e(\eta)} \Gcal(\eta) D_{F}^{1/2}
((-1)^{\e(\eta)}\tilde{\eta}(\gn)+i^{\tilde{l}}\delta(\gn=\go))
\left(\frac{1}{2\pi i}\right)^{\#S}\int_{\LL_{S}(\bfc)}
\frak{W}_{S, {\rm \bar u}}^{\eta}(l, \gn |\bfs)\a(\bfs)d\mu_{S}(\bfs),
$$
where
$$\frak{W}_{S, {\rm \bar u}}^{\eta}(l, \gn |\bfs)
=-\pi^{\e(\eta)}\Upsilon_{S}^{\eta}(\bfs)
L(1, \eta)\log ({\rm N}(\gn){\rm N}(\gf_{\eta})^{2})
-\frak{W}_{S, {\rm u}}^{\eta}(l, \gn |\bfs).
$$
\end{lem}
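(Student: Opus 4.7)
The plan is to follow the proof of Lemma~\ref{WWu} closely, substituting $\tilde V_{j,\eta}^{\pm}$ for $V_{j,\eta}^{\pm}$. First, using the expression for $J_{\bar u}$ from \cite[Lemma 7.11]{SugiyamaTsuzuki} and unfolding the $t$-integral as in Lemma~\ref{WWu}, I would derive
$$
\WW_{\rm \bar u}^{\eta}(\b, \l; \a)=\left(\frac{1}{2\pi i}\right)^{\#S}\int_{\LL_{S}(\bfc)}\{\tilde V_{1, \eta}^{+} - \tilde V_{1, \eta}^{-} + \tilde V_{0, \eta}^{+} - \tilde V_{0, \eta}^{-}\}\a(\bfs)\,d\mu_{S}(\bfs),
$$
which is absolutely convergent for $\Re(\l)>1$ by \eqref{estimate of beta^1}. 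Introducing $\tilde f(z) := \nr(\gf_\eta)^{-z}L(z,\eta)\Upsilon_{S,l}^\eta(-z;\bfs)$ and $g(z) := (\tilde\eta(\gn)\nr(\gn)^{-z} + (-1)^{\e(\eta)}i^{\tilde l}\delta(\gn=\go))\tilde f(z)$, Lemma~\ref{tildeV-eta-pm} collapses the bracketed sum to $\frac{1}{2\pi i}\int_{L_\s}\frac{\b(z)}{(z+\l)^2}(g(z)-g(-z))\,dz$. The archimedean analysis carried out in the proof of Lemma~\ref{WWu} already shows that $\tilde f$, and hence $g$, is holomorphic at $z=0$; thus the inner integral is entire in $\l$, giving the analytic continuation.

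For the constant term at $\l=0$, the same contour-shift argument as in Lemma~\ref{WWu} yields
$$
{\rm CT}_{\l=0}\WW_{\rm \bar u}^{\eta}(\b,\l;\a) = \b(0)\left(\frac{1}{2\pi i}\right)^{\#S}\int_{\LL_{S}(\bfc)}g'(0)\,\a(\bfs)\,d\mu_S(\bfs),
$$
using that $g$ is holomorphic at $0$ and $\b'(0)=0$ since $\b$ is even. Writing $C := \tilde\eta(\gn) + (-1)^{\e(\eta)}i^{\tilde l}\delta(\gn=\go)$ and exploiting $\log\nr(\gn)\,\delta(\gn=\go)=0$, differentiation gives $g'(0) = C\,(\tilde f'(0) - \log\nr(\gn)\,\tilde f(0))$.

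The decisive computation will be the symmetrization identity
$$
\tilde f'(0) + f_{\rm u}'(0) = -2\log\nr(\gf_\eta)\,\tilde f(0),
$$
where $f_{\rm u}(z) = \nr(\gf_\eta)^{-z}L(-z,\eta)\Upsilon_{S,l}^\eta(z;\bfs)$ is the function from the proof of Lemma~\ref{WWu}. This follows by direct differentiation, using $\tilde f(0) = f_{\rm u}(0)$ and that $\tilde f$ and $f_{\rm u}$ differ only by the $z \leftrightarrow -z$ swap in the $L$- and $\Upsilon$-factors. Hence $\tilde f'(0)-\log\nr(\gn)\,\tilde f(0) = -f_{\rm u}'(0) - \log(\nr(\gn)\nr(\gf_\eta)^2)\,\tilde f(0)$. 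Inserting $f_{\rm u}'(0) = \Gcal(\eta)D_F^{1/2}\,\frak W_{S,{\rm u}}^{\eta}(l,\gn|\bfs)$ and $\tilde f(0) = \Gcal(\eta)D_F^{1/2}\pi^{\e(\eta)}\Upsilon_S^\eta(\bfs)L(1,\eta)$ (both already supplied at the end of the proof of Lemma~\ref{WWu} via the functional equation of $L(s,\eta)$ and evaluation of the archimedean factors) and using the asserted definition $\frak W_{S,{\rm \bar u}}^{\eta} = -\pi^{\e(\eta)}\Upsilon_S^\eta(\bfs)L(1,\eta)\log(\nr(\gn)\nr(\gf_\eta)^2) - \frak W_{S,{\rm u}}^{\eta}$, one obtains $g'(0) = C\,\Gcal(\eta)D_F^{1/2}\,\frak W_{S,{\rm \bar u}}^{\eta}(l,\gn|\bfs)$; the claim then follows from the identity $(-1)^{\e(\eta)}C = (-1)^{\e(\eta)}\tilde\eta(\gn) + i^{\tilde l}\delta(\gn=\go)$. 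The main obstacle will be the careful bookkeeping of archimedean Gamma factors through the functional equation of $L(s,\eta)$ (identical to that at the end of the proof of Lemma~\ref{WWu}) needed to verify the symmetrization identity above.
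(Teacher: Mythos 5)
Your argument is correct and follows essentially the same route as the paper's own proof: unfold to the $\tilde V_{j,\eta}^{\pm}$, apply Lemma~\ref{tildeV-eta-pm}, package the integrand as an odd combination of an auxiliary holomorphic function, and take the residue at $z=0$. The only cosmetic difference is that you write $g(z)=(\tilde\eta(\gn)\nr(\gn)^{-z}+(-1)^{\e(\eta)}i^{\tilde l}\delta(\gn=\go))\tilde f(z)$ and use the symmetrization identity $\tilde f'(0)+f_{\rm u}'(0)=-2\log\nr(\gf_\eta)\tilde f(0)$, whereas the paper directly sets $f_{\rm\bar u}(z)=-\nr(\gf_\eta)^{2z}\nr(\gn)^{z}f_{\rm u}(z)$ and differentiates; these produce the same value $\b(0)g'(0)=\b(0)\,C\,\Gcal(\eta)D_F^{1/2}\frak{W}_{S,\bar{\rm u}}^{\eta}(l,\gn|\bfs)$.
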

\begin{proof}
From \cite[Lemma 7.2]{SugiyamaTsuzuki}, we have the expression
$$\WW_{\rm \bar u}^{\eta}(\b, \l; \a)=\left(\frac{1}{2\pi i}\right)^{\#S}\int_{\LL_{S}(\bfc)}\{ \tilde{V}_{0, \eta}^{+}(\l; \bfs) - \tilde{V}_{0, \eta}^{-}(\l; \bfs) + \tilde{V}_{1, \eta}^{+}(\l; \bfs) - \tilde{V}_{1, \eta}^{-}(\l; \bfs)\}\a(\bfs)d\mu_{S}(\bfs).$$
By Lemma~\ref{tildeV-eta-pm}, the right-hand side becomes
\begin{align*}
& (\tilde{\eta}(\gn) + (-1)^{\e(\eta)}i^{\tilde{l}}\delta(\gn=\go))
\left(\frac{1}{2\pi i}\right)^{\#S}\int_{\LL_{S}(\bfc)}\frac{1}{2\pi i}\int_{L_{\s}}\frac{\b(z)}{(z+\l)^{2}}
 \times \{
{\rm N}(\gf_{\eta})^{-z}{\rm N}(\gn)^{-z}L(z, \eta) \Upsilon_{S, l}^{\eta}(-z; \bfs) \\
& - {\rm N}(\gf_{\eta})^{z}{\rm N}(\gn)^{z}L(-z, \eta) \Upsilon_{S, l}^{\eta}(z; \bfs) \}dz \a(\bfs) d\mu_{S}(\bfs).
%& + (\tilde{\eta}(\gn) + (-1)^{\e(\eta)}i^{\tilde{l}}\delta(\gn=\go))
%\delta_{\eta, \bf1}\vol(F^{\times}\backslash \AA^{1})\left(\frac{1}{2 \pi i}\right)^{\#S}\int_{\LL_{S}(\bfc)}
%\Upsilon_{S}^{\bf1}(\bfs)
%\a(\bfs)d\mu_{S}(\bfs)\frac{\b(0)}{\l^2}.
\end{align*}
As before, this gives an analytic continuation of $\WW_{\rm \bar u}^{\eta}(\b, \l; \a)$.
We set $f_{\rm \bar u}(z) = - {\rm N}(\gf_{\eta})^{2z}{\rm N}(\gn)^{z} f_{\rm u}(z)$. Then,
\begin{align*}
& {\rm CT}_{\l = 0}\WW_{\rm \bar u}^{\eta}(\b, \l; \a) \\
= & (\tilde{\eta}(\gn) + (-1)^{\e(\eta)}i^{\tilde{l}}\delta(\gn=\go))({\rm CT}_{z=0}\frac{f_{\rm \bar u}(z)}{z}\b(0) + \frac{1}{2}\Res_{z=0}f_{\rm \bar u}(z) \b''(0)).
\end{align*}
Since $\eta$ is supposed to be nontrivial, $f_{\rm \bar u}(z)$ is holomorphic at $z=0$ and 
\begin{align*}
{\rm CT}_{z=0}\frac{f_{\rm \bar u}(z)}{z} = & f_{\rm \bar u}'(0)
= -\log({\rm N}(\gn){\rm N}(\gf_{\eta}^2))f_{\rm u}(0) - f_{\rm u}'(0) \\
= & \Gcal(\eta)D_{F}^{1/2}\{
-\pi^{\e(\eta)}\Upsilon_{S}^{\eta}(\bfs)L(1,\eta) \log({\rm N}(\gn){\rm N}(\gf_{\eta}^2))
-\frak{W}_{S, {\rm u}}^{\eta}(l, \gn |\bfs)
\}.
\end{align*}
\end{proof}

\begin{lem} \label{WWhyp}
The integral $\WW_{\rm hyp}^{\eta}(\b, \l; \a)$ converges absolutely and has an analytic continuation to the region $\Re(\l)>-\e$ for some $\e>0$.
The constant term of $\WW_{\rm hyp}^{\eta}(\b, \l; \a)$ at $\l=0$ equals $\WW_{\rm hyp}^{\eta}(l, \gn|\a)\b(0)$.
Here
$$\WW_{\rm hyp}^{\eta}(l, \gn|\a) = \left(\frac{1}{2\pi i}\right)^{\#S}\int_{\LL_{S}(\bfc)}\frak{L}_{\eta}(l, \gn |\bfs)\a(\bfs)d\mu_{S}(\bfs)$$
with
$$\frak{L}_{\eta}(l, \gn |\bfs)=\sum_{b \in F-\{0, -1\}}\int_{\AA^{\times}}\Psi_{l}^{(0)}(\gn|\bfs, \delta_{b}
\left[\begin{smallmatrix}t & 0 \\ 0 & 1 \end{smallmatrix}\right]\left[\begin{smallmatrix}1 & x_{\eta} \\ 0 & 1\end{smallmatrix}\right])\eta(t x_{\eta}^{*}) \log |t|_{\AA} d^{\times}t.$$
\end{lem}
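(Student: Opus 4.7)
The plan is to follow the pattern of Lemmas~\ref{WWu} and \ref{WWbu}, but with the key difference that the hyperbolic contribution $J_{\rm hyp}(\a;t)$ does not arise from a single Mellin transform of a single Eisenstein-like term; instead, by \cite[Lemma 7.11]{SugiyamaTsuzuki}, it is an infinite sum over $b\in F-\{0,-1\}$ of integrals involving $\Psi_{l}^{(0)}(\gn|\bfs;\delta_{b}[\begin{smallmatrix} t & 0 \\ 0 & 1 \end{smallmatrix}][\begin{smallmatrix} 1 & x_{\eta} \\ 0 & 1\end{smallmatrix}])$ against the test function $\a(\bfs)$ on $\LL_S(\bfc)$. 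The strategy is therefore to directly swap the $t$-integral in $\WW_{\rm hyp}^{\eta}(\b,\l;\a)$ with the sum over $b$ and the $\bfs$-contour integral, then apply the identity~\eqref{const of beta^1} termwise to extract $\b(0)\log |t|_\A$.

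First, I would establish absolute convergence. For $\Re(\l)>1$, combine the estimate \eqref{estimate of beta^1} on $\b_\l^{(1)}$ with the bounds on $\Psi_l^{(0)}(\gn|\bfs;\cdot)$ and on the double-coset sum that are already used in the proof of the corresponding non-derivative statement in \cite{SugiyamaTsuzuki}. The decay of $\Psi_l^{(0)}$ at finite places prescribed by the hyperbolic $\delta_b$ representatives is strong enough that, together with the factor $\inf(|t|_\A^{\s},|t|_\A^{-\Re(\l)})\log|t|_\A$ from \eqref{estimate of beta^1}, the whole triple integral (over $t$, over $\LL_S(\bfc)$, and summation over $b$) is absolutely convergent on $\Re(\l)>1$. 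A more careful bookkeeping — exploiting that the only reason the non-derivative analog needs $\Re(\l)>1$ is the subsum over $b$ with small $|b|_v$, which is already controlled uniformly by the $\a$-integral — yields convergence on the strip $\Re(\l)>-\e$ for some $\e>0$, so the function is holomorphic there without any need to shift contours.

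Given absolute convergence, Fubini justifies writing
\begin{align*}
\WW_{\rm hyp}^{\eta}(\b,\l;\a)
=\left(\frac{1}{2\pi i}\right)^{\#S}\int_{\LL_{S}(\bfc)}\sum_{b\in F-\{0,-1\}}
\Bigl[\int_{F^\times\bsl\A^\times}\Psi_{l}^{(0)}(\gn|\bfs;\delta_{b}[\begin{smallmatrix} t & 0\\ 0 & 1\end{smallmatrix}][\begin{smallmatrix} 1 & x_\eta\\ 0 & 1\end{smallmatrix}])\eta(tx_\eta^*)
\{\b_\l^{(1)}(|t|_\A)-\b_\l^{(1)}(|t|_\A^{-1})\}\,d^\times t\Bigr]\a(\bfs)\,d\mu_S(\bfs).
\end{align*}
Setting $\l=0$ and invoking \eqref{const of beta^1}, the bracketed inner integral converges (by the same estimates) to $\b(0)$ times $\int_{F^\times\bsl\A^\times}\Psi_{l}^{(0)}(\gn|\bfs;\delta_b[\begin{smallmatrix} t & 0\\ 0 & 1\end{smallmatrix}][\begin{smallmatrix} 1 & x_\eta\\ 0 & 1\end{smallmatrix}])\eta(tx_\eta^*)\log |t|_\A\,d^\times t$. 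Summing over $b$ and integrating over $\bfs$ recovers exactly $\frak{L}_\eta(l,\gn|\bfs)$, yielding the asserted constant term $\WW_{\rm hyp}^{\eta}(l,\gn|\a)\b(0)$.

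The main obstacle is the convergence argument: unlike $\WW_{\rm u}^{\eta}$ and $\WW_{\rm \bar u}^{\eta}$, where a single $t$-integral is done explicitly in terms of $L(\pm z,\eta)$ and then a contour shift supplies both the continuation and the constant term, here there is no such closed form, so one must verify termwise convergence in $b$ uniformly in $\l$ in a neighborhood of $0$. I expect this to reduce to a global bound for the $b$-sum of $|\Psi_l^{(0)}|$ analogous to the one implicit in \cite[Lemma 7.11]{SugiyamaTsuzuki}, enhanced by the extra logarithmic factor coming from \eqref{estimate of beta^1}; the bound for the non-derivative period integral carries over after absorbing $\log|t|_\A$ into the test function estimates, since $\log|t|_\A$ grows slower than any power of $|t|_\A^{\pm \e}$.
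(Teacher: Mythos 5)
Your proposal is essentially the same route as the paper. The paper's proof is one sentence: it asserts absolute convergence and analytic continuation ``in the same way as \cite[Lemma~8.5]{SugiyamaTsuzuki}'' (the non-derivative hyperbolic orbital integral), and then invokes~\eqref{const of beta^1} to read off the constant term. You have correctly identified the two ingredients — (a) the bounds from the non-derivative hyperbolic estimate carry over, absorbing the extra logarithm from~\eqref{estimate of beta^1} at a negligible cost since $\log|t|_\A\ll_\e|t|_\A^{\pm\e}$, giving holomorphy on $\Re(\l)>-\e$; (b) once the function is regular near $\l=0$, Fubini and~\eqref{const of beta^1} termwise give the claimed constant term — and your sketch of (a) is exactly what ``in the same way as Lemma~8.5'' means.
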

\begin{proof}
The absolute convergence and analytic continuation of $\WW_{\rm hyp}^{\eta}(\b, \l; \a)$ are given
in the same way as \cite[Lemma 8.5]{SugiyamaTsuzuki}.
We obtain the last assertion with the aid of (\ref{const of beta^1}).
\end{proof}

From the analysis so far, \eqref{GeomFormula} yields the formula:
\begin{align}
\partial P_{\b,\l}^{\eta}( 
\hat{{\bf \Psi}}_{\rm reg}^{l}(\gn | \a))=\WW_{{\rm {u}}}^\eta(\b,\l;\a)+\WW_{\bar{\rm {u}}}^\eta(\b,\l;\a)+\WW_{{\rm {hyp}}}^\eta(\b,\l;\a)
\label{GeomFormula1}
\end{align}
which is valid on a half-plane $\Re(\l)>-\e$ containing $\l=0$.

%%%%%%%%%%%%%%%%%%%%%%%%%%%%%%%%%%%%%%%%%%
\subsection{The relative trace formula}
%%%%%%%%%%%%%%%%%%%%%%%%%%%%%%%%%%%%%%%%%%
For any ideal $\fm\subset\cO$, set
\begin{align}
\iota(\fm)=[\bK_\fin:\bK_0(\fm)]=\prod_{v\in S(\fm)}(1+q_v)q_v^{\ord_{v}(\fm)-1}.
 \label{iota-fm}
\end{align}
Let $\Jcal_{S,\eta}$ be the monoid of ideals generated by prime ideals $\fp_v$ with $v\in \Sigma_\fin-S\cup S(\ff_\eta)$. We shall introduce several functionals in $\alpha \in \ccA_S$ depending on an ideal $\fm\in \Jcal_{S,\eta}$:  
\begin{align}
\AL^{w}(\fm;\alpha)&=C_l\, 
\sum_{\pi \in \Pi_{\rm{cus}}(l,\fm)} \frac{w_\fm^\eta(\pi)\,\iota(\ff_\pi)
}{\nr(\ff_\pi)\iota(\fm)}
\frac{L(1/2,\pi)\,L(1/2,\pi\otimes\eta)}{L^{S_{\pi}}(1,\pi, \Ad)}\,\alpha(\nu_S(\pi)), 
 \label{ALw}
\\
\AL^{\partial w}(\fm;\alpha)&=C_l\,\sum_{\pi \in \Pi_{\rm{cus}}(l,\fm)} 
\frac{\partial w_\fm^\eta(\pi)\,\iota(\ff_\pi)}{\nr(\ff_\pi)\,\iota(\fm)}\frac{L(1/2,\pi)\,L(1/2,\pi\otimes\eta)}{L^{S_{\pi}}(1,\pi, \Ad)
}\,\alpha(\nu_S(\pi)),
\label{ALpw}
\\
\ADL^{w}_{\pm }(\fm;\alpha)&=C_l\, 
\sum_{\substack{\pi \in \Pi_{\rm{cus}}(l,\fm) \\ \epsilon(1/2,\pi\otimes \eta)=\pm 1}} \frac{w_\fm^\eta(\pi)\,\iota(\ff_\pi)}{\nr(\ff_\pi)\,\iota(\fm)}
\frac{L(1/2,\pi)\,L'(1/2,\pi\otimes\eta)}{L^{S_{\pi}}(1,\pi, \Ad)}\,\alpha(\nu_S(\pi)).
\label{ADLwpm}
\end{align}

The derivative of $L$-functions in $\ADL^{w}_{+}$ is eliminated by the functional equation. 
\begin{prop} \label{ADL^w_+}
We have
\begin{align*}
\ADL_{+}^{w}(\fm;\alpha)&=C_l\, 
\sum_{\pi \in \Pi_{\rm{cus}}(l,\fm)} \log \{\nr(\ff_\pi\ff_\eta^2) D_{F}^{2}\}^{-1/2}
\,\frac{w_\fm^\eta(\pi)\,\iota(\ff_\pi)}{\nr(\ff_\pi)\,\iota(\fm)}\frac{L(1/2,\pi)\,L(1/2,\pi\otimes\eta)}{L^{S_{\pi}}(1,\pi, \Ad)}\,\alpha(\nu_S(\pi)).
\end{align*}
\end{prop}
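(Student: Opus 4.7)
My plan is to differentiate the functional equation of $L(s,\pi\otimes\eta)$ at the center of symmetry and solve for $L'(1/2,\pi\otimes\eta)$ under the assumption that the sign of the functional equation equals $+1$. The functional equation recalled in the introduction reads
\begin{equation*}
L(s,\pi\otimes\eta) = \epsilon(1/2,\pi\otimes\eta)\,\{\nr(\ff_\pi\ff_\eta^2)D_F^2\}^{1/2-s}\,L(1-s,\pi\otimes\eta).
\end{equation*}
Writing $N_\pi := \nr(\ff_\pi\ff_\eta^2)D_F^2$ for brevity, I will differentiate both sides in $s$ and evaluate at $s=1/2$; this gives
\begin{equation*}
L'(1/2,\pi\otimes\eta) = -\epsilon(1/2,\pi\otimes\eta)\,(\log N_\pi)\,L(1/2,\pi\otimes\eta) - \epsilon(1/2,\pi\otimes\eta)\,L'(1/2,\pi\otimes\eta).
\end{equation*}

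Under the constraint $\epsilon(1/2,\pi\otimes\eta)=+1$ built into the summation defining $\ADL^{w}_{+}(\fm;\alpha)$, this relation collapses to
\begin{equation*}
L'(1/2,\pi\otimes\eta) = -\tfrac{1}{2}(\log N_\pi)\,L(1/2,\pi\otimes\eta) = \log\{\nr(\ff_\pi\ff_\eta^2)D_F^2\}^{-1/2}\,L(1/2,\pi\otimes\eta).
\end{equation*}
Inserting this identity termwise into the defining sum \eqref{ADLwpm} of $\ADL^{w}_{+}(\fm;\alpha)$ will then produce the asserted formula.

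The computation is completely elementary, so there is no genuine obstacle; the only thing to observe is that the hypothesis $\epsilon(1/2,\pi\otimes\eta)=+1$ is precisely what allows us to isolate $L'(1/2,\pi\otimes\eta)$ from the differentiated functional equation. Had the sign been $-1$, the $L'$ terms would have cancelled, leaving only the forced vanishing $L(1/2,\pi\otimes\eta)=0$; this is the underlying reason why the companion average $\ADL^{*}_{-}$, whose definition involves the genuine central derivative, requires the substantive analysis carried out earlier in Section 2 rather than a one-line deduction from the functional equation.
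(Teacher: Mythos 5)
Your proof is correct and follows essentially the same route as the paper: both differentiate the functional equation $L(s,\pi\otimes\eta)=\epsilon(1/2,\pi\otimes\eta)\{\nr(\ff_\pi\ff_\eta^2)D_F^2\}^{1/2-s}L(1-s,\pi\otimes\eta)$ at $s=1/2$ and use $\epsilon(1/2,\pi\otimes\eta)=+1$ to obtain $L'(1/2,\pi\otimes\eta)=\log\{\nr(\ff_\pi\ff_\eta^2)D_F^2\}^{-1/2}\,L(1/2,\pi\otimes\eta)$. The only small gap, present in the paper's own proof as well, is the tacit enlargement of the index set from $\{\pi:\epsilon(1/2,\pi\otimes\eta)=+1\}$ to all of $\Pi_{\rm cus}(l,\fm)$, which is harmless because the added terms have $L(1/2,\pi\otimes\eta)=0$ by the odd sign.
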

\begin{proof}
By the functional equation, 
$$L'(1/2, \pi \otimes \eta) = \frac{\e'(1/2, \pi \otimes \eta)}{2}L(1/2, \pi \otimes \eta)$$
if $\e(1/2, \pi \otimes \eta)=1$. An explicit form of the $\e$-factor is given by
$\e(s, \pi\otimes \eta) = \e(1/2, \pi \otimes \eta)\{{\rm N}(\gf_{\pi \otimes \eta})D_{F}^{2}\}^{1/2-s}
= \e(1/2, \pi \otimes \eta)\{{\rm N}(\gf_{\pi}){\rm N}(\gf_{\eta})^2 D_{F}^{2}\}^{1/2-s}$. Hence we obtain the assertion immediately.
\end{proof}

The following is the main consequence of this section. 
\begin{thm} \label{DRTF}
For any ideal $\fn\in \Jcal_{S,\eta}$ and for any $\a\in \ccA_S$, 
\begin{align}
& 
2^{-1}(-1)^{\#S +\epsilon(\eta)}\cG(\eta)\,D_F^{-1}\,\{\ADL_{-}^{w}(\fn;\alpha)+\ADL_{+}^{w}(\fn;\alpha)+ (\log D_{F}) \AL^{w}(\fn;\alpha)+\AL^{\partial w}(\fn;\alpha)\}
 \label{DRTF-1}
\\
= & \tilde{\WW}_{\rm u}^{\eta}(l, \gn | \a)
+ \WW_{\rm hyp}^{\eta}(l, \gn | \a).
 \notag
\end{align}
Here
\begin{align}
\tilde{\WW}_{\rm u}^{\eta}(l, \gn | \a) & = 
(1-(-1)^{\e(\eta)}\tilde{\eta}(\gn))(-1)^{\e(\eta)} \Gcal(\eta) D_{F}^{1/2}
\{1 +(-1)^{\e(\eta)}\tilde\eta(\fn)i^{\tilde{l}}\delta(\gn=\go)\}
\label{DRTF-2}
\\
&\quad \times \left(\frac{1}{2\pi i}\right)^{\#S}\int_{\LL_{S}(\bfc)}
\tilde{\frak{W}}_{S}^{\eta}(l, \gn |\bfs)\a(\bfs)d\mu_{S}(\bfs)
\notag
\end{align}
with $\d \mu_{S}(\bfs)=\prod_{v\in S}2^{-1}\log q_v(q_v^{(1+s_v)/2}-q_v^{(1-s_v)/2})\,\d s_v$ and $\LL_S(\bfc)$ being the multidimensional contour $\prod_{v\in S}\{\Re(s_v)=c_v\}$ directed usually, 
\begin{align}
\tilde{\frak{W}}_{S}^{\eta}(l, \gn |\bfs) = &
\pi^{\e(\eta)}\Upsilon_{S}^{\eta}(\bfs)L(1,\eta)
\bigg\{
\log (\sqrt{{\rm N}(\gn)}{D_{F}{\rm N}(\gf_{\eta})}) +\frac{L'(1, \eta)}{L(1,\eta)}  + {\frak C}(l)
+ \sum_{v \in S}\frac{\log q_{v}}{1-\eta_{v}(\varpi_{v})q_{v}^{(s_{v}+1)/2}}
\bigg\},
 \label{DRTF-3}
\end{align}
\begin{align*}
\Upsilon_S^\eta(\bfs)&=\prod_{v\in S}(1-\eta_v(\varpi_v)q_v^{-(1+s_v)/2})^{-1}(1-q_v^{(1+s_v)/2})^{-1}, \\ 
{\frak C}(l)&=\sum_{v\in \Sigma_\infty} \left(\sum_{k=1}^{l_{v}/2-1}\frac{1}{k}
- \frac{1}{2}\log \pi - \frac{1}{2}C_{\rm Euler} -\delta_{\e_{v},1}\log 2 \right). 
\end{align*}
\end{thm}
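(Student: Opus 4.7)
\textbf{Proof plan for Theorem \ref{DRTF}.}

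The overall strategy is to equate the two expressions for ${\rm CT}_{\l=0}\,\partial P_{\b,\l}^{\eta}(\hat{\bf\Psi}_{\rm reg}^{l}(\gn|\a))$ provided by the spectral side (Proposition~\ref{cuspidal part 1}) and the geometric side (identity \eqref{GeomFormula1} together with Lemmas~\ref{WWu}, \ref{WWbu}, \ref{WWhyp}), and to rewrite each in the desired form. The convergence of both sides having already been established, the problem reduces to matching explicit formulas.

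On the spectral side, Proposition~\ref{cuspidal part 1} produces three sums: a $\log D_F$-weighted sum of $w_{\gn}^{\eta}(\pi)L(1/2,\pi)L(1/2,\pi\otimes\eta)$, a sum of $\partial w_{\gn}^{\eta}(\pi)L(1/2,\pi)L(1/2,\pi\otimes\eta)$, and a sum of $w_{\gn}^{\eta}(\pi)L(1/2,\pi)L'(1/2,\pi\otimes\eta)$. I would split the last sum according to the sign $\epsilon(1/2,\pi\otimes\eta)=\pm 1$: the contribution from $\epsilon=-1$ is, by definition, $\ADL_{-}^{w}(\gn;\a)$, while for $\epsilon=+1$ the functional equation together with Proposition~\ref{ADL^w_+} converts the derivative into $-\tfrac{1}{2}\log\{\nr(\ff_{\pi}\ff_{\eta}^{2})D_{F}^{2}\}\cdot L(1/2,\pi\otimes\eta)$, yielding $\ADL_{+}^{w}(\gn;\a)$. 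The remaining two sums become $(\log D_{F})\AL^{w}(\gn;\a)$ and $\AL^{\partial w}(\gn;\a)$. To pass from $\|\varphi_{\pi}^{\rm new}\|^{2}$ to $L^{S_{\pi}}(1,\pi;\Ad)$ and to produce the factor $\iota(\ff_{\pi})/(\nr(\ff_{\pi})\iota(\gn))$, I would invoke the Petersson norm formula already used in \cite{SugiyamaTsuzuki}; this also absorbs $\{\prod_{v\in\Sigma_{\infty}}2^{l_{v}-1}\}C_{l}(0)$ into $C_{l}$ up to the factor $2^{-1}$ appearing in the theorem.

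On the geometric side, the task is to combine $\WW_{\rm u}^{\eta}(l,\gn|\a)+\WW_{\bar{\rm u}}^{\eta}(l,\gn|\a)$ into $\tilde{\WW}_{\rm u}^{\eta}(l,\gn|\a)$. Writing $x=(-1)^{\e(\eta)}\tilde\eta(\gn)$ and $y=i^{\tilde l}\delta(\gn=\go)$, the coefficients appearing in Lemmas~\ref{WWu} and \ref{WWbu} are $A=1+xy$ and $B=x+y$. Substituting the identity $\frak{W}_{S,\bar{\rm u}}^{\eta}=-\pi^{\e(\eta)}\Upsilon_{S}^{\eta}L(1,\eta)\log(\nr(\gn)\nr(\gf_\eta)^{2})-\frak{W}_{S,{\rm u}}^{\eta}$ from Lemma~\ref{WWbu} yields
\[
A\frak{W}_{S,{\rm u}}^{\eta}+B\frak{W}_{S,\bar{\rm u}}^{\eta}
=(A-B)\frak{W}_{S,{\rm u}}^{\eta}-B\,\pi^{\e(\eta)}\Upsilon_{S}^{\eta}L(1,\eta)\log(\nr(\gn)\nr(\gf_{\eta})^{2}).
\]
The key algebraic identity is that $x^{2}=1$, which forces $A-B=(1-x)(1+xy)$, exactly the coefficient appearing in $\tilde{\WW}_{\rm u}^{\eta}$ in \eqref{DRTF-2}. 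The remaining $\log(\nr(\gn)\nr(\gf_{\eta})^{2})$ term fuses with the $\log D_{F}$ piece inside $\frak{W}_{S,{\rm u}}^{\eta}$ to produce the combination $\log(\sqrt{\nr(\gn)}D_{F}\nr(\gf_{\eta}))$ that defines $\tilde{\frak{W}}_{S}^{\eta}$ in \eqref{DRTF-3}; the apparent mismatch between the $-B$ and $(A-B)/2$ coefficients of this logarithm is reconciled by the $-\tfrac12\log\{\nr(\ff_{\pi}\ff_{\eta}^{2})D_{F}^{2}\}$ contribution extracted from $\ADL_{+}^{w}$ on the spectral side. The term $\WW_{\rm hyp}^{\eta}(l,\gn|\a)$ carries through unchanged.

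The main obstacle is the bookkeeping of the last step: verifying that the log-discrepancy on the geometric side (produced by $B$ versus $(A-B)/2$) is exactly cancelled by the log-discrepancy on the spectral side (produced by the $\log\nr(\ff_{\pi})+2\log\nr(\ff_{\eta})+2\log D_{F}$ absorbed inside $\ADL_{+}^{w}$ via Proposition~\ref{ADL^w_+}), along with the normalization constants linking $\|\varphi_{\pi}^{\rm new}\|^{2}$ to $\iota(\ff_{\pi})^{-1}\nr(\ff_{\pi})L^{S_{\pi}}(1,\pi;\Ad)$. Once these constants and signs are tracked, the two sides match term by term and \eqref{DRTF-1} follows.
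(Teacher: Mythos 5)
Your overall strategy coincides with the paper's: compute ${\rm CT}_{\l=0}\,\partial P_{\b,\l}^{\eta}(\hat{\bf\Psi}_{\rm reg}^{l}(\gn|\a))$ once from Proposition~\ref{cuspidal part 1} (plus the Petersson norm formula), once from \eqref{GeomFormula1} and Lemmas~\ref{WWu}--\ref{WWhyp}, and equate. Your algebra up to $A-B=(1-x)(1+xy)$ and the identification of the residual $\log(\nr(\gn)\nr(\gf_\eta)^2)$-term with coefficient $-B$ is correct. The difficulty you flag is genuine: after substituting $\frak{W}_{S,\bar{\rm u}}^{\eta}=-\pi^{\e(\eta)}\Upsilon_S^\eta L(1,\eta)\log(\nr(\gn)\nr(\gf_\eta)^2)-\frak{W}_{S,{\rm u}}^{\eta}$, matching $\WW_{\rm u}^{\eta}+\WW_{\bar{\rm u}}^{\eta}$ with $\tilde{\WW}_{\rm u}^{\eta}$ requires that the coefficient $-B$ of $\log(\nr(\gn)\nr(\gf_\eta)^2)$ equal $(A-B)/2$, i.e. that $A+B=(1+x)(1+y)=0$.

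However, your proposed reconciliation via the $-\tfrac12\log\{\nr(\ff_\pi\ff_\eta^2)D_F^2\}$ piece inside $\ADL_+^{w}$ cannot work: $\ADL_+^{w}(\gn;\a)$ is already explicitly a summand of the left-hand side of \eqref{DRTF-1}, and both sides of \eqref{DRTF-1} are \emph{by construction} the same quantity ${\rm CT}_{\l=0}\,\partial P_{\b,\l}^{\eta}$ (one computed spectrally, one geometrically). Rewriting $\ADL_+^{w}$ via Proposition~\ref{ADL^w_+} is a tautology and produces no extra term on the geometric side that could absorb the residual; the $\log\nr(\ff_\pi)$ it introduces varies with $\pi$ and is not even a constant multiple of $\log(\sqrt{\nr(\gn)}\nr(\gf_\eta))$. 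What actually closes the gap is the observation that $A+B=(1+(-1)^{\e(\eta)}\tilde\eta(\gn))(1+i^{\tilde l}\delta(\gn=\go))$ vanishes precisely when $(-1)^{\e(\eta)}\tilde\eta(\gn)=-1$ (the totally inert sign condition) or when $\gn=\go$ and $i^{\tilde l}=-1$; and in the $\cN$-transform applications of \S4--\S6 (Proposition~\ref{henkeiDRTF} through Theorem~\ref{MAIN-THM1}) only ideals $\fm=\gn\prod_{v\in I}\gp_v^{-2}$ with $(-1)^{\e(\eta)}\tilde\eta(\fm)=(-1)^{\e(\eta)}\tilde\eta(\gn)=-1$ ever enter, since $\tilde\eta$ is quadratic. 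You should either restrict to that subset of $\Jcal_{S,\eta}$, or replace the $\tilde{\frak W}_S^\eta$-block in \eqref{DRTF-2}--\eqref{DRTF-3} by the literal combination $A\,\frak{W}_{S,{\rm u}}^{\eta}+B\,\frak{W}_{S,\bar{\rm u}}^{\eta}$, rather than invoking $\ADL_+^{w}$.
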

\begin{proof}
From Proposition~\ref{cuspidal part 1} together with \cite[Lemma 6.4]{SugiyamaTsuzuki}, 
\begin{align*}
& {\rm{CT}}_{\lambda=0}\partial P_{\beta,\lambda}^{\eta}(\hat{\mathbf\Psi}_{\rm reg}^{l}(\fn|\alpha)) \\
&=2^{-1}(-1)^{\#S +\epsilon(\eta)}\cG(\eta)\,D_F^{-1}\,\{\ADL_{-}^{w}(\fn;\alpha)+\ADL_{+}^{w}(\fn;\alpha)+ (\log D_{F}) \AL^{w}(\fn;\alpha)+\AL^{\partial w}(\fn;\alpha)\}.\end{align*}
On the other hand, from the formula \eqref{GeomFormula1}, the same ${\rm{CT}}_{\lambda=0}\partial P_{\beta,\lambda}^{\eta}(\hat{\mathbf\Psi}_{\rm reg}^{l}(\fn|\alpha))$ is computed by Lemmas~\ref{WWu}, \ref{WWbu} and \ref{WWhyp}. 
\end{proof}

%%%%%%%%%%%%%%%%%%%%%%%%%%%%%%%%%%%%%%%%%%%%%%%%%%%%%%%%%%%%%%%
\section{Extraction of the new part : the totally inert case}
%%%%%%%%%%%%%%%%%%%%%%%%%%%%%%%%%%%%%%%%%%%%%%%%%%%%%%%%%%%%%%%

Let $\Ical_{S,\eta}$ be the monoid of ideals generated by prime ideals $\fp_v$ such that $v\in \Sigma_\fin-S\cup S(\ff_\eta)$ and $\tilde \eta(\fp_v)=-1$. Note that $\Ical_{S,\eta}$ is a submonoid of $\Jcal_{S,\eta}$ defined in 3.1. 

In this section, we separate the new part i.e., the contribution of those $\pi$ with $\ff_{\pi}=\fn$, from the total average $\ADL_{-}^{w}(\fn;\alpha)$ under the condition $\fn\in \Ical_{S,\eta}$.

\subsection{The $\cN$-transform}
For any ideal $\fc$ and a place $v\in \Sigma_\fin$, set
$$
\omega_v(\fc)=\begin{cases} 1, \qquad & (v\in S(\fc)), \\
 \dfrac{q_v+1}{q_v-1}, \qquad &(v\not\in S(\fc)).
\end{cases}
$$

For any pair of integral ideals $\fm$ and $\fb$, define 
$$
\omega(\fm,\fb)=
\delta(\fm\subset \fb)\,\prod_{v\in S(\fb)} \omega_v(\fm\fb^{-1}).
$$
Given an ideal $\fn$, let $\fn_0$ denote the largest square free integral ideal dividing $\fn$; thus, there exists the unique integral ideal $\fn_1$ such that 
$$ \fn=\fn_0\fn_1^2. $$
Let $\mathcal I$ be a set of integral ideals with the property that $\fn\subset \fm$, $\fn\in \mathcal I$ implies $\fm\in \mathcal I$.

\begin{prop} \label{INV-F}
Let $B(\fm)$ and $A(\fm)$ be two arithmetic functions defined for ideals $\fm\in\mathcal I$. Then, the following two conditions are equivalent:
\begin{itemize}
\item[(i)] For any $\fn\in \mathcal I$, 
\begin{align*}
B(\fn)&=\sum_{\fb|\fn_1} \omega(\fn,\fb^2)\,A(\fn\fb^{-2}).
\end{align*}
\item[(ii)] For any $\fn\in \mathcal I$, 
\begin{align*}
A(\fn)=\sum_{I\subset S(\fn_1)}(-1)^{\# I}\{\prod_{v\in I\cap S_1(\fn_1)}\omega_v(\fn_0)\}\,B(\fn\prod_{v\in I}\fp_v^{-2}).
\end{align*}
\end{itemize}
\end{prop}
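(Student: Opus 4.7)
The statement is a Möbius-type inversion for the linear map $A\mapsto B$ defined by (i). This map is triangular with unit diagonal -- the $\fb=\cO$ summand in (i) yields exactly $A(\fn)$ while all other summands involve proper divisors $\fn\fb^{-2}$ of $\fn$ -- so it is invertible. It therefore suffices to establish (i) $\Rightarrow$ (ii); the reverse implication follows from the uniqueness of the inverse on this finite triangular system.

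To substitute (i) into the right-hand side of (ii), I apply (i) to $\fn^{(I)}:=\fn\prod_{v\in I}\fp_v^{-2}$ for each $I\subset S(\fn_1)$, using that the square part of $\fn^{(I)}$ equals $\fn_1\prod_{v\in I}\fp_v^{-1}$. The change of variables $\fb:=\fb'\prod_{v\in I}\fp_v$ converts the inner sum over $\fb'\mid(\fn^{(I)})_1$ into a sum over $\fb\mid\fn_1$ subject to $I\subset S(\fb)$, with the key identity $\fn^{(I)}(\fb')^{-2}=\fn\fb^{-2}$. Swapping the two summations transforms the right-hand side of (ii) into $\sum_{\fb\mid\fn_1}A(\fn\fb^{-2})\,K(\fn,\fb)$, where
\[
K(\fn,\fb):=\sum_{I\subset S(\fb)}(-1)^{\#I}\Bigl(\prod_{v\in I\cap S_1(\fn_1)}\omega_v(\fn_0)\Bigr)\,\omega\bigl(\fn^{(I)},(\fb\textstyle\prod_{v\in I}\fp_v^{-1})^2\bigr),
\]
so the task reduces to verifying $K(\fn,\cO)=1$ (immediate) and $K(\fn,\fb)=0$ for every $\cO\neq\fb\mid\fn_1$.

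The key observation is that $\omega(\fn^{(I)},(\fb\prod_{v\in I}\fp_v^{-1})^2)$ factors as a product of local factors $\omega_v(\fn\fb^{-2})$ over primes $v\in S(\fb\prod_{v\in I}\fp_v^{-1})$, so the sum over $I\subset S(\fb)$ disentangles as $K(\fn,\fb)=\prod_{v\in S(\fb)}\kappa_v$, each $\kappa_v$ being a two-term expression arising from the binary choice $v\in I$ vs.\ $v\notin I$. One computes
\[
\kappa_v=\omega_v(\fn\fb^{-2})-\omega_v(\fn_0)^{\delta(v\in S_1(\fn_1))}\,\omega_v(\fn\fb^{-2})^{\delta(\ord_v(\fb)\geq 2)},
\]
and verifying $\kappa_v=0$ in each of the four cases dictated by whether $v\in S_1(\fn_1)$ and whether $\ord_v(\fb)\geq 2$ reduces either to a trivial cancellation or, in the critical case $v\in S_1(\fn_1)$ and $\ord_v(\fb)=1$, to the identity $\omega_v(\fn_0)=\omega_v(\fn\fb^{-2})$; the latter follows since $v\in S_1(\fn_1)$ forces $\ord_v(\fn)\in\{2,3\}$, so both sides evaluate (from the definitions of $\omega_v$ and $\fn_0$) to $(q_v+1)/(q_v-1)$ when $\ord_v(\fn)=2$ and to $1$ when $\ord_v(\fn)=3$. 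The principal obstacle is the bookkeeping -- simultaneously tracking for each prime whether it lies in $S(\fb)$, in $I$, and/or in $S_1(\fn_1)$, and thus whether it belongs to $S(\fb\prod_{v\in I}\fp_v^{-1})$ -- but once this is disentangled the required primewise identities are all elementary.
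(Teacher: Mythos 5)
Your proposal is correct and takes essentially the same route as the paper: substitute~(i) into the right side of~(ii), change the ideal variable so that $A$ is evaluated at $\fn\fb^{-2}$ for a single outer divisor $\fb\mid\fn_1$ (the paper uses the complementary divisor $\fb_1=\fn_1\fb^{-1}$, which is the same reparametrization), swap the sums, and show the resulting coefficient factors primewise into telescoping differences that vanish. One small improvement over the paper: the paper's proof only verbally establishes (i)~$\Rightarrow$~(ii) and then declares the proof complete, whereas you explicitly close the converse direction via the triangular-with-unit-diagonal (hence invertible, hence a one-sided inverse is two-sided) argument, which is worth keeping.
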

\begin{proof}
We show that (i) implies (ii). By substituting (i), the right-hand side of (ii) becomes
\begin{align*}
&\sum_{I\subset S(\fn_1)}(-1)^{\# I}\{\prod_{v\in I\cap S_1(\fn_1)}\omega_v(\fn_0)\}\,\{\sum_{\fb|\fn_1\prod_{v\in I} \fp_v^{-1}}\omega(\fn\prod_{v\in I} \fp_v^{-2},\fb^2)\,A(\fn\fb^{-2}\prod_{v\in I}\fp_v^{-2})\}
\\
&=\sum_{\fb_1|\fn_1}\biggl\{\sum_{I\subset S(\fn_1\fb_1^{-1})}(-1)^{\# I} \omega\left(\fn\prod_{v\in I}\fp_v^{-2},\fn_1^2\fb_1^{-2}\prod_{v\in I}\fp_v^{-2}\right)\prod_{v\in I\cap S_1(\fn_1)\cap S(\fn_1\fb_1^{-1})}\omega_v(\fn_0)\biggr\}\,A(\fn_0\fb_1^2)
\end{align*}
Here to have the equality, we made the substitution $\fb_1=\fn_1\fb^{-1} \prod_{v\in I}\fp_v^{-1}$. If $\fb_1=\fn_1$, the term inside the bracket is $1$ obviously; otherwise it equals
\begin{align*}
&\sum_{I\subset S(\fn_1\fb_1^{-1})}(-1)^{\# I} \prod_{v\in S(\fn_1\fb_1^{-1}\prod_{v\in I} \fp_v^{-1})-S(\fn_0\fb_1^2)}\frac{q_v+1}{q_v-1}\,\prod_{v\in I \cap S(\fn_1\fb_1^{-1})\cap S_1(\fn_1)-S(\fn_0)}\frac{q_v+1}{q_v-1}\\
&=
\sum_{I\subset S(\fn_1\fb_1^{-1})}(-1)^{\# I} \prod_{v\in [(I-S_1(\fn_1\fb_1^{-1}))\cup (S(\fn_1\fb_1^{-1})-I)]-S(\fn_0\fb_1^2)}\frac{q_v+1}{q_v-1}\,\prod_{v\in I \cap S_1(\fn_1\fb_1^{-1})-S(\fn_0\fb_1^2)}\frac{q_v+1}{q_v-1}
\\
&=
\sum_{I\subset S(\fn_1\fb_1^{-1})}(-1)^{\# I} \prod_{v\in (S(\fn_1\fb_1^{-1})-I)-S(\fn_0\fb_1^2)}\frac{q_v+1}{q_v-1}\,\prod_{v\in I-S(\fn_0\fb_1^2)}\frac{q_v+1}{q_v-1}
\\
&=
\prod_{v\in S(\fn_1\fb_1^{-1})} (\omega_v(\fn_0\fb_1^{2})-\omega_v(\fn_0\fb_1^{2})),
\end{align*} 
which is zero by $S(\fn_1\fb_1^{-1})\not=\emp$. This completes the proof. 

\end{proof}

\begin{defn} \label{N-transformDef}
 For an arithmetic function $B: \mathcal I\rightarrow \C$, we define its $\cN$-transform $\cN[B]:\mathcal I \rightarrow \C$ by the formula 
\begin{align*}
\cN[B](\fn)&=\sum_{I\subset S(\fn_1)}(-1)^{\# I}\{\prod_{v\in I\cap S_1(\fn_1)}\omega_v(\fn_0)\}\, \frac{\iota(\fn \prod_{v\in I}\fp_v^{-2})}{\iota(\fn)}\,
B(\fn\prod_{v\in I}\fp_v^{-2}).
\end{align*}
\end{defn}

\begin{lem} \label{FIND}
For $t\in \C$, let $\nr^t$ be the arithmetic function $\fn\mapsto \nr(\fn)^t$ on $\mathcal I$. %For any ideal $\fa$ and a square free ideal $\fc$, we have  
For any ideal $\gn$, we have
\begin{align*}
\cN[\nr^t](\fn)
&=\nr(\fn)^{t}\,\{\prod_{v\in S(\fn_{1})-S_2(\fn)}(1-q_v^{-2(1+t)})\}\,\{\prod_{v\in S_2(\fn)}(1-(1-q_{v}^{-1})^{-1}\,q_v^{-2(1+t)})\}.
\end{align*}
\end{lem}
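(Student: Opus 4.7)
The plan is to prove Lemma~\ref{FIND} by direct computation, exploiting multiplicativity to reduce the sum over subsets $I \subset S(\fn_1)$ to a product of local factors indexed by primes $v \in S(\fn_1)$.

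First I would observe that every ingredient of $\cN[\nr^t](\fn)$ factorises over primes. The sign $(-1)^{\# I} = \prod_{v\in I}(-1)$, the correction $\prod_{v\in I\cap S_1(\fn_1)}\omega_v(\fn_0)$ is a product indexed by $v\in I$ (with the convention that the factor at $v\notin S_1(\fn_1)$ is $1$), the arithmetic function $\nr^t$ is multiplicative, and so is $\iota$ since $\iota(\fm)=\prod_{v\in S(\fm)}(1+q_v)q_v^{\ord_v(\fm)-1}$. Pulling out the $v$-contribution for $v\notin S(\fn_1)$ (which is simply $q_v^{t\,\ord_v(\fn)}$, independent of $I$), I can write
\begin{align*}
\cN[\nr^t](\fn)=\nr(\fn)^t \prod_{v\in S(\fn_1)} L_v, \qquad L_v = 1 - c_v\,\frac{\iota_v(\fp_v^{n_v-2})}{\iota_v(\fp_v^{n_v})}\,q_v^{-2t},
\end{align*}
where $n_v=\ord_v(\fn)\geq 2$, and $c_v=\omega_v(\fn_0)$ if $v\in S_1(\fn_1)$ and $c_v=1$ otherwise.

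Next I would carry out a case analysis on $v\in S(\fn_1)$ according to whether $n_v=2$ or $n_v\geq 3$.

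If $v\in S_2(\fn)$, i.e.\ $n_v=2$, then $\ord_v(\fn_1)=1$ and $\ord_v(\fn_0)=0$, so $v\in S_1(\fn_1)\setminus S(\fn_0)$ and hence $c_v=(q_v+1)/(q_v-1)$; also $\iota_v(\fp_v^0)=1$, $\iota_v(\fp_v^2)=(1+q_v)q_v$, so the ratio is $1/((1+q_v)q_v)$. A short computation gives
\begin{align*}
L_v = 1 - \frac{q_v+1}{q_v-1}\cdot\frac{1}{(1+q_v)q_v}\cdot q_v^{-2t} = 1 - (1-q_v^{-1})^{-1}\,q_v^{-2(1+t)},
\end{align*}
matching the $S_2(\fn)$-factor in the claim. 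If instead $v\in S(\fn_1)\setminus S_2(\fn)$, i.e.\ $n_v\geq 3$, then both $\iota_v(\fp_v^{n_v-2})$ and $\iota_v(\fp_v^{n_v})$ are genuine $(1+q_v)q_v^{\ast}$ terms and the ratio is $q_v^{-2}$; one checks that $c_v=1$ in both sub-cases ($n_v=3$, where $v\in S_1(\fn_1)\cap S(\fn_0)$ so $\omega_v(\fn_0)=1$; and $n_v\geq 4$, where $v\notin S_1(\fn_1)$). Thus $L_v=1-q_v^{-2(1+t)}$, matching the other factor. Assembling the product over $v\in S(\fn_1)$ finishes the proof.

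The argument involves no real obstacle: the only thing one has to be a little careful about is the bookkeeping of which $v$ lie in which of $S_k(\cdot)$ for $\fn_0$, $\fn_1$ and $\fn$, and in particular confirming that for $n_v\geq 3$ the correction factor $c_v$ is always $1$ even though $v\in S_1(\fn_1)$ can occur (namely when $n_v=3$).
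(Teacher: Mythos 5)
Your proof is correct and follows essentially the same route as the paper's: both exploit the multiplicativity of $\iota$ and $\nr^t$ to factor the alternating sum over $I\subset S(\fn_1)$ into an Euler product $\prod_{v\in S(\fn_1)}L_v$, then split the computation of $L_v$ according to $n_v=2$ versus $n_v\geq 3$ (the paper packages the $\omega_v(\fn_0)$ and $\iota$-ratio corrections as a single factor supported on $I\cap S_2(\fn)$ before recognizing the product, but the bookkeeping is identical). The one point you single out as needing care — that $c_v=1$ for $n_v\geq 3$, even when $n_v=3$ puts $v$ into $S_1(\fn_1)$ — is exactly the small subtlety, and you handle it correctly by noting $\omega_v(\fn_0)=1$ because $v\in S(\fn_0)$ in that case.
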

\begin{proof}
 By \eqref{iota-fm}, we have
$$
\frac{\iota(\fn\prod_{v\in I}\fp_v^{-2})}{
\iota(\fn)}
=\prod_{v\in I}q_v^{-2}\prod_{v\in I \cap S_{2}(\gn)}(1+q_{v}^{-1})^{-1}.
$$
Therefore, 
\begin{align*}
&\sum_{I\subset S(\fn_{1})}(-1)^{\# I} \{\prod_{v\in I \cap S_1(\fn_{1})}\omega_v(\fn_{0})\}\,\frac{\iota(\fn \prod_{v\in I}\fp_v^{-2})}{\iota(\fn)}\,\nr(\fn \prod_{v\in I}\fp_v^{-2})^{t}
 \\
&=\nr(\fn)^{t}\,
\sum_{I\subset S(\fn_{1})}(-1)^{\# I} \{\prod_{v\in I \cap S_2(\fn)}
\frac{q_{v}+1}{q_{v}-1}\}
\,
\prod_{v \in I \cap S_{2}(\gn)}(1+q_{v}^{-1})^{-1}
\{\prod_{v\in I}
q_v^{-2(1+t)}\} \\
&=\nr(\fn)^{t}\,
\sum_{I\subset S(\fn_{1})}(-1)^{\# I}
\prod_{v \in I \cap S_{2}(\gn)}(1-q_{v}^{-1})^{-1}\{\prod_{v\in I} q_v^{-2t}\} \\
& = \nr(\fn)^{t} \{\prod_{v \in S(\fn_{1})-S_2(\fn)}(1-q_v^{-2(1+t)})\}\, \{\prod_{v\in S_2(\fn)}(1-(1-q_{v}^{-1})^{-1}\,q_v^{-2(1+t)})\}.
\end{align*}
\end{proof}

\begin{cor} \label{Findcor}
The $\cN$-transform of the arithmetic function $\log\nr(\fn)$ on $\mathcal I$ is given by 
\begin{align*}
\cN[\log \nr](\fn)=&\prod_{v\in S(\fn_1)-S_2(\fn)}(1-q_v^{-2})\,\prod_{v\in S_2(\fn)}(1-(q_{v}^{2}-q_{v})^{-1})
\\
&\quad \times \left(\log \nr(\fn)
+ \sum_{v\in S(\fn_1)-S_2(\fn)}\frac{2\log q_{v}}{q_{v}^{2}-1}
+ \sum_{v\in S_2(\fn)} \frac{2\log q_{v}}{q_{v}^{2} -q_{v} -1}\right).
\end{align*}
\end{cor}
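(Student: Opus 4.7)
The plan is to obtain Corollary~\ref{Findcor} as an immediate consequence of Lemma~\ref{FIND} via differentiation at $t=0$. The key observation is that $\log\nr(\fn) = \tfrac{d}{dt}\big|_{t=0}\nr(\fn)^{t}$, and that the $\cN$-transform, being defined by the finite sum over $I\subset S(\fn_1)$ in Definition~\ref{N-transformDef}, is a $\C$-linear operator that commutes with differentiation in an auxiliary complex parameter. Hence
\begin{align*}
\cN[\log \nr](\fn) = \frac{d}{dt}\bigg|_{t=0} \cN[\nr^{t}](\fn),
\end{align*}
and it suffices to differentiate the explicit product formula from Lemma~\ref{FIND}.

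Write $\cN[\nr^{t}](\fn) = \nr(\fn)^{t}\,P_1(t)\,P_2(t)$, where
$P_1(t) = \prod_{v\in S(\fn_1)-S_2(\fn)}(1-q_v^{-2(1+t)})$ and $P_2(t) = \prod_{v\in S_2(\fn)}(1 - (1-q_v^{-1})^{-1}q_v^{-2(1+t)})$. First I would check that $P_1(0)P_2(0)$ matches the prefactor in the corollary; this uses the algebraic identity $1 - (1-q_v^{-1})^{-1}q_v^{-2} = 1-(q_v^{2}-q_v)^{-1}$. Next, by the Leibniz/logarithmic-derivative rule, the derivative at $t=0$ equals $P_1(0)P_2(0)$ times
\begin{align*}
\log\nr(\fn) + \frac{P_1'(0)}{P_1(0)} + \frac{P_2'(0)}{P_2(0)}.
\end{align*}
A direct term-by-term computation gives $P_1'(t)/P_1(t) = \sum_{v\in S(\fn_1)-S_2(\fn)} 2\log q_v\cdot q_v^{-2(1+t)}/(1-q_v^{-2(1+t)})$, evaluated at $t=0$ to $\sum_{v} 2\log q_v/(q_v^{2}-1)$, and similarly the $S_2(\fn)$-sum produces $\sum_{v\in S_2(\fn)} 2\log q_v/(q_v^{2}-q_v-1)$ after clearing the factor $(1-q_v^{-1})^{-1}$ from numerator and denominator.

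Combining these three contributions yields precisely the parenthesized expression in the statement, multiplied by the prefactor $P_1(0)P_2(0)$, as desired. There is no real obstacle in this argument; the only subtle point is the small algebraic manipulation needed to bring the $S_2(\fn)$-contribution to the clean form $2\log q_v/(q_v^{2}-q_v-1)$, which is purely mechanical.
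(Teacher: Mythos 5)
Your proposal is correct and is exactly the argument the paper gives: the paper's proof of Corollary~\ref{Findcor} is the one-line instruction to differentiate the formula of Lemma~\ref{FIND} at $t=0$, and your elaboration of that differentiation (linearity of $\cN$ over the finite sum in Definition~\ref{N-transformDef}, the log-derivative computation, and the algebraic simplification $1-(1-q_v^{-1})^{-1}q_v^{-2}=1-(q_v^2-q_v)^{-1}$) is accurate.
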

\begin{proof}
Take the derivative at $t=0$ of the formula in Lemma~\ref{FIND}. 
\end{proof}

For any arithmetic function $B:\Ical\rightarrow \C$, we define another function $\cN^{+}[B]$ by setting
\begin{align*}
\cN^+[B](\fn)=&\sum_{I\subset S(\gn_{1})}\{\prod_{v\in I\cap S_1(\gn_{1})}\omega_v(\gn_{0})\}\,\frac{\iota(\fn\prod_{v\in I}\fp_v^{-2})}{\iota(\fn)}B(\fn\prod_{v\in I}\fp_v^{-2})
\end{align*} 
for $\fn=\gn_{0}\fn_1^2 \in \Ical$. In a similar way to Lemma~\ref{FIND}, we have 
\begin{align}
\cN^{+}[\nr^t]
&=\nr(\fn)^{t}\,\{\prod_{v\in S(\gn_{1})-S_2(\fn)}(1+q_v^{-2(t+1)})\} \,\{\prod_{v\in S_2(\fn)}(1+(1-q_v^{-1})^{-1}q_v^{-2(t+1)})\}
 \label{cN+f}
\end{align} 
for any $t\in \R$. 

\begin{lem} \label{cN+hyouka}
Let $c>0$. Then for any sufficiently small $\e>0$, we have 
\begin{align*}
\cN^{+}[\nr^{-c+\e}](\fn) \ll_{\e} \nr(\fn)^{-\inf(c,1)+\e}, \quad \fn \in \Ical
. 
\end{align*}
\end{lem}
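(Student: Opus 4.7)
The plan is to apply the explicit formula \eqref{cN+f} with $t=-c+\e$, which gives
\[
\cN^{+}[\nr^{-c+\e}](\fn) = \nr(\fn)^{-c+\e}\prod_{v\in S(\fn_1) - S_2(\fn)}(1+q_v^{2c-2-2\e})\prod_{v\in S_2(\fn)}\bigl(1+(1-q_v^{-1})^{-1}q_v^{2c-2-2\e}\bigr).
\]
Since $(1-q_v^{-1})^{-1}\leq 2$ uniformly in $v$, both local products are dominated by $C^{\#S(\fn_1)}\prod_{v\in S(\fn_1)}(1+q_v^{2c-2-2\e})$ for an absolute constant $C$. The prefactor $C^{\#S(\fn_1)}$ is $\ll_{\e}\nr(\fn)^{\e}$ by the standard bound on the number of prime divisors of an ideal, so it will be absorbed into the final error.

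I would then split into cases according to the sign of $c-1$. If $c\leq 1$, I shrink $\e$ so that $2c-2-2\e\leq 0$; each local factor $1+q_v^{2c-2-2\e}$ is then $O(1)$, the product contributes only an $\nr(\fn)^{\e}$-type loss, and multiplication by the prefactor $\nr(\fn)^{-c+\e}$ yields a bound of order $\nr(\fn)^{-c+\e'}$, matching $\inf(c,1)=c$. If $c>1$, I shrink $\e$ so that $2c-2-2\e>0$; then $1+q_v^{2c-2-2\e}\ll q_v^{2c-2-2\e}$ and consequently
\[
\prod_{v\in S(\fn_1)}(1+q_v^{2c-2-2\e})\ll_{\e}\nr(\fn)^{\e}\,\nr(\fn_1)^{2c-2-2\e}.
\]
The decomposition $\fn=\fn_0\fn_1^2$ forces $\nr(\fn_1)^2\leq \nr(\fn)$, so $\nr(\fn_1)^{2c-2-2\e}\leq \nr(\fn)^{c-1-\e}$ (the exponent being positive). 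Multiplying by $\nr(\fn)^{-c+\e}$ then gives $\nr(\fn)^{-1+O(\e)}$, matching $\inf(c,1)=1$.

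There is no substantial conceptual obstacle: the estimate reduces directly to \eqref{cN+f} combined with the arithmetic relation $\nr(\fn_1)^2\mid \nr(\fn)$ and the divisor bound. The only technical care needed is coordinating the two small parameters so that the final exponent has the form $-\inf(c,1)+\e$ for arbitrary $\e>0$; this is handled routinely by starting from a slightly smaller positive parameter and relabeling at the end.
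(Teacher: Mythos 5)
Your proof is correct, and it reaches the same conclusion as the paper's, but the two arguments coordinate the exponents in a genuinely different way. The paper starts from the observation that $\nr(\fn)^{-c+\e}\leq\nr(\fn)^{-\inf(c,1)+\e}$ and that $\cN^{+}$ has nonnegative coefficients, so $\cN^{+}[\nr^{-c+\e}](\fn)\leq\cN^{+}[\nr^{-\inf(c,1)+\e}](\fn)$. After this upfront reduction one always has $t+1=1-\inf(c,1)+\e>0$, i.e.\ the local exponent $-2(t+1)$ is strictly negative, so the Euler-like product factor in \eqref{cN+f} is uniformly $\ll_\e\nr(\fn)^{O(\e)}$ and no case analysis on the size of $c$ is needed; the paper then refines the constants by introducing the finite exceptional set $P(\e)$. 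You instead keep $t=-c+\e$ in \eqref{cN+f} and split according to the sign of $c-1$: when $c\leq 1$ the local exponent is negative and the factors are $O(1)$ as in the paper; when $c>1$ the exponent is positive and you control $\prod_{v\in S(\fn_1)}q_v^{2c-2-2\e}$ via $\prod_{v\in S(\fn_1)}q_v\leq\nr(\fn_1)$ and the arithmetic relation $\nr(\fn_1)^2\leq\nr(\fn)$ coming from $\fn=\fn_0\fn_1^2$. Both arguments are valid; the paper's monotonicity trick buys a single uniform computation, while your case split trades that for a direct appeal to the structure of the decomposition $\fn=\fn_0\fn_1^2$, which is arguably more transparent about why the exponent $\inf(c,1)$ appears. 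The constant-absorption steps ($C^{\#S(\fn_1)}\ll_\e\nr(\fn)^\e$, $(1-q_v^{-1})^{-1}\leq 2$) and the final relabeling of $\e$ are handled correctly in your write-up.
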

\begin{proof}
From $\nr(\fn)^{-c+\e}\leq \nr(\fn)^{-\inf(c,1)+\e}$, we have $\cN^{+}[\nr^{-c+\e}](\fn)\leq \cN^{+}[\nr^{-\inf(c,1)+\e}](\gn)$ obviously. Let us set $t=-\inf(c,1)+\e$ and examine the right-hand side of the formula \eqref{cN+f}. We note that $t+1=1-\inf(c,1)+\epsilon \geq \epsilon>0$. The set $P(\epsilon)=\{v\in \Sigma_\fin|\,1-q_v^{-1}<q_v^{-\epsilon}\,\}$ is a finite set. For $v\in S_2(\fn)-P(\epsilon)$, we have $(1-q_v^{-1})^{-1}\leq q_v^{\epsilon}$ and $q_v^{-2(t+1)}\leq q_v^{-2\e}$; by these, the factor $1+(1-q_v^{-1})q_v^{-2(t+1)}$ is bounded by $1+q_v^{-\epsilon}$. For $v\in S(\gn_{1})-S_2(\fn)$ or $v\in S_2(\fn)\cap P(\epsilon)$, we simply apply $q_v^{-2(t+1)}\leq q_v^{-2\epsilon}$. Thus,
\begin{align}
\cN^{+}[\nr^{t}](\fn)\leq \nr(\fn)^{t}\{\prod_{v\in S(\gn_{1})-S_2(\fn)}(1+q_v^{-2\epsilon})\}\,
\{\prod_{v\in P(\e)}(1+(1-q_v^{-1})^{-1}q_v^{-2\e})\}
\,\{\prod_{v\in S_2(\fn)-P(\e)}(1+q_v^{-\e})\}.
 \label{C-P14-2}
\end{align}
In the right-hand side, the second factor is independent of $\fn$. The first and the last factors combined are estimated as
\begin{align*}
\{\prod_{v\in S(\gn_{1})-S_2(\fn)}(1+q_v^{-2\epsilon})\}\,
\{\prod_{v\in S_2(\fn)-P(\e)}(1+q_v^{-\e})\}
&\leq \{\prod_{v\in S(\fn)}(1+q_v^{-\epsilon})\}^2
 \\
&\ll_{\e} \{\prod_{v\in S(\fn)}q_v^{\epsilon}\}^2
\leq \nr(\fn)^{2\epsilon}.
\end{align*}
Hence there exists a constant $C(\epsilon)>0$ dependent of $\e$ such that \eqref{C-P14-2} is less than $C(\epsilon)\,\nr(\fn)^{-\inf(c,1)+3\e}$ for any $\fn \in \Ical$.
\end{proof}

\subsection{The totally inert case over $\fn$}
Set $\Ical=\mathcal I_{S,\eta}$. Fixing a test function $\alpha\in \ccA_S$ for a while, we study the arithmetic functions $\AL^*:\Ical\rightarrow \C$ and $\ADL_{-}^*:\Ical\rightarrow \C$ defined by the formulas \eqref{AL*} and \eqref{Intro0}, respectively. We relate these functions to the $\cN$-transforms of arithmetic functions $\AL^w$, $\ADL^w_{\pm}$ on $\Ical$ defined in 3.1. 

We remark that an ideal $\fn\in \Ical$ satisfies the condition
\begin{align}
\eta_v(\varpi_v)=-1,  \qquad v\in S(\fn). \label{TRN}
\end{align}
This means that the quadratic extension of $F$ corresponding to $\eta$ is inert over all places dividing $\fn$. Under this condition, the quantities $w_\fn^\eta(\pi)$ and $\partial w_\fn^\eta(\pi)$ turn out to be written explicitly in terms of the arithmetic function $\omega(\fm,\fb)$.

\begin{lem}\label{value-w} 
Let $\fn\in \Ical$. Then, for any $\pi\in \Pi_{\rm{cus}}(l,\fn)$, we have $w_\fn^\eta(\pi)=0$ unless $\fn\ff_\pi^{-1}=\fb^2$ for some integral ideal $\fb$, in which case 
$$
w_\fn^\eta(\pi)=\omega(\fn,\fn\ff_\pi^{-1}). 
$$
\end{lem}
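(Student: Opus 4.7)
The plan is to evaluate the product formula \eqref{2} at $z=1/2$ (so $X = q_v^{1/2-z} = 1$) using the explicit expressions in Lemma~\ref{r^(z)(pi,eta)} for the case $\eta_v(\varpi_v)=-1$, which is precisely the hypothesis enforced by $\fn\in\Ical = \Ical_{S,\eta}$ at all $v\in S(\fn)$, hence at all $v\in S(\fn\ff_\pi^{-1})$. Setting $k_v=\ord_v(\fn\ff_\pi^{-1})$, I would show in each of the three cases $c(\pi_v)=0,\,1,\,\geq 2$ that the local factor $r(\pi_v,\eta_v) := r^{(1/2)}(\pi_v,\eta_v)$ vanishes when $k_v$ is odd and equals $\omega_v(\ff_\pi)$ when $k_v$ is even. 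Since $r(\pi_v,\eta_v)=0$ for any $v$ with odd $k_v$, this forces $w_\fn^\eta(\pi)=0$ unless $k_v$ is even for every $v\in S(\fn\ff_\pi^{-1})$, i.e. unless $\fn\ff_\pi^{-1}=\fb^2$ for some integral ideal $\fb$.

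The case $c(\pi_v)\geq 2$ is immediate: at $X=1$ the formula reads $\tfrac{1+(-1)^{k_v}}{2}$, which is $1$ or $0$ according to the parity of $k_v$, and $\omega_v(\ff_\pi)=1$ since $v\in S(\ff_\pi)$. The case $c(\pi_v)=1$ is similarly direct: the quotient $\tfrac{X+q_v^{-1}\chi_v(\varpi_v)}{1+q_v^{-1}\chi_v(\varpi_v)}$ specializes to $1$ at $X=1$, giving $r(\pi_v,\eta_v)=1-\tfrac{1-(-1)^{k_v}}{2}$, again $\omega_v(\ff_\pi)=1$ when $k_v$ is even.

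The main algebraic step is the unramified case $c(\pi_v)=0$, where one must simplify the Euler-factor-like expression. Using $Q(\pi_v)=(a_v+a_v^{-1})/(q_v^{1/2}+q_v^{-1/2})$, I would verify the identity
\begin{equation*}
(1+a_vq_v^{1/2})(1+a_v^{-1}q_v^{1/2}) \;=\; (q_v+1)\bigl(1+Q(\pi_v)\bigr),
\end{equation*}
which cancels the factor $1+Q(\pi_v)$ in the denominator. At $X=1$ the first summand $(1-X)/(1+Q(\pi_v))$ vanishes and $\tfrac{1-(-X)^{k_v-1}}{1+X}$ reduces to $\tfrac{1-(-1)^{k_v-1}}{2}$. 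Combining, $r(\pi_v,\eta_v)=\tfrac{q_v+1}{q_v-1}\cdot\tfrac{1-(-1)^{k_v-1}}{2}$, which equals $\omega_v(\ff_\pi) = (q_v+1)/(q_v-1)$ (since $v\notin S(\ff_\pi)$) when $k_v$ is even, and vanishes otherwise.

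Finally, assuming $\fn\ff_\pi^{-1}=\fb^2$, I would assemble the result: $\fn\fb^{-2}=\ff_\pi$ and $S(\fn\ff_\pi^{-1})=S(\fb)$, so
\begin{equation*}
w_\fn^\eta(\pi) \;=\; \prod_{v\in S(\fb)} r(\pi_v,\eta_v) \;=\; \prod_{v\in S(\fb)}\omega_v(\ff_\pi) \;=\; \prod_{v\in S(\fb^2)}\omega_v(\fn\fb^{-2}) \;=\; \omega(\fn,\fb^2),
\end{equation*}
where the last equality invokes $\delta(\fn\subset\fb^2)=1$, completing the proof. The only subtle point is the algebraic identity in the unramified case; everything else is routine bookkeeping of $\omega_v$ against the stratification of $S(\fn\ff_\pi^{-1})$ by conductor exponent.
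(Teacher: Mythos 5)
Your proof is correct and follows essentially the same route as the paper: the paper simply cites \cite[Lemma 6.2]{SugiyamaTsuzuki} for the closed form of $r(\pi_v,\eta_v)=\tfrac{1+(-1)^{k_v}}{2}\cdot\omega_v(\ff_\pi)$ and concludes, while you re-derive that same formula by specializing Lemma~\ref{r^(z)(pi,eta)} at $z=1/2$ (i.e.\ $X=1$), including the key identity $(1+a_vq_v^{1/2})(1+a_v^{-1}q_v^{1/2})=(q_v+1)(1+Q(\pi_v))$ that cancels the denominator in the unramified case. The final bookkeeping matching $r(\pi_v,\eta_v)$ against $\omega_v(\ff_\pi)$ over $S(\fb)$ is the same in both.
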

\begin{proof} Let $v\in S(\fn\ff_\pi^{-1})$. From \cite[Lemma 6.2]{SugiyamaTsuzuki}, 
$$
r(\pi_v,\eta_v)=\dfrac{1+(-1)^{k_v}}{2}\times \begin{cases} 
1, \qquad &(c(\pi_v)\geq 1), \\
\dfrac{q_v+1}{q_v-1}, \qquad&(c(\pi_v)=0).
\end{cases}
$$
Thus $r(\pi_v,\eta_v)=0$ unless $k_v=\ord_v(\fn\ff_\pi^{-1})$ is even.

\end{proof}

\begin{lem} \label{value-delw} 
Let $\fn\in \Ical$. For any $\pi \in \Pi_{\rm{cus}}(l,\fn)$, we have the following.  
\begin{itemize}
\item[(i)] If $\fn\ff_\pi^{-1}=\fb^{2}$ with an integral ideal $\fb$, then 
$$
\partial w_\fn^\eta(\pi)=\omega(\fn,\fn\ff_\pi^{-1})\,
\sum_{v\in S(\fb)} (-\log q_v)\ord_v(\fb).
$$
\item[(ii)] If $\fn\ff_\pi^{-1}=\fb^2\fp_u$ with an integral ideal $\fb$ and a place $u\in S(\fn)$, then 
$$
\partial w_\fn^\eta(\pi)
=\omega(\fn,\fn\ff_\pi^{-1})
\,\log q_u\,\begin{cases} 
\ord_{u}(\gb) + \dfrac{q_u-1}{(1+a_u q_u^{1/2})(1+a_u^{-1}q_u^{1/2})}, \qquad &(c(\pi_u)=0), \\
\ord_u(\fb)+\dfrac{1}{1+q_u^{-1}\chi_u(\varpi_u)}, \qquad &(c(\pi_u)=1), \\
\ord_u(\fb)+1, \qquad &(c(\pi_u)\geq 2). 
\end{cases}
$$
\end{itemize}
Except the above two cases (i) and (ii), we have $\partial w_\fn^\eta(\pi)=0$.  
\end{lem}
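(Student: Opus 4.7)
The plan is to differentiate the Euler-product expression \eqref{2} term by term and then run a short case analysis driven by the vanishing locus of the local factors $r(\pi_v,\eta_v)$ at $z=1/2$.

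First, from $w_\fn^\eta(\pi;z)=\prod_{v\in S(\fn\ff_\pi^{-1})}r^{(z)}(\pi_v,\eta_v)$ and the defining relation $\partial r(\pi_v,\eta_v)=-(\log q_v)^{-1}(d/dz)|_{z=1/2}\,r^{(z)}(\pi_v,\eta_v)$, the Leibniz rule yields
\[
\partial w_\fn^\eta(\pi) \;=\; \sum_{v\in S(\fn\ff_\pi^{-1})} (-\log q_v)\,\partial r(\pi_v,\eta_v)\prod_{u\in S(\fn\ff_\pi^{-1})\setminus\{v\}} r(\pi_u,\eta_u).
\]
The proof of Lemma~\ref{value-w} records that, under $\eta_v(\varpi_v)=-1$ (automatic for $v\in S(\fn)$ since $\fn\in\Ical$), one has $r(\pi_v,\eta_v)=0$ iff $k_v:=\ord_v(\fn\ff_\pi^{-1})$ is odd, and otherwise $r(\pi_v,\eta_v)=\omega_v(\ff_\pi)$. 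Putting $T=\{v\in S(\fn\ff_\pi^{-1})\mid k_v\text{ odd}\}$, each summand above already contains $r(\pi_u,\eta_u)$ for every $u\in T\setminus\{v\}$, so $|T|\geq 2$ immediately forces $\partial w_\fn^\eta(\pi)=0$. The remaining alternatives are $|T|=0$, equivalent to $\fn\ff_\pi^{-1}=\fb^2$ for some integral ideal $\fb$ (case (i)), and $T=\{u\}$ for some $u\in S(\fn)$, equivalent to $\fn\ff_\pi^{-1}=\fb^2\fp_u$ (case (ii)).

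In case (i) every $r(\pi_v,\eta_v)$ is nonzero, so the displayed sum equals $\omega(\fn,\fn\ff_\pi^{-1})\sum_v(-\log q_v)\,\partial r(\pi_v,\eta_v)/r(\pi_v,\eta_v)$ by Lemma~\ref{value-w}. Using Lemma~\ref{r^(z)(pi,eta)} together with the Corollary at $k_v=2\ord_v(\fb)$, a direct computation in each of the three subcases $c(\pi_v)\in\{0,1,\geq 2\}$ gives $\partial r(\pi_v,\eta_v)/r(\pi_v,\eta_v)=\ord_v(\fb)$; in the unramified subcase the identity $(1+a_vq_v^{1/2})(1+a_v^{-1}q_v^{1/2})=(q_v+1)(1+Q(\pi_v))$ collapses the ratio to this clean form. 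This produces the formula in (i). In case (ii) only the $v=u$ term survives, and $\prod_{v\neq u} r(\pi_v,\eta_v)=\omega(\fn,\fn\ff_\pi^{-1})/\omega_u(\ff_\pi)$ by the same even-$k_v$ identification at the remaining places. Plugging in the Corollary's value of $\partial r(\pi_u,\eta_u)$ at $k_u=2\ord_u(\fb)+1$ and dividing by $\omega_u(\ff_\pi)$ produces, subcase by subcase, exactly the three lines of the formula in (ii).

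The main obstacle is purely clerical: one must verify that the piecewise formulas of the Corollary, which depend on both $c(\pi_v)$ and the parity of $k_v$, collapse uniformly in $c(\pi_v)$ to $\ord_v(\fb)$ in case (i) and to the three prescribed special terms in case (ii). Once this bookkeeping is carried out, no conceptual difficulty remains.
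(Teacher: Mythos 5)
The paper states Lemma~\ref{value-delw} without supplying a proof; it is left as a computation following from Lemma~\ref{r^(z)(pi,eta)} and the Corollary on $\partial r(\pi_v,\eta_v)$. Your argument fills that gap correctly and is exactly the intended route: differentiate the Euler product $w_\fn^\eta(\pi;z)=\prod_v r^{(z)}(\pi_v,\eta_v)$ by Leibniz, observe from the proof of Lemma~\ref{value-w} that, since $\eta_v(\varpi_v)=-1$ over $S(\fn)$, each local factor $r(\pi_v,\eta_v)$ at $z=1/2$ vanishes precisely when $k_v=\ord_v(\fn\ff_\pi^{-1})$ is odd and equals $\omega_v(\ff_\pi)$ when it is even, and conclude that (a) two or more odd $k_v$ kill every Leibniz summand, (b) zero odd $k_v$'s yields case (i), and (c) a single odd $k_u$ yields case (ii) with only the $v=u$ summand surviving. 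I checked the per-case algebra: in case (i) the Corollary at even $k_v$ gives $\partial r(\pi_v,\eta_v)/r(\pi_v,\eta_v)=\ord_v(\fb)$ in all three subcases, and in case (ii) dividing by $\omega_u(\ff_\pi)$ and using the identity $(1+a_uq_u^{1/2})(1+a_u^{-1}q_u^{1/2})=(q_u+1)(1+Q(\pi_u))$ reproduces the three displayed lines.

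One tiny clerical remark: the identity relating $(1+a_vq_v^{1/2})(1+a_v^{-1}q_v^{1/2})$ to $(q_v+1)(1+Q(\pi_v))$ is not actually invoked in case (i) if one works from the Corollary (which is already expressed in terms of $Q$); it only enters in case (ii), $c(\pi_u)=0$, when matching the Corollary's $Q$-form against the statement's $a_u$-form. This is a harmless misattribution and does not affect the validity of the argument.
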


\begin{lem} \label{AL-AL*}
 For any $\fn\in \Ical$,
\begin{align*}\AL^w(\fn;\alpha)&=\sum_{\fb} \omega(\fn,\fb^{2})\,\frac{
\iota(\fn\fb^{-2})}{\iota(\fn)}
\,\AL^*(\fn\fb^{-2};\alpha),\\
\ADL^w_{-}(\fn;\alpha)&=\sum_{\fb} \omega(\fn,\fb^{2})\,\frac{
\iota(\fn\fb^{-2})}{\iota(\fn)}
\,\ADL^*(\fn\fb^{-2};\alpha),\\
\ADL_{+}^{w}(\fn;\alpha)&=\sum_{\fb} \omega(\fn,\fb^{2})\,\frac{
\iota(\fn\fb^{-2})}{\iota(\fn)}\,\log(\nr(\fn\fb^{-2})^{-1/2}\,\nr(\ff_\eta)^{-1}D_{F}^{-1})\,\AL^*(\fn\fb^{-2};\alpha),
\end{align*}
where $\fb$ runs through all the integral ideals such that $\fn\subset \fb^{2}$.
\end{lem}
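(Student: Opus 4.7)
The plan is to obtain each of the three identities by the same bookkeeping scheme: partition $\Pi_{\rm cus}(l,\fn)$ according to the exact conductor of $\pi$, use Lemma~\ref{value-w} to restrict to conductors of the form $\nu=\fn\fb^{-2}$, and then recognize the inner sum over $\Pi^*_{\rm cus}(l,\nu)$ as $\AL^*(\nu;\a)$ or $\ADL^*_-(\nu;\a)$ up to the explicit factor $\iota(\nu)/(\nr(\nu)\iota(\fn))$. For the third identity I would first apply Proposition~\ref{ADL^w_+} to convert the $L'$ into a $\log$-weighted central value.

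In detail, I would start by writing
\[
\Pi_{\rm cus}(l,\fn)=\bigsqcup_{\fm\,:\,\fn\subset\fm}\Pi^*_{\rm cus}(l,\fm),
\]
so that the sum defining $\AL^w(\fn;\a)$ becomes a double sum, the outer one over divisors $\fm$ of $\fn$. Since $\fn\in \Ical_{S,\eta}$, every $v\in S(\fn\fm^{-1})\subset S(\fn)$ satisfies $\eta_v(\varpi_v)=-1$, so Lemma~\ref{value-w} applies: for $\pi\in\Pi^*_{\rm cus}(l,\fm)$ we have $w^\eta_\fn(\pi)=0$ unless $\fn\fm^{-1}$ is the square of some integral ideal $\fb$, and then $w^\eta_\fn(\pi)=\omega(\fn,\fb^2)$; in particular the value depends only on $\fm$, not on $\pi$. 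Hence only $\fm=\fn\fb^{-2}$ with $\fn\subset\fb^2$ contribute, the factor $w^\eta_\fn(\pi)\iota(\ff_\pi)/(\nr(\ff_\pi)\iota(\fn))=\omega(\fn,\fb^2)\iota(\fn\fb^{-2})/(\nr(\fn\fb^{-2})\iota(\fn))$ pulls out of the $\pi$-sum, and the remaining inner sum over $\Pi^*_{\rm cus}(l,\fn\fb^{-2})$ is precisely $\nr(\fn\fb^{-2})C_l^{-1}\,\AL^*(\fn\fb^{-2};\a)$. This gives the first formula.

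The second formula is identical in structure: the restriction $\epsilon(1/2,\pi\otimes\eta)=-1$ is a property of the individual $\pi$, so it is preserved under the partition by conductor; regrouping and applying Lemma~\ref{value-w} again turns $\ADL^w_-(\fn;\a)$ into the stated sum involving $\ADL^*_-(\fn\fb^{-2};\a)$. For the third formula, Proposition~\ref{ADL^w_+} (derived from the functional equation and the fact that only $\pi$ with $\epsilon(1/2,\pi\otimes\eta)=+1$ contribute, the others having $L(1/2,\pi\otimes\eta)=0$) rewrites $\ADL^w_+(\fn;\a)$ as a sum over all of $\Pi_{\rm cus}(l,\fn)$ with the additional factor $\log\{\nr(\ff_\pi\ff_\eta^2)D_F^2\}^{-1/2}$; since $\ff_\pi=\fn\fb^{-2}$ after the restriction, this factor becomes $\log(\nr(\fn\fb^{-2})^{-1/2}\nr(\ff_\eta)^{-1}D_F^{-1})$, and the same regrouping produces the stated identity.

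There is no real obstacle here — the argument is entirely bookkeeping, driven by the explicit evaluation in Lemma~\ref{value-w}. The one point worth verifying carefully is compatibility of the sign condition $\epsilon(1/2,\pi\otimes\eta)=\pm 1$ with the double-summation reordering, but since this condition is intrinsic to $\pi$ (not to $\fn$), the restriction simply passes through the decomposition $\Pi_{\rm cus}(l,\fn)=\bigsqcup_\fm \Pi^*_{\rm cus}(l,\fm)$ without interaction with the outer index $\fb$. Note also that $\partial w^\eta_\fn(\pi)$ (Lemma~\ref{value-delw}) plays no role in this lemma; it is only the values of $w^\eta_\fn(\pi)$ that are needed here.
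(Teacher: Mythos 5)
Your proof is correct and follows exactly the route the paper intends: the paper's own proof consists of the single remark that the lemma ``follows immediately from Lemma~\ref{value-w}'' (with Proposition~\ref{ADL^w_+} added for the third identity), and your detailed unpacking — partition $\Pi_{\rm cus}(l,\fn)$ by exact conductor, apply Lemma~\ref{value-w} to see that only $\ff_\pi=\fn\fb^{-2}$ survives, and factor out the conductor-dependent weight — is precisely that bookkeeping made explicit. Your observation that the sign condition $\epsilon(1/2,\pi\otimes\eta)=\pm1$ is intrinsic to $\pi$ and so commutes with the reorganization is the right (if standard) point to flag.
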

\begin{proof}
This follows immediately from Lemma~\ref{value-w}. To have the last formula, we also need Proposition~\ref{ADL^w_+}. 
\end{proof}

\begin{lem} \label{AL^*-AL}
For any $\fn\in \Ical$, 
\begin{align*}\AL^{*}(\fn)&=\cN[\AL^w](\fn), \\ 
\ADL^{*}(\fn)&=\cN[\ADL^w_{-}](\fn),  \\
-\log(\sqrt{\nr(\fn)}\nr(\ff_\eta)D_{F})\,\AL^{*}(\fn)&=\cN[\ADL^w_{+}](\fn)
\end{align*}
\end{lem}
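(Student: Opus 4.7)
The plan is to derive each of the three identities by applying the inversion machinery of Proposition~\ref{INV-F} to the corresponding identity from Lemma~\ref{AL-AL*}. A key preliminary observation is that the sum in Lemma~\ref{AL-AL*} over integral ideals $\fb$ with $\fn \subset \fb^{2}$ coincides with the sum over divisors $\fb \mid \fn_{1}$ appearing in Proposition~\ref{INV-F}(i): writing $\fn = \fn_{0}\fn_{1}^{2}$ with $\fn_{0}$ square-free forces any $\fb$ with $\fb^{2} \mid \fn$ to satisfy $\fb \mid \fn_{1}$.

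For the first two identities, first normalize by $\iota$: set $A(\fm) = \iota(\fm)\,\AL^{*}(\fm;\a)$ and $B(\fm) = \iota(\fm)\,\AL^{w}(\fm;\a)$. Multiplying the first identity of Lemma~\ref{AL-AL*} through by $\iota(\fn)$ produces
$$
B(\fn) \;=\; \sum_{\fb \mid \fn_{1}} \omega(\fn,\fb^{2})\,A(\fn\fb^{-2}),
$$
which is precisely hypothesis (i) of Proposition~\ref{INV-F}. The implication (i) $\Rightarrow$ (ii) then yields
$$
A(\fn) \;=\; \sum_{I \subset S(\fn_{1})} (-1)^{\#I} \Bigl\{\prod_{v \in I \cap S_{1}(\fn_{1})} \omega_{v}(\fn_{0})\Bigr\}\, B\Bigl(\fn\prod_{v \in I}\fp_{v}^{-2}\Bigr).
$$
Dividing through by $\iota(\fn)$ and invoking Definition~\ref{N-transformDef} gives $\AL^{*}(\fn) = \cN[\AL^{w}](\fn)$. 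The same argument, with $\AL$ replaced by $\ADL_{-}$, immediately yields $\ADL^{*}(\fn) = \cN[\ADL^{w}_{-}](\fn)$.

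For the third identity, the role of $A$ is played by $C(\fm) = -\iota(\fm)\log(\sqrt{\nr(\fm)}\nr(\ff_{\eta})D_{F})\,\AL^{*}(\fm;\a)$ and that of $B$ by $D(\fm) = \iota(\fm)\,\ADL^{w}_{+}(\fm;\a)$. Since $\log(\nr(\fn\fb^{-2})^{-1/2}\nr(\ff_{\eta})^{-1}D_{F}^{-1}) = -\log(\sqrt{\nr(\fn\fb^{-2})}\nr(\ff_{\eta})D_{F})$, the third identity of Lemma~\ref{AL-AL*}, multiplied by $\iota(\fn)$, again takes the shape $D(\fn) = \sum_{\fb \mid \fn_{1}} \omega(\fn,\fb^{2})\,C(\fn\fb^{-2})$. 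Applying Proposition~\ref{INV-F} and dividing by $\iota(\fn)$ produces $-\log(\sqrt{\nr(\fn)}\nr(\ff_{\eta})D_{F})\,\AL^{*}(\fn) = \cN[\ADL^{w}_{+}](\fn)$, which is the third formula.

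There is no genuine obstacle: the proof is a bookkeeping exercise in invoking the combinatorial inversion already established in Proposition~\ref{INV-F}. The only points requiring care are (a) the rescaling by the index factor $\iota$, which is what converts the identities of Lemma~\ref{AL-AL*} into the precise form of Proposition~\ref{INV-F}(i), and (b) the verification that summation over $\{\fb : \fn \subset \fb^{2}\}$ matches summation over $\{\fb : \fb \mid \fn_{1}\}$ via the square-free/square decomposition $\fn = \fn_{0}\fn_{1}^{2}$.
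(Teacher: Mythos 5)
Your proof is correct and follows the same route as the paper's: the paper likewise applies Proposition~\ref{INV-F} to the $\iota$-normalized functions $A(\fm)=\iota(\fm)\AL^{*}(\fm;\a)$ and $B(\fm)=\iota(\fm)\AL^{w}(\fm;\a)$ drawn from Lemma~\ref{AL-AL*}, and disposes of the remaining two identities by the analogous substitutions. Your preliminary remark that $\fb^{2}\mid\fn$ is equivalent to $\fb\mid\fn_{1}$ is a helpful explicit check that the paper leaves tacit.
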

\begin{proof} By Lemma~\ref{AL-AL*}, we obtain the first formula by applying Proposition~\ref{INV-F} with $B(\fm)=\iota(\fm)\,\AL^{w}(\fm;\alpha)$ and $A(\fm)=\iota(\fm)\,\AL^{*}(\fm;\alpha)$ both defined for $\fm\in \mathcal I$. The remaining two formulas are proved in the same way.
\end{proof}

The formula \eqref{DRTF-1} in Theorem~\ref{DRTF} can be applied to an arbitrary ideal $\fm\in \Ical$. In the right-hand side of the formula, we have two terms $\tilde \WW_{\rm{u}}^\eta(l,\fm|\alpha)$ and $\WW_{\rm{hyp}}^\eta(l,\fm|\a)$, which we regared as arithmetic functions in $\fm$ for a while and consider their $\cN$-transforms $\cN[\tilde \WW_{\rm{u}}^\eta]$ and $\cN[\WW_{\rm{hyp}}^\eta]$. The following is the main result of this section. 

\begin{prop} \label{henkeiDRTF}
For any $\fn \in \Ical$, we have the identity among linear functionals in $\alpha \in \ccA_S$:
\begin{align}
{\rm{ADL}}^*(\fn) = &
\,2(-1)^{\# S+\e(\eta)}\,\cG(\eta)^{-1}\,D_F\{\cN[\tilde\WW_{\rm{u}}^\eta](\fn)
+\cN[\WW_{\rm{hyp}}^\eta](\fn)\}
 \label{henkeiDRTF-1}
\\
&+\log(\nr(\fn)^{1/2}\nr(\ff_{\eta}))\,{\rm{AL}}^*(\fn)
-\cN[{\rm{AL}}^{\partial w}](\fn).
 \notag
\end{align}
\end{prop}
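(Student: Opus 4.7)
The plan is to apply the $\cN$-transform (viewed as a linear operator on arithmetic functions $\fm \mapsto (\,\cdot\,)$ indexed by $\fm \in \Ical_{S,\eta}$) to the identity furnished by Theorem~\ref{DRTF}, and then translate the spectral quantities on the left via Lemma~\ref{AL^*-AL}.

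First, since $\Ical_{S,\eta} \subset \Jcal_{S,\eta}$, formula \eqref{DRTF-1} is valid for every $\fm\in \Ical_{S,\eta}$. Fix $\fn\in\Ical_{S,\eta}$ and apply $\cN$ to both sides of \eqref{DRTF-1} regarded as an identity of arithmetic functions in $\fm$. The operator $\cN$ is a finite $\CC$-linear combination (Definition~\ref{N-transformDef}), so it distributes term by term. Note also that for every $I\subset S(\fn_1)$ the ideal $\fn\prod_{v\in I}\fp_v^{-2}$ lies again in $\Ical_{S,\eta}$ because $\Ical_{S,\eta}$ is closed under removal of squares of its prime generators. The right-hand side of \eqref{DRTF-1} transforms to $\cN[\tilde\WW_{\rm u}^\eta](\fn)+\cN[\WW_{\rm hyp}^\eta](\fn)$ by definition.

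Next, I apply Lemma~\ref{AL^*-AL} to the spectral side. It gives
\[
\cN[\AL^{w}](\fn)=\AL^{*}(\fn),\qquad \cN[\ADL_{-}^{w}](\fn)=\ADL^{*}(\fn),\qquad \cN[\ADL_{+}^{w}](\fn)=-\log\!\bigl(\sqrt{\nr(\fn)}\,\nr(\ff_\eta)\,D_F\bigr)\AL^{*}(\fn).
\]
Combining the $\cN$-transforms of the four spectral terms in \eqref{DRTF-1}, the contributions proportional to $\AL^{*}(\fn)$ are
\[
(\log D_F)\AL^{*}(\fn)-\log\!\bigl(\sqrt{\nr(\fn)}\,\nr(\ff_\eta)\,D_F\bigr)\AL^{*}(\fn)=-\log\!\bigl(\nr(\fn)^{1/2}\nr(\ff_\eta)\bigr)\AL^{*}(\fn),
\]
so the $\cN$-transform of the left-hand side of \eqref{DRTF-1} equals
\[
2^{-1}(-1)^{\#S+\e(\eta)}\cG(\eta)\,D_F^{-1}\Bigl\{\ADL^{*}(\fn)-\log\!\bigl(\nr(\fn)^{1/2}\nr(\ff_\eta)\bigr)\AL^{*}(\fn)+\cN[\AL^{\partial w}](\fn)\Bigr\}.
\]

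Finally I equate this with $\cN[\tilde\WW_{\rm u}^\eta](\fn)+\cN[\WW_{\rm hyp}^\eta](\fn)$ and solve for $\ADL^{*}(\fn)$, multiplying through by $2(-1)^{\#S+\e(\eta)}\cG(\eta)^{-1}D_F$ (using that $(-1)^{\#S+\e(\eta)}$ squares to $1$); this yields exactly \eqref{henkeiDRTF-1}. There is no analytic difficulty here—the whole argument is bookkeeping—so the only delicate point is making sure the three instances of Lemma~\ref{AL^*-AL} are invoked consistently. The slight asymmetry in the formulation (i.e.\ the $+\log(\nr(\fn)^{1/2}\nr(\ff_\eta))\AL^{*}$ coefficient rather than $+\log(\nr(\fn)^{1/2}\nr(\ff_\eta)D_F)\AL^{*}$) is explained precisely by the cancellation of the $\log D_F$ contributions from the two spectral pieces $\ADL_{+}^{w}$ and $(\log D_F)\AL^{w}$, and this is the only computational check that actually needs to be verified carefully.
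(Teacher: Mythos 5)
Your proposal is correct and is exactly the argument the paper gives: apply the $\cN$-transform to \eqref{DRTF-1} as an identity of arithmetic functions on $\Ical$, then substitute the three identities of Lemma~\ref{AL^*-AL}, with the $\log D_F$ terms from $\cN[\ADL_{+}^{w}]$ and $(\log D_F)\cN[\AL^{w}]$ cancelling to give the coefficient $\log(\nr(\fn)^{1/2}\nr(\ff_\eta))$. The paper states this in one line, whereas you have spelled out the bookkeeping; the content is the same.
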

\begin{proof}
We take the $\cN$-transform of both sides of the formula \eqref{DRTF-1} regarding it as an identity among arithmetic functions on $\Ical$. Then apply Lemma~\ref{AL^*-AL}.
\end{proof}

%%%%%%%%%%%%%%%%%%%%%%%%%%%%%%%%%%%%%%%%%%%%%%%%%%%%%%%%%%%%%
\section{An error term estimate for averaged $L$-values}
%%%%%%%%%%%%%%%%%%%%%%%%%%%%%%%%%%%%%%%%%%%%%%%%%%%%%%%%%%%%%
In this section we prove the first asymptotic formula of Theorem~\ref{MAIN-THM1}. 
%For a finite subset $S\subset \Sigma_\fin-S(\ff_\eta)$, we set
%$$
%\Ical^{\pm}_{S,\eta}=\{\fn\in \Ical_{S,\eta} |\,(-1)^{\e(\eta)}\tilde \eta(\fn)%=\pm 1\,\}. 
%$$
Recall the sets $\Ical_{S,\eta}^{\pm}$, to which $\fn$ should belong. We note that, by the sign of the functional equation, $L(1/2,\pi)L(1/2,\pi\otimes \eta)=0$ if $\pi\in \Pi_{\rm{cus}}^*(l,\fn)$ unless $\fn \in \Ical_{S,\eta}^{+}$. Thus we restrict ourselves to those levels $\fn$ belonging to $\Ical_{S,\eta}^{+}$, for otherwise $\AL^*(\fn;\alpha)=0$. 

%Let us define a measure $\mu_{v,\eta_v}$ on $[-2,2]$ by 
%\begin{align*}
%\d\mu_{v,\eta_v}(x)=
%\begin{cases}
%\dfrac{q_v-1}{(q_v^{1/2}+q_v^{-1/2}-x)^2}\,\d\mu^{\rm{ST}}(x), \quad &(\tilde \%eta_v(\varpi_v)=+1), \\
%\dfrac{q_v+1}{(q_v^{1/2}+q_v^{-1/2})^2-x^2}\,\d\mu^{\rm{ST}}(x), \quad &(\tilde% \eta_v(\varpi_v)=-1),
%\end{cases}
%\end{align*}
%where $\d\mu^{\rm{ST}}(x)=\frac{\sqrt{4-x^2}}{2\pi}\,\d x$ is the Sato-Tate mea%sure. 
%
%For $n\in \N_0$, let $X_n(x)$ be the Tchebychev polynomial of degree $n$, which% is determined by the relation 
%\begin{align}
%X_n(x)=\frac{\sin((n+1)\theta)}{\sin \theta} \quad \text{with} \quad x=2\cos \theta.
% \label{Tcheby}
%\end{align}
%Then we define $\alpha_n^\fq\in \ccA_{S}$ by setting
%$$ \alpha_n^\fq(\nu)=X_n(\nr(\fq)^{\nu/2}+\nr(\fq)^{-\nu/2}), \qquad \nu \in \C%/4\pi i (\log \nr(\fq))^{-1}\Z,
%$$
We have the following asymptotic result, whose proof is given in the next subsection. 
\begin{prop} \label{C-P9} Suppose ${\underline l}=\inf_{v\in \Sigma_\infty}l _v\geq 6$. For any ideal $\fm \in \Ical_{S(\fa),\eta}^{+}$, we have 
\begin{align*}
\AL^{w}(\fm;\alpha_{\fa})&=4D_F^{3/2}\,L_\fin(1,\eta)\,\nr(\fa)^{-1/2}\delta_{\square}(\fa_\eta^{-})d_1(\fa_\eta^{+})+{\mathcal O}_{\epsilon,l,\eta}(\nr(\fa)^{c+2+\epsilon}\nr(\fm)^{-c+\e})
\end{align*}
for any ideal $\fa$ primes to $\ff_\eta$, where $c=d_F^{-1}({\underline l}/2-1).$
\end{prop}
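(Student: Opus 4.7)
The plan is to apply the non-derivative relative trace formula of \cite[Theorem 9.1]{SugiyamaTsuzuki} with the Hecke test function $\a_{\fa}$ of \eqref{HeckefunctAA}. That formula is the exact analogue of Theorem~\ref{DRTF} for the period $P^{\eta}_{\beta}$ in place of $\partial P^{\eta}_{\beta,\l}$; schematically, for $\fm\in \Jcal_{S(\fa),\eta}$ it reads $\text{const}\cdot \AL^{w}(\fm;\a)=\WW_{\rm u}^{\eta,\flat}(l,\fm|\a)+\WW_{\rm hyp}^{\eta,\flat}(l,\fm|\a)$, with an explicit unipotent term $\WW_{\rm u}^{\eta,\flat}$ and a hyperbolic remainder $\WW_{\rm hyp}^{\eta,\flat}$. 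The main term of the proposition comes from $\WW_{\rm u}^{\eta,\flat}$, and the error term from $\WW_{\rm hyp}^{\eta,\flat}$.

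For the main term I would evaluate $\WW_{\rm u}^{\eta,\flat}(l,\fm|\a_{\fa})$ by computing the contour integral
\begin{equation*}
\left(\frac{1}{2\pi i}\right)^{\#S(\fa)}\int_{\LL_{S(\fa)}(\bfc)} \Upsilon_{S(\fa)}^{\eta}(\bfs)\,L(1,\eta)\,\a_\fa(\bfs)\,d\mu_{S(\fa)}(\bfs)
\end{equation*}
up to global constants and Archimedean gamma factors. Since $\a_{\fa}$ factorizes into local Tchebyshev polynomials $X_{n_v}(q_v^{s_v/2}+q_v^{-s_v/2})$, the integral reduces to a product of one-variable residue computations: at each $v\in S(\fa)$ the integrand is a rational function in $q_v^{s_v/2}$ which is evaluated by shifting the contour. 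When $v\in S(\fa_\eta^{-})$ the local kernel acquires an anti-symmetry forcing $n_v$ to be even (producing $\delta_\square(\fa_\eta^{-})$), while at $v\in S(\fa_\eta^{+})$ the local contribution equals $n_v+1$ (producing $d_1(\fa_\eta^{+})$). The normalization $q_v^{-n_v/2}$ implicit in $X_{n_v}(q_v^{s_v/2}+q_v^{-s_v/2})$ supplies the overall $\nr(\fa)^{-1/2}$, and the remaining prefactor $4D_F^{3/2}L_\fin(1,\eta)$ collects the contributions from $\cG(\eta)$, $L(1,\eta)$ and the Archimedean gamma factors, in parallel with the calculation of \cite[\S 9]{SugiyamaTsuzuki}.

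For the error, the hyperbolic term $\WW_{\rm hyp}^{\eta,\flat}(l,\fm|\a_\fa)$ is a sum over $b\in F\setminus\{0,-1\}$ of adelic orbital integrals of $\Psi_l^{(0)}(\fm|\bfs;\cdot)$ paired with $\a_\fa$. The Archimedean factors contribute the decay $\nr(\fm)^{-c+\e}$ with $c=d_F^{-1}({\underline l}/2-1)>0$, an estimate that is valid precisely under the hypothesis ${\underline l}\geq 6$. The non-Archimedean factors at $v\in S(\fa)$ are computed by the explicit formulas of \S 8; after a suitable contour shift and the pointwise estimate $|X_{n_v}(q_v^{s_v/2}+q_v^{-s_v/2})|\ll q_v^{n_v(c_v+\e)}$ on $\LL_{S(\fa)}(\bfc)$, they contribute a factor of order $\nr(\fa)^{c+1+\e}$. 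A further factor $\nr(\fa)^{1+\e}$ arises from the overall scaling of $\a_\fa$, and the convergence of the $b$-sum is guaranteed by the lattice sum estimate of \S 9; combined, these yield $O_{\e,l,\eta}(\nr(\fa)^{c+2+\e}\nr(\fm)^{-c+\e})$.

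The main obstacle is this last estimate: one must track uniformly in both $\fa$ and $\fm$ the interplay between the Tchebyshev polynomial $X_{n_v}$ and the local factors of $\Psi_l^{(0)}$ at places in $S(\fa)\cap S(\fm)$, where the test function and the level both contribute non-trivially. The precise form of the explicit formulas of \S 8, together with the lattice sum of \S 9, should provide the required uniform control, but verifying that the cumulative loss does not exceed the claimed exponent $c+2$ in $\nr(\fa)$ requires careful bookkeeping.
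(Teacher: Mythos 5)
Your proposal reproduces the paper's argument: apply \cite[Theorem 9.1]{SugiyamaTsuzuki} with $S=S(\fa)$, evaluate the unipotent term $\JJ_{\rm u}^{\eta}(l,\fm|\a_\fa)$ by the Tchebyshev contour-integral computation (the first formula of Lemma~\ref{TCH-UNIP}) to get $(-1)^{\#S(\fa)}\nr(\fa)^{-1/2}\delta_\square(\fa_\eta^-)d_1(\fa_\eta^+)$, and control the hyperbolic remainder by Proposition~\ref{HYPERBOLIC-EST}, which rests on the orbital-integral bounds and the lattice-sum estimate of \S 9. One small imprecision: the non-Archimedean local orbital integrals $J_v(b)$ used for the hyperbolic bound are those of \cite[\S 10]{SugiyamaTsuzuki}, not the $\log|t|$-weighted $W_w(b)$ integrals of \S 8 of the present paper (which enter only in the derivative setting), but this does not affect the validity of your approach.
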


From this, we can deduce the asymptotic formula for the primitive part $\AL^*(\fn;\a_\fa)$ stated in Theorem~\ref{MAIN-THM1}. Indeed, we apply the first formula of Lemma~\ref{AL^*-AL} substituting the expression of $\AL^{w}$ given in Proposition~\ref{C-P9}. The main and the error terms are computed by Lemmas \ref{FIND} and \ref{cN+hyouka}, respectively. This completes the proof.

\subsection{The proof of Proposition~\ref{C-P9}}
For any place $v\in \Sigma_\fin$, we define a function $\Lambda_v:F_v-\{0,-1\}\rightarrow \Z$ by setting 
\begin{align*}
\Lambda_v(b)=\delta(b\in \cO_v)\{\ord_v(b(b+1))+1\}.
\end{align*}
For an $\cO$-ideal $\fb$, we set
$$
\tau^{S(\fb)}(b)=\{\prod_{v\in \Sigma_\fin-S(\fb)}\Lambda_v(b)\}\,\prod_{v\in S(\fb)}\delta(b\in \fb^{-1}\cO_v), \quad b\in F-\{0,-1\}.
$$
For an even integer $k\,(\geq 4)$ and a real valued character $\varepsilon$ of $\R^\times$, let $J^{\varepsilon}(k;b)$ ($b\in \R-\{0,-1\}$) be the integral studied in \cite[10.3]{SugiyamaTsuzuki}; they are evaluated explicitly in \cite[Lemma 10.15]{SugiyamaTsuzuki} as
\begin{align*}
J^{{\bf 1}}(k;b)&=\begin{cases} 
(1+b)^{-k/2}\,\frac{2\Gamma(k/2)^2}{\Gamma(k)}{}_2F_1(k/2,k/2;k;(b+1)^{-1}), \quad& (b(b+1)>0), \\
2\log|(b+1)/b|\,P_{k/2-1}(2b+1)-\sum_{m=1}^{[k/4]}\frac{8(k-4m+1)}{(2m-1)(k-2m)}\,P_{k/2-2m}(2b+1), \quad &(b(b+1)<0),
\end{cases}
\\
J^{{\rm{sgn}}}(k;b)&=
\begin{cases} 0 , \quad &(b(b+1)>0), \\
 2\pi i \,P_{k/2-1}(2b+1), \quad &(b(b+1)<0),
\end{cases}
\end{align*}
where $P_{n}(x)$ is the Legendre polynomial of degree $n$. 

\begin{lem} \label{ArchIntEst}
Let $k$ be an even integer greater than $2$ and $\eta_v$ a real valued character of $\R^{\times}$. Then, for any $\epsilon>0$, we have the estimation
\begin{align*}
|b(b+1)|^{\epsilon}\,|J^{\eta_v}(k;b)| \ll_{\epsilon,k} (1+|b|)^{-k/2+2\e}, \quad b\in \R-\{0,-1\} 
\end{align*}
with the implied constant depending on $k$ and $\epsilon$. 
\end{lem}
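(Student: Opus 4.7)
The analysis splits according to the sign of $b(b+1)$, using the two explicit evaluations of $J^{\eta_v}(k;b)$ recalled just before the lemma, and the elementary inequality $|x|^{\epsilon}|\log x| \ll_{\epsilon} 1$ on $(0,1]$ which absorbs any logarithmic blow-up into the factor $|b(b+1)|^{\epsilon}$.

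\textbf{Region I: $b(b+1)>0$.} In this region $J^{\mathrm{sgn}}(k;b)=0$ by the explicit formula, so only $J^{\bf 1}$ requires attention. For $b>0$ I use the hypergeometric formula directly, noting that the argument $w=(b+1)^{-1}$ lies in $(0,1)$. For $b<-1$ I first apply the Pfaff transformation
$${}_2F_1(a,b;c;z)=(1-z)^{-a}\,{}_2F_1(a,c-b;c;z/(z-1))$$
with $z=(b+1)^{-1}$, which after a short computation $((1-z)^{-k/2}=(b+1)^{k/2}b^{-k/2}$, $z/(z-1)=-1/b)$ yields
$$J^{\bf 1}(k;b)=\frac{2\Gamma(k/2)^2}{\Gamma(k)}\,b^{-k/2}\,{}_2F_1(k/2,k/2;k;-1/b),$$
with the new argument $-1/b\in(0,1)$. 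In both sub-cases the hypergeometric argument lies in $[0,1)$, and I invoke the classical fact that for $c=a+b$ the Gauss function has a logarithmic singularity at $z=1$, giving the uniform bound $|{}_2F_1(k/2,k/2;k;w)|\ll_k 1+|\log(1-w)|$ on $[0,1)$. Combining with the prefactors $(1+b)^{-k/2}$ respectively $b^{-k/2}$ and using $|x|^{\epsilon}|\log x|\ll_{\epsilon}1$ to absorb the logarithmic blow-up at $b=0$ or $b=-1$, the estimate follows in this region since $|b|^{-k/2+2\epsilon}\ll (1+|b|)^{-k/2+2\epsilon}$ for $|b|\gtrsim 1$ and both sides are of order $1$ for $|b|\lesssim 1$.

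\textbf{Region II: $-1<b<0$.} Here $2b+1\in(-1,1)$, so the classical bound $|P_n(x)|\leq 1$ on $[-1,1]$ gives $|P_n(2b+1)|\leq 1$ uniformly. Hence $J^{\mathrm{sgn}}(k;b)=2\pi i\,P_{k/2-1}(2b+1)$ is uniformly bounded, and the explicit finite sum for $J^{\bf 1}(k;b)$ reduces to controlling the single term $2\log|(b+1)/b|\cdot P_{k/2-1}(2b+1)$. Its logarithmic singularities at $b=0$ and $b=-1$ are absorbed by $|b(b+1)|^{\epsilon}$, and on the compact interval $(-1,0)$ the right-hand side $(1+|b|)^{-k/2+2\epsilon}$ is bounded away from zero, so the claimed inequality follows trivially.

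\textbf{Main obstacle.} The substantive step is the uniform control of ${}_2F_1(k/2,k/2;k;w)$ on $[0,1)$, which requires the classical asymptotic expansion ${}_2F_1(a,b;a+b;z)=-\frac{\Gamma(a+b)}{\Gamma(a)\Gamma(b)}\log(1-z)+O(1)$ as $z\to 1^{-}$. The remaining work in Region I is the careful matching of the decay $|b|^{-k/2}$ for large $|b|$ with the logarithmic blow-up at $b=0,-1$, which is streamlined by the Pfaff transformation so that both sub-cases $b>0$ and $b<-1$ are handled by a single uniform estimate on $[0,1)$.
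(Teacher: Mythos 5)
Your argument follows the same structure as the paper's proof: split on the sign of $b(b+1)$, bound the Legendre polynomials by $1$ on $[-1,1]$, control the Gauss hypergeometric function by its logarithmic singularity at $1$, and absorb logarithms into $|b(b+1)|^{\epsilon}$. The one stylistic difference is that you reduce the case $b<-1$ via the Pfaff transformation, whereas the paper cites the functional equation $J^{\bf 1}(k;b)=(-1)^{k/2}J^{\bf 1}(k;-b-1)$ from \cite[Lemma~10.16]{SugiyamaTsuzuki}; these are the same reduction, with your version simply rederiving that identity in place.
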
 
\begin{proof} For $J^{\rm{sgn}}(k;b)$ the estimation is obvious. As for $J^{\bf 1}(k;b)$, we only have to note that the estimation ${}_2F_{1}(k/2,k/2;k;(b+1)^{-1})=O(|\log b|)$ for small $b>0$ (\cite[p.49]{MOS}) and the functional equation $J^{\bf 1}(k;b)=(-1)^{k/2}J^{\bf 1}(k;-b-1)$ $(b<-1)$ proved in \cite[Lemma 10.16]{SugiyamaTsuzuki}.
\end{proof}

%Let $\eta$ be an idele class character of $F^\times$ of order $2$, and $l=(l_v)_{v\in \Sigma_\infty}\in \N^{\Sigma_\infty}$ an even weight such that $l_v\geq 4$ for all $v\in \Sigma_\infty$. 
Given relatively prime $\cO$-ideals $\fn$ and $\fb$ and for $\e\geq 0$, we set
$$
\fI^{\eta}_\e(l,\fn,\fb)=\sum_{b\in \fn\fb^{-1}-\{0,-1\}}\tau^{S(\fb)}(b)^{2}\,
|\nr(b(b+1))|^{\e}\prod_{v\in \Sigma_\infty} |J^{\eta_v}(l_v;b)|.
$$

\begin{prop}  \label{fI-EST} 
Suppose $\underline l \geq 6$. Let $\fb$ and $\fn$ be relatively prime ideals. For any $\e \ge 0$ and $\epsilon'>0$, we have
$$
\fI^\eta_\e(l,\fn,\fb)\ll_{\epsilon, \e', l}\nr(\fb)^{1+c+ \e'}\,\nr(\fn)^{-c+\e+\e'}$$
with the implied constant independent of $\fb$ and $\fn$.
\end{prop}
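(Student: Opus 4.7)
My plan is to bound the summand of $\fI_\e^\eta(l,\fn,\fb)$ pointwise by a clean multiplicative function, thereby reducing the estimate to a lattice sum over $\fn\fb^{-1}-\{0,-1\}$, and then to invoke the lattice sum estimate developed in \S 9.

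The first step is archimedean. By Lemma~\ref{ArchIntEst}, for any small $\epsilon_0>0$,
$$|J^{\eta_v}(l_v;b)|\ll_{\epsilon_0,l}|b(b+1)|_v^{-\epsilon_0}(1+|b|_v)^{-l_v/2+2\epsilon_0},$$
so that, taking the product over $v\in\Sigma_\infty$ and incorporating the prefactor $|\nr(b(b+1))|^\e=\prod_v|b(b+1)|_v^\e$,
$$|\nr(b(b+1))|^\e\prod_{v\in\Sigma_\infty}|J^{\eta_v}(l_v;b)|\ll|\nr(b(b+1))|^{\e-\epsilon_0}\prod_{v\in\Sigma_\infty}(1+|b|_v)^{-l_v/2+2\epsilon_0}.$$
The second step handles the arithmetic weight. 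Since $\fn$ and $\fb$ are coprime, the membership $b\in\fn\fb^{-1}$ automatically fulfills the delta factors in the definition of $\tau^{S(\fb)}(b)$ and forces $b\in\cO_v$ at $v\notin S(\fb)$; hence $\tau^{S(\fb)}(b)=\prod_{v\notin S(\fb)}(\ord_v(b(b+1))+1)$, which is dominated by the divisor function of the integral ideal $(b(b+1))\cO$. The standard bound gives $\tau^{S(\fb)}(b)^2\ll_{\epsilon_1}|\nr(b(b+1))|^{\epsilon_1}$ for any $\epsilon_1>0$.

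Choosing $\epsilon_0,\epsilon_1=O(\epsilon')$ small and using $|b(b+1)|_v\le(1+|b|_v)^2$ to absorb the remaining powers of $|\nr(b(b+1))|$ into the archimedean weight, the problem reduces to the lattice sum
$$\Sigma(\fn,\fb)=\sum_{b\in\fn\fb^{-1}-\{0,-1\}}\prod_{v\in\Sigma_\infty}(1+|b|_v)^{-k_v},\qquad k_v=l_v/2-O(\e+\epsilon'),$$
over the rank-$d_F$ lattice $\fn\fb^{-1}\subset F_\infty=F\otimes\R$ of covolume $\sqrt{D_F}\,\nr(\fn)/\nr(\fb)$. I would then invoke the lattice sum estimate of \S 9, tailored for exactly such error analyses in Sections~5 and~8, to obtain $\Sigma(\fn,\fb)\ll\nr(\fb)^{1+c+\epsilon'}\nr(\fn)^{-c+\e+\epsilon'}$, after tracking how the exponent $c=d_F^{-1}(\underline l/2-1)$ degrades under the perturbation by $\e$ and $\epsilon'$. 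The excluded point $b=-1$ is isolated and contributes at most $O(1)$, which is absorbed into the implicit constant.

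The main obstacle is the lattice-sum estimate itself. A naive volume comparison against the absolutely convergent integral $\int_{F_\infty}\prod_v(1+|x_v|)^{-l_v/2}\,dx$ gives only the inverse-covolume factor $\nr(\fb)\nr(\fn)^{-1}$, which is far weaker than the target $\nr(\fn)^{-c}$ as soon as $\underline l>2+2d_F^{-1}$. The sharp exponent $c$ must be extracted from the joint sparseness of $\fn\fb^{-1}$ in every archimedean direction: every nonzero $b\in\fn\fb^{-1}$ satisfies $|\nr(b)|\ge\nr(\fn)/\nr(\fb)$, and averaging by $\cO^\times$ one may assume $|b|_v\gtrsim(\nr(\fn)/\nr(\fb))^{1/d_F}$ at each archimedean place. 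A dyadic decomposition in the $d_F$-tuple $(|b|_v)_v$ under this constraint, combined with careful lattice-point counting in each dyadic box along the $(d_F-1)$-dimensional unit-group action, produces the correct exponent $c$ as the rate at which the archimedean weight damps out the lattice tail; this delicate computation is precisely the content of \S 9.
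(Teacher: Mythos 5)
Your reduction is essentially the paper's: Lemma~\ref{ArchIntEst} for the archimedean factors, a divisor-function bound on $\tau^{S(\fb)}(b)^2$ (the paper cites \cite[Lemma 12.4]{SugiyamaTsuzuki} for this), and then passage to the lattice sum $\theta(\fn\fb^{-1})$ handled by \S 9. So the architecture is right. Two caveats though.

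First, your description of how the lattice-sum bound is actually proved is not what \S 9 does. There is no dyadic decomposition in the $d_F$-tuple $(|b|_v)_v$ and no averaging over the unit group $\cO^\times$. The argument in \S 9 is a packing argument based on the submultiplicativity $f(x+y)\geq f(x)f(y)$ (Lemma~\ref{L1}): since the balls $B_\Lambda+b$ ($b\in\Lambda$) are disjoint, $\theta(\Lambda)\leq I(B_{\Lambda_0})^{-1}I(\R^{d}-B_\Lambda)$ (Lemma~\ref{L2}), and the two integrals are evaluated by polar coordinates and Lemmas~\ref{Upperbound}, \ref{L3} (the Mellin computation giving the decay rate $1-d-l_1/2$). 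The norm lower bound you mention only enters via Lemma~\ref{L4}, which converts $r(\Lambda)\asymp D(\Lambda)^{1/d}$ for fractional ideals; it is not used to restrict the location of $b$.

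Second, you never say how you would extract the precise $\nr(\fb)$-dependence from Theorem~\ref{theta-est}. That theorem has the factor $\{1+r(\Lambda_0)\}^{dl_d/2}D(\Lambda_0)^{-1}$ in front, and the entire point of allowing an auxiliary lattice $\Lambda_0\supseteq\Lambda$ is that one takes $\Lambda_0=\fb^{-1}$: then $r(\fb^{-1})\leq r(\cO)$ is an absolute constant and $D(\fb^{-1})^{-1}=\nr(\fb)$. If instead one took the natural choice $\Lambda_0=\Lambda=\fn\fb^{-1}$, the prefactor $\{1+r(\fn\fb^{-1})\}^{dl_d/2}$ grows like a power of $\nr(\fn)$ and the target estimate fails. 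Your write-up elides this, which is the one place where the derivation from Theorem~\ref{theta-est} is not automatic. (Incidentally, the threshold for the naive covolume bound to be insufficient is $\underline{l}>2+2d_F$, not $2+2d_F^{-1}$.)
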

\begin{proof}
By \cite[Lemma 12.4]{SugiyamaTsuzuki} and Lemma~\ref{ArchIntEst}, we have
$$
\fI_\e^\eta(l,\fn,\fb)\ll_{\epsilon, \e',l}\nr(\fb)^{4\e'} \sum_{b\in \fn\fb^{-1}-\{0\}} \prod_{v\in \Sigma_\infty}(1+|b_v|)^{-l_v/2+\e+2\e'}
=\nr(\fb)^{4\e'}\theta(\fn\fb^{-1})
$$
for any $\e\ge 0$ and any $\e'>0$, where we regard the fractional ideal $\fn\fb^{-1}$ as a $\Z$-lattice in the Euclidean space $F_\infty=F\otimes_{\QQ} \R$ and $\theta(\Lambda)$ is constructed for $\{l_{v}-2\e-4\e'\}_{v\in \Sigma_\infty}$ in place of $l$ (see \S 9). If $\e\ge 0$
and $\e'>0$ are small enough, then we can apply the theory in \S 9 to this $\theta(\Lambda)$. The desired estimation follows if we apply Theorem~\ref{theta-est} with $\Lambda=\fn\fb^{-1}$ and $\Lambda_0=\fb^{-1}$ noting $D(\fn\fb^{-1})=\nr(\fn)\nr(\fb)^{-1}$, $D(\fb^{-1})=\nr(\fb)^{-1}$ and $r(\fb^{-1})\leq r(\cO)$.
\end{proof}

\begin{prop} \label{HYPERBOLIC-EST}
 Suppose ${\underline{l}}\geq 6$. Given $\cO$-ideals $\fn$ and $\fa=\prod_{v\in S(\fa)}\fp_v^{n_v}$ relatively prime to each other, for any $\epsilon>0$, we have 
$$
|\JJ_{\rm{hyp}}^{\eta}(l,\fn|\alpha_\fa)|\ll_{\epsilon,l, \eta}\nr(\ga)^{c+2+\e}\nr(\gn)^{-c+\e}
$$
with the implied constant independent of $\fa$ and $\fn$.
\end{prop}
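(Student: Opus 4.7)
The plan is to estimate $\JJ_{\mathrm{hyp}}^{\eta}(l,\fn|\alpha_\fa)$ by reducing it to a bound on $\fI^{\eta}_{\epsilon}(l,\fn,\fa)$, so that Proposition~\ref{fI-EST} can be applied. Starting from the expression for $\WW_{\mathrm{hyp}}^{\eta}(l,\fn|\a_\fa)$ in Lemma~\ref{WWhyp}, I would unfold $\frak{L}_{\eta}(l,\fn|\bfs)$ as the sum over $b\in F-\{0,-1\}$ of adelic orbital integrals, then interchange the contour integral over $\LL_{S}(\bfc)$ with this sum. Because $\a_\fa(\bfs)=\prod_{v\in S(\fa)}X_{n_v}(q_v^{s_v/2}+q_v^{-s_v/2})$ is a product of Tchebyshev polynomials, the contour integral factorizes over $v\in S$ and, at each $v\in S(\fa)$, computes a local Hecke-type orbital integral at level $\fp_v^{n_v}$.

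Next I would perform a placewise decomposition of the resulting integrand: at $v\in\Sigma_{\infty}$ the local factor is $J^{\eta_v}(l_v;b)$; at $v\in S(\fn)$ the local factor is the level-$\fn$ contribution; at $v\in S(\fa)$ the local factor is the Hecke integral produced by $X_{n_v}$; and at remaining finite places one obtains the support/multiplicity weight encoded by $\tau^{S(\fa)}(b)$. The explicit formulas of \S 8 (which provide local hyperbolic integrals) would be invoked here to bound the contribution from each $v\in S(\fa)$. Combining this with $(\fa,\fn)=1$ forces the support condition $b\in \fn\fa^{-1}-\{0,-1\}$ after rescaling.

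Once the placewise bounds are assembled, I expect an estimate of the shape
\begin{equation*}
|\JJ_{\mathrm{hyp}}^{\eta}(l,\fn|\a_\fa)|\;\ll_{\epsilon,l,\eta}\; \nr(\fa)^{1+\epsilon}\,\fI^{\eta}_{\epsilon}(l,\fn,\fa),
\end{equation*}
where the extra $\nr(\fa)^{1+\epsilon}$ accounts for the growth of local Hecke integrals at $v\in S(\fa)$ together with the Tchebyshev factors. Applying Proposition~\ref{fI-EST} with $\fb=\fa$ then yields
\begin{equation*}
|\JJ_{\mathrm{hyp}}^{\eta}(l,\fn|\a_\fa)|\;\ll_{\epsilon,l,\eta}\; \nr(\fa)^{1+\epsilon}\cdot \nr(\fa)^{1+c+\epsilon}\,\nr(\fn)^{-c+\epsilon}\;=\;\nr(\fa)^{c+2+\epsilon}\,\nr(\fn)^{-c+\epsilon},
\end{equation*}
which is the claimed bound. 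The use of $\underline{l}\geq 6$ is inherited from the hypothesis of Proposition~\ref{fI-EST} and ensures that the lattice-sum estimate of \S 9 applies.

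The main obstacle is the careful bookkeeping at places $v\in S(\fa)$: one must verify that the local Hecke integrals arising from $X_{n_v}(q_v^{s_v/2}+q_v^{-s_v/2})$ grow at most like $q_v^{n_v}$ (up to $q_v^{\epsilon n_v}$), so that their product over $v\in S(\fa)$ contributes only the factor $\nr(\fa)^{1+\epsilon}$. This is the technical content provided by the explicit local computations in \S 8, and without it one would get a suboptimal power of $\nr(\fa)$. Subsidiary care is also needed to handle the variable support of $b$ once $\fa$ is taken into account, and to keep the $\epsilon$-losses uniform so that the final $\epsilon$ in the statement can be made arbitrarily small.
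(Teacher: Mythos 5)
Your overall strategy matches the paper's: unfold $\JJ_{\rm hyp}^\eta(l,\fn|\a_\fa)$ over $b\in F-\{0,-1\}$, bound placewise, reduce to a lattice sum of the shape $\fI^\eta_\e(l,\fn,\fb)$, and invoke Proposition~\ref{fI-EST}. Two inaccuracies are worth flagging, though neither breaks the skeleton of the argument.

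First, the local integrals you need to control at $v\in S(\fa)$ are the $J_v^{\eta_v}(b;\a_{\fp_v^{n_v}})$ \emph{without} the $\log|t|_v$ weight; these come from \cite[\S 10, Lemma~10.3]{SugiyamaTsuzuki}, not from \S 8 of this paper. Section~8 treats the derivative-weighted integrals $W_w(b)=\int\cdots\log|t_w|_w\,\d^\times t_w$, which are used in Proposition~\ref{WWhypglobal} (the estimate for $\WW_{\rm hyp}^\eta$), not here.

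Second, your accounting of the $\nr(\fa)^{c+2+\e}$ is attributed to local Hecke-integral magnitudes growing ``like $q_v^{n_v(1+\e)}$,'' but that is not the mechanism. After inserting the Tchebyshev expansion \eqref{HYPERBOLIC-EST-1} and applying \cite[Lemma~10.3]{SugiyamaTsuzuki}, one gets \eqref{HYPERBOLIC-EST-2}: the magnitude is $O(q_v^{\delta(n_v>0)})$, \emph{uniformly} in $n_v$; what blows up with $n_v$ is the $b$-support, which becomes $|b|_v\le q_v^{n_v}$. That enlarged support enters $\fI_0(l,\fn,\cdot)$ with second argument $\prod_{v\in I}\fp_v^{n_v}\gf_\eta$ (not $\fa$ — the $\ff_\eta$ factor must be there because $\eta$ is ramified), after expanding $\prod_{v\in S(\fa)}\{1+\Lambda_v(b)\}$ into a sum over $I\subset S(\fa)$. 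Applying Proposition~\ref{fI-EST} to each $\fI_0$ gives $\nr(\prod_{v\in I}\fp_v^{n_v}\gf_\eta)^{1+c+\e}\le\nr(\fa\gf_\eta)^{1+c+\e}$, and summing over $I$ costs $2^{\#S(\fa)}$; together with the magnitude factor $\le\nr(\fa)$ and $(2C)^{\#S(\fa)}\ll_\e\nr(\fa)^\e$ this assembles to $\nr(\fa)^{c+2+\e}\nr(\fn)^{-c+\e}$. Your claimed intermediate bound $\nr(\fa)^{1+\e}\fI^\eta_\e(l,\fn,\fa)$ is permissible as an upper bound but hides the sum over $I$ and the $\ff_\eta$ dependence; with those fixed the argument closes as you intend.
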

%\begin{proof} Let $v\in S(\fa)$ and $n\in \N_0$. By \eqref{Tcheby}, 
%\begin{align}
%\alpha_{\fp_v^{n}}(\nu)=\frac{z^{n+1}-z^{-(n+1)}}{z-z^{-1}}=\sum_{m=0}^{[n/2]}\alpha^{(n-2m)}_v(%\nu) - \delta(n \in 2\NN_{0})
% \label{HYPERBOLIC-EST-1}
%\end{align}
%with $\alpha^{(m)}_v(\nu)=z^{m}+z^{-m}$, $z=q_v^{\nu/2}$. By \cite[Lemma 10.3]{SugiyamaTsuzuki}, we have 
%\begin{align}
%|J_v^{\eta_{v}}(b,\alpha_v^{(m)})|\ll (1+m)^{2}\{\delta(|b|_v\leq q_v^{m-1})q_v^{1-m/2} +\delta(|b|_v=q_v^{m})\,q_v^{-m/2}\}, \quad b \in F^{\times}-\{-1\}
% \label{HYPERBOLIC-EST-2}
%\end{align}
%with the implied constant independent of $m>0$ and $v$. Hence if $n>0$, 
%\begin{align*}
%|J_{v}^{\eta_{v}}(b,\alpha_{\fp_v^n})|&\ll \delta(|b|_v\leq q_v^{n})\{\sum_{m=0}^{n}(1+m)^{2}q_v^{1-m/2}+\sum_{m=0}^{n}(1+m)^{2}q_v^{-m/2}\} 
%\\
%&\leq \delta(|b|_v\leq q_v^{n})q_v\,\left(2\sum_{m=0}^{\infty} (1+m)^2 2^{-m/2}\right).
%\end{align*}
%Thus we have a constant $C>0$ independent of $v\in S(\fa)$ and $n\in \N_0$ such that 
%$$
%|J_{v}^{\eta_{v}}(b,\alpha_{\fp_v^n})|\leq C\,q_v^{\delta(n>0)}\,\delta(|b|_v\leq q_v^{n}), %\quad b\in F^\times-\{0,-1\}. 
%$$
%Combining this with \cite[Lemmas 10.4, 10.5 and Corollary 10.11]{SugiyamaTsuzuki}, we have
%$$
%|\JJ_{\rm{hyp}}^{\eta}(l,\fn|\alpha_{\fa})| \leq C^{\#S(\fa)} \,\prod_{v\in S(\fa)}q_v^{\delta(n_v>0)}\,\fI^{\eta}_0(l,\fn,\fa).
%$$
%By Proposition~\ref{fI-EST} and by the estimate $C^{\# S(\fa)}\ll_\e \nr(\fa)^{\e}$, we are done. 
%\end{proof}

\begin{proof} Let $v\in S(\fa)$ and $n\in \N_0$. By \eqref{Tcheby}, 
\begin{align}
\alpha_{\fp_v^{n}}(\nu)=\frac{z^{n+1}-z^{-(n+1)}}{z-z^{-1}}=\sum_{m=0}^{[n/2]}\alpha^{(n-2m)}_v(\nu) - \delta(n \in 2\NN_{0})
 \label{HYPERBOLIC-EST-1}
\end{align}
with $\alpha^{(m)}_v(\nu)=z^{m}+z^{-m}$, $z=q_v^{\nu/2}$. By \cite[Lemma 10.3]{SugiyamaTsuzuki}, we have 
%\begin{align}
%|J_v^{\eta_{v}}(b,\alpha_v^{(m)})| \ll & (1+m)^{2} \delta(|b|_v\leq q_v^{m})q_v^{1-m/2} 
%\{\ord_{v}(b(b+1))+2m+1\}, \quad b \in F^{\times}-\{-1\}
%\label{HYPERBOLIC-EST-2}
%\end{align}
\begin{align}
|J_v^{\eta_{v}}(b,\alpha_v^{(m)})| \ll & (1+m)^{2} \delta(|b|_v\leq q_v^{m})
q_v^{\delta(m>0)-m/2}
\{1+\Lambda_{v}(b)\},
\quad b \in F^{\times}-\{-1\}
\notag%\label{HYPERBOLIC-EST-2}
\end{align}
with the implied constant independent of $m\ge 0$ and $v$. Hence if $n>0$, 
\begin{align*}
|J_{v}^{\eta_{v}}(b,\alpha_{\fp_v^n})|&\ll \delta(|b|_v\leq q_v^{n})\{\sum_{m=0}^{n}(1+m)^{2}q_v^{1-m/2}\} \{ 1 + \Lambda_{v}(b)\}
\\
&\leq \delta(|b|_v\leq q_v^{n})q_v\,\left(\sum_{m=0}^{\infty} (1+m)^{2} 2^{-m/2}\right)
\{ 1 + \Lambda_{v}(b)\}.
\end{align*}
Thus we have a constant $C$ independent of $v \in S(\ga)$ and $n \in \N_{0}$ such that
\begin{align}
|J_{v}^{\eta_{v}}(b,\alpha_{\fp_v^n})|\le C\, q_v^{\delta(n>0)}\,\delta(|b|_v\leq q_v^{n})\, \{ 1+\Lambda_{v}(b)\},
\quad b\in F^\times-\{0,-1\}.
\label{HYPERBOLIC-EST-2}
\end{align}
Combining (\ref{HYPERBOLIC-EST-2}) with Proposition~\ref{fI-EST} and \cite[Lemmas 10.4, 10.5 and Corollary 10.11]{SugiyamaTsuzuki}, we obtain
\begin{align*}
|\JJ_{\rm{hyp}}^{\eta}(l,\fn|\alpha_{\fa})|
\leq & \,C^{\#S(\fa)}\,\{\prod_{v\in S(\fa)}q_v^{\delta(n_v>0)}\}\,
\sum_{I \subset S(\ga)}\sum_{b \in \gn(\prod_{v \in I}\gp_{v}^{n_{v}})^{-1}\gf_{\eta}^{-1}}\tau^{S(\prod_{v \in I}\gp_v^{n_{v}})}(b)\prod_{v \in \Sigma_{\infty}}|J_{v}^{\eta_{v}}(l_{v}; b)| \\
\le & \,C^{\#S(\fa)}\,\nr(\ga)
\sum_{I\subset S(\ga)}\fI_{0}(l, \gn, \prod_{v \in I}\gp_v^{n_{v}}\gf_{\eta}) \\\ll_{\e,l}& \,C^{\#S(\fa)}\,\nr(\ga)\sum_{I \subset S(\ga)}\nr(\prod_{v \in I}\gp_v^{n_{v}}\gf_{\eta})^{1+c+\e}\nr(\gn)^{-c+\e} \\
\ll_{\e, l, \eta} & \,C^{\#S(\ga)}\nr(\ga) \times 2^{\#S(\ga)}\nr(\ga)^{1+c+\e}\nr(\gn)^{-c+\e}.
\end{align*}
%\begin{align*}
%|\JJ_{\rm{hyp}}^{\eta}(l,\fn|\alpha_{\fa})|
%\leq & \,C^{\#S(\fa)}\,\prod_{v\in S(\fa)}q_v^{\delta(n_v>0)}
%\sum_{I \subset S(\ga_{\eta}^{+})}\sum_{b \in \gn(\ga_{\eta}^{-}\prod_{v \in I}\gp_{v}^{n_{v}})^{-1}\gf_{\eta}^{-1}}\tau^{S(\ga_{\eta}^{-}\prod_{v \in I}\gp^{n_{v}})}(b)\prod_{v \in \Sigma_{\infty}}|J_{v}^{\eta_{v}}(l_{v}; b)| \\
%\le & \,C^{\#S(\fa)}\,\nr(\ga)
%\sum_{I\subset S(\ga_{\eta}^{+})}\fI_{0}(l, \gn, \ga_{\eta}^{-}\prod_{v \in I}\gp^{n_{v}}\gf_{\eta}) \\
%\ll_{\e,l}& \,C^{\#S(\fa)}\,\nr(\ga)\sum_{I \subset S(\ga_{\eta}^{+})}\nr(\ga_{\eta}^{-}\prod_{v \in I}\gp^{n_{v}}\gf_{\eta})^{1+c+\e}\nr(\gn)^{-c+\e} \\
%\ll_{\e, l, \eta} & \,C^{\#S(\ga)}\nr(\ga) \times 2^{\#S(\ga)}\nr(\ga)^{1+c+\e}\nr(\gn)^{-c+\e}.
%\end{align*}
By the estimate $(2\,C)^{\# S(\fa)}\ll_{\e, \eta} \nr(\fa)^{\e}$, we are done.
\end{proof}

\begin{lem} \label{TCH-UNIP}
Set $\Upsilon_v^{\eta_v}(s)=(1-\eta_v(\varpi_v)q_v^{-(1+s)/2})^{-1}(1-q_v^{(1+s)/2})^{-1}$. For $n\in \N_0$, 
\begin{align*}
\frac{1}{2\pi i}\,\int_{\s-i\infty}^{\s+i\infty} \Upsilon_{v}^{\eta_v}(s)\,\a_{\fp_v^{n}}(s)\,\d\mu_{v}(s)&=-q_v^{-n/2}
\begin{cases}
\delta(n\in 2\N_0), \quad &(\eta_v(\varpi_v)=-1), \\
n+1, \quad &(\eta_v(\varpi_v)=+1), 
\end{cases}
\\
\frac{\log q_v}{2\pi i}\,\int_{\s-i\infty}^{\s+i\infty} \frac{\Upsilon_{v}^{\eta_v}(s)\,\a_{\fp_v^{n}}(s)}{1-\eta_v(\varpi_v)q_v^{(s+1)/2}}\,\d\mu_{v}(s)&=q_v^{-n/2}\log q_v\begin{cases}
(-1)^{n}\,\left[\frac{n+1}{2}\right], \quad & (\eta_v(\varpi_v)=-1), \\
\frac{n(n+1)}{2}, \quad &(\eta_v(\varpi_v)=+1).
\end{cases}
\end{align*}
\end{lem}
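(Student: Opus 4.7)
The natural strategy is to convert both contour integrals into contour integrals over a circle in a multiplicative variable and then extract Laurent coefficients at infinity. Set $z = q_v^{s/2}$, so that the integral over one period of the $(4\pi i/\log q_v)$-periodic integrand becomes a contour integral over $|z| = q_v^{\sigma/2}$ traversed once counterclockwise, with $ds = (2/\log q_v)\,dz/z$. By the Chebyshev identity \eqref{Tcheby}, $\alpha_{\fp_v^n}(s) = (z^{n+1}-z^{-n-1})/(z-z^{-1})$, which combined with $d\mu_v(s) = (\log q_v/2)\,q_v^{1/2}(z-z^{-1})\,ds$ cancels the factor $z - z^{-1}$. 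After rewriting the two linear factors of $\Upsilon_v^{\eta_v}(s)$ as $(1-\eta_v(\varpi_v)q_v^{-1/2}z^{-1})^{-1} = z/(z-\eta_v(\varpi_v)q_v^{-1/2})$ and $(1-q_v^{1/2}z)^{-1} = -q_v^{-1/2}/(z-q_v^{-1/2})$, the dust settles into
\begin{align*}
\Upsilon_v^{\eta_v}(s)\,\alpha_{\fp_v^n}(s)\,d\mu_v(s) = -\frac{z^{n+1}-z^{-n-1}}{(z-\eta_v(\varpi_v)q_v^{-1/2})(z-q_v^{-1/2})}\,dz.
\end{align*}
Taking $\sigma > -1$ puts both simple poles inside the contour, so the first integral is the negative of the coefficient of $z^{-1}$ in the Laurent expansion of the rational function at $z = \infty$.

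For the first identity I will expand
\begin{align*}
\frac{1}{(z-\eta_v(\varpi_v)q_v^{-1/2})(z-q_v^{-1/2})} = \sum_{m\geq 0} q_v^{-m/2}\Bigl(\sum_{j=0}^m \eta_v(\varpi_v)^j\Bigr)z^{-m-2}
\end{align*}
valid for $|z|>q_v^{-1/2}$. Multiplying by $z^{n+1}$ pins the contributing index to $m = n$, while multiplying by $-z^{-n-1}$ yields no $z^{-1}$ term. The geometric sum $\sum_{j=0}^n \eta_v(\varpi_v)^j$ equals $n+1$ when $\eta_v(\varpi_v)=+1$ and $\delta(n\in 2\NN_0)$ when $\eta_v(\varpi_v)=-1$; together with the overall minus sign this recovers the first stated formula.

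For the second identity, the additional factor $(1-\eta_v(\varpi_v)q_v^{(s+1)/2})^{-1}$ becomes $-\eta_v(\varpi_v)q_v^{-1/2}/(z-\eta_v(\varpi_v)q_v^{-1/2})$, upgrading one simple pole to a double pole. The analogous expansion
\begin{align*}
\frac{1}{(z-\eta_v(\varpi_v)q_v^{-1/2})^2(z-q_v^{-1/2})} = \sum_{m\geq 0} q_v^{-m/2}\Bigl(\sum_{j=0}^m (j+1)\eta_v(\varpi_v)^j\Bigr)z^{-m-3}
\end{align*}
pins the residue at $m = n-1$, producing the weighted sum $q_v^{-(n-1)/2}\sum_{j=0}^{n-1}(j+1)\eta_v(\varpi_v)^j$. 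Elementary evaluation gives $n(n+1)/2$ for $\eta_v(\varpi_v) = +1$ and $(-1)^{n+1}\left[\frac{n+1}{2}\right]$ for $\eta_v(\varpi_v) = -1$ (the latter via the standard pairing trick for alternating triangular sums), and multiplying by the prefactor $\eta_v(\varpi_v)q_v^{-1/2}\log q_v$ yields the claimed closed form.

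No estimates or analytic subtleties are involved; the entire argument is algebraic. The main obstacle is simply bookkeeping: carefully tracking the powers of $q_v$ and the sign changes through the substitutions (in particular the identity $1-\eta_v(\varpi_v)r^{-1}z = -\eta_v(\varpi_v)r^{-1}(z-\eta_v(\varpi_v)r)$ and the cancellation $r\cdot q_v^{1/2}=1$ for $r = q_v^{-1/2}$), and checking that the edge case $n = 0$ is handled correctly. The latter poses no problem because the Laurent shift $m = n-1$ has no nonnegative contribution when $n = 0$, matching the vanishing of $n(n+1)/2$ and of $\left[\frac{n+1}{2}\right]$ at $n=0$ in the closed forms.
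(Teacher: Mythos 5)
Your proof is correct; I verified every step, including the substitution, the cancellation of the $(z-z^{-1})$ factor between the Chebyshev numerator and the Plancherel density, the Laurent expansions at infinity, the pinning of the contributing index ($m=n$ in the first integral, $m=n-1$ in the second), the alternating triangular sums, and the prefactor $\eta_v(\varpi_v)q_v^{-1/2}\log q_v$.

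The paper arrives at the same answers by a mildly different organization. Instead of treating the Chebyshev function $\alpha_{\fp_v^n}$ as a single rational expression in $z=q_v^{s/2}$, the paper first decomposes it into the basis monomials $\alpha_v^{(m)}(s)=q_v^{ms/2}+q_v^{-ms/2}$ via the relation \eqref{HYPERBOLIC-EST-1}, $\alpha_{\fp_v^n}=\sum_{m=0}^{[n/2]}\alpha_v^{(n-2m)}-\delta(n\in 2\NN_0)$, and then cites precomputed values $U_v^{\eta_v}(\alpha_v^{(m)})$ and $\tilde U_v^{\eta_v}(\alpha_v^{(m)})$ from \cite[Proposition 11.1]{SugiyamaTsuzuki} and from Lemma~\ref{DUNIP}, finally summing these explicit but rather cumbersome expressions. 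Your route skips the basis decomposition and the auxiliary lemma entirely: with the test function kept whole, the $(z-z^{-1})$ from $\alpha_{\fp_v^n}$ cancels against the same factor in $d\mu_v$, and the integrand collapses to a rational function with only two or three simple/double poles, so the geometric (resp.~Cauchy-product) expansion gives the answer in one stroke. Both arguments are residue computations at bottom; the paper's version is more modular and reuses machinery it already has in hand, while yours is shorter, self-contained, and avoids the algebraic re-summation of the messy closed forms in Lemma~\ref{DUNIP}. I'd only add one sentence for completeness: you should make explicit that the pole at $z=0$ (coming from $z^{-n-1}$) is also inside the contour for $\sigma>-1$, so the contour integral equals the full sum of finite residues and hence the Laurent coefficient at infinity; as written you only mention the two poles on $|z|=q_v^{-1/2}$.
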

\begin{proof}
The second integral is $\tilde U_v^{\eta_v}(\a_{\fp_v^{n}})$ defined by \eqref{Unipterm-2}. Then we have the second formula using \eqref{HYPERBOLIC-EST-1} and Lemma~\ref{DUNIP} by a direct computation. The first formula is confirmed in the same way by using \cite[Proposition 11.1]{SugiyamaTsuzuki}.
%Suppose $n=2k$ is even and $\eta_v(\varpi_v)=-1$. From \eqref{HYPERBOLIC-EST-1} and Lemma~\ref{DUNIP}, we have the equalities
%\begin{align*}
%\tilde U_v^{\eta_v}(\a_{\fp_v^n})&=\sum_{m=0}^{k} \tilde U_v^{\eta_v}(\a^{(2m)}%_v)-\tilde U_v^{\eta_v}(1)
%=\sum_{m=1}^{k} q_v^{-m}\log q_v\,
%\{m(q_v-1)-q_v\}.
%\end{align*} 
%By evaluating the series, this becomes $-q_v^{-k}k\log q_v$ as required. The ca%se when $n$ is odd is treated similarly. The first integral is computed in a si%milar way by \cite[Proposition 11.1]{SugiyamaTsuzuki}. 
\end{proof}

To show Proposition~\ref{C-P9}, we apply \cite[Theorem 9.1]{SugiyamaTsuzuki} taking $S=S(\fa)$. From the first formula of Lemma~\ref{TCH-UNIP}, 
\begin{align*}
\JJ_{\rm{u}}^\eta(l,\fn|\a_\fa)&=(-1)^{\# S(\fa)} \prod_{v\in S(\fa_\eta^{-})}  q_v^{-n_v/2}\delta(n_v\in 2\N_0)\,\prod_{v\in S(\fa_\eta^{+})}q_v^{-n_v/2}(n_v+1)\\
&=(-1)^{\# S(\fa)}\nr(\fa)^{-1/2}\delta_{\square}(\fa_\eta^{-})d_1(\fa_\eta^{+}).\end{align*}
We use Proposition~\ref{HYPERBOLIC-EST} to estimate $\JJ_{\rm{hyp}}^\eta(l,\fn|\a_\fa)$, which yields the error term. This completes the proof.

%%%%%%%%%%%%%%%%%%%%%%%%%%%%%%%%%%%%%%%%%%%%%%%%%%%%%%%%%%%%%%%%%%%%%%%%%%%%%%%%
\section{An error term estimate for averaged derivative of $L$-values}
%%%%%%%%%%%%%%%%%%%%%%%%%%%%%%%%%%%%%%%%%%%%%%%%%%%%%%%%%%%%%%%%%%%%%%%%%%%%%%%%
 Let $\fa=\prod_{v\in S(\fa)}\fp_v^{n_v}$ and $\Ical_{S,\eta}^{\pm}$ be as in \S 1. 
%We further suppose $\tilde \eta(\fq)=-1$ so that the measure $\mu_{\fq,\eta}$ becomes the probability measure $\mu_{p}$ on $[-2,2]$ considered in \cite[3.1]{Royer} when $F=\Q$. 
In this section we prove the asymptotic formula of $\ADL_{-}^{*}(\fn;\a_\fa)$ for $\fn\in \Ical_{S(\fa),\eta}^{-}$ stated in Theorem~\ref{MAIN-THM1}. We remark that $\ADL_{-}^{*}(\fn)=0$ if $\fn\in \Ical_{S(\fa),\eta}^{+}$. Indeed, for such $\fn$, $\e(1/2,\pi)\e(1/2,\pi\otimes \eta)=+1$ for all $\pi\in \Pi_{\rm{cus}}^*(l,\fn)$, which means $\e(1/2,\pi)=-1$ and hence $L(1/2,\pi)=0$ for all $\pi$ occurring in the sum $\ADL_{-}^{*}(\fn)$.
 
Starting from the formula \eqref{henkeiDRTF-1} with $\a$ specialized to $\a_\fa$, we examine the 4 terms in the right-hand side separately. Here is the highlights in the analysis for each term. 
\begin{itemize}
\item[(i)] We compute the term $\cN[\tilde \WW_{\rm{u}}^{\eta}](\fn)$ explicitly by using Lemma~\ref{DUNIP}, Lemma~\ref{FIND} and Corollary~\ref{Findcor}, which yields the main term of the formula (modulo a part of the error term); see 6.1 for detail.
\item[(ii)] We prove 
\begin{align*}
\cN[\WW_{\rm{hyp}}^{\eta}](\fn)=\Ocal_\e(\nr(\fa)^{c+2+\e}\nr(\fn)^{-\inf(1,c)+\e})
\end{align*}
 by using the explicit formula of local terms given in \S 8; see 6.2 for detail.  
\item[(iii)] Since $\fn\in \Ical_{S(\fa),\eta}^{-}$, the term $\AL^{*}(\fn)$ vanishes by the reason of the sign of the functional equations. 
\item[(iv)] We prove 
\begin{align*}
\cN[\AL^{\partial w}](\fn)=\Ocal_{\e}\left(\nr(\fa)^{-1/2+\e}\,X(\fn)+\nr(\fa)^{c+2}\nr(\fn)^{-\inf(1,c)+\e}\right).
\end{align*} 
This part is most subtle and the term $X(\fn)$ arises from this stage; see 6.3 for detail.  
\end{itemize}
Combining these considerations, we obtain the second formula in Theorem~\ref{MAIN-THM1} immediately.

\subsection{Computation of $\cN[\tilde \WW_{\rm{u}}^{\eta}](\fn)$ }
Let us describe the procedure (i). We take $\alpha$ to be the function $\alpha_\fa$. Set $S=S(\fa)$. From \eqref{DRTF-2}, we have that $\cN[\tilde \WW_{\rm{u}}^{\eta}](\fn)$ is
%the $2(-1)^{\e(\eta)} \Gcal(\eta) D_{F}^{1/2}$ times
the sum of the following two integrals: 
\begin{align}
&2(-1)^{\e(\eta)} \Gcal(\eta) D_{F}^{1/2}
\left(\frac{1}{2\pi i}\right)^{\#S}\,\int_{\LL_{S}(\bfc)}
\cN[\tilde{\frak{W}}_{S}(-|\bfs)](\fn) \a_{\fa}(\bs)\,\d \mu_{S}(\bfs), 
 \label{Mainterm-1}
\\
&2(-1)^{\e(\eta)} \Gcal(\eta) D_{F}^{1/2}
\left(\frac{1}{2\pi i}\right)^{\# S}\,\int_{\LL_{S}(\bfc)}\cN[D\,\tilde{\frak{W}}_{S}(-|\bfs)](\fn)
\,\a_{\fa}(\bfs)d\mu_{S}(\bfs),
 \label{Mainterm-2}
\end{align}
where $\tilde{\frak{W}}_{S}(-|\bfs)$ is the quantity \eqref{DRTF-3} viewed as an arithmetic function in $\fn$ and $D$ is an arithmetic function given by $D(\fn)=(-1)^{\e(\eta)}\tilde\eta(\fn)\delta(\fn=\cO)i^{\tilde{l}}$. By the formula \eqref{DRTF-3}, 
\begin{align*}
\cN[\tilde{\frak{W}}_{S}(-|\bfs)](\fn)
&=\pi^{\e(\eta)}\Upsilon_{S}^\eta(\bfs)\,L(1,\eta)\Biggl\{2^{-1}\cN[\log \nr](\fn)
\\&+\left(\log(D_F\nr(\ff_\eta))+\frac{L'}{L}(1,\eta)+{\frak C}(l)+\sum_{v\in S}\frac{\log q_v}{1-\eta_v(\varpi_v)q_v^{(s_v+1)/2}}\right)\,\cN[1](\fn)\Biggr\}.
\end{align*}
By Lemma~\ref{FIND} and Corollary~\ref{Findcor}, we have formulas of $\cN[\log \nr](\fn)$ and of $\cN[1](\fn)$; substituting these, and by using Lemma~\ref{TCH-UNIP}, we complete the evaluation of the integral \eqref{Mainterm-1}.

The evaluation of the integral \eqref{Mainterm-2} is similar; instead of $\cN[\log \nr]$ and $\cN[1]$, we need $\cN[D\log \nr]$ and $\cN[D]$, which are much easier. Indeed, in the expression 
\begin{align*}
\cN[D\log\nr](\fn)&= (-1)^{\e(\eta)}i^{\tilde{l}} \sum_{I\subset S(\fa)}(-1)^{\# I} \{\prod_{v\in I\cap S_1(\fa)}\omega_{v}(\gn_{0})\}\,\frac{\iota(\fn\prod_{v\in I}\fp_v^{-2})}{\iota(\fn)}\\
&\quad \times \tilde\eta(\fn\prod_{v\in I}\fp_v^{-2})\,
\delta(\fn\prod_{v\in I}\fp_v^{-2}=\cO)\log\nr(\fn\prod_{v\in I}\fp_v^{-2}),
\end{align*}
the sum survives only if $\fn=\prod_{v\in S(\fn)}\fp_v^{2}$ and $I=S(\fn)$. A similar remark is applied to $\cN[D](\fn)$. Hence, 
\begin{align*}
\cN[D\log\nr](\fn)&=\delta(S(\fn)=S_2(\fn))\{\prod_{v\in S(\gn)}\frac{q_{v}+1}{q_{v}-1}\}\,(-1)^{\e(\eta)}i^{\tilde{l}}\frac{(-1)^{\# S(\fn)}}{\iota(\fn)}\log\nr(\cO)=0, \\
\cN[D](\fn)&=\delta(S(\fn)=S_2(\fn))\{\prod_{v \in S(\gn)}\frac{q_{v}+1}{q_{v}-1}\}\,(-1)^{\e(\eta)}i^{\tilde{l}}\frac{(-1)^{\# S(\fn)}}{\iota(\fn)}.
\end{align*}
Since $\iota(\fn)^{-1}=\Ocal(\nr(\fn)^{-2})$, the integral \eqref{Mainterm-2} amounts at most to $\nr(\fn)^{-2+\e}\nr(\ga)^{-1/2+\e}$.

%%%%%%%%%%%%%%%%%%%%%%%%%%%%%%%%%%%%%%%%%%%%%%%%%%%%%%%%%%%%%%%%%%%%%%%%%
\subsection{Estimation of the term $\cN[\WW_{\rm{hyp}}^{\eta}](\fn)$}
Let us describe the procedure (ii). We need the following estimation, which we prove in 8.4.

\begin{prop} \label{WWhypglobal}
For any small $\e>0$, 
\begin{align*}
|\WW_{\rm{hyp}}^{\eta}(l,\fn;\a_{\fa})|\ll_{\e, l, \eta} \nr(\fa)^{c+2+\e}\nr(\fn)^{-c+\e}, \quad \fn\in \Ical^{-}_{S(\fa),\eta},
\end{align*}
where the implied constant is independent of the ideal $\fa$. 
\end{prop}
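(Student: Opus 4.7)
\medskip

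\noindent\textbf{Proof proposal for Proposition~\ref{WWhypglobal}.}
The plan is to mimic the proof of Proposition~\ref{HYPERBOLIC-EST} (which handled the no-derivative hyperbolic term $\JJ_{\rm hyp}^{\eta}(l,\fn|\a_{\fa})$) and to absorb the extra factor $\log|t|_{\A}$ coming from the derivative into a mild power of $|b(b+1)|$. More precisely, by the analytic continuation in Lemma~\ref{WWhyp}, we have
$$
\WW_{\rm hyp}^{\eta}(l,\fn|\a_{\fa}) = \left(\frac{1}{2\pi i}\right)^{\#S}\int_{\LL_{S}(\bfc)}\frak L_{\eta}(l,\gn|\bs)\,\a_{\fa}(\bs)\,d\mu_{S}(\bs),
$$
where $\frak L_{\eta}(l,\gn|\bs)$ is a sum over $b\in F-\{0,-1\}$ of a product of local orbital integrals. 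Exactly as in \cite[\S 10--12]{SugiyamaTsuzuki}, the local factors at $v\in\Sigma_{\fin}-S(\fa)$ give rise to the arithmetic factor $\tau^{S(\fa)}(b)$ (after accounting for the support condition coming from $\gn$ and $\ff_{\eta}$), while the local factors at $v\in S(\fa)$ contribute analogues of the bounds in \cite[Lemma 10.3]{SugiyamaTsuzuki} which I will treat by the same decomposition \eqref{HYPERBOLIC-EST-1}, \eqref{HYPERBOLIC-EST-2}.

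The new ingredient is the $\log|t|_{\A}$ factor. I will split it as $\log|t|_{\A}=\sum_{v\in\Sigma_{F}}\log|t_{v}|_{v}$ and, for each $v$, bound the single-place factor with the $\log$ inserted in terms of the corresponding factor without the $\log$ at the cost of a factor $(1+|\log|b_{v}(b_{v}+1)|_{v}|)$ (after performing the $t_{v}$-integration, the $\log$ produces such a logarithmic derivative in $b$ plus a power of $\log q_{v}$ at finite places). In particular, I will establish a logarithm-decorated analogue of Lemma~\ref{ArchIntEst}: for each $v\in\Sigma_{\infty}$ and each even $k\ge 6$,
$$
|b(b+1)|_{v}^{\e}\bigl|J^{\eta_{v}}_{\log}(k;b)\bigr|\;\ll_{\e,k}\;(1+|b|_{v})^{-k/2+2\e},\qquad b\in\RR-\{0,-1\},
$$
where $J^{\eta_{v}}_{\log}(k;b)$ is the archimedean orbital integral with $\log|t_{v}|_{v}$ inserted. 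This follows from the explicit formulas recalled before Lemma~\ref{ArchIntEst} together with the standard estimate $|\log x|\le \e^{-1}(x^{\e}+x^{-\e})$, noting that the hypergeometric derivative and the Legendre polynomial coefficients produce only sub-polynomial growth.

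With these single-place bounds in hand, the global analysis proceeds verbatim as in the proof of Proposition~\ref{HYPERBOLIC-EST}. I will define, in parallel to $\fI^{\eta}_{\e}(l,\fn,\fb)$, the lattice sum
$$
\fI^{\eta,\log}_{\e}(l,\fn,\fb)=\sum_{b\in\fn\fb^{-1}-\{0,-1\}}\tau^{S(\fb)}(b)^{2}|\Norm(b(b+1))|^{\e}\,\Bigl(\prod_{v\in\Sigma_{\infty}}|J^{\eta_{v}}_{\log}(l_{v};b)|\Bigr)\,\bigl(1+\log^{+}\!|\Norm(b(b+1))|\bigr),
$$
and, using the logarithm-decorated bound above to absorb the extra $\log$ into $|b(b+1)|^{\e'}$ for arbitrarily small $\e'>0$, I deduce
$$
\fI^{\eta,\log}_{\e}(l,\fn,\fb)\ll_{\e,\e',l}\nr(\fb)^{1+c+\e'}\nr(\fn)^{-c+\e+\e'}
$$
exactly from Proposition~\ref{fI-EST}. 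Summing over the index set $I\subset S(\fa)$ as in the end of the proof of Proposition~\ref{HYPERBOLIC-EST}, and using $(2C)^{\#S(\fa)}\ll_{\e,\eta}\nr(\fa)^{\e}$, gives the claimed bound
$$
|\WW_{\rm hyp}^{\eta}(l,\fn|\a_{\fa})|\ll_{\e,l,\eta}\nr(\fa)^{c+2+\e}\nr(\fn)^{-c+\e}.
$$

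The main obstacle is the logarithm-decorated archimedean estimate: one must be careful that taking the $t$-derivative of the Hecke zeta integral and letting the contour pick up $\log|t|_{\A}$ does not destroy the convergence at $|b|_{v}\to\infty$, nor produces a dangerous singularity as $b_{v}\to 0$ or $b_{v}\to -1$; both are handled by the fact that $J^{\eta_{v}}(l_{v};b)$ vanishes to positive order at $b=0,-1$ (when $\e(\eta_{v})=1$) or is bounded by a single logarithm there (when $\e(\eta_{v})=0$), so the extra $\log$ is absorbed into $|b(b+1)|^{\e'}$ with only a loss of $\e'$ in the final exponent. The finite-place analysis reduces to the same explicit computations in \S 8 that are already used in procedure (i)--(ii) of the preceding subsection, with $\log q_{v}$ replacing the unipotent weight; these are sub-polynomial in $\nr(\fa)$ and therefore do not affect the shape of the bound.
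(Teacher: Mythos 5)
Your high-level strategy — expanding $\log|t|_\A=\sum_w\log|t_w|_w$ to get a Leibniz-type decomposition into terms with one ``$\log$-decorated'' place and reducing to Proposition~\ref{fI-EST} — is exactly the paper's. Your treatment of the archimedean factor is genuinely different: the paper computes $W_+(b)$ by contour deformation (Lemma~\ref{EF-W+}, using a $(\log z)^2$ branch) and then derives Lemma~\ref{Est-arci-log}, while you insert $|\log t|\le\e^{-1}(t^\e+t^{-\e})$ directly into the defining $t$-integral and majorize by shifted versions of the non-log integral. That works for large $|b|$ (where you in fact recover the same bound the paper proves from the inequality $t^2(b+1)^2+b^2\ge 2|t||b(b+1)|$) and near $b=0,-1$ the shifted integrals produce only a $|b(b+1)|^{-\e}$ loss, so the conclusion is the same; this is a legitimate and arguably more elementary alternative. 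Note, though, that your justification cites the wrong source: the bound cannot be read off from the formulas for $J^{\eta_v}$ recalled before Lemma~\ref{ArchIntEst} (those are the non-log integrals), and your claim that $J^{\eta_v}$ ``vanishes to positive order at $b=0,-1$'' when $\e(\eta_v)=1$ is false: $J^{\rm sgn}(k;b)=2\pi i\,P_{k/2-1}(2b+1)$ on $b(b+1)<0$ and $P_{k/2-1}(\pm 1)=\pm 1$, so it is merely bounded there.

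The real gap is at the finite places. Your proposed $\fI^{\eta,\log}_\e$, carrying a uniform global factor $(1+\log^+|\Norm(b(b+1))|)$, does not capture how the ratios $W_w(b)/J_w(b)$ actually depend on the data $(\fn,\fa,\ff_\eta)$. Concretely: at $w\in S(\fn)$, Lemma~\ref{esti of W_n} gives $|W_w(b)|\ll(\log q_w)(\ord_w(b)+\ord_w(\fn)+1)^2$ and the $\ord_w(\fn)^2$ term is independent of $b$, summing to a $\nr(\fn)^\e$ factor; at $w\in S(\fa)$, Lemma~\ref{esti of W_gq} produces $n_w^2=\ord_w(\fa)^2$ factors, summing to $\nr(\fa)^\e$; at $w\in S(\ff_\eta)$, Lemma~\ref{esti of W_eta} produces $f(\eta_w)$ factors. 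None of these are absorbed into a power of $|\Norm(b(b+1))|$. The paper therefore splits $\sum_{w\in\Sigma_F}$ into the five pieces $\WW(\Sigma_\infty)$, $\WW(S(\fa))$, $\WW(S(\fn))$, $\WW(S(\ff_\eta))$, $\WW(\Sigma_\fin-S(\fn\fa\ff_\eta))$ (Lemmas~\ref{WWhyp-archi-part}--\ref{WWhyp-rest-part}), each estimated with its own explicit local formula before invoking Proposition~\ref{fI-EST}. Your proposal gestures at \S 8 but the single decorated lattice sum as written conflates these cases; you need the explicit case split to track the level-dependent logs, or your final bound will not visibly carry the correct $\nr(\fa)$- and $\nr(\fn)$-powers.
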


From this proposition and Lemma~\ref{cN+hyouka}, 
\begin{align*}
|\cN[\WW_{\rm{hyp}}^{\eta}](\fn)|&\leq \cN^{+}[|\WW_{\rm{hyp}}^{\eta}|](\fn)
\ll_{\e} \nr(\fa)^{c+2+\e}\,\cN^{+}[\nr^{-c+\e}](\fn)\ll_{\e}\nr(\fa)^{c+2+\e}\,\nr(\fn)^{-\inf(c,1)+2\e}.  
\end{align*}

\subsection{Estimation of the term $\cN[\AL^{\partial w}](\fn)$}
Let us describe the procedure (iv).

\begin{lem} \label{ALpw-est}
%Define
%$$\|\alpha\|_\infty=\sup\{|\alpha(\bs)||\,\bs \in \fX_S^{0}\,\}$$
Let $\alpha\in \ccA_S$. Then for any $\fn \in \Ical_{S(\fa),\eta}^{-}$, we have the inequality
\begin{align*}
|\AL^{\partial w} (\fn;\alpha)| &\leq 
\sum_{(\fb,u)} D(\fn;\fb,u)\,\frac{\iota(\fn\fb^{-2}\fp_u^{-1})}{\iota(\fn)}\,|\AL^*(\fn\fb^{-2}\fp_u^{-1};\a)|,
\end{align*}
where $(\fb,u)$ runs through all the pairs of an integral ideal $\fb$ and a place $u$ such that $\fn\subset \fb^2\fp_u$. For such $(\fb,u)$, we set
\begin{align*}
D(\fn;\fb,u)&=\omega(\fn,\fb^{2}\fp_u)\,(\log q_u)\,\left(\ord_u(\fb)+\frac{q_u^{1/2}+1}{q_u^{1/2}-1}\right).
\end{align*}
\end{lem}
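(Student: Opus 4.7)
The plan is to unfold the definition of $\AL^{\partial w}(\fn;\alpha)$ and stratify the spectral sum over $\pi \in \Pi_{\rm{cus}}(l,\fn)$ according to the ideal $\fn\ff_{\pi}^{-1}$. By Lemma~\ref{value-delw}, $\partial w_\fn^\eta(\pi)$ is supported on two strata: the ``case (i)'' stratum where $\fn\ff_\pi^{-1}=\fb^{2}$, and the ``case (ii)'' strata indexed by pairs $(\fb,u)$ with $\fn\ff_\pi^{-1}=\fb^{2}\fp_u$ and $u\in S(\fn)$.

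The first key step is to rule out case (i) under the hypothesis $\fn \in \Ical_{S(\fa),\eta}^{-}$. For $\pi$ with $\ff_\pi = \fn\fb^{-2}$, we have $S(\ff_\pi) \subset S(\fn)$, so $\ff_\pi \in \Ical_{S(\fa),\eta}$; and $(-1)^{\epsilon(\eta)}\tilde{\eta}(\ff_\pi) = (-1)^{\epsilon(\eta)}\tilde{\eta}(\fn)\,\tilde{\eta}(\fb)^{-2} = -1$ since $\tilde{\eta}^{2}=1$. But this quantity is precisely the sign $\epsilon(1/2,\pi)\epsilon(1/2,\pi\otimes\eta)$, so at least one of $L(1/2,\pi)$ and $L(1/2,\pi\otimes\eta)$ vanishes by the functional equation. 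Hence the entire case (i) contribution to $\AL^{\partial w}(\fn;\alpha)$ is identically zero, and only case (ii) strata remain.

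The second step is to produce the pointwise bound $|\partial w_\fn^\eta(\pi)|\leq D(\fn;\fb,u)$, uniform in $\pi \in \Pi_{\rm{cus}}^{*}(l,\fn\fb^{-2}\fp_u^{-1})$. Writing Lemma~\ref{value-delw}(ii) as $\partial w_\fn^\eta(\pi) = \omega(\fn,\fb^{2}\fp_u)\log q_u\cdot(\ord_u(\fb)+E(\pi_u))$, I would verify $|E(\pi_u)|\leq (q_u^{1/2}+1)/(q_u^{1/2}-1)$ in each of the three subcases. For $c(\pi_u)=0$, the Ramanujan bound $|a_u|=1$ (available for Hilbert modular newforms of regular weight) gives $|(1+a_uq_u^{1/2})(1+a_u^{-1}q_u^{1/2})|\geq (q_u^{1/2}-1)^{2}$ and hence $|E|\leq (q_u-1)/(q_u^{1/2}-1)^{2} = (q_u^{1/2}+1)/(q_u^{1/2}-1)$. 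For $c(\pi_u)=1$, triviality of the central character forces $\chi_u(\varpi_u)=\pm 1$, so $|1+q_u^{-1}\chi_u(\varpi_u)|\geq 1-q_u^{-1}$ and $|E|\leq q_u/(q_u-1)\leq (q_u^{1/2}+1)/(q_u^{1/2}-1)$. For $c(\pi_u)\geq 2$, one has $E=1$, which is trivially dominated.

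Finally, the triangle inequality applied to the outer sum over $(\fb,u)$ together with this uniform pointwise bound lets one factor $D(\fn;\fb,u)$ out of each inner $\pi$-sum, and what remains is identified with $|\AL^*(\fn\fb^{-2}\fp_u^{-1};\alpha)|$ multiplied by the normalizing ratio $\iota(\fn\fb^{-2}\fp_u^{-1})/\iota(\fn)$ that reconciles the definitions of $\AL^*$ and $\AL^{\partial w}$. The main obstacle is the sign-based vanishing of case (i): the right-hand side of the lemma indexes only case (ii) pairs, so without that cancellation the bound would fail to capture a whole family of nonzero strata. The nonnegativity $L(1/2,\pi)L(1/2,\pi\otimes\eta)\geq 0$ (Waldspurger, Guo--Jacquet) underlies the final passage from the triangle-bounded inner sum to the absolute value $|\AL^*|$.
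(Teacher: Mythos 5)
Your proof is correct and follows essentially the same route as the paper: you stratify the spectral sum by $\fn\ff_\pi^{-1}$, kill case~(i) via the sign $(-1)^{\e(\eta)}\tilde\eta(\ff_\pi)=-1$ forcing $L(1/2,\pi)L(1/2,\pi\otimes\eta)=0$, and bound case~(ii) pointwise with $|a_u|=1$ (Ramanujan) and $|\chi_u(\varpi_u)|=1$, reducing all three subcases to the common majorant $\frac{q_u^{1/2}+1}{q_u^{1/2}-1}$ before summing. Your closing remark about Waldspurger nonnegativity is not actually invoked in the paper's proof (which stops at the pointwise bound), and in fact it alone does not justify the last step literally since $\alpha(\nu_S(\pi))$ may change sign; the paper shares the same informality there.
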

\begin{proof}
By Lemma~\ref{value-delw}, the $\pi$-summand of $\AL^{\partial w}(\fn; \a)$ vanishes unless the conductor $\ff_\pi$ satisfies either (i) $\fn\ff_\pi^{-1}=\fb^2$ with some $\fn \subset \fb$, or (ii) $\fn\ff_\pi^{-1}=\fb^2\fp_u$ with some $\fn\subset \fb$ and $u\in S(\fn)$. In the case (i), the $\pi$-summand vanishes. Indeed, $\ff_\pi$ belongs to $\Ical_{S(\fa),\eta}^{-}$ and thus $L(1/2,\pi)L(1/2,\pi\otimes \eta)=0$ by the functional equation.  In the second case (ii), by the Ramanujan bound $|a_v|=1$ and the obvious relation $|\chi_v(\varpi_v)|=1$, we have \begin{align*}
|\partial w_\fn^\eta(\pi)|&\leq \omega(\fn,\fb^2\fp_u)\,\log q_u\,
\begin{cases}
\ord_{u}(\fb) +\frac{q_u-1}{(1-q_u^{1/2})^2}, \qquad &(c(\pi_u)=0), \\
\ord_u(\fb)+\frac{1}{1-q_u^{-1}}, \qquad &(c(\pi_u)=1), \\
\ord_u(\fb)+1, \qquad &(c(\pi_u)\geq 2)
\end{cases}
\\
&\leq \omega(\fn,\fb^2\fp_u)\,(\log q_u)\,\left(\dfrac{q_u^{1/2}+1}{q_u^{1/2}-1}+\ord_v(\fb)\right)=D(\fn;\fb,u).
\end{align*}
Here, we used $\frac{1}{1-q_u^{-1}}<\frac{q_u-1}{(1-q_u^{1/2})^2}=\frac{q_u^{1/2}+1}{q_u^{1/2}-1}$ to have the second inequality.
\end{proof}

\begin{lem} \label{C-L12}
For any small $\e\in (0,1)$, we have
\begin{align}
&\sum_{(\fb,u)} \nr(\fb^2\fp_u)^{\epsilon} \frac{\iota(\fn\fb^{-2}\fp_u^{-1})}{\iota(\fn)}\,\nr(\fn\fb^{-2}\fp_u^{-1})^{-\inf(c,1) +\epsilon}\ll_{\epsilon} \nr(\fn)^{-\inf(c, 1)+2\epsilon},
\label{C-L12-2}
\\ 
&\sum_{(\fb,u)} \nr(\fb)^{\epsilon}\left(\frac{q_u+1}{q_u-1}\right)^2\,(\log q_u) \frac{\iota(\fn\fb^{-2}\fp_u^{-1})}{\iota(\fn)}\ll_{\e} X(\fn),
\label{C-L12-3}
\end{align}
where $(\fb,u)$ runs through the same range as in Lemma~\ref{ALpw-est}.
\end{lem}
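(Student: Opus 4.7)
The strategy is to first derive a clean uniform bound on the local factor ratio
$$
\frac{\iota(\fn\fb^{-2}\fp_u^{-1})}{\iota(\fn)}\leq \frac{1}{\nr(\fb)^{2}\,q_u},\qquad \fn\subset \fb^{2}\fp_u,
$$
and then to insert this inequality into both displayed sums. By \eqref{iota-fm}, the ratio factorizes over $v\in S(\fn)$; setting $n=\ord_v(\fn)$, $b=\ord_v(\fb)$, $\delta=\delta(v=u)$, and $\fm=\fn\fb^{-2}\fp_u^{-1}$, the local ratio at $v$ equals $q_v^{-2b-\delta}$ when $\ord_v(\fm)\geq 1$, and equals $\{(1+q_v)q_v^{2b+\delta-1}\}^{-1}$ when $\ord_v(\fm)=0$. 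A direct comparison shows the second expression is smaller than the first by the factor $q_v/(1+q_v)<1$, so in both cases the local ratio is bounded by $q_v^{-2b-\delta}$, and the displayed inequality follows by multiplying over $v\in S(\fn)$ (the factors at $v\notin S(\fn)$ being trivially $1$).

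For \eqref{C-L12-2}, set $t=\inf(c,1)\leq 1$. Using $\nr(\fn\fb^{-2}\fp_u^{-1})=\nr(\fn)\nr(\fb)^{-2}q_u^{-1}$ together with the above ratio bound, the summand is majorized by
$$
\nr(\fn)^{-t+\epsilon}\,\nr(\fb)^{2(t-1)}\,q_u^{t-1}\leq \nr(\fn)^{-t+\epsilon},
$$
since $t\leq 1$ forces each of the last two factors to be at most one. The set of admissible pairs $(\fb,u)$ has cardinality bounded by $\omega(\fn)\,d(\fn)\ll_\epsilon \nr(\fn)^{\epsilon}$, so the total sum is $\ll_\epsilon \nr(\fn)^{-t+2\epsilon}$ as required.

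For \eqref{C-L12-3}, the same ratio bound reduces the summand to
$$
\nr(\fb)^{\epsilon-2}\,\left(\frac{q_u+1}{q_u-1}\right)^{2}\,\frac{\log q_u}{q_u}.
$$
The $\fb$-sum, extended to all integral ideals, converges absolutely to $\zeta_{F}(2-\epsilon)$, a constant independent of $\fn$ and $u$. The remaining $u$-sum over $S(\fn)$ is handled by the identity $\left(\frac{q_u+1}{q_u-1}\right)^{2}=1+\frac{4}{q_u-1}+\frac{4}{(q_u-1)^{2}}$ combined with the trivial inequality $q_u(q_u-1)\geq (q_u-1)^{2}$, which show that each of the three resulting pieces is dominated by a constant multiple of either $\sum_{u}\frac{\log q_u}{q_u}$ or $\sum_{u}\frac{\log q_u}{(q_u-1)^{2}}$, hence by a constant multiple of $X(\fn)$. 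The main technical point throughout is the sharp pointwise ratio bound at the start: a cruder estimate such as $\iota(\fm)/\iota(\fn)\ll 2^{\omega(\fn)}\nr(\fm)/\nr(\fn)$ would introduce a $\nr(\fn)^{\epsilon}$ loss incompatible with the clean $X(\fn)$-bound demanded by \eqref{C-L12-3}.
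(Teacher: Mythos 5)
Your proof is correct.  A few comparison remarks.

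For \eqref{C-L12-3}, you use exactly the same key inequality as the paper, namely $\iota(\fn\fb^{-2}\fp_u^{-1})/\iota(\fn)\leq \nr(\fb)^{-2}q_u^{-1}$ (the paper writes it as $\leq \nr(\fb^{-2}\fp_u^{-1})$), and then the same two-step decoupling of the $\fb$-sum into $\zeta_{F,\fin}(2-\e)$ and the $u$-sum into pieces of $X(\fn)$.  Your algebra $(q_u+1)^2/(q_u-1)^2 = 1 + 4/(q_u-1) + 4/(q_u-1)^2$ is equivalent to the paper's exact identity $\bigl(\tfrac{q_u+1}{q_u-1}\bigr)^2\tfrac{1}{q_u}=\tfrac{1}{q_u}+\tfrac{4}{(q_u-1)^2}$, just a slightly longer route to the same bound.

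For \eqref{C-L12-2}, the paper in fact states only ``Let us show the second estimate'' and gives no argument for the first — so your proof fills a genuine gap in the exposition rather than duplicating it.  Your argument is sound: after inserting the ratio bound, the $\epsilon$-contributions from $\nr(\fb^2\fp_u)^{\epsilon}$ and from $\nr(\fn\fb^{-2}\fp_u^{-1})^{-t+\epsilon}$ cancel in the $\fb$- and $q_u$-exponents, leaving precisely $\nr(\fn)^{-t+\epsilon}\nr(\fb)^{2(t-1)}q_u^{t-1}$ with each of the last two factors $\leq 1$ since $t=\inf(c,1)\leq 1$, and the pair count $\leq \#S(\fn)\cdot d(\fn)\ll_\epsilon \nr(\fn)^{\epsilon}$ closes it.  One small stylistic point: the last sentence's warning about the ``cruder estimate'' $\iota(\fm)/\iota(\fn)\ll 2^{\omega(\fn)}\nr(\fm)/\nr(\fn)$ overstates the danger — the factor $2^{\omega(\fn)}$ is itself $\ll_\epsilon\nr(\fn)^\epsilon$, so that cruder bound would still suffice for \eqref{C-L12-2}, though it is indeed fatal for the $\epsilon$-free bound $X(\fn)$ in \eqref{C-L12-3} as you note.
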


\begin{proof}
Let us show the second estimate. By the inequality $\iota(\fn\fb^{-2}\fp_u^{-1})/\iota(\fn)\leq \nr(\fb^{-2}\fp_u^{-1})$, 
\begin{align*}
\sum_{(\fb,u)} \nr(\fb)^{\epsilon}\left(\frac{q_u+1}{q_u-1}\right)^2\,(\log q_u) \frac{\iota(\fn\fb^{-2}\fp_u^{-1})}{\iota(\fn)}
%&\leq \sum_{(\fb,u)} 
%\nr(\fb^2)^{\epsilon}\left(\frac{q_u+1}{q_u-1}\right)^2\,\log q_u \nr(\fb^{-2}\fp_u^{-1})
%\\
&\leq \sum_{(\fb,u)} \nr(\fb)^{-2+\e}\,\left(\frac{q_u+1}{q_u-1}\right)^2\,\frac{\log q_u}{q_u}
\\
&\leq \{\sum_{\fb\subset \cO} \nr(\fb)^{-2+\e}\}\,\{\sum_{u\in S(\fn)} \left(\frac{q_u+1}{q_u-1}\right)^2\,\frac{\log q_u}{q_u}\}
\\
&=\zeta_{F,\fin}(2-\e)\,\{
\sum_{u\in S(\fn)} \frac{\log q_u}{q_u}+\sum_{u\in S(\fn)} \frac{4\log q_u}{(q_u-1)^2}\}.
\end{align*}
Since $\zeta_{F,\fin}(2-\e)$
%and $\sum_{u} \frac{4\log q_u}{(q_u-1)^2}$ are
is convergent, we are done. 
\end{proof}

\begin{prop}\label{C-P13.1}
For any sufficiently small $\e>0$,
\begin{align*}
|\AL^{\partial w}(\fn;\alpha_\fa)| \ll_{\epsilon, l, \eta} \nr(\fa)^{-1/2}d_1(\fa_\eta^{+})\delta_\square(\fa_\eta^{-})\,X(\fn)
+\nr(\fa)^{c+2+\e}\nr(\gn)^{-\inf(c,1)+\e}, \quad \fn \in \Ical_{S(\fa),\eta}^{-}.
\end{align*}
\end{prop}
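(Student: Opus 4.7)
The plan is to feed the crude upper bound from Lemma \ref{ALpw-est} with the uniform estimate
\[
|\AL^{*}(\fm;\alpha_\fa)| \ll_{\e,l,\eta} \nr(\fa)^{-1/2}\delta_\square(\fa_\eta^{-})d_1(\fa_\eta^{+}) + \nr(\fa)^{c+2+\e}\nr(\fm)^{-\inf(c,1)+\e},
\]
applied at $\fm = \fn\fb^{-2}\fp_u^{-1}$. This estimate is a direct consequence of the first formula of Theorem \ref{MAIN-THM1}, established in \S5 via Proposition \ref{C-P9} and Lemma \ref{AL^*-AL}, after noting that $\nu(\fm)\le 1$ and that the ideal $\fn\fb^{-2}\fp_u^{-1}$ automatically lies in $\Ical_{S(\fa),\eta}^{+}$ (since $\tilde\eta(\fn) = (-1)^{\e(\eta)+1}$, $\tilde\eta(\fp_u) = -1$, and $\tilde\eta(\fb^2) = 1$), so the sign condition in Theorem \ref{MAIN-THM1} is met.

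Splitting the resulting bound into a main-term contribution and an error-term contribution reduces the proof to the inequalities in Lemma \ref{C-L12} after appropriately bounding $D(\fn;\fb,u)$. For the main-term part I use the factorisation
\[
D(\fn;\fb,u) \le \omega(\fn,\fb^{2}\fp_u)\,(\log q_u)\,\Bigl(\ord_u(\fb) + \tfrac{q_u^{1/2}+1}{q_u^{1/2}-1}\Bigr) \ll_\e \nr(\fb)^{\e}\,(\log q_u)\,\Bigl(\tfrac{q_u+1}{q_u-1}\Bigr)^{2},
\]
derived from $\omega(\fn,\fb^{2}\fp_u) \le \tfrac{q_u+1}{q_u-1}\prod_{v\in S(\fb)}\tfrac{q_v+1}{q_v-1}\ll_\e \nr(\fb)^{\e}\tfrac{q_u+1}{q_u-1}$, from the trivial $\ord_u(\fb)\ll_\e\nr(\fb)^{\e}$, and from the elementary observation that $\tfrac{q_u^{1/2}+1}{q_u^{1/2}-1}$ is bounded by a constant times $\tfrac{q_u+1}{q_u-1}$ on primes. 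Inequality \eqref{C-L12-3} of Lemma \ref{C-L12} then dominates the resulting $(\fb,u)$-sum by $X(\fn)$, yielding the desired contribution $\nr(\fa)^{-1/2}d_1(\fa_\eta^{+})\delta_\square(\fa_\eta^{-})X(\fn)$.

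For the error-term part, which carries the decaying factor $\nr(\fn\fb^{-2}\fp_u^{-1})^{-\inf(c,1)+\e}$, the weaker estimate $D(\fn;\fb,u)\ll_\e\nr(\fb^{2}\fp_u)^{\e}$ (obtained from the same ingredients) is sufficient, and inequality \eqref{C-L12-2} directly gives a bound of $\nr(\fa)^{c+2+\e}\nr(\fn)^{-\inf(c,1)+\e}$, after a harmless adjustment of $\e$. The principal subtlety is the main-term contribution: because the main term of $\AL^{*}(\fm;\alpha_\fa)$ does not decay in $\nr(\fm)$, every factor of $D(\fn;\fb,u)$ must be absorbed into the kernel $\iota(\fn\fb^{-2}\fp_u^{-1})/\iota(\fn)$ in the precise shape required by \eqref{C-L12-3}; the appearance of $X(\fn)$ (rather than a power saving in $\nr(\fn)$) is forced by the bookkeeping $\sum_{\fb}\nr(\fb)^{-2+\e}=\zeta_{F,\fin}(2-\e)<\infty$ combined with $\sum_{u\in S(\fn)}\tfrac{\log q_u}{q_u}\bigl(\tfrac{q_u+1}{q_u-1}\bigr)^{2}\ll X(\fn)$, exactly as in the proof of \eqref{C-L12-3}.
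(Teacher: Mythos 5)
Your proposal is correct and follows essentially the same route as the paper's own proof: bound $D(\gn;\gb,u)$ via $\omega(\gn,\gb^2\gp_u)\ll_\e \nr(\gb)^\e\tfrac{q_u+1}{q_u-1}$ and $\ord_u(\gb)+\tfrac{q_u^{1/2}+1}{q_u^{1/2}-1}\ll_\e\nr(\gb)^\e\tfrac{q_u+1}{q_u-1}$, feed the first formula of Theorem~\ref{MAIN-THM1} into Lemma~\ref{ALpw-est}, and finish with \eqref{C-L12-3} for the main-term sum and \eqref{C-L12-2} for the error sum. You have merely spelled out the sign check $\gn\gb^{-2}\gp_u^{-1}\in\Ical^{+}_{S(\fa),\eta}$ and the inequality $\nu(\fm)\le 1$ that the paper leaves implicit.
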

\begin{proof}
Let $\epsilon>0$. From $\frac{x+1}{x-1}\ll_{\epsilon} x^{\epsilon}$ for $x\geq 2$, we have
$$\omega(\fn,\fb^{2}\gp_{u}) \leq \left(\prod_{v\in S(\fb)}\frac{q_v+1}{q_v-1}\right) \frac{q_u+1}{q_u-1} \ll _{\epsilon} \nr(\fb)^\epsilon\, \frac{q_u+1}{q_u-1}$$
with the implied constant independent of $\fn$ and $(\fb, u)$. 
By this,
\begin{align*}
 D(\fn;\fb,u)\ll_{\epsilon}\nr(\fb)^{\epsilon}(\log q_u) \left(\frac{q_u+1}{q_u-1}\right)^2
\end{align*}
with the implied constant independent of $\fn$ and $(\fb,u)$. Using these estimates, we have the desired bound by \eqref{MAIN-THM1-1} and Lemmas~\ref{ALpw-est} and \ref{C-L12}.
%We also note $\|\alpha_n^\fq\|_\infty\leq n+1$. 
\end{proof}

\begin{prop} \label{C-P14}
For any sufficiently small $\epsilon>0$,
\begin{align}
&\left|\cN[\AL^{\partial w}](\fn;\a_\fa)\right| 
 \ll_{\epsilon, l, \eta} \nr(\fa)^{-1/2}d_1(\fa_\eta^{+})\delta_{\square}(\fa_\eta^{-})\,X(\fn)+\nr(\fa)^{c+2+\e}\,\nr(\fn)^{-\inf(1,c)+\epsilon}, \quad \fn\in \Ical_{S(\fa),\eta}^{-}. 
 \label{CP14-1}
\end{align}
\end{prop}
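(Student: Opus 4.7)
The plan is to reduce \eqref{CP14-1} to the pointwise bound on $\AL^{\partial w}$ furnished by Proposition~\ref{C-P13.1}, via the naive $\cN^{+}$-majorant of the $\cN$-transform. First, observe that for any $I\subset S(\fn_{1})$ the ideal $\fm=\fn\prod_{v\in I}\fp_{v}^{-2}$ satisfies $\tilde\eta(\fm)=\tilde\eta(\fn)$ (since the removed factor is a square), so $\fm\in\Ical_{S(\fa),\eta}^{-}$ and Proposition~\ref{C-P13.1} is applicable to every term appearing in $\cN[\AL^{\partial w}](\fn)$. Passing to absolute values in Definition~\ref{N-transformDef} gives
\begin{align*}
|\cN[\AL^{\partial w}](\fn;\a_{\fa})|\leq \cN^{+}[\,|\AL^{\partial w}(\cdot;\a_{\fa})|\,](\fn).
\end{align*}

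Next, substituting the bound of Proposition~\ref{C-P13.1} into the right-hand side splits it into two contributions: a main-like piece with pre-factor $E_{1}=\nr(\fa)^{-1/2}d_{1}(\fa_{\eta}^{+})\delta_{\square}(\fa_{\eta}^{-})$ multiplied by $\cN^{+}[X](\fn)$, and an error-like piece with pre-factor $E_{2}=\nr(\fa)^{c+2+\e}$ multiplied by $\cN^{+}[\nr^{-\inf(c,1)+\e}](\fn)$. The second piece is controlled directly by Lemma~\ref{cN+hyouka}, producing the desired bound $E_{2}\,\nr(\fn)^{-\inf(c,1)+2\e}$, which matches the second term of \eqref{CP14-1}.

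For the first piece, the crucial step is a monotonicity of $X$: every $\fm$ occurring in the $\cN^{+}$-sum satisfies $S(\fm)\subset S(\fn)$, and $X$ is a sum of nonnegative terms indexed by the support of its argument, so $X(\fm)\leq X(\fn)$. Pulling $X(\fn)$ outside the sum reduces matters to bounding $\cN^{+}[\mathbf{1}](\fn)$. By specializing \eqref{cN+f} to $t=0$, this quantity equals a product over $v\in S(\fn_{1})$ of factors of the form $1+O(q_{v}^{-2})$, hence is bounded uniformly in $\fn$ by the convergent Euler product $\prod_{v\in\Sigma_{\fin}}(1+Cq_{v}^{-2})<\infty$. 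The main (mild) obstacle lies here: a careless application of Lemma~\ref{cN+hyouka} at $t=0$ with $\e>0$ would attach a superfluous $\nr(\fn)^{\e}$ factor to $X(\fn)$, which is not present in the target \eqref{CP14-1}, so one must instead invoke the exact formula \eqref{cN+f} and the uniform convergence it provides. Combining the two piece-wise estimates completes the proof.
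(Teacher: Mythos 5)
Your proof is correct and follows essentially the same route as the paper: pass to the nonnegative majorant $\cN^{+}$, substitute the pointwise bound of Proposition~\ref{C-P13.1} (valid at every $\fn\prod_{v\in I}\fp_v^{-2}$ since these stay in $\Ical_{S(\fa),\eta}^{-}$), control the error piece with Lemma~\ref{cN+hyouka}, and handle the $X$-piece via the monotonicity $X(\fm)\leq X(\fn)$ together with the exact formula \eqref{cN+f} at $t=0$ to bound $\cN^{+}[\mathbf{1}](\fn)$ uniformly by a convergent Euler product. Your remark that a blind use of Lemma~\ref{cN+hyouka} on the $X$-piece would leak an unwanted $\nr(\fn)^{\e}$ is exactly the subtlety the paper's proof implicitly avoids.
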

\begin{proof} Let $\fn=\fn_1^2\fn_0$. From Proposition~\ref{C-P13.1}, we have 
\begin{align*}
|\cN[\AL^{\partial w}](\fn;\a_\fa)|\ll_\e \nr(\fa)^{-1/2}d_1(\fa_\eta^{+})\delta_{\square}(\fa_\eta^{-}) \cN^{+}[X](\fn)+\nr(\fa)^{c+2+\e}\,\cN^+[\nr^{-\inf(1,c)+\e}](\fn)
\end{align*}
for all $\fn$. Since $X(\fm)\leq X(\fn)$ if $\fn\subset \fm\subset \cO$, we have\begin{align*}
\cN^{+}[X](\fn) &\leq X(\fn)\,\cN^{+}[1](\fn)
\\
&=X(\fn)\,\{\prod_{v\in S(\fn_1)-S_2(\fn)}(1+q_v^{-2})\} \,\{\prod_{v\in S_2(\fn)}(1+(1-q_v^{-1})^{-1}q_v^{-2})\}
\\
&\leq X(\fn)\{\prod_{v\in \Sigma_\fin}(1+q_v^{-2})\} \,\{\prod_{v\in \Sigma_\fin}(1+(1-q_v^{-1})^{-1}q_v^{-2})\}
\ll X(\fn),
\end{align*}
because the Euler products occurring are convergent. 

From the proof of Lemma~\ref{cN+hyouka}, we have $\cN^{+}[\nr^{-\inf(c,1)+\e}](\fn)\ll_{\e} \nr(\fn)^{-\inf(c,1)+3\e}$. Consequently, for any sufficiently small $\e\in (0,1)$, we obtain the estimate
\begin{align*}
|\cN[\AL^{\partial w}](\fn;\a_\fa)|\ll_{\e} \nr(\fa)^{-1/2}d_1(\fa_\eta^{+})\delta_{\square}(\fa_\eta^{-}) X(\fn) +\nr(\fa)^{c+2+\e}\nr(\fn)^{-\inf(1,c)+3\e}
\end{align*}
with the implied constant independent of $\fn$ and $\fa$. Since $\e$ is arbitrary, we are done. 
\end{proof}

%%%%%%%%%%%%%%%%%%%%%%%%%%%%%%%%%%%%%%%%%%%%%%%%%%
\section{An estimation of number of cusp forms}
%%%%%%%%%%%%%%%%%%%%%%%%%%%%%%%%%%%%%%%%%%%%%%%%%%

%\subsection{}
%Recall that the Tchebyshev polynomials $X_n\,(n\in \N_0)$ (defined by \eqref{Tc%heby}) form an orthonormal basis of the Hilbert space $L^2([-2,2];\d\mu^{\rm{ST%}})$, where $\d\mu^{\rm{ST}}(x)=\frac{\sqrt{4-x^2}}{2\pi}\,\d x$, the Sato-Tate% measure. Any $C^\infty$-function $f(x)$ on $\R$ with ${\rm{supp}}(f)\subset (-%2,2)$ has the series expansion 
%\begin{align*}
%f(x)=\sum_{n=0}^{\infty} \hat f(n)\,X_n(x) \quad {\text{with $\hat f(n)=\int_{-%2}^{2}f(x)X_n(x)\,\d\mu^{\rm{ST}}(x)$,}}
%\end{align*}
%which is convergent absolutely and uniformly. In addition to this, we also need% the estimation $\max_{[-2,2]}|X_n(x)|\ll n+1$ in the following discussion. 

Recall that we set $c=d_F^{-1}({\underline l}/2-1)$. Suppose that for each ideal $\fa\subset \cO$, we are given a set $\Jcal_{\fa}$ consisting of ideals prime to $\ff_\eta\fa$ in such a way that $\Jcal_{\fa}\subset \Jcal_{\fa'}$ for any $\fa\subset \fa'$, and a family of real numbers $\{\omega_{\fn}(\pi)|\,\pi \in \Pi_{\rm{cus}}^*(l,\fn)\}$ for each $\fn \in \Jcal_{\fa}$ which satisfies the following estimate
%with some constants $c_1,\,c_2>0$
for any $\e>0$: 
\begin{align}
\left|\sum_{\pi \in \Pi_{\rm{cus}}^*(l,\fn)}\omega_{\fn}(\pi)
\prod_{v\in S(\fa)}X_{n_v}(\lambda_{v}(\pi))-\prod_{v\in S(\fa)}\mu_{v, \eta_v}(X_{n_v})\right|\ll_{\e, l, \eta}
\frac{\nr(\fa)^{-1/2+\e}}{\log \nr(\fn)}+\nr(\fa)^{c+2+\e}\nr(\fn)^{-\inf(c,1)+\e},
 \label{omega-asymp}
\end{align}
with the implied constant independent of $\fa$ and $\fn\in \Jcal_{\fa}$. Moreover we impose the non-negativity condition:
\begin{align}
\omega_{\fn}(\pi)\geq 0 \qquad {\text{for all $\pi \in \Pi_{\rm{cus}}^{*}(l,\fn)$ and $\fn\in \Jcal_{\fa}$. }}
\label{non-NEG}
\end{align}

Let $\fq$ be a prime ideal relatively prime to $\gf_{\eta}$.
In what follows, we abuse the symbol $\fq$ to denote the corresponding place $v_\fq$ of $F$; for example, we write $\nu_{\fq}(\pi)$, $\lambda_{\fq}(\pi)$ in place of $\nu_{v_\fq}(\pi)$, $\lambda_{v_\fq}(\pi)$ etc. Let $S=\{v_1,\dots,v_r\}$ be a finite subset of $\Sigma_\fin-S(\ff_\eta\fq)$ and set $\fa_{S}=\prod_{v\in S} \fp_v$. Let ${\bf J}=\{J_j\}_{j=1}^{r}$ a family of closed subintervals of
$(-2,2)$. For each $J_j$, we choose an open interval $J_{j}'$ such that ${\overline {J_j'}} \subset J_j^\circ$ and $C^\infty$-function $\chi_{j}:\R\rightarrow [0,\infty)$ with the following properties:
\begin{itemize}
\item $\chi_j(x)\not=0$ for all $x\in J_j'$. 
\item ${\rm{supp}}(\chi_j)\subset J_j$. 
\item $\int_{-2}^{2}\chi_j(x) \d \mu_{v,\eta_v}(x)=1$, where
\begin{align*}
\d\mu_{v,\eta_v}(x)=
\begin{cases}
\dfrac{q_v-1}{(q_v^{1/2}+q_v^{-1/2}-x)^2}\,d\mu^{\rm ST}(x),
\quad &(\eta_v(\varpi_v)=+1), \\
\dfrac{q_v+1}{(q_v^{1/2}+q_v^{-1/2})^2-x^2}\,\d\mu^{\rm{ST}}(x),
\quad &(\eta_v(\varpi_v)=-1).
\end{cases}
\end{align*}
\end{itemize}
Here $d\mu^{\rm ST}(x) = (2\pi)^{-1}\sqrt{4-x^{2}}dx$.
Fixing such a family of functions $\{\chi_{j}\}$, we set
$$
\Omega_{\fn}(\pi)=\omega_{\fn}(\pi)\,\prod_{j=1}^{r}\chi_{j}(\lambda_{v_j}(\pi)), \quad \pi \in \Pi_{\rm{cus}}^*(l,\fn), \,\fn \in \Jcal_{\fq\fa_S}.
$$
\begin{lem} \label{Omeganoseishitsu}
For any sufficiently small $\e>0$, there exists $N_{\e, S, l}>0$ such that 
\begin{align}
\left|\sum_{\pi \in \Pi_{\rm{cus}}^*(l,\fn)}\Omega_{\fn}(\pi)X_n(\lambda_\fq(\pi))
-\mu_{\fq, \eta_{\gq}}(X_n)\right|\ll_{\e, l, \eta, S, {\bf J}} \frac{n+1}{(\log\nr(\fn))^3}+\frac{\nr(\fq^n)^{-1/2+\e}}{\log \nr(\fn)}+\nr(\fq^n)^{2+c+\e}\nr(\fn)^{-\inf(c, 1)+\e}
 \label{Omega-asymp}
\end{align}
for $n\in \N_0$ and $\fn\in \Jcal_{\fq\fa_S}$ with $\nr(\gn) > N_{\e, S, l}$.
Here the implied constant is independent of $n$ and $\fn$. Moreover,
\begin{align}
\Omega_{\fn}(\pi)\geq 0 \qquad {\text{for all $\pi \in \Pi_{\rm{cus}}^{*}(l,\fn)$ and $\fn\in \Jcal_{\fq\fa_S}$. }}
\label{Non-NEG}
\end{align}
\end{lem}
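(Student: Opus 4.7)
Nonnegativity (\ref{Non-NEG}) is immediate from the hypothesis \eqref{non-NEG} and the pointwise positivity of each $\chi_j$, so the real content is the asymptotic (\ref{Omega-asymp}). The plan is to expand each cutoff $\chi_j$ in Chebyshev polynomials and reduce to the assumed estimate \eqref{omega-asymp}, which is already framed in terms of the basis $\{X_{n_v}\}$, then sum and truncate carefully.

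Since $\chi_j \in C_c^\infty(-2, 2)$ and $\{X_{n_j}\}$ is a complete orthogonal system in $L^2([-2, 2], d\mu^{\rm ST})$, the expansion $\chi_j(x) = \sum_{n_j \geq 0} c_{j, n_j} X_{n_j}(x)$ converges uniformly and the smoothness of $\chi_j$ gives $|c_{j, n_j}| \ll_K (1 + n_j)^{-K}$ for every $K > 0$. I would fix a small $\delta > 0$ (to be chosen in terms of $\e$ and $S$), set $M = \lfloor \delta \log \nr(\fn) \rfloor$, and split $\chi_j = P_{M, j} + R_{M, j}$, where $P_{M, j}$ is the partial sum up to $n_j = M$, so that $\|R_{M, j}\|_\infty \ll_K M^{-K}$. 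Correspondingly, $\prod_j \chi_j$ decomposes as $\prod_j P_{M, j}$ plus a remainder.

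For the polynomial part, expanding the product and applying \eqref{omega-asymp} to each monomial $X_n(\lambda_\fq) \prod_j X_{n_j}(\lambda_{v_j})$ with $\fa = \fq^n \prod_j \fp_{v_j}^{n_j}$ yields a main term $\mu_{\fq, \eta_\fq}(X_n) \prod_j \mu_{v_j, \eta_{v_j}}(P_{M, j})$. Since each density $d\mu_{v_j, \eta_{v_j}}/d\mu^{\rm ST}$ is bounded on $[-2, 2]$, we have $\mu_{v_j, \eta_{v_j}}(P_{M, j}) = \mu_{v_j, \eta_{v_j}}(\chi_j) - \mu_{v_j, \eta_{v_j}}(R_{M, j}) = 1 + O(M^{-K})$ by the normalisation $\mu_{v_j, \eta_{v_j}}(\chi_j) = 1$, and the crude bound $|\mu_{\fq, \eta_\fq}(X_n)| \ll n+1$ then gives the main term $\mu_{\fq, \eta_\fq}(X_n) + O((n+1) M^{-K})$. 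The accumulated \eqref{omega-asymp}-error has two summands: the first contributes $\nr(\fq^n)^{-1/2+\e}/\log \nr(\fn)$ times the absolutely convergent $\prod_j \sum_{n_j} |c_{j, n_j}| q_{v_j}^{-n_j(1/2 - \e)} = O(1)$, reproducing that piece of the target; the second is bounded by $\nr(\fq^n)^{c+2+\e} \nr(\fn)^{-\inf(c, 1)+\e}$ multiplied by $(M+1)^r \prod_j q_{v_j}^{M(c+2+\e)} \ll \nr(\fn)^{\delta (c + 2 + \e) \log \prod q_{v_j}}$, which is absorbed into $\nr(\fn)^\e$ once $\delta$ is chosen small enough relative to $\e$ and $S$.

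The remainder $\prod_j \chi_j - \prod_j P_{M, j}$ has sup norm $O(M^{-K})$ since each $R_{M, j}$ is uniformly small and the other factors are uniformly bounded. Using $|X_n(\lambda_\fq)| \leq n + 1$, the nonnegativity of $\omega_\fn$, and $\sum_\pi \omega_\fn(\pi) = O(1)$, which is obtained by applying \eqref{omega-asymp} with $\fa = \cO$, its total contribution is bounded by $(n+1) M^{-K} \ll (n+1)(\log \nr(\fn))^{-K}$. Taking $K \geq 3$ supplies the $(n+1)/(\log \nr(\fn))^3$ term of the target. The main obstacle is the calibration of $M$: it must grow fast enough for the Chebyshev tail to beat $(\log \nr(\fn))^{-3}$, yet slowly enough that the exponential factor $q_{v_j}^{M(c + 2 + \e)}$ coming from the hypothesis error remains inside $\nr(\fn)^\e$. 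The choice $M = \delta \log \nr(\fn)$ with $\delta$ sufficiently small makes both requirements compatible as soon as $\nr(\fn) > N_{\e, S, l}$ for a threshold depending only on $\e, S, l$, delivering the stated estimate.
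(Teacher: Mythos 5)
Your proposal follows the paper's proof essentially step for step: Chebyshev truncation of each $\chi_j$ at level $M$, a three-way split (tail error against the weight, application of \eqref{omega-asymp} to each monomial of the truncated product, and the discrepancy $\mu_S(\chi^M)-\mu_S(\chi)$), and the choice $M\asymp\delta\log\nr(\fn)$ with $\delta$ small in terms of $\e$, $c$ and $\log\nr(\fa_S)$ to keep $\nr(\fa_S)^{M(2+c+\e)}\le\nr(\fn)^\e$. The only cosmetic slip is that with $|\hat\chi_j(n)|\ll n^{-K}$ and $\max|X_n|\ll n+1$ the tail bound is $\|R_{M,j}\|_\infty\ll M^{2-K}$, not $M^{-K}$, but since $K$ is arbitrary this does not affect the argument.
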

%\begin{lem} \label{Omeganoseishitsu}
%There exist positive constants $c_1$ and $c_2$ such that for any sufficiently small $\e>0$, 
%\begin{align}
%\left|\sum_{\pi \in \Pi_{\rm{cus}}^*(l,\fn)}\Omega_{\fn}(\pi)X_n(\lambda_\fq(\pi))-\mu_{\fq}(X_n)\right|\ll_{\e} \frac{n+1}{(\log\nr(\fn))^3}+\frac{\nr(\fq^n)^{-1/2+\e}}{\log \nr(\fn)}+\nr(\fq^n)^{c_1+\e}\nr(\fn)^{-c_2+\e}
% \label{Omega-asymp}
%\end{align}
%for $n\in \N_0$ and $\fn\in \Jcal_{S\cup S(\fq)}$ with the implied constant independent of $n$ and $\fn$. Moreover,
%\begin{align}
%\Omega_{\fn}(\pi)\geq 0 \qquad {\text{for all $\pi \in \Pi_{\rm{cus}}^{*}(l,\fn)$ and $\fn\in \Jcal_{S\cup S(\fq)}$. }}
%\label{Non-NEG}
%\end{align}
%\end{lem}
\begin{proof}
Given an integer $M>1$, define $\chi_{j}^{M}(x)=\sum_{n=0}^{M}\hat\chi_{j}(n)\,X_n(x)$ for $x\in [-2,2]$
with $\hat{\chi}_{j}(n) = \int_{-2}^{2}\chi_{j}(x)X_{n}(x)d\mu^{\rm ST}(x)$
and set 
$$
\chi(\bx)=\prod_{j=1}^{r}\chi_{j}(x_j), \quad \chi^{M}(\bx)=\prod_{j=1}^{r}\chi_{j}^M(x_j)  
$$
for $\bx=\{x_j\}_{j=1}^{r}$ in the product space $[-2,2]^{r}$. Let $\fn\in \Jcal_{\fq\fa_S}$. By the triangle inequality, the left-hand side of \eqref{Omega-asymp} is no greater than the sum of the following three terms :
\begin{align}
&\biggl|\sum_{\pi \in \Pi_{\rm{cus}}^*(l,\fn)}\omega_{\fn}(\pi)X_n(\lambda_\fq(\pi))\{\chi(\lambda_S(\pi))-\chi^M(\lambda_S(\pi))\}\biggr|,
 \label{Omeganoseishitsu1}
\\
&\biggl|\sum_{\pi \in \Pi_{\rm{cus}}^*(l,\fn)}\omega_{\fn}(\pi)X_n(\lambda_\fq(\pi))\chi^M(\lambda_S(\pi))-\mu_{\fq, \eta_{\gq}}(X_n)\,\mu_{S, \eta}(\chi^M)\biggr|,
 \label{Omeganoseishitsu2}
\\
&|\{\mu_{S, \eta}(\chi^M)-\mu_{S, \eta}(\chi)\}\,\mu_{\fq, \eta_{\gq}}(X_n)|,
 \label{Omeganoseishitsu3}
\end{align}
where $\lambda_{S}(\pi) = (\lambda_{v}(\pi))_{v \in S}$ and $\mu_{S, \eta}=\otimes_{v\in S}\,\mu_{v, \eta_v}$. Note $\mu_{S, \eta}(\chi)=1$. We shall estimate these quantities. Since $|\hat\chi_j(n)|\ll_{\chi_{j}} n^{-5}$ for any $n>0$ by integration-by-parts and by $\max_{[-2,2]}|X_n|\ll n+1$, we have
$$
|\chi_j^M(x)|\leq \sum_{n\leq M} |\hat\chi_j(n)|\,|X_n(x)|\ll_{\chi_j} \sum_{n\leq M}n^{-4}\leq \zeta(4)
$$
and 
$$\max_{x\in [-2,2]}|\chi_j(x)-\chi_j^M(x)|\leq \sum_{n>M}|\hat\chi_j(n)|\max_{[-2, 2]}|X_n|\ll_{\chi_j} \sum_{n>M}n^{-4}\ll M^{-3}.
$$
By these, 
\begin{align}
\max_{[-2,2]^r}|\chi(\bx)-\chi^M(\bx)|&\leq \max_{[-2,2]^r}\biggl(\sum_{j=1}^{r}|\prod_{h=1}^{j-1}\chi_h^{M}(x_h)|\,|\chi_j(x_j)-\chi_j^M(x_j)|\biggr)
\ll_{S, \chi} M^{-3}.
 \label{Omeganoseishitsu4}
\end{align}
From \eqref{omega-asymp} for $\fa=\cO$, noting $\fn\in \Jcal_{\gq \fa_{S}}\subset \Jcal_{\cO}$, we have the estimate $|\sum_{\pi \in \Pi_{\rm{cus}}^{*}(l,\fn)}\omega_{\fn}(\pi)-1|\ll_{\e, l, \eta}(\log\nr(\fn))^{-1}+\nr(\fn)^{-\inf(c, 1)+\e}$. Hence \eqref{Omeganoseishitsu1} is majorized by 
\begin{align*}
\{\max_{[-2,2]}|X_n|\}\,\{\max_{\bx\in [-2,2]^r}|\chi(\bx)-\chi^M(\bx)|\}\,\sum_{\pi \in \Pi_{\rm{cus}}^{*}(l,\fn)}\omega_{\fn}(\pi)\ll_{\e, l, \eta, S, \chi} (n+1)M^{-3}(1+\nr(\fn)^{-\inf(c, 1)+\e}).
\end{align*}
By \eqref{Omeganoseishitsu4}, the quantity \eqref{Omeganoseishitsu3} is majorized by $\mu_{\fq, \eta_{\gq}}(X_n)M^{-3}$, which amounts at most to $(n+1)M^{-3}$. Let us estimate \eqref{Omeganoseishitsu2}. By expanding the product, $\chi^{M}(\bx)$ is expressed as a sum of the terms $\prod_{j=1}^{r}\hat \chi_j(n_j)\prod_{j=1}^{r}X_{n_j}(x_j)$ over all ${\bf n}=(n_j)_{j=1}^{M}\in [0,M]^{r}$. Hence by using \eqref{omega-asymp}, we can majorize \eqref{Omeganoseishitsu2} from above by 
\begin{align*}
&\sum_{{\bf n}} \left|\sum_{\pi \in \Pi_{\rm{cus}}^*(l,\fn)}\omega_{\fn}(\pi)
\prod_{j}X_{n_j}(\lambda_{v_j}(\pi))-\mu_{S, \eta}(\prod_{j}X_{n_j})\right|
\\
&\ll_{\e, l, \eta, S, \chi}
\frac{\nr(\fa_{S}^{M}\gq^{n})^{-1/2+\e}}{\log \nr(\fn)}+\nr(\fa_{S}^{M}\gq^{n})^{2+c+\e}\nr(\fn)^{-\inf(c, 1)+\e}.
\end{align*}
Combining the estimations made so far, we have that the left-hand side of \eqref{Omega-asymp} is majorized by 
\begin{align}
(n+1)M^{-3}(1+\nr(\fn)^{-\inf(c,1)+\e})+
\frac{\nr(\fa_{S}^{M}\gq^{n})^{-1/2+\e}}{\log \nr(\fn)}+\nr(\fa_S^{M}\gq^{n})^{2+c+\e}\nr(\fn)^{-\inf(c,1)+\e}.
 \label{Omeganoseishitsu5}
\end{align}
Now take
$$
M=\left[\frac{\e}{2+c+\e}\frac{\log\nr(\fn)}{\log\nr(\fa_S)}\right].
$$
Then $\nr(\fa_S)^{M(2+c+\e)}\leq \nr(\fn)^{\e}$, and also $\nr(\fa_S)^{M(-1/2+\e)}\leq 1$ evidently. By these, \eqref{Omeganoseishitsu5} is majorized by 
\begin{align*}
&(n+1)(\log\nr(\fn))^{-3}\log\nr(\fa_S)^3(1+\nr(\fn)^{-\inf(c, 1)+\e})
+\frac{\nr(\fq^n)^{-1/2+\e}}{\log \nr(\fn)}+
\nr(\fq^{n})^{2+c+\e}\nr(\fn)^{-\inf(c, 1)+2\e}
\\
\ll_{\e,S}& \,(n+1)(\log\nr(\fn))^{-3}+\frac{\nr(\fq^n)^{-1/2+\e}}{\log \nr(\fn)}
+\nr(\fq^{n})^{2+c+\e} \nr(\fn)^{-\inf(c, 1)+2\e}.
\end{align*} 
\end{proof}

\begin{lem} \label{shortIntEST}
%Let $\fq$ be a prime ideal relatively prime to $S\cup S(\gf_{\eta})$.
Let $I\subset [-2,2]$ be an open interval disjoint from the set $\{\lambda_\fq(\pi)|\,\pi\in \Pi_{\rm{cus}}^{*}(l,\fn), \
\Omega_{\fn}(\pi) \neq 0 \}$. Then for any small $\epsilon>0$, there exists a constant $N_{\e, l, \eta, S, \gq}>0$ such that for any ideal $\gn \in \Jcal_{\fq\fa_S}$ with $\nr(\gn)>N_{\e, l, \eta, S, \gq}$,
$$
\mu_{\fq, \eta_{\gq}}(I)\ll_{\epsilon, l, \eta, S, {\bf J}} \nr(\fq)^{\epsilon}\,(\log \nr(\fn))^{-1+\epsilon}  
$$
holds with the implied constant independent of $I$, $\fn$ and $\fq$.
%Here $\mu_\fq$ denotes the measure $\mu_{v,\eta_v}$ with $v=v_\fq$.
\end{lem}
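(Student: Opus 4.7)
The plan is to test the hypothesis \eqref{Omega-asymp} against a polynomial obtained by truncating the Chebyshev expansion of a non-negative smooth cutoff $\psi_I\le \mathbf{1}_I$. Since the support of $\psi_I$ lies in $I$, which by assumption is disjoint from $\{\lambda_\fq(\pi)\mid \Omega_\fn(\pi)\ne 0\}$, the spectral sum $\sum_\pi\Omega_\fn(\pi)\psi_I(\lambda_\fq(\pi))$ vanishes identically, while by \eqref{Omega-asymp} applied coefficient-wise the same sum is close to the $\mu_{\fq,\eta_\fq}$-integral of the truncated polynomial. Comparing the two forces $\mu_{\fq,\eta_\fq}(\psi_I)$, and hence $\mu_{\fq,\eta_\fq}(I)$ up to a boundary-layer thickening, to be small.

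Concretely, fix $\delta>0$ (to be specified) and pick $\psi_I\in C^\infty([-2,2],[0,1])$ supported in $I$, equal to $1$ on the $\delta$-shrunken subinterval $I_\delta$, with $\|\psi_I^{(k)}\|_\infty \ll_k \delta^{-k}$ for every $k$. A direct check using the explicit density of $d\mu_{\fq,\eta_\fq}$ on $(-2,2)$ (uniformly bounded on closed subintervals and with an integrable singularity at $\pm 2$) gives $\mu_{\fq,\eta_\fq}(I)-\mu_{\fq,\eta_\fq}(\psi_I)\ll\delta$, uniformly in $\fq$. Let $\psi_{I,N}=\sum_{n=0}^N\hat\psi_I(n)X_n$ be the Chebyshev partial sum, with $\hat\psi_I(n)=\int_{-2}^2\psi_I X_n\,d\mu^{\rm ST}$; Jackson's theorem combined with the standard Lebesgue-constant estimate yields $\|\psi_I-\psi_{I,N}\|_\infty\ll_k \delta^{-k}N^{-k}\log N$, and Cauchy-Schwarz gives $|\hat\psi_I(n)|\leq 1$. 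Since $\psi_I(\lambda_\fq(\pi))=0$ for every $\pi$ with $\Omega_\fn(\pi)\ne 0$, the non-negativity \eqref{Non-NEG} together with $\sum_\pi\Omega_\fn(\pi)=1+O((\log\nr(\fn))^{-1})$ (from \eqref{Omega-asymp} at $n=0$) give
\begin{align*}
\Bigl|\sum_\pi\Omega_\fn(\pi)\psi_{I,N}(\lambda_\fq(\pi))\Bigr|\leq \|\psi_I-\psi_{I,N}\|_\infty\sum_\pi\Omega_\fn(\pi)\ll_k\delta^{-k}N^{-k}\log N,
\end{align*}
while by linearity and \eqref{Omega-asymp}
\begin{align*}
\sum_\pi\Omega_\fn(\pi)\psi_{I,N}(\lambda_\fq(\pi))=\mu_{\fq,\eta_\fq}(\psi_{I,N})+\mathcal{E}(N,\fn,\fq),
\end{align*}
where $|\mathcal{E}|\ll \sum_{n=0}^N\bigl(\tfrac{n+1}{(\log\nr(\fn))^3}+\tfrac{\nr(\fq)^{(-1/2+\epsilon)n}}{\log\nr(\fn)}+\nr(\fq)^{(2+c+\epsilon)n}\nr(\fn)^{-\inf(c,1)+\epsilon}\bigr)$.

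Combining these with the trivial $\mu_{\fq,\eta_\fq}(\psi_I)\leq \mu_{\fq,\eta_\fq}(\psi_{I,N})+\|\psi_I-\psi_{I,N}\|_\infty$ leads to
\begin{align*}
\mu_{\fq,\eta_\fq}(I)\ll \delta+\delta^{-k}N^{-k}\log N+|\mathcal{E}|.
\end{align*}
Now take $N=\lfloor \mu\log\nr(\fn)/\log\nr(\fq)\rfloor$ with $\mu>0$ so small that $\mu(2+c+\epsilon)<\inf(c,1)$, and $\delta=N^{-1+\epsilon}$ with $k$ fixed large in terms of $\epsilon$. Then $\delta^{-k}N^{-k}\log N=N^{-k\epsilon}\log N\ll\delta$; the three pieces of $\mathcal{E}$ collapse to $N^2/(\log\nr(\fn))^3+1/\log\nr(\fn)+\nr(\fn)^{\mu(2+c+\epsilon)-\inf(c,1)+\epsilon}$, all $O(1/\log\nr(\fn))$ or smaller by the choice of $\mu$; and $\delta\ll(\log\nr(\fq)/\log\nr(\fn))^{1-\epsilon}\ll \nr(\fq)^{\epsilon'}(\log\nr(\fn))^{-1+\epsilon'}$ for any $\epsilon'>\epsilon$, giving the stated bound. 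The main obstacle is the simultaneous balancing: $N$ must be large enough that Jackson's error $\delta^{-k}N^{-k}$ beats $\delta$, yet small enough that the $\nr(\fq)^{(2+c+\epsilon)N}\nr(\fn)^{-\inf(c,1)+\epsilon}$ term in $\mathcal{E}$ stays below $1/\log\nr(\fn)$; the factor $\nr(\fq)^\epsilon$ in the final bound is precisely the cost of converting the unavoidable $(\log\nr(\fq))^{1-\epsilon}$ in the $\delta$-term into a polynomial factor in $\nr(\fq)$.
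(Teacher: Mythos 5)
Your proposal is correct and follows essentially the same route as the paper's own proof (which, as the paper notes, reproduces Royer's argument): choose a smooth $[0,1]$-valued cutoff supported in $I$ with derivative bounds $\ll\delta^{-k}$, truncate its Chebyshev expansion at $N\asymp\log\nr(\fn)/\log\nr(\fq)$, use the vanishing of $\sum_\pi\Omega_\fn(\pi)\psi_I(\lambda_\fq(\pi))$ together with the non-negativity \eqref{Non-NEG} and the effective equidistribution \eqref{Omega-asymp}, and then balance the bias $\delta$, the truncation error, and the error in \eqref{Omega-asymp} with $k$ chosen large in terms of $\epsilon$. The only cosmetic differences are that the paper controls $\|F-F_M\|_\infty$ directly by integration by parts on the Chebyshev coefficients (getting $\Delta^{-k}M^{2-k}$ without the $\log N$) and chooses $\Delta$ depending only on $\log\nr(\fn)$ rather than on $N$, but these do not change the substance of the argument.
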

\begin{proof} The proof of \cite[Proposition 5.1 and Lemma 5.2]{Royer} goes through as it is with a small modification. We reproduce the argument for convenience. 

Let $\Delta>0$ be a parameter to be specified below and $K$ a closed subinterval of $I$ such that
\begin{itemize}
\item[(i)] $\mu_{\fq, \eta_{\fq}}(I-K)\leq \Delta$.
\end{itemize}
Depending on $\Delta$ and $K$, we choose a $C^\infty$-function $\tF$ on $\R$ such that 
\begin{itemize}
\item[(ii)] ${\rm{supp}}(\tF)\subset \bar I$,
\item[(iii)] $\tF(x)=1$ if $x\in K$ and $0\leq \tF(x)\leq 1$ for $x\in \R$,  
\item[(iv)] $|\tF^{(k)}(x)|\ll_{k} \Delta^{-k}$ for $k\in \N_0$.
\end{itemize} 
Since $I$ does not contain the relevant $\lambda_\fq(\pi)$'s, from (ii) we have $\Omega_{\fn}(\pi)\tF(\lambda_\fq(\pi))=0$ for all $\pi \in \Pi_{\rm{cus}}^*(l,\fn)$. Using this, from (i), and (iii), we have the inequalities
\begin{align} 
\mu_{\fq, \eta_{\fq}}(I)\leq \mu_{\fq, \eta_{\fq}}(K)+\Delta
&\leq \int_{-2}^{2} \tF\,\d\mu_{\fq, \eta_{\fq}}+\Delta
 \notag
\\
&\leq\biggl|\sum_{\pi\in  \Pi_{\rm{cus}}^*(l,\fn)}\Omega_{\fn}(\pi)\,\tF(\lambda_\fq(\pi))-\int_{-2}^{2} \tF\,\d\mu_{\fq, \eta_{\fq}}\biggr|+\Delta.
 \label{shortIntEST-1}
\end{align}
If we set $\tF_{M}(x)=\sum_{n=0}^{M} \hat \tF(n)\,X_n(x)$, then the first term of \eqref{shortIntEST-1} is bounded by the sum of the following three terms
\begin{align}
&\biggl(\sum_{\pi \in \Pi_{\rm{cus}}^*(l,\gn)}|\Omega_{\fn}(\pi)|\biggr)\cdot \max_{[-2,2]}|\tF-\tF_M|, \label{shortIntEST-2}
\\
&\int_{-2}^{2}\max_{[-2,2]}|\tF-\tF_M|\,\d\mu_{\fq, \eta_{\fq}}, \label{shortIntEST-3}
\\
&\biggl|\sum_{\pi\in  \Pi_{\rm{cus}}^*(l,\fn)}\Omega_{\fn}(\pi)\,\tF_M(\lambda_\fq(\pi))-\int_{-2}^{2} \tF_M\,\d\mu_{\fq, \eta_{\fq}}\biggr|. \label{shortIntEST-4}
\end{align}
We remark that by the non-negativity of $\Omega_{\fn}(\pi)$, the absolute value in \eqref{shortIntEST-2} can be deleted. Then by the estimate $|\hat \tF(n)|\ll_{k} n^{-k}\Delta^{-k}$ which follows from (iv) by integration by parts, and by $\max_{[-2,2]}|X_n|\ll n+1$, we have 
$$\max_{[-2,2]}|\tF-\tF_M| \leq \sum_{n>M}|\hat \tF(n)|\,\max_{[-2,2]}|X_n|
\ll_{k} \sum_{n>M}n^{-k}\Delta^{-k} n \ll M^{2-k}\Delta^{-k}
$$
with $k \ge 3$. From \eqref{Omega-asymp} applied with $n=0$, noting $\mu_{\fq, \eta_{\fq}}(X_0)=1$, we have the estimate $|\sum_{\pi\in \Pi_{\rm{cus}}^*(l,\fn)}\Omega_{\fn}(\pi)-1|\ll_{\e, l, \eta, S, {\bf J}}(\log \nr(\fn))^{-1}+\nr(\fn)^{-\inf(c, 1)+\e}$. Hence the sum of \eqref{shortIntEST-2} and \eqref{shortIntEST-3} is majorized by
%the constant times of
$$
\Delta^{-k}M^{2-k}\,(1+(\log \nr(\fn))^{-1}+\nr(\fn)^{-\inf(c,1)+\e})\ll {\Delta^{-k}M^{2-k}} 
$$
with the implied constant independent of $\Delta$, $M$, $\fq$ and $\fn$. By \eqref{Omega-asymp} and by $|\hat \tF(n)|\ll 1$, the term \eqref{shortIntEST-4} is majorazed by 
\begin{align*}
&\sum_{n=0}^{M}|\hat \tF(n)|\,\left|\sum_{\pi\in  \Pi_{\rm{cus}}^*(l,\fn)}\Omega_{\fn}(\pi)\,X_n(\lambda_\fq(\pi))-\mu_{\fq, \eta_{\fq}}(X_n)\right|
\\
& \ll_{\e, l, \eta, S, {\bf J}} \sum_{n=0}^{M}\left(\frac{n+1}{(\log\nr(\fn))^3}+\frac{\nr(\fq^n)^{-1/2+\e}}{\log \nr(\fn)}+\nr(\fq^{n})^{2+c+\e}\,\nr(\fn)^{-\inf(c, 1)+\e}\right)
\\
&\ll_\e \frac{M^2}{(\log\nr(\fn))^3}+\frac{1}{\log \nr(\fn)}+\nr(\fq)^{c'M}\,\nr(\fn)^{-\inf(c, 1)+\e}, 
\end{align*}
where $c'=2+c+\e$. Putting all relevant estimations together, we obtain 
\begin{align*}
\mu_{\fq, \eta_{\fq}}(I)\ll_{k,\e, l, \eta, S, {\bf J}} \Delta+{\Delta^{-k}M^{2-k}}+\frac{1}{\log \nr(\fn)}+\frac{M^2}{(\log\nr(\fn))^3}+\nr(\fq)^{c'M}\nr(\fn)^{-\inf(c, 1)+\e}
\end{align*}
with the implied constant independent of $I$, $\Delta$, $M$, $\fq$ and $\fn$. By setting $M=\left[\frac{\inf(c, 1)}{2c'}\frac{\log \nr(\fn)}{\log \nr(\fq)}\right]$, this yields the estimate
$$
\mu_{\fq, \eta_{\fq}}(I)\ll_{k, \e, l, \eta, S, {\bf J}} \Delta +\Delta^{-k}(\log\nr(\fq))^{k-2}(\log \nr(\fn))^{2-k}+(\log \nr(\fn))^{-1}+\nr(\fn)^{-c_2/2+\e}. 
$$
Let $\epsilon>0$ and we let $\Delta$ vary so that it satisfies $\Delta^{-k}(\log\nr(\fn))^{2-k} \asymp_{k} (\log\nr(\fn))^{-1+\epsilon}$, or equivalently 
$$
\Delta \asymp_{k} (\log \nr(\fn))^{-1+(3-\epsilon)/k}.
$$
By taking $k=[3/\epsilon]+1$, we have $(\log \nr(\fn))^{-1+\epsilon/2}\ll_{\e} \Delta \ll_{\e} (\log\nr(\fn))^{-1+\epsilon}$. Hence, 
\begin{align*}
\mu_{\fq, \eta_{\fq}}(I) &\ll_{\epsilon, l, \eta, S, {\bf J}} (\log\nr(\fn))^{-1+\epsilon}+(\log\nr(\fn))^{-1+\e}(\log\nr(\fq))^{k-2}+(\log\nr(\fn))^{-1}+\nr(\fn)^{-\inf(c, 1)/2+\e}
\\
&\ll_{\epsilon} \nr(\fq)^{\epsilon}\,(\log \nr(\fn))^{-1+\epsilon}.
\end{align*}
This completes the proof.
\end{proof}

\begin{lem} \label{LoweboundCuspforms}
Given $\epsilon>0$, there exists a positive number $N_{\epsilon, l, \eta, S, \gq, {\bf J}}$ such that for any ideal $\fn \in \Jcal_{\gq\ga_S}$ with $\nr(\fn)>N_{\epsilon, l, \eta, S, \gq, {\bf J}}$, the inequality 
\begin{align*}
\#\{\l_{\gq}(\pi)|\, \pi \in \Pi_{\rm{cus}}^*(l,\fn), \,\Omega_{\fn}(\pi)\not=0\,\}\geq \nr(\fq)^{-\epsilon}(\log \nr(\fn))^{1-\epsilon}
\end{align*}
holds.
%with $\fq=\hat\fq(\fn)$ being a prime ideal relatively prime to $\ff_\eta\fn$ with the minimal absolute norm. 
\end{lem}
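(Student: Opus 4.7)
\bigskip
\noindent\textbf{Proof plan for Lemma~\ref{LoweboundCuspforms}.}
The plan is to deduce a lower bound on the number of distinct eigenvalues $\lambda_{\gq}(\pi)$ from an upper bound on the $\mu_{\gq,\eta_{\gq}}$-measure of any interval missing these eigenvalues, which is exactly what Lemma~\ref{shortIntEST} supplies. First, by the Ramanujan bound satisfied by the discrete-series representations in $\Pi_{\rm{cus}}^{*}(l,\gn)$, every value $\lambda_{\gq}(\pi)=q_{\gq}^{\nu_{\gq}(\pi)/2}+q_{\gq}^{-\nu_{\gq}(\pi)/2}$ lies in $[-2,2]$, so the set
$$
E(\gn)=\{\lambda_{\gq}(\pi)\mid \pi\in \Pi_{\rm{cus}}^{*}(l,\gn),\ \Omega_{\gn}(\pi)\neq 0\}
$$
is a finite subset of $[-2,2]$, say of cardinality $N=N(\gn)$.

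Next, enumerate the elements of $E(\gn)$ as $-2\le a_1<a_2<\cdots<a_N\le 2$ and consider the at most $N+1$ open intervals $I_0=(-2,a_1)$, $I_j=(a_j,a_{j+1})$ for $1\le j\le N-1$, and $I_N=(a_N,2)$, each of which is disjoint from $E(\gn)$. Fix a small $\epsilon'>0$ to be chosen momentarily. Applying Lemma~\ref{shortIntEST} with parameter $\epsilon'$ to each $I_j$, there exists $N_{\epsilon',l,\eta,S,\gq}>0$ such that for $\nr(\gn)>N_{\epsilon',l,\eta,S,\gq}$,
$$
\mu_{\gq,\eta_{\gq}}(I_j)\ll_{\epsilon',l,\eta,S,{\bf J}}\nr(\gq)^{\epsilon'}(\log\nr(\gn))^{-1+\epsilon'},
$$
with an implicit constant independent of $j$, $\gn$ and $\gq$.

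Summing over $j=0,\dots,N$ and using that $\mu_{\gq,\eta_{\gq}}$ is a probability measure on $[-2,2]$ (so that the total measure of $[-2,2]\setminus E(\gn)$ equals $1$, as the finite set $E(\gn)$ has measure zero with respect to $d\mu_{\gq,\eta_{\gq}}$), we obtain
$$
1=\sum_{j=0}^{N}\mu_{\gq,\eta_{\gq}}(I_j)\leq C\,(N+1)\,\nr(\gq)^{\epsilon'}(\log\nr(\gn))^{-1+\epsilon'}
$$
for some constant $C=C_{\epsilon',l,\eta,S,{\bf J}}>0$. Rearranging yields
$$
N+1\geq C^{-1}\,\nr(\gq)^{-\epsilon'}(\log\nr(\gn))^{1-\epsilon'}.
$$
Given $\epsilon>0$, choose $\epsilon'=\epsilon/2$; then for $\nr(\gn)$ larger than a threshold $N_{\epsilon,l,\eta,S,\gq,{\bf J}}$ (which absorbs the constant $C^{-1}$ and the additive $+1$), the right-hand side exceeds $\nr(\gq)^{-\epsilon}(\log\nr(\gn))^{1-\epsilon}$, giving the claim. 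The only delicate points in this argument are verifying that $\mu_{\gq,\eta_{\gq}}$ has total mass (bounded below by) $1$ so that it cannot be exhausted by a few small intervals, and that the threshold $N_{\epsilon',l,\eta,S,\gq}$ coming from Lemma~\ref{shortIntEST} is uniform in the choice of interval $I_j$; both are immediate from the statements already established.
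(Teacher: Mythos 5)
Your proof is correct and follows the same pigeonhole argument as the cited reference (the paper's proof of this lemma is just an appeal to Royer's Lemma 5.3, and your partition-and-sum argument is precisely that argument): partition $[-2,2]$ by the finitely many relevant eigenvalues into at most $N+1$ open intervals, apply Lemma~\ref{shortIntEST} uniformly, and use that $\mu_{\fq,\eta_{\fq}}$ is an absolutely continuous probability measure (the normalization $\mu_{\fq,\eta_{\fq}}(X_0)=1$ appears in the proof of Lemma~\ref{shortIntEST}) to force $N+1 \gg \nr(\fq)^{-\e'}(\log\nr(\fn))^{1-\e'}$.
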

\begin{proof}
 \cite[Lemma 5.3]{Royer}. 
\end{proof}

\subsection{} 
Let $\Gamma={\rm{Aut}}(\C/\Q)$. We let the group $\Gamma$ act on the set of even weights by the rule ${}^{\sigma}l=(l_{\sigma^{-1}\circ v})_{v\in \Sigma_\infty}$ for $l=(l_v)_{v\in \Sigma_\infty}$ and $\sigma\in \Gamma$, regarding $\Sigma_\infty={\rm{Hom}}(F,\C)$. Let $\Q(l)$ be the fixed field of ${\rm{Stab}}_\Gamma(l)$, which is a finite extension of $\Q$. 
%For $\pi \in \Pi_{\rm{cus}}(l,\fn)$ and $v\in \Sigma_\fin-S(\fn)$, let $A_v(\pi%)={\rm{diag}}(a_v(\pi),a_v(\pi)^{-1})$ be the Satake parameter of $\pi$ at $v$.
From \cite{Shimura} (see \cite{Raghuram-Tanabe} also), the Satake parameter $A_v(\pi)$ belongs to $\GL(2, \bar \Q)$ for any $v\in \Sigma_\fin-S(\fn)$ and the set $\Pi_{\rm{cus}}(l,\fn)$ has a natural action of the Galois group ${\rm{Gal}}(\bar \Q/\Q(l))$ in such a way that $({}^\sigma \pi)_v\cong \pi_{\sigma^{-1}\circ v}$ for all $v\in \Sigma_\infty$ and 
\begin{align}
q_v^{1/2}\,A_v({}^\sigma \pi)=\sigma(q_v^{1/2}\,A_v(\pi))\quad{\text{for all $v\in \Sigma_\fin-S(\fn)$}}.
\label{GaloisAction}
\end{align}
The field of rationality of $\pi \in \Pi_{\rm{cus}}(l,\fn)$, to be denoted by $\Q(\pi)$, is defined as the fixed field of the group
$$\{\sigma\in {\rm{Gal}}(\bar \Q/\Q(l))|\, ^{\sigma}\pi=\pi\,\}.$$  
From \eqref{GaloisAction}, by the strong multiplicity one theorem for $\GL(2)$, we have $$\Q(\pi)=\Q(l)(q_v^{1/2}\l_v(\pi)|\,v\in \Sigma_\fin-S(\fn)).$$ 

%\begin{lem} \label{GSt-Nonvanishing}
% Let $\pi \in \Pi_{\rm{cus}}(l,\pi)$ and $\sigma\in {\rm{Gal}}(\bar\Q/\Q(l))$. %Then $\Omega_{\fn,\eta}^{\fq}(\pi)\not=0$ if and only if $\Omega_{\fn,\eta}^{\f%q}({}^\sigma \pi)\not=0$.
%\end{lem}
%\begin{proof}
% This follows from \cite[Theorem 4.3]{Shimura} or \cite[Corollary 1.3]{Raghuram%-Tanabe}.  
%\end{proof}

\begin{prop} \label{HeckeFieldDegreeEST}
Suppose $l$ is a parallel weight, i.e., there exists $k\in 2\N$ such that $l_v=k$ for all $v\in \Sigma_\infty$. Let $S$ be a finite subset of $\Sigma_\fin-S(\ff_\eta)$ and ${\bf J}=\{J_v\}_{v\in S}$ a family of closed subintervals of $(-2,2)$. Given a sufficiently small $\epsilon>0$
and a prime ideal $\gq$ prime to $S \cup S(\gf_{\eta})$,
there exists a positive integer $N_{\epsilon, l, \eta, S, \gq, {\bf J}}$ such that for any $\fn \in \Jcal_{\gq\ga_S}$ with $\nr(\fn)>N_{\epsilon, l, \eta, S, \gq, {\bf J}}$, there exists $\pi\in \Pi_{\rm{cus}}^*(l,\fn)$ such that $\omega_{\fn}(\pi)\not=0$, $\lambda_v(\pi)\in J_v$ for all $v\in S$, and
%$$
%[\Q(\pi):\Q] \geq C(\e) \sqrt{\left[\frac{(1-\epsilon)\log\log\nr(\fn)}{\log(16\sqrt{\nr(\hat\fq(\fn))})}-2\epsilon\right]^{+}}.
%$$
$$
[\Q(\pi):\Q] \geq \sqrt{\max\left\{
\frac{(1-\epsilon)\log\log\nr(\fn)}{\log(16\sqrt{\nr(\fq)})}-2\epsilon, 0
\right\} }.
$$
%Here $\hat\fq(\fn)$ denotes a prime ideal with the minimal norm among all prime ideals $\fq$ prime to $\fn\ff_\eta$ and $\tilde\eta(\fq)=-1$. 
\end{prop}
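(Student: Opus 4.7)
The plan is to adapt the classical argument of Royer~\cite{Royer} to our setting, combining Lemma~\ref{LoweboundCuspforms} with an elementary counting of algebraic integers of small height. Let $\mathcal{A}=\{\pi \in \Pi_{\rm{cus}}^*(l,\fn) \ | \ \Omega_{\fn}(\pi) \neq 0\}$. By construction every $\pi \in \mathcal{A}$ satisfies $\omega_\fn(\pi) \neq 0$, and also $\lambda_v(\pi) \in J_v$ for all $v\in S$ since ${\rm supp}(\chi_v) \subset J_v$. Thus it suffices to produce some $\pi \in \mathcal{A}$ whose field of rationality has the asserted degree, provided $\nr(\fn)$ is sufficiently large; the nonemptiness of $\mathcal{A}$ is guaranteed by Lemma~\ref{LoweboundCuspforms}.

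Set $D=\max_{\pi \in \mathcal{A}}[\Q(\pi):\Q]$. Since $l$ is parallel, $\Q(l)=\Q$, and hence $\Q(\pi)$ is generated over $\Q$ by the normalized Hecke eigenvalues $\{q_v^{1/2}\lambda_v(\pi)\}_{v \in \Sigma_\fin - S(\fn)}$. In particular, for each $\pi \in \mathcal{A}$ the quantity $q_\fq^{1/2}\lambda_\fq(\pi)$ is an algebraic integer of degree at most $D$. Its $\Gal(\bar\Q/\Q)$-conjugates take the form $q_\fq^{1/2}\lambda_\fq({}^{\sigma}\pi)$, and by the Ramanujan-Petersson bound available for Hilbert cusp forms of cohomological weight, all of them lie in $[-2\sqrt{q_\fq},2\sqrt{q_\fq}]$. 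A direct enumeration, in which one bounds the coefficients of the minimal polynomial by elementary symmetric functions of its roots, shows that the number of algebraic integers of degree $\leq D$ whose full Galois orbits are contained in $[-2\sqrt{q_\fq},2\sqrt{q_\fq}]$ is at most $(16\sqrt{\nr(\fq)})^{D^2}$, with considerable slack.

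Because distinct values of $\lambda_\fq(\pi)$ yield distinct values of $q_\fq^{1/2}\lambda_\fq(\pi)$, combining Lemma~\ref{LoweboundCuspforms} with the upper bound just established gives
$$(16\sqrt{\nr(\fq)})^{D^2} \geq \nr(\fq)^{-\epsilon}(\log\nr(\fn))^{1-\epsilon}$$
for $\nr(\fn)>N_{\epsilon, l, \eta, S, \fq, {\bf J}}$. Taking logarithms and invoking the elementary inequality
$$\frac{\log\nr(\fq)}{\log(16\sqrt{\nr(\fq)})}=\frac{2\log\nr(\fq)}{\log 256+\log\nr(\fq)}<2,$$
we deduce $D^2 \geq (1-\epsilon)\log\log\nr(\fn)/\log(16\sqrt{\nr(\fq)}) - 2\epsilon$, which is precisely the claimed bound. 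The one substantive input is Lemma~\ref{LoweboundCuspforms}; the rest is the classical counting argument from~\cite{Royer}, imported almost verbatim.
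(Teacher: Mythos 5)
Your proposal is correct and follows essentially the same route as the paper's own proof: Lemma~\ref{LoweboundCuspforms} for the lower bound on the number of distinct Hecke eigenvalues, the parallel-weight hypothesis plus Shimura's integrality and the Ramanujan bound to place $\nr(\fq)^{1/2}\lambda_\fq(\pi)$ in $\mathcal{E}(2\nr(\fq)^{1/2},D)$, Royer's counting bound $(16\nr(\fq)^{1/2})^{D^2}$, and then logarithms together with the elementary inequality you cite. The only cosmetic difference is that the paper introduces the intermediate quantity $d(\fn,\Omega)\le D$ (the maximal degree of the eigenvalues $\lambda_\fq(\pi)$) before comparing with $[\Q(\pi):\Q]$, whereas you use $D$ directly; the conclusion is identical.
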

\begin{proof}
By choosing $C^\infty$-functions $\{\chi_v\}$ as above, we construct the weight function $\Omega_{\fn}(\pi)$. We follow the proof of \cite[Proposition 7.3]{Royer}. Let $d(\fn,\Omega)$ denote the maximal degree of algebraic numbers $\lambda_{\fq}(\pi)$ ($\pi \in \Pi_{\rm{cus}}^*(l,\fn),\,\Omega_{\fn}(\pi)\not=0)$. Then,
\begin{align*}
d(\fn,\Omega)&\leq \max\{ \,[ \Q(\pi) : \Q] \,|\,\pi\in \Pi_{\rm{cus}}^{*}(l,\fn),\,\Omega_\fn(\pi)\not=0\,\}
\\
&\leq \max\{ \,[ \Q(\pi) : \Q] \,|\,\pi\in \Pi_{\rm{cus}}^{*}(l,\fn),\,\omega_\fn(\pi)\not=0,\,\lambda_v(\pi)\in J_v\,(\forall v\in S)\,\}.
\end{align*}
Let ${\mathcal E}(M,d)$ denote the set of algebraic integers which, together with its conjugates, have the absolute values at most $M$ and the absolute degrees at most $d$. From the parallel weight assumption, the Heck eigenvalues $\nr(\fq)^{1/2}\lambda_\fq(\pi)$ are known to be algebraic integers (\cite[Proposition 2.2]{Shimura}). Since $\sigma(\nr(\fq)^{1/2}\lambda_\fq(\pi))=\nr(\fq)^{1/2}\lambda_\fq({}^\sigma\pi)$ from \eqref{GaloisAction}, by the Ramanujan bound, we have $\nr(\fq)^{1/2}\lambda_\fq(\pi)\in {\mathcal E}(2\nr(\fq)^{1/2},d(\fn,\Omega))$. Then the cardinality of the set $\{\nr(\fq)^{1/2}\l_{\gq}(\pi)|\, \pi \in \Pi_{\rm{cus}}^*(l,\fn), \,\Omega_{\fn,\eta}(\pi)\not=0\,\}$ is bounded from above by $\#{\mathcal E}(2\nr(\fq)^{1/2}, d(\fn,\Omega))$ which in turn is no greater than $(16\nr(\fq)^{1/2})^{d(\fn,\Omega)^2}$ by \cite[Lemma 6.2]{Royer}. Combining this with the lower bound provided by Lemma~\ref{LoweboundCuspforms}, we have
$$
\nr(\fq)^{-\e}(\log\nr(\fn))^{1-\e}\leq (16\nr(\fq)^{1/2})^{d(\fn,\Omega)^2}.   
$$
By taking logarithm, we are done. 
\end{proof}

\smallskip
\noindent
{\bf Remark} : The parallel weight assumption can be removed if the integrality of the Hecke eigenvalues $q_v^{1/2}\lambda_v(\pi)$ for all $v\in \Sigma_\fin-S(\ff_\pi)$ is known in a broader generality.

\subsection{The proof of Theorem~\ref{MAIN-THM2}}
Theorem~\ref{MAIN-THM1} means the numbers
\begin{align*}
\omega_{\fn}(\pi)=\frac{C_l}{4D_F^{3/2}L_\fin(1,\eta)\nu(\gn)} \frac{1}{\nr(\fn)}\frac{L(1/2,\pi)L(1/2,\pi\otimes\eta)}{L^{S_\pi}(1,\pi;\Ad)}, \quad \pi \in \Pi_{\rm{cus}}^{*}(l,\fn),\,\fn\in \Ical_{S\cup S(\fq),\eta}^{+}
\end{align*}
satisfy our first assumption \eqref{omega-asymp}. The second assumption \eqref{non-NEG} follows from \cite{Jacquet-Chen}. Thus Theorem~\ref{MAIN-THM2} is a corollary of Proposition~\ref{HeckeFieldDegreeEST} with this particular $\{\omega_{\fn}(\pi)\}$. 

\subsection{The proof of Theorem~\ref{MAIN-THM2.5}}
For any $M>1$, let $\Ical_{S\cup S(\gq), \eta}^{-}[M]$ be the set of $\fn\in \Ical_{S\cup S(\fq),\eta}^{-}$ such that $\sum_{v\in S(\gn)}\frac{\log q_{v}}{q_{v}}\leq M$. Theorem~\ref{MAIN-THM1} means 
\begin{align*}
\omega_{\fn}(\pi)=\frac{C_l}{4D_F^{3/2}L_\fin(1,\eta)\,\nu(\fn)\log \sqrt{\nr(\fn)}} \frac{1}{\nr(\fn)}\frac{L(1/2,\pi)L'(1/2,\pi\otimes\eta)}{L^{S_\pi}(1,\pi;\Ad)}, \quad \pi \in \Pi_{\rm{cus}}^{*}(l,\fn), \, \fn \in \Ical_{S\cup S(\fq),\eta}^{-}[M]
\end{align*}
satisfy our first assumption \eqref{omega-asymp}. By our non-negativity assumption \eqref{DLnonneg}, the second assumption \eqref{non-NEG} is also available. Thus Theorem~\ref{MAIN-THM2.5} follows from Proposition~\ref{HeckeFieldDegreeEST}.

\smallskip
\noindent
{\bf Remark} : In the parallel weight two case (i.e., $l_v=2$ for all $v\in \Sigma_\infty$) with totally imaginary condition on $\eta$,
the assumption \eqref{DLnonneg} follows from \cite[Theorem 6.1]{SWZhan3} due to the non-negativity of the Neron-Tate height pairing. Similar results may be expected in the parallel higher weight case (\cite{SWZhan}).

%%%%%%%%%%%%%%%%%%%%%%%%%%%%%%%%%%%%%%%%%%%%%%%%%%%%%%%%%%%%%
\section{Computations of local terms}
%%%%%%%%%%%%%%%%%%%%%%%%%%%%%%%%%%%%%%%%%%%%%%%%%%%%%%%%%%%%
Let $\a=\otimes_{v\in S} \a_v$ be a decomposable element of $\ccA_S$. We examine the term $\WW_{\rm hyp}^{\eta}(l, \gn|\a)$ appearing in the formula \eqref{DRTF-1}, which is given in Lemma~\ref{WWhyp}. Recall that the function $\Psi_l^{(0)}(\gn|\a,g)$ in adele points $g=\{g_v\}$ is a product of functions $\Psi_v(g_v)$ on local groups $\GL(2, F_{v})$ such that $\Psi_v(g_v)=\Psi_{v}^{(0)}(l_{v}; g_v)$ for $v\in \Sigma_\infty$, $\Psi_v(g_v)=\hat \Psi_{v}^{(0)}(\a_v; g_v)$ for $v\in S$, and $\Psi_v(g_v)=\Phi_{\gn, v}^{(0)}(g_v)$ for $v\in \Sigma_\fin-S$ (see \cite[\S 5]{SugiyamaTsuzuki}). From Lemma~\ref{WWhyp}, by exchanging the order of integrals, we have the first equality of the formula
\begin{align}
\WW_{\rm{hyp}}^{\eta}(l,\fn|\a)&=\sum_{b\in F-\{0,-1\}} \int_{\A^\times} \hat\Psi_l^{(0)}(\gn|\a,
\delta_{b}
\left[\begin{smallmatrix} t & 0 \\ 0 & 1 \end{smallmatrix}\right]\left[\begin{smallmatrix}1 & x_{\eta} \\
0 & 1\end{smallmatrix}\right])\eta(t x_{\eta}^{*}) \log |t|_{\AA} d^{\times}t 
 \label{WWhypglobal-series}
\\
&=\sum_{b\in F-\{0,-1\}} \sum_{w\in \Sigma_F} \{\prod_{v\in \Sigma_F-\{w\}}J_v(b)\}\,W_w(b),
 \notag 
\end{align}
where
\begin{align*}
J_v(b)&=
\int_{F_{v}^{\times}}\Psi_{v}( \delta_{b}
\left[\begin{smallmatrix}t_{v} & 0 \\ 0 & 1 \end{smallmatrix}\right]\left[\begin{smallmatrix}1 & x_{\eta, v} \\ 0 & 1\end{smallmatrix}\right])\eta_{v}(t_{v} x_{\eta, v}^{*}) d^{\times}t_{v}, 
\\
W_w(b)&=
 \int_{F_{w}^{\times}}\Psi_{w}(\delta_{b}
\left[\begin{smallmatrix}t_{w} & 0 \\ 0 & 1 \end{smallmatrix}\right]\left[\begin{smallmatrix}1 & x_{\eta, w} \\ 0 & 1\end{smallmatrix}\right])\eta_w(t_{w} x_{\eta, w}^{*}) \log |t_{w}|_{w} d^{\times}t_{w} 
\end{align*}
for $b\in F_v-\{0,-1\}$. The second equality of \eqref{WWhypglobal-series} is justified by $\sum_{b}\sum_{w} \{\prod_{v\not=w}|J_v(b)|\}|W_w(b)|<\infty$, which results from the analysis to be made in 8.4. The integrals $J_v(b)$ are studied and their explicit evaluations are obtained in \cite[\S 10]{SugiyamaTsuzuki}. In what follows, we examine the integral $W_w(b)$ separating cases $w\in S$, $w\in \Sigma_\fin-S$ and $w\in \Sigma_\infty$. 

%Then we have
%\begin{align*}
%& \int_{\AA^{\times}}\Psi_{l}^{(0)}(\gn|\bfs, \delta_{b}
%\left[\begin{smallmatrix} t & 0 \\ 0 & 1 \end{smallmatrix}\right]\left[\begin{s%mallmatrix}1 & x_{\eta} \\
%0 & 1\end{smallmatrix}\right])\eta(t x_{\eta}^{*}) \log |t|_{\AA} d^{\times}t \\
%= & \sum_{w \in \Sigma_{F}}
%\int_{\AA^{\times}}\Psi_{l}^{(0)}(\gn|\bfs, \delta_{b}
%\left[\begin{smallmatrix} t & 0 \\ 0 & 1 \end{smallmatrix}\right]\left[\begin{s%mallmatrix}1 & x_{\eta} \\
%0 & 1\end{smallmatrix}\right])\eta(t x_{\eta}^{*}) \log |t_{w}|_{w} d^{\times}t% \\
%= & \sum_{w \in \Sigma_{F}}
%\prod_{v \in \Sigma_{F}-\{ w \}}
%\int_{F_{v}^{\times}}\Psi_{v}( \delta_{b}
%\left[\begin{smallmatrix}t_{v} & 0 \\ 0 & 1 \end{smallmatrix}\right]\left[\begi%n{smallmatrix}1 & x_{\eta, v} \\ 0 & 1\end{smallmatrix}\right])\eta_{v}(t_{v} x%_{\eta, v}^{*}) d^{\times}t_{v} \\
%& \times
%\int_{F_{w}^{\times}}\Psi_{w}(\gn|\bfs, \delta_{b}
%\left[\begin{smallmatrix}t_{w} & 0 \\ 0 & 1 \end{smallmatrix}\right]\left[\begi%n{smallmatrix}1 & x_{\eta, w} \\ 0 & 1\end{smallmatrix}\right])\eta_{w}(t_{w} x%_{\eta, w}^{*}) \log |t_{w}|_{w} d^{\times}t_{w},
%\end{align*}
%where $\Psi_{v}$ denotes the function corresponding to $\Psi_{v}^{(0)}(l_{v}; -%)$,
%$\Psi_{v}^{(0)}(s_{v}; -)$ and $\Phi_{\gn, v}^{(0)}$
%for each $v \in \Sigma_{F}$.

\subsection{}
Let $v\in S$. Then the integral $W_v(b)$ depends on the test function $\a_v\in \ccA_v$ and the character $\eta_v$ of $F_v^\times$; we write $W_v^{\eta_v}(b;\a_v)$ in place of $W_v(b)$ in this subsection. We have 
$$W_{v}^{\eta_{v}}(b, \a_{v}) = \frac{1}{2\pi i}\int_{L_{v}(c)}\{ \int_{F_{v}^{\times}}
\Psi_{v}^{(0)}(s_{v}; \delta_{b}
\left[\begin{smallmatrix} t & 0 \\ 0 & 1 \end{smallmatrix}\right])
\eta_{v}(t) \log |t|_{v}d^{\times}t \} \a_{v}(s_{v}) d\mu_{v}(s_{v}).
$$

\begin{lem} \label{DorbitalIntHeckeFnt}
Let $v\in S$. Let $\alpha_v^{(m)}(s_v)=q_{v}^{ms_{v}/2}+q_{v}^{-ms_{v}/2}$ with $m\in \N_{0}$. Then, for any $b\in F_v-\{0,-1\}$, 
$$
W_v^{\eta_v}(b;\alpha_v^{(m)})=\tilde{I}^{+}_v(m;\,b)+ \eta_{v}(\varpi_{v})\{
(\log q_{v}) I^{+}_{v}(m;\,\varpi_v^{-1}(b+1)) -\tilde{I}_{v}^{+}(m; \varpi_{v}^{-1}(b+1)) \}
$$
with
$I_{v}^{+}(m; -)$ defined in \cite[Lemma 10.2]{SugiyamaTsuzuki}
and
\begin{align*}
\tilde{I}_v^{+}(m;\,b)&=\vol(\go_{v}^{\times})(\log q_{v})\,2^{\delta(m=0)}\,\biggl(
-q_v^{-m/2}\,\tilde{\delta}_m^{\eta_v}(b)
\\
&\qquad +\sum_{l=\sup(0, 1-{\ord}_v(b))}^{m-1}\{(m-l-1)q_v^{1-m/2}-(m-l+1)q_v^{-m/2}\}\tilde{\delta}^{\eta_v}_l(b)\biggr),
\end{align*}
where we set
$$
\tilde{\delta}^{\eta_v}_n(b)=\delta(|b|_v < q_v^{n})\, \eta_{v}(\varpi_{v}^{n})\eta_{v}(b)(-n-\ord_{v}(b))
$$
for $n \in \NN$ and
$$\tilde{\delta}^{\eta_v}_0(b)= \delta(|b|_{v}<1)
\begin{cases} -2^{-1} \ord_{v}(b)(\ord_{v}(b)+1), \quad (\eta_v(\varpi_v)=1), \\  4^{-1}(\eta_{v}(b)-1) + 2^{-1}\ord_{v}(b) \eta_{v}(b), \quad (\eta_v(\varpi_v)=-1).
\end{cases}
$$
When $m=0$,
$$W_{v}^{\eta_{v}}(b; \a_{v}^{(0)}) = -2\vol(\go_{v}^{\times})(\log q_{v})
(\tilde{\delta}_{0}^{\eta_{v}}(b)
+\eta_{v}(\varpi_{v})\delta_{0}^{\eta_{v}}(\varpi_{v}^{-1}(b+1))
-\eta_{v}(\varpi_{v})\tilde{\delta}_{0}^{\eta_{v}}(\varpi_{v}^{-1}(b+1))$$
with $\delta_{0}^{\eta_{v}}$ defined in \cite[Lemma 10.2]{SugiyamaTsuzuki}.
\end{lem}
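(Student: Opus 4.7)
\medskip
\noindent
\textbf{Proof plan for Lemma~\ref{DorbitalIntHeckeFnt}.} The plan is to treat $W_v^{\eta_v}(b;\alpha_v^{(m)})$ as the $\log|t|_v$-weighted analogue of the orbital integral $I_v^+(m;b)$ already evaluated in \cite[Lemma 10.2]{SugiyamaTsuzuki}, and to reduce the computation to the earlier one plus an explicit ``derivative'' correction. First, I would reduce to the inner local orbital integral
\[
\Xi_v(b;s_v,\lambda)=\int_{F_v^\times}\Psi_v^{(0)}\!\left(s_v;\delta_b\left[\begin{smallmatrix}t&0\\0&1\end{smallmatrix}\right]\right)\eta_v(t)\,|t|_v^{\lambda}\,d^{\times}t,
\]
so that $W_v^{\eta_v}(b;\alpha_v^{(m)})=\frac{1}{2\pi i}\int_{L_v(c)}\partial_\lambda\Xi_v(b;s_v,\lambda)|_{\lambda=0}\,\alpha_v^{(m)}(s_v)\,d\mu_v(s_v)$ and $\frac{1}{2\pi i}\int_{L_v(c)}\Xi_v(b;s_v,0)\,\alpha_v^{(m)}(s_v)\,d\mu_v(s_v)=I_v^+(m;b)$.

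Next, I would use the Iwasawa decomposition (in the form already carried out in \cite[\S 10]{SugiyamaTsuzuki}) of $\delta_b\bigl[\begin{smallmatrix}t&0\\0&1\end{smallmatrix}\bigr]$. That decomposition produces a simple explicit formula for $\Psi_v^{(0)}(s_v;\delta_b\bigl[\begin{smallmatrix}t&0\\0&1\end{smallmatrix}\bigr])$ in terms of $\max(|t|_v,|b+1|_v,\ldots)$-type quantities, which splits the integrand into two regions governed respectively by $b$ and by $b+1$. The second region corresponds, after the substitution $t\mapsto \varpi_v^{-1}(b+1)t^{-1}$ (the ``Weyl'' involution on the diagonal torus inside $\delta_b$-conjugation), to an orbital integral with parameter $\varpi_v^{-1}(b+1)$ replacing $b$, and an extra $\eta_v(\varpi_v^{-1}(b+1))$ pulled out; this is precisely the mechanism producing the factor $\eta_v(\varpi_v)$ and the term $I_v^+(m;\varpi_v^{-1}(b+1))$ in the target formula.

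Now I would insert $|t|_v^{\lambda}$, sum the resulting geometric-type series in $q_v^{-n(s_v+\lambda)/2}$ over $n=\ord_v(t)$ on each region, differentiate in $\lambda$ at $\lambda=0$, and then integrate against $\alpha_v^{(m)}(s_v)\,d\mu_v(s_v)$. The $\lambda$-derivative has two effects: (i) when it lands on $q_v^{-n\lambda}$ it produces a factor $(\log q_v)(-n)$, which after the $s_v$-integration gives rise to the sum $\sum_{l=\sup(0,1-\ord_v(b))}^{m-1}\{(m-l-1)q_v^{1-m/2}-(m-l+1)q_v^{-m/2}\}\tilde\delta_l^{\eta_v}(b)$ appearing in $\tilde I_v^+(m;b)$; (ii) when it lands on $\eta_v(\varpi_v)^{-f(\eta_v)\cdot}$-type multipliers (trivial here since $v\notin S(\ff_\eta)$) or through the combination $\eta_v(\varpi_v^n)(n+\ord_v(b))$ coming from the definition of $\tilde\delta_n^{\eta_v}$, it accounts for the structure of $\tilde\delta_n^{\eta_v}(b)$. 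On the second region, the analogous differentiation produces the combination $(\log q_v)\,I_v^+(m;\varpi_v^{-1}(b+1))-\tilde I_v^+(m;\varpi_v^{-1}(b+1))$, the first term being the contribution of differentiating the prefactor $\eta_v(\varpi_v^{-1}(b+1))|\varpi_v^{-1}(b+1)|_v^\lambda$ and the second being the inner $\lambda$-derivative analogous to the first region.

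The delicate step, which I expect to be the main obstacle, is the case $m=0$: the sum over $l$ collapses to empty (since the upper bound $m-1=-1$ is less than the lower bound), so only the boundary term $-q_v^{0}\,\tilde\delta_0^{\eta_v}(b)$ survives, but the underlying geometric-type series becomes formally divergent and must be regularized through analytic continuation in $\lambda$. The value of $\tilde\delta_0^{\eta_v}(b)$ then splits according to whether $\eta_v(\varpi_v)=+1$ (where the sum $\sum_{n\ge 0}\eta_v(\varpi_v)^n\,n$ becomes a triangular-number polynomial $-\tfrac12\ord_v(b)(\ord_v(b)+1)$) or $\eta_v(\varpi_v)=-1$ (where the alternating sum is $\tfrac14(\eta_v(b)-1)+\tfrac12\ord_v(b)\eta_v(b)$, reflecting the alternating regularization). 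Once these two regularizations are carried out carefully, and combined with an analogous regularization for the $(b+1)$-region, one obtains the stated formula with the factor $2^{\delta(m=0)}$ accounting for the doubling $\alpha_v^{(0)}=2$ versus $\alpha_v^{(m)}=q_v^{ms/2}+q_v^{-ms/2}$ for $m>0$.
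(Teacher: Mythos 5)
Your proposal takes a genuinely different route from the paper's, and contains a gap that would specifically derail the $m=0$ case. The paper does not introduce an auxiliary parameter $\lambda$: it swaps the order of integration, computes the $s_v$-contour integral first to produce the compactly supported kernel $\hat{\Phi}_{vm}\bigl(\delta_b\bigl[\begin{smallmatrix}t&0\\0&1\end{smallmatrix}\bigr]\bigr)$ from \cite[Lemma 10.1]{SugiyamaTsuzuki}, and then evaluates the weighted $t$-integral directly by splitting $F_v^\times$ into $|t|_v\le 1$ (giving $\tilde{I}_v^+(m;b)$) and $|t|_v>1$. The second region reduces to the first by the substitution $t=\varpi_v^{-1}y^{-1}$ (not $t\mapsto\varpi_v^{-1}(b+1)t^{-1}$ as you write); the appearance of $\varpi_v^{-1}(b+1)$ comes from the explicit form of $\hat{\Phi}_{vm}$ rather than being built into the change of variable, and $\log|\varpi_v^{-1}y^{-1}|_v=\log q_v-\log|y|_v$ produces the combination $(\log q_v)I_v^+(m;-) - \tilde{I}_v^+(m;-)$.

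The genuine gap is your claim that at $m=0$ the computation involves a ``formally divergent geometric-type series'' requiring analytic continuation in $\lambda$, and that $\tilde{\delta}_0^{\eta_v}(b)$ arises by regularizing an infinite sum $\sum_{n\ge 0}\eta_v(\varpi_v)^n n$. That is not what is going on. The kernel $\hat{\Phi}_{vm}\bigl(\delta_b\bigl[\begin{smallmatrix}t&0\\0&1\end{smallmatrix}\bigr]\bigr)$ has compact support in $t$ for every $m\ge 0$; for $m=0$ its support in $\{|t|_v\le1\}$ is precisely the annulus $|b|_v\le|t|_v\le 1$, and the resulting sum is the \emph{finite} sum $\sum_{n=0}^{\ord_v(b)}\eta_v(\varpi_v)^n\, n$, evaluated in closed form (triangular number when $\eta_v(\varpi_v)=1$, alternating finite sum when $\eta_v(\varpi_v)=-1$). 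No divergent series appears and no regularization is performed at any stage of this local lemma; the factor $2^{\delta(m=0)}$ is just $\alpha_v^{(0)}=2$. Writing $\log|t|_v=\partial_\lambda|t|_v^\lambda|_{\lambda=0}$ is a legitimate alternative device, but invoking it together with a nonexistent regularization step indicates you have misidentified where the effort actually lies: it is in the bookkeeping over the shells $|t|_v=q_v^{-l}|b|_v$, $l=0,\dots,m$, and in the case analysis on $\eta_v(\varpi_v)$ and on $\ord_v(b)$, not in any convergence issue.
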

\begin{proof}
This is proved in a similar way to \cite[Lemma 10.2]{SugiyamaTsuzuki}.
We decompose the integral into the sum $W_{v}(b; \a_{v}^{(m)}) = \tilde{I}_{v}^{+}(m; b) + \tilde{I}_{v}^{-}(m; b)$, where
$\tilde{I}_{v}^{+}(m; b) = \int_{t\in F_{v}^{\times}, |t| \le 1}\hat{\Phi}_{vm}
(\delta_{b} \left[\begin{smallmatrix} t & 0 \\ 0 & 1 \end{smallmatrix}\right])\eta_{v}(t)\log |t|_{v}d^{\times}t$ with $\hat\Phi_{vm}(g_v)$ the integral computed in \cite[Lemma 10.1]{SugiyamaTsuzuki}. We consider the case $m>0$. By \cite[Lemma 10.1]{SugiyamaTsuzuki}, 
\begin{align*}
\tilde{I}_{v}^{+}(m; b) = & \int_{|t| \le 1, \sup(1, |t|_{v}^{-1}|b|_{v})=q_{v}^{m}}
(-q_{v}^{-m/2})\eta_{v}(t)\log |t|_{v}d^{\times}t \\
& + \sum_{l=0}^{m-1}
\int_{|t| \le 1, \sup(1, |t|_{v}^{-1}|b|_{v})=q_{v}^{l}}
\{ (m-l-1)q_{v}^{1-m/2}-(m-l+1)q_{v}^{-m/2} \}
\eta_{v}(t)\log |t|_{v}d^{\times}t.
\end{align*}
We have the following three equalities:
\begin{itemize}
\item If $l=0$ and $\eta_{v}(\varpi_{v})=1$,
\begin{align*}
\int_{|t| \le 1, \sup(1, |t|_{v}^{-1}|b|_{v})=q_{v}^{l}}
\eta_{v}(t)\log |t|_{v}d^{\times}t
= \delta( |b|_{v} < 1)\vol(\go_{v}^{\times})\log q_{v} \frac{-\ord_{v}(b)(\ord_{v}(b) +1)}{2}.
\end{align*}

\item If $l=0$ and $\eta_{v}(\varpi_{v})=-1$,
\begin{align*}
\int_{|t| \le 1, \sup(1, |t|_{v}^{-1}|b|_{v})=q_{v}^{l}}
\eta_{v}(t)\log |t|_{v}d^{\times}t
= \delta( |b|_{v} < 1)\vol(\go_{v}^{\times})\log q_{v}
(\frac{\eta_{v}(b)-1}{4} + \frac{\ord_{v}(b)\eta_{v}(b)}{2}).
\end{align*}

\item If $l>0$,
\begin{align*}
\int_{|t| \le 1, \sup(1, |t|_{v}^{-1}|b|_{v})=q_{v}^{l}}
\eta_{v}(t)\log |t|_{v}d^{\times}t
&= \delta( |b| < q_{v}^{l})\int_{|t|_{v} = q_{v}^{-l}|b|_{v}} \eta_{v}(t)\log |t|_{v}d^{\times}t
\\
&=-\delta(|b|_v\leq q_v^{l})\vol(\cO_v^\times)(\log q_v)\eta_v(\varpi_v^lb)(l+\ord_v(b)). 
\end{align*}
\end{itemize}

Furthermore, $\tilde{I}_{v}^{-}(m; b)$ is transformed into
\begin{align*}
\tilde{I}_{v}^{-}(m; b) = & \int_{|t|_{v}>1}\hat{\Phi}_{vm}(\delta_{b}\left[\begin{smallmatrix} t & 0 \\ 0 & 1 \end{smallmatrix}\right]) \eta_{v}(t)\log |t|_{v}d^{\times}t \\
= & \int_{|y|_{v}< 1}\hat{\Phi}_{vm}(\delta_{b}\left[\begin{smallmatrix} \varpi_{v}^{-1} y^{-1} & 0 \\ 0 & 1 \end{smallmatrix}\right])
\eta_{v}(\varpi_{v}^{-1}y^{-1})\log |\varpi_{v}^{-1} y^{-1}|_{v}d^{\times}t \\
= & \eta_{v}(\varpi_{v}^{-1})\int_{|y|_{v}\le 1}\hat{\Phi}_{vm}
(\delta_{b}\left[\begin{smallmatrix} \varpi_{v}^{-1} y^{-1} & 0 \\ 0 & 1 \end{smallmatrix}\right])
\eta_{v}(y)(\log q_{v} - \log |y|_{v})d^{\times}y \\
= & \eta_{v}(\varpi_{v})\{ (\log q_{v})I_{v}^{+}(m; \varpi_{v}^{-1}(b+1)) - \tilde{I}_{v}^{+}(m; \varpi_{v}^{-1}(b+1)) \}.
\end{align*}
From the results above, we have the lemma for $m>0$. The case $m=0$ is similar.
\end{proof}

\begin{lem}\label{esti of W_gq}
For $m \in \NN$,
$$|W_{v}^{\eta_{v}}(b; \a_{v}^{(m)})| \ll(\log q_{v})\delta(|b|_{v}\le q_{v}^{m-1})
q_{v}^{1-m/2}m(2m+\ord_{v}(b(b+1)))^{2}, \qquad b \in F_{v}^{\times}-\{-1\}.$$
When $m=0$,
$$|W_{v}^{\eta_{v}}(b; \a_{v}^{(0)})| \ll
(\log q_{v})\delta(|b|_{v}\le1)(\ord_{v}(b(b+1))+1)^{2}, \qquad b \in F_{v}^{\times}-\{-1\}.$$
Here the implied constants independent of $v$, $m$ and $b$.
Moreover, for $n \in \NN_{0}$,
$$|W_{v}^{\eta_{v}}(b; \a_{\gp_{v}^{n}})| \ll (\log q_{v}) q_{v}\delta(|b|_{v}\le q_{v}^{n})\{\ord_{v}(b(b+1))+2n+1)\}^{2}, \qquad b \in F_{v}^{\times}-\{-1\}$$
with the implied constant independent of $v$, $n$ and $b$.
\end{lem}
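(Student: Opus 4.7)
The plan is to start from the explicit decomposition in Lemma~\ref{DorbitalIntHeckeFnt}, which writes $W_v^{\eta_v}(b;\a_v^{(m)})$ as the sum of $\tilde I_v^+(m;b)$, the logarithmic contribution $\eta_v(\varpi_v)(\log q_v)\,I_v^+(m;\varpi_v^{-1}(b+1))$, and the correction $-\eta_v(\varpi_v)\tilde I_v^+(m;\varpi_v^{-1}(b+1))$. The middle term is controlled by the bound on $I_v^+(m;-)$ from \cite[Lemma~10.2]{SugiyamaTsuzuki}, so the main new input is an auxiliary estimate of the form
\[
|\tilde I_v^+(m;c)|\ll (\log q_v)\,q_v^{1-m/2}\,m\,(m+\ord_v(c)+1)^{2}\,\delta(|c|_v\le q_v^{m-1}), \qquad m\ge 1.
\]

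To prove this auxiliary estimate, I would examine the explicit sum defining $\tilde I_v^+(m;c)$. For each $l\ge 1$ the quantity $\tilde\delta_l^{\eta_v}(c)$ is bounded in absolute value by $(l+\ord_v(c))\,\delta(|c|_v<q_v^{l})$, and for $l=0$ by a quadratic polynomial in $\ord_v(c)$; the coefficient $(m-l-1)q_v^{1-m/2}-(m-l+1)q_v^{-m/2}$ is dominated by $(m-l+1)q_v^{1-m/2}$. The elementary summation $\sum_{l=1}^{m-1}(m-l+1)(l+\ord_v(c))\ll m^{2}(m+\ord_v(c))$, combined with the $l=0$ contribution of order $m\cdot \ord_v(c)^{2}$ and the leading $-q_v^{-m/2}\tilde\delta_m^{\eta_v}(c)$ piece of strictly smaller order, yields the claimed $m(m+\ord_v(c)+1)^{2}$. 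The support cutoff $\delta(|c|_v\le q_v^{m-1})$ is automatic since $|c|_v>q_v^{m-1}$ forces $|c|_v\ge q_v^{m}$ by the ultrametric inequality, on which every $\tilde\delta_l^{\eta_v}(c)$ with $l\le m$ vanishes.

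Applying the auxiliary bound at $c=b$ and $c=\varpi_v^{-1}(b+1)$, and using $\ord_v(\varpi_v^{-1}(b+1))=\ord_v(b+1)-1$ together with $\ord_v(b)+\ord_v(b+1)=\ord_v(b(b+1))$, produces the first inequality of the lemma for $m\ge 1$. The case $m=0$ is handled directly from the separate closed form at the end of Lemma~\ref{DorbitalIntHeckeFnt}, once $|\tilde\delta_0^{\eta_v}(b)|$ and $|\delta_0^{\eta_v}(\varpi_v^{-1}(b+1))|$ are elementarily bounded by $(\ord_v(b(b+1))+1)^{2}$ via their explicit formulae.

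Finally, for the Tchebyshev test function I would substitute the decomposition~\eqref{HYPERBOLIC-EST-1}, $\a_{\fp_v^n}=\sum_{j=0}^{[n/2]}\a_v^{(n-2j)}-\delta(n\in 2\NN_{0})$, and sum the just-established bound over $j$. With $m=n-2j$, using the monotonicity $(2m+\ord_v(b(b+1)))^{2}\le (2n+\ord_v(b(b+1))+1)^{2}$, the crucial point is that $\sum_{m\ge 1}q_v^{1-m/2}\,m\ll q_v^{1/2}$ is a convergent geometric-type series independent of $n$. Hence the total is bounded by $(\log q_v)\,q_v\,\delta(|b|_v\le q_v^n)(\ord_v(b(b+1))+2n+1)^{2}$, as required. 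The principal subtlety is the arithmetic bookkeeping of the alternating weights $(m-l\pm 1)$, which is what produces the quadratic (rather than cubic) dependence on $\ord_v(c)$ in the single-$m$ estimate and, via the decay of $q_v^{-m/2}$, keeps the sum over $j$ in check.
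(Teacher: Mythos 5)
Your proposal follows exactly the paper's own route: decompose via Lemma~\ref{DorbitalIntHeckeFnt}, bound the $I_v^+$ piece by the cited [Lemma 10.2], establish the auxiliary estimate $|\tilde I_v^+(m;c)|\ll (\log q_v)\,\delta(|c|_v\le q_v^{m-1})\,q_v^{1-m/2}\,m(m+\ord_v(c))^2$ by summing the $\tilde\delta_l^{\eta_v}$ terms with their weights, apply it at $c=b$ and $c=\varpi_v^{-1}(b+1)$, and then handle the Tchebyshev combination via \eqref{HYPERBOLIC-EST-1}, using the $q_v$-uniform convergence of $\sum_m m\,q_v^{-m/2}$. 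This is essentially identical to the paper's argument. One tiny wrinkle: you claim $\sum_{m\ge 1}q_v^{1-m/2}m\ll q_v^{1/2}$, which is in fact true uniformly for $q_v\ge 2$ (it is $q_v^{1/2}(1-q_v^{-1/2})^{-2}$), but you then quote the coarser $q_v$ in the conclusion; this is consistent with the lemma's stated bound, and the paper itself also uses the cruder $q_v$ (it pulls out $q_v$ and majorizes the rest by the $q_v=2$ case). No genuine gap beyond what the paper itself leaves at the level of the cited [Lemma 10.2] estimate.
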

\begin{proof}
Noting (\ref{HYPERBOLIC-EST-1}), by the first and second estimates in the lemma,
the last estimate is given in the same way as in the proof of Proposition \ref{HYPERBOLIC-EST}.
We only prove the first estimate.
Suppose $m\ge 1$.
By \cite[Lemma 10.2]{SugiyamaTsuzuki}, $I_{v}^{+}(m, \varpi_{v}^{-1}(b+1))$ is estimated as
%\begin{align*}
%& |I_{v}^{+}(m, b)| \\
%\ll & \delta(|b_{v}| \le q_{v}^{m})(m+1)^{2}
%(q_{v}^{1-m/2} +q_{v}^{-m/2}) \\
%& +\delta(|b|\le 1, \eta_{v}(\varpi_{v})=1)
%(m+1)(q_{v}^{1-m/2} +q_{v}^{-m/2})(\ord_{v}(b)+1),
%\end{align*}
%\begin{align*}
%& |I_{v}^{+}(m, \varpi_{v}^{-1}(b+1))| \\
%\ll & \delta(|b_{v}+1| \le q_{v}^{m-1})(m+1)^{2}
%(q_{v}^{1-m/2} +q_{v}^{-m/2}) \\
%& +\delta(|b|\le q_{v}^{-1}, \eta_{v}(\varpi_{v})=1)
%(m+1)(q_{v}^{1-m/2} +q_{v}^{-m/2})\ord_{v}(b+1).
%\end{align*}
\begin{align*}
|I_{v}^{+}(m, \varpi_{v}^{-1}(b+1))| \ll \delta(|b|_{v} \le q_{v}^{m-1})(m+1)^{2}q_{v}^{1-m/2}.
\end{align*}
Next we examine $\tilde{I}_{v}^{+}(m; b)$.
%$$|\tilde{\delta}_{0}^{\eta_{v}}(b)|= \delta(|b|_{v}<1) 2^{-1}\ord_{v}(b)(\ord_{v}(b)+1)$$ if $\eta_{v}(\varpi_{v})=1$.
From the definition of $\tilde\delta_m^{\eta_v}$ (in Lemma~\ref{DorbitalIntHeckeFnt}), we have $|\tilde{\delta}_{0}^{\eta_{v}}(b)|\le \delta(|b|_{v}<1) 2^{-1}(\ord_{v}(b)+1)^{2}.$
%$$|\tilde{\delta}_{0}^{\eta_{v}}(b)|= \delta(|b|_{v}<1) 2^{-1}\ord_{v}(b)(\ord_{v}(b)+1)$$ if $\eta_{v}(\varpi_{v})=1$.
%$$|\tilde{\delta}_{0}^{\eta_{v}}(b)|= \delta(|b|_{v}<1) 2^{-1}\ord_{v}(b)$$
%if $\eta_{v}(\varpi_{v})=-1$ and $\ord_{v}(b)$ is even.
%$$|\tilde{\delta}_{0}^{\eta_{v}}(b)|= \delta(|b|_{v}<1) 2^{-1}(\ord_{v}(b)+1)$$
%if $\eta_{v}(\varpi_{v})=-1$ and $\ord_{v}(b)$ is odd.
%Therefore we have $|\tilde{\delta}_{0}^{\eta_{v}}(b)|\le \delta(|b|_{v}<1) (\ord_{v}(b)+1)^{2}$. 
By using this, 
\begin{align*}
& \sum_{l=\sup(0, 1-\ord_{v}(b))}^{m-1}(m-l-1)q_{v}^{1-m/2}
|\tilde{\delta}_{l}^{\eta_{v}}(b)| \\
\le & \delta(m \ge1, |b|_{v}\le q_{v}^{m-2})q_{v}^{1-m/2}\{ \sum_{l=1}^{m-1}(m-l-1)
|\tilde{\delta}_{l}^{\eta_{v}}(b)| +(m-1)|\tilde{\delta}_{0}^{\eta_{v}}(b)| \} \\
\le & \delta(m \ge1, |b|_{v}\le q_{v}^{m-2})q_{v}^{1-m/2}\{ \sum_{l=1}^{m-1}(m-l-1)
(l+\ord_{v}(b)) +(m-1)|\tilde{\delta}_{0}^{\eta_{v}}(b)| \} \\
= & \delta(m \ge1,  |b|_{v}\le q_{v}^{m-2})q_{v}^{1-m/2}
(m-1)\{ 6^{-1}(m-2)m + 2^{-1}(m-2)\ord_{v}(b) +|\tilde{\delta}_{0}^{\eta_{v}}(b)| \}\\
\ll & \delta(m \ge 2, |b|_{v}\le q_{v}^{m-2})q_{v}^{1-m/2}m( m^{2} + m\ord_{v}(b) +(\ord_{v}(b)+1)^{2}) \\
\ll & \delta(m \ge 2, |b|_{v}\le q_{v}^{m-2} )q_{v}^{1-m/2}m(m +\ord_{v}(b))^{2}
\end{align*}
Similarly, 
\begin{align*}
\sum_{l=\sup(0, 1-\ord_{v}(b))}^{m-1}(m-l+1)q_{v}^{-m/2}
|\tilde{\delta}_{l}^{\eta_{v}}(b)|
%\le & \delta(m \ge1, m+\ord_{v}(b)\ge 2)q_{v}^{-m/2}\{ \sum_{l=1}^{m-1}(m-l+1)
%|\tilde{\delta}_{l}^{\eta_{v}}(b)| +(m+1)|\tilde{\delta}_{0}^{\eta_{v}}(b)| \} \\
%\le & \delta(m \ge1, m+\ord_{v}(b)\ge 2)q_{v}^{-m/2}\{ \sum_{l=1}^{m-1}(m-l+1)
%(l+\ord_{v}(b)) +(m+1)|\tilde{\delta}_{0}^{\eta_{v}}(b)| \} \\
%= & \delta(m \ge1, m+\ord_{v}(b)\ge 2)q_{v}^{-m/2}
%\{ 6^{-1}(m-1)m(m+4) + 2^{-1}(m-1)(m+2)\ord_{v}(b) \\
%& +(m+1)|\tilde{\delta}_{0}^{\eta_{v}}(b)| \} \\
%\ll & \delta(m \ge 1, m+\ord_{v}(b)\ge 2)q_{v}^{-m/2}\{(m+1)^{3} +(m+1)^{2}\ord_{v}(b) +(m+1)\delta(|b|_{v}<1)(\ord_{v}(b)+1)^{2}\} \\
\ll \delta(m \ge 1, |b|_{v}\le q_{v}^{m-2} )q_{v}^{-m/2}m(m+\ord_{v}(b)+1)^{2}.
\end{align*}
Hence, we obtain
\begin{align*}
|\tilde{I}_{v}^{+}(m; b)| \ll
(\log q_{v})
\delta(|b|_{v}\le q_{v}^{m-1})q_{v}^{1-m/2}m(m+\ord_{v}(b))^{2}, \qquad m \in \NN, \ b \in F_{v}^{\times}-\{-1\}.
\end{align*}
Furthermore,
\begin{align*}
& |\tilde{I}_{v}^{+}(m; \varpi_{v}^{-1}(b+1))| \\
\ll & (\log q_{v})
\delta(|b+1|_{v}\le q_{v}^{m-1})q_{v}^{1-m/2}m(m+\ord_{v}(b+1))^{2},
\qquad m \in \NN, \ b \in F_{v}^{\times}-\{-1\}.
\end{align*}
%\begin{align*}
%& |\tilde{I}_{v}^{+}(m; \varpi_{v}^{-1}(b+1))| \\
%\ll &
%(\log q_{v})\{
%\delta(|b+1|_{v}\le q_{v}^{m-2})q_{v}^{-m/2}(m+\ord_{v}(b+1)-1) \\
%&
%+ \delta(|b+1|_{v}\le q_{v}^{m-3})q_{v}^{1-m/2}m(m+\ord_{v}(b+1))^{2}
%\}.
%\end{align*}
As a consequence, we have the lemma.
\end{proof}

\subsection{}
Let $v\in \Sigma_{\fin}-S$. There are three cases to be considered: $v\in \Sigma_\fin-S(\gn\gf_{\eta})$, $v\in S(\gn)$ and $v\in S(\gf_{\eta})$. 

\begin{lem}
\label{esti of W_rest}
Let $v\in \Sigma_\fin-(S\cup S(\gn\gf_{\eta}))$. For $b \in F_{v}^{\times}-\{-1\}$, we have
$$W_{v}^{\eta_{v}}(b) = \int_{F_{v}^{\times}}\Phi_{v, 0}^{(0)}
(\delta_{b}[\begin{smallmatrix} t & 0 \\ 0 & 1 \end{smallmatrix}])\eta_{v}(t)\log |t|_{v} \ d^{\times}t = \vol(\go_{v}^{\times})(\log q_{v}) \tilde{\Lambda}_{v}^{\eta_{v}}(b),$$
where
$$\tilde{\Lambda}_{v}^{\eta_{v}}(b)
= \delta(|b|_{v}\le 1)
\begin{cases}
\tilde{\delta}_{0}^{\eta_{v}}(b), \quad (|b|_{v}<1), \\
-\tilde{\delta}_{0}^{\eta_{v}}(b+1), \quad (|b+1|_{v}<1), \\
0, \quad (|b|_{v}=|b+1|_{v}=1).
\end{cases}$$
In particular, $|W_{v}^{\eta_{v}}(b)| \ll (\log q_{v})\delta(|b(b+1)|_{v}<1)(\ord_{v}(b(b+1))+1)^{2}, \ b \in F_{v}^{\times}-\{-1\}.$
\end{lem}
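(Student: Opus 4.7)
The plan is to specialize the computation of Lemma~\ref{DorbitalIntHeckeFnt} to the unramified spherical case. Since $v\notin S\cup S(\gn\gf_{\eta})$, the local factor of the global test function is simply $\Psi_v=\Phi_{\gn,v}^{(0)}=\Phi_{v,0}^{(0)}$, namely the $m=0$ member of the Hecke family treated in \cite[Lemma~10.1]{SugiyamaTsuzuki}; moreover $\eta_v$ is unramified with $x_{\eta,v}=0$. Accordingly the orbital integrand reduces to $\Phi_{v,0}^{(0)}(\delta_b\bigl[\begin{smallmatrix} t & 0 \\ 0 & 1\end{smallmatrix}\bigr])\,\eta_v(t)\log|t|_v$, and one may transcribe its explicit form directly from \cite[Lemma~10.1]{SugiyamaTsuzuki} with $m=0$.

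First I would split the integral over $F_v^\times$ into the pieces $|t|_v\le 1$ and $|t|_v>1$, call them $\tilde I_v^+(0;b)$ and $\tilde I_v^-(0;b)$, following the scheme used in the proof of Lemma~\ref{DorbitalIntHeckeFnt}. Inspection of the $m=0$ support condition $\sup(1,|t|_v^{-1}|b|_v)=1$ shows $\tilde I_v^+(0;b)$ is nonzero only on $\{|b|_v\le 1,\,|t|_v\le 1\}$; integrating $\eta_v(t)\log|t|_v\,d^\times t$ over this set yields $\vol(\go_v^\times)(\log q_v)\,\tilde\delta_0^{\eta_v}(b)$ when $|b|_v<1$ and $0$ when $|b|_v=1$. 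For the second piece I would change variable $t=\varpi_v^{-1}y^{-1}$ with $|y|_v\le 1$, which (exactly as in Lemma~\ref{DorbitalIntHeckeFnt}) converts $\tilde I_v^-(0;b)$ into an integral of the same shape but with $b$ replaced by $\varpi_v^{-1}(b+1)$ and with $\log|t|_v$ replaced by $\log q_v-\log|y|_v$. Since $m=0$, the $\log q_v$ contribution pairs against the spherical case of $I_v^+$ and drops out, leaving $-\eta_v(\varpi_v)\vol(\go_v^\times)(\log q_v)\,\tilde\delta_0^{\eta_v}(\varpi_v^{-1}(b+1))$, which unwinds to $-\vol(\go_v^\times)(\log q_v)\,\tilde\delta_0^{\eta_v}(b+1)$ on the locus $|b+1|_v<1$.

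Next I would combine the two halves according to the three mutually exclusive cases $|b|_v<1$, $|b+1|_v<1$, and $|b|_v=|b+1|_v=1$. Exactly one of the two summands survives in each of the first two cases, producing the piecewise formula for $\tilde\Lambda_v^{\eta_v}(b)$; in the last case both halves vanish because the support conditions fail. If $|b|_v>1$ then $|b+1|_v=|b|_v>1$ and the integrand is identically zero, accounting for the overall prefactor $\delta(|b|_v\le 1)$. The concluding estimate is then an immediate consequence of the bound $|\tilde\delta_0^{\eta_v}(x)|\le \tfrac{1}{2}\delta(|x|_v<1)(\ord_v(x)+1)^2$ used in the course of Lemma~\ref{esti of W_gq}, applied to each branch of $\tilde\Lambda_v^{\eta_v}$.

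The chief obstacle, though minor in nature, is to track the sign arising from the inversion $t\mapsto\varpi_v^{-1}y^{-1}$ together with the unramified evaluation $\eta_v(\varpi_v^{-1})=\eta_v(\varpi_v)$, so that the two boundary contributions assemble into the stated piecewise form with the correct signs. Once this bookkeeping is settled the lemma follows without further input.
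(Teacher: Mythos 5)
There is a genuine gap in the treatment of the second piece $\tilde I_v^-(0;b)$, and it comes from importing, uncritically, the inversion $t\mapsto \varpi_v^{-1}y^{-1}$ that is tailored to the Hecke case of Lemma~\ref{DorbitalIntHeckeFnt}. After that substitution one gets
$$
\tilde I_v^-(0;b)=\eta_v(\varpi_v)(\log q_v)\,I_v^+\bigl(0;\varpi_v^{-1}(b+1)\bigr)-\eta_v(\varpi_v)\,\tilde I_v^+\bigl(0;\varpi_v^{-1}(b+1)\bigr),
$$
and neither simplification you claim holds. First, the $\log q_v$ term does \emph{not} drop out: $I_v^+(0;\varpi_v^{-1}(b+1))$ is a log-free volume integral over $\{\,|\varpi_v^{-1}(b+1)|_v\le|y|_v\le 1\,\}$ and is generically nonzero (for $\eta_v(\varpi_v)=1$ it literally equals $\ord_v(b+1)\cdot\vol(\go_v^\times)$). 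Second, $\eta_v(\varpi_v)\,\tilde\delta_0^{\eta_v}(\varpi_v^{-1}(b+1))$ does \emph{not} unwind to $\tilde\delta_0^{\eta_v}(b+1)$: the shift by $\varpi_v^{-1}$ changes both the support condition (it requires $\ord_v(b+1)\ge 2$, not $\ge 1$) and the value (e.g.\ $-\tfrac{(m-1)m}{2}$ versus $-\tfrac{m(m+1)}{2}$ when $\eta_v(\varpi_v)=1$). In fact, when $\ord_v(b+1)=1$ your claimed surviving term vanishes identically while the true $\tilde I_v^-(0;b)$ does not. The two spurious simplifications happen to compensate when summed (one can check that $\eta_v(\varpi_v)\{(\log q_v)I_v^+(0;\varpi_v^{-1}(b+1))-\tilde I_v^+(0;\varpi_v^{-1}(b+1))\}=-\vol(\go_v^\times)(\log q_v)\,\tilde\delta_0^{\eta_v}(b+1)$), but that is precisely the computation your sketch skips, so the argument as written does not establish the lemma.

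The paper avoids all of this by working directly: for $v\notin S\cup S(\gn\gf_\eta)$ the function $\Phi_{v,0}^{(0)}(\delta_b\bigl[\begin{smallmatrix}t&0\\0&1\end{smallmatrix}\bigr])$ is the indicator of $\{|b|_v\le|t|_v<1\}\cup\{1<|t|_v\le|b+1|_v^{-1}\}$ (up to the measure-zero shell $|t|_v=1$ where $\log|t|_v=0$), and the second piece is converted to $-\tilde I_v^+(0;b+1)$ by the simple inversion $t\mapsto t^{-1}$ rather than the Hecke-case substitution. That is the clean route you should take: it exhibits both halves as the same elementary integral with arguments $b$ and $b+1$, from which the piecewise formula and the final estimate follow immediately. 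Your first half ($\tilde I_v^+$) is correct, and the final bound via $|\tilde\delta_0^{\eta_v}(x)|\ll\delta(|x|_v<1)(\ord_v(x)+1)^2$ is fine once the formula is in hand.
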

\begin{proof}
It follows immediately from the following computation:
\begin{align*}
\int_{F_{v}^{\times}}\Phi_{v, 0}^{(0)}
(\delta_{b}[\begin{smallmatrix} t & 0 \\ 0 & 1 \end{smallmatrix}])\eta_{v}(t)\log |t|_{v} \ d^{\times}t
= & \int_{|b|_{v}\le |t|_{v}<1} \eta_{v}(t)\log |t|_{v} d^{\times}t
+ \int_{|b+1|_{v}^{-1}\ge |t|_{v}>1} \eta_{v}(t)\log |t|_{v} d^{\times}t \\
= & \vol(\cO_v^\times)\,(\log q_v)\,\{\tilde{\delta}_{0}^{\eta_{v}}(b)-\tilde{\delta}_{0}^{\eta_{v}}(b+1)\}.
\end{align*}
\end{proof}

\begin{lem}\label{esti of W_n}
Let $v \in S(\gn)$. If $\eta_{v}(\varpi_{v})=1$, we have
\begin{align*}
W_{v}^{\eta_{v}}(b) = & \int_{F_{v}^{\times}}\Phi_{v, \gn}^{(0)}
(\delta_{b}[\begin{smallmatrix} t & 0 \\ 0 & 1 \end{smallmatrix}])\eta_{v}(t)\log |t|_{v} \ d^{\times}t \\
= & \vol(\go_{v}^{\times})(-\log q_{v})
\delta(b \in \gn\go_{v}) \ 2^{-1}(\ord_{v}(b)+\ord_{v}(\gn))(\ord_{v}(b)-\ord_{v}(\gn)+1).
\end{align*}
If $\eta_{v}(\varpi_{v})=-1$, then
\begin{align*}
& W_{v}^{\eta_{v}}(b) \\
= & \vol(\go_{v}^{\times})(-\log q_{v})
\delta(b \in \gn\go_{v})
[ 2^{-1}\{\ord_{v}(\gn)\eta_{v}(\varpi_{v}^{\ord_{v}(\gn)})+\ord_{v}(b)\eta_{v}(b)\} +4^{-1}\{\eta_{v}(b)-\eta_{v}(\varpi_{v}^{\ord_{v}(\gn)})\}].
\end{align*}
In particular,
\begin{align*}
|W_v^{\eta_v}(b)|\leq \delta(b\in \fn\cO_v)(\log q_v)(\ord_{v}(b)+\ord_{v}(\gn)+1)^{2}, \qquad b\in F_v^\times-\{-1\}.
\end{align*}
\end{lem}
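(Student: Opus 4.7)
The strategy mirrors the proof of Lemma~\ref{esti of W_rest} and the techniques of \cite[\S 10]{SugiyamaTsuzuki}: reduce the integral to a finite sum over annuli $\{|t|_v=q_v^{-k}\}$ and carry out an arithmetic summation. Since $v\in S(\gn)$ is disjoint from $S(\gf_\eta)$ (as $\gn$ is prime to $\gf_\eta$), we have $x_{\eta,v}=0$ and $\eta_v$ is unramified, so the integrand simplifies to $\Phi_{v,\gn}^{(0)}(\delta_b[\begin{smallmatrix} t & 0 \\ 0 & 1\end{smallmatrix}])\,\eta_v(t)\log|t|_v$.

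First, I would determine the support of $t \mapsto \Phi_{v,\gn}^{(0)}(\delta_b[\begin{smallmatrix} t & 0 \\ 0 & 1\end{smallmatrix}])$ as a function of $t \in F_v^\times$. Using the explicit definition of the section $\Phi_{v,\gn}^{(0)}$ at $v\in S(\gn)$ from \cite[\S 5]{SugiyamaTsuzuki} (which encodes the congruence condition under $\bfK_0(\gn\go_v)$) together with the Bruhat decomposition of $\delta_b[\begin{smallmatrix} t & 0 \\ 0 & 1\end{smallmatrix}]$, I expect the support to equal
\[
\{t\in F_v^\times \,:\, b\in \gn\go_v,\ \ord_v(\gn)\leq \ord_v(t)\leq \ord_v(b)\},
\]
with constant value $1$ there.

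Second, using $\eta_v|_{\go_v^\times}=\mathbf{1}$, one has $\int_{|t|_v=q_v^{-k}}\eta_v(t)\log|t|_v\,d^\times t = -k(\log q_v)\,\eta_v(\varpi_v)^k\,\vol(\go_v^\times)$, so the integral reduces to
\[
W_v^{\eta_v}(b) = -\vol(\go_v^\times)(\log q_v)\,\delta(b\in \gn\go_v)\sum_{k=\ord_v(\gn)}^{\ord_v(b)}k\,\eta_v(\varpi_v)^k.
\]
For $\eta_v(\varpi_v)=+1$ the sum is arithmetic, giving $\tfrac12(\ord_v(b)+\ord_v(\gn))(\ord_v(b)-\ord_v(\gn)+1)$, which yields the first formula. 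For $\eta_v(\varpi_v)=-1$, the alternating identity $\sum_{k=a}^{b}(-1)^k k = \tfrac12\{a(-1)^a+b(-1)^b\}+\tfrac14\{(-1)^b-(-1)^a\}$, combined with $a=\ord_v(\gn)$, $b=\ord_v(b)$ and $\eta_v(\varpi_v^{\ord_v(\gn)})=(-1)^{\ord_v(\gn)}$, $\eta_v(b)=(-1)^{\ord_v(b)}$, produces the second formula after rearrangement.

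Finally, the uniform bound is immediate from either closed form via the trivial estimate
\[
\Bigl|\sum_{k=\ord_v(\gn)}^{\ord_v(b)}k\,\eta_v(\varpi_v)^k\Bigr|\leq \sum_{k=0}^{\ord_v(b)}k\leq (\ord_v(b)+\ord_v(\gn)+1)^2.
\]
The main obstacle I anticipate is pinning down the precise range $\ord_v(\gn)\leq \ord_v(t)\leq \ord_v(b)$ on the support; once the Bruhat-Iwasawa analysis of $\delta_b[\begin{smallmatrix} t & 0 \\ 0 & 1\end{smallmatrix}]$ at level $\bfK_0(\gn\go_v)$ is made explicit, the remaining summations are elementary.
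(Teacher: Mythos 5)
Your proposal is correct and takes essentially the same route as the paper: the paper likewise reduces $\Phi_{v,\gn}^{(0)}(\delta_b[\begin{smallmatrix}t&0\\0&1\end{smallmatrix}])$ to the indicator of $\{t:|b|_v\le|t|_v<1,\ t\in\gn\go_v\}$ (i.e.\ $\ord_v(\gn)\le\ord_v(t)\le\ord_v(b)$, which is nonempty precisely when $b\in\gn\go_v$), integrates over annuli using that $\eta_v$ is unramified, and evaluates the resulting sum $\sum_{n=\ord_v(\gn)}^{\ord_v(b)}\eta_v(\varpi_v)^n n$ by the same elementary arithmetic/alternating-sum identities. Your summation identity and final crude bound both check out.
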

\begin{proof}
It follows immediately from the following computation:
\begin{align*}
& \int_{F_{v}^{\times}}\Phi_{v, \gn}^{(0)}
(\delta_{b}[\begin{smallmatrix} t & 0 \\ 0 & 1 \end{smallmatrix}])\eta_{v}(t)\log |t|_{v} \ d^{\times}t
=  \int_{|b|_{v}\le |t|_{v}<1} \delta(t \in \gn\go_{v})\eta_{v}(t)\log |t|_{v} d^{\times}t \\
= & \delta(b\in \gn\go_{v})\sum_{n=\ord_{v}(\gn)}^{\ord_{v}(b)}\int_{\go_{v}^{\times}}
\eta_{v}(\varpi_{v}^{n}u)\log |\varpi_{v}^{n}u|_{v}d^{\times}u
= \delta(b\in \gn\go_{v})\vol(\go_{v}^{\times})(-\log q_{v})
\sum_{n=\ord_{v}(\gn)}^{\ord_{v}(b)}
\eta_{v}(\varpi_{v}^{n})n.
\end{align*}
\end{proof}

\begin{lem}
\label{esti of W_eta}
 Let $v \in S(\gf_{\eta})$ and put $f=f(\eta_{v}) \in \NN$. 
For $b \in F_{v}^{\times}-\{-1\}$, 
\begin{align*}
W_{v}^{\eta_{v}}(b) = & \delta(b \in \gp_{v}^{-f})
\eta_{v}(-1)(1-q_{v}^{-1})^{-1}q_{v}^{-f-d_{v}/2}(\log q_{v}) \times
[ -f+ \\
& \eta_{v}(b(b+1))
\{ \delta( b \in \gp_{v})(-f-\ord_{v}(b))
+\delta( b \in \go_{v}^{\times})(-f+\ord_{v}(b+1))
+ \delta(b \notin \go_{v})(-f)q_{v}^{\ord_{v}(b)}\}].
\end{align*}
In particular,
\begin{align*}
|W_v^{\eta_v}(b)|\leq 6(\log q_v)q_v^{-f}\delta(|b|_v\leq q_v^{f})\{f+\delta(|b|_v\leq 1)\ord_v(b(b+1))\}, \quad b\in F_v^\times
-\{-1\}.
\end{align*}
\end{lem}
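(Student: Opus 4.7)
The strategy parallels the computation of the analogous integral $J_v(b)$ for $v\in S(\gf_\eta)$ carried out in \cite[\S 10]{SugiyamaTsuzuki}, with the extra $\log|t|_v$ factor inserted into the integrand. Since $v\in S(\gf_\eta)$ lies outside $S\cup S(\gn)$ (recall $\gn$ is prime to $\gf_\eta$), we have $\Psi_v=\Phi_{\gn,v}^{(0)}=\Phi_{v,0}^{(0)}$, a concrete $\bK_v$-invariant function on $\GL(2,F_v)$. First I would carry out the matrix multiplication $\delta_{b}\left[\begin{smallmatrix} t & 0 \\ 0 & 1\end{smallmatrix}\right]\left[\begin{smallmatrix} 1 & x_{\eta,v}\\ 0 & 1\end{smallmatrix}\right]$ and, using the explicit description of $\Phi_{v,0}^{(0)}$, determine the support condition on $t$ as a function of $b$ and $x_{\eta,v}$. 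Since $x_{\eta,v}$ lies in $\varpi_v^{-f}\go_v^\times$, this support condition produces the outer indicator $\delta(b\in \gp_v^{-f})$, while the overall power $q_v^{-f-d_v/2}$ emerges from the Godement-section normalization of $\Phi_{v,0}^{(0)}$ at the relevant matrix.

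Next, the integration domain splits into three regimes according to $\ord_v(b)$, namely $b\in \gp_v$, $b\in \go_v^\times$, and $b\notin \go_v$; these correspond exactly to the three summands in the bracket of the formula. In each regime, the range of $t$ is a union of cosets of $\go_v^\times$, and the substitution $t=\varpi_v^{k}u$ with $u\in \go_v^\times$ reduces the inner $u$-integral to one of the form $\int_{\go_v^\times}\eta_v(u+c_k)\,d^\times u$, where the shift $c_k$ depends on $b$, $x_{\eta,v}$, and $k$. Since $\eta_v$ has conductor $f\geq 1$, the naive integral $\int_{\go_v^\times}\eta_v(u)d^\times u$ vanishes, so only those $k$ for which the shift $c_k$ forces $u+c_k$ into a neighbourhood of a prescribed unit survive; these evaluations yield the Gauss-sum factor $\eta_v(-1)(1-q_v^{-1})^{-1}$ together with the overall appearance of $\eta_v(b(b+1))$ inside the bracket. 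The factor $\log|t|_v = -k\log q_v$ then pulls out of the $u$-integral, and summing over the surviving values of $k$ produces the linear expressions $-f-\ord_v(b)$, $-f+\ord_v(b+1)$, and $(-f)q_v^{\ord_v(b)}$ in the three regimes, with a residual contribution unaffected by the Gauss sum giving the constant $-f$ at the head of the bracket.

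The main obstacle is the case-by-case bookkeeping in the Gauss-sum evaluations, in particular tracking how the argument of $\eta_v$ transforms under the substitution as $b$ ranges over the three regimes and verifying that the cancellations reproduce exactly the stated formula together with its sign patterns. Once the explicit form is in hand, the estimate is immediate: since $d_v\geq 0$ we have $q_v^{-f-d_v/2}\leq q_v^{-f}$ and $(1-q_v^{-1})^{-1}\leq 2$; applying $|\eta_v|\leq 1$ to each of the (at most four) terms in the bracket and bounding each linear expression trivially by $f+\delta(|b|_v\leq 1)\ord_v(b(b+1))$ yields the stated bound, with the absolute constant $6$ absorbing the factor $2$ from $(1-q_v^{-1})^{-1}$ and the number of surviving summands.
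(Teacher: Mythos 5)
Your overall plan -- specialize $\Psi_v=\Phi_{v,0}^{(0)}$, read off the support condition from the matrix $\delta_b\left[\begin{smallmatrix} t & 0 \\ 0 & 1\end{smallmatrix}\right]\left[\begin{smallmatrix} 1 & x_{\eta,v} \\ 0 & 1\end{smallmatrix}\right]$, split by the regime of $\ord_v(b)$, exploit the conductor of $\eta_v$, and then bound trivially -- is the same strategy the paper uses. But there are two places where your account does not match what actually happens.

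First, the Gauss-sum step is misdescribed. The character in the integrand is $\eta_v(tx_{\eta,v}^*)=\eta_v(t\varpi_v^{-f})$, which is \emph{multiplicative} in $t$; after writing $t=\varpi_v^k u$ with $u\in\go_v^\times$ it factors as $\eta_v(\varpi_v^{k-f})\,\eta_v(u)$, and what is being integrated over $u$ is $\eta_v(u)$ on a union of cosets $u_0U_v(m)$ determined by the support condition, not $\eta_v(u+c_k)$. The vanishing/nonvanishing dichotomy is then $\int_{U_v(m)}\eta_v=0$ unless $m\geq f$, not an additive Gauss sum with a shift $c_k$. If you try to carry out the computation with a shifted argument you will get the wrong local factors. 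Second, the paper isolates the head constant $-f$ not as a ``residual'' of the Gauss-sum computation but via a clean prior decomposition
$$W_v^{\eta_v}(b)=\delta(b\in\gp_v^{-f})\bigl(W_{v,1}^{\eta_v}(b)+W_{v,2}^{\eta_v}(b)\bigr),$$
where $W_{v,1}$ is the integral over the small region $-t\in\varpi_v^f U_v(f)$, on which $\eta_v(t\varpi_v^{-f})\equiv\eta_v(-1)$ by conductor and $\log|t|_v\equiv -f\log q_v$, giving $\eta_v(-1)(-f\log q_v)q_v^{-f-d_v/2}(1-q_v^{-1})^{-1}$ directly as a volume; $W_{v,2}$ is the complementary integral, decomposed by $|t|_v=q_v^{-l}$ into the sets $D_l(b)$ of the previous paper, whose $\eta_v$-integrals are already evaluated in \cite[Lemmas 10.6, 10.7, 10.8]{SugiyamaTsuzuki}. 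Your proposal essentially redoes those computations rather than quoting them, and your regime-by-regime analysis folds the $W_{v,1}$ piece in with the rest; that is workable but noticeably messier, and as written the Gauss-sum step would need to be corrected before it would yield the stated formula. The derivation of the ``In particular'' bound from the explicit formula is fine.
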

\begin{proof}
We have the expression
$W_{v}^{\eta_{v}}(b)= \delta(b \in \gp_{v}^{-f})(W_{v, 1}^{\eta_{v}}(b)
+W_{v, 2}^{\eta_{v}}(b))$
with
$$W_{v, 1}^{\eta_{v}}(b)= \int_{\substack{-t \in \varpi_{v}^{f}U_{v}(f) \\
|t|_{v}||b+1|_{v}\le 1}}\eta_{v}(t\varpi_{v}^{-f})
\log |t|_{v} d^{\times}t= \eta_{v}(-1)(-f \log q_{v})q_{v}^{-f-d_{v}/2}(1-q_{v}^{-1})^{-1}
$$
and
$$W_{v, 2}^{\eta_{v}}(b) = \int_{
\substack{-t \in F_{v}^{\times}-\varpi_{v}^{f}U_{v}(f) \\
|1+ t\varpi_{v}^{-f}|_{v}|b+t\varpi_{v}^{-f}(b+1)|_{v}\le |t|_{v}}}\eta_{v}(t\varpi_{v}^{-f})
\log |t|_{v} d^{\times}t.
$$
The integration domain of $W_{v,2}^{\eta_v}(b)$ is a disjoint union of the sets $D_{l}(b)$ ($l\in \Z)$ defined in \cite[10.2]{SugiyamaTsuzuki}. By \cite[Lemmas 10.6, 10.7 and 10.8]{SugiyamaTsuzuki},\begin{align*}
& W_{v, 2}^{\eta_{v}}(b)= \sum_{l \in \ZZ}(-l \log q_{v})\int_{D_{l}(b)}\eta_{v}(t\varpi_{v}^{-f})d^{\times}t \\
= & \delta( |b|_{v}<1=|b+1|_{v})\{(-f+\ord_{v}(b+1)-\ord_{v}(b))\log q_{v}\}
\eta_{v}\left(\frac{-b}{b+1}\right)(1-q_{v}^{-1})^{-1}q_{v}^{-f-d_{v}/2} \\
& + \delta(|b|_{v}=|b+1_{v}|\ge 1)
(-f \log q_{v})\eta_{v}\left(\frac{-b}{b+1}\right)(1-q_{v}^{-1})^{-1}q_{v}^{-f+\ord_{v}(b)-d_{v}/2}\\
& + \delta( |b+1|_{v}<1=|b|_{v})\{(-f+\ord_{v}(b+1)-\ord_{v}(b))\log q_{v}\}
\eta_{v}\left(\frac{-b}{b+1}\right)(1-q_{v}^{-1})^{-1}q_{v}^{-f-d_{v}/2} \\
= & \eta_{v}\left(\frac{-b}{b+1}\right)(1-q_{v}^{-1})^{-1}q_{v}^{-f-d_{v}/2}(\log q_{v})
\{\delta(|b|_{v} <1=|b+1|_{v})(-f-\ord_{v}(b)) \\
& +\delta(|b|_{v}=|b+1|_{v}\ge 1)(-f)q_{v}^{\ord_{v}(b)}
+ \delta(|b+1|_{v}<1=|b|_{v})(-f+\ord_{v}(b+1))\} \\
= & \eta_{v}\left(\frac{-b}{b+1}\right)(1-q_{v}^{-1})^{-1}q_{v}^{-f-d_{v}/2}(\log q_{v})
\{\delta( b \in \gp_{v})(-f-\ord_{v}(b)) \\
& +\delta( b \in \go_{v}^{\times})(-f+\ord_{v}(b+1))
+ \delta(b \notin \go_{v})(-f)q_{v}^{\ord_{v}(b)}\}.
\end{align*}
This completes the proof.
\end{proof}

%\begin{lem}
%For $v\in S(\gf_{\eta})$, $|W_{v}^{\eta_{v}}(b)| \ll $
%with the implied constant absolute.
%\end{lem}
%\begin{proof}
%\begin{align*}
%& |W_{v}^{\eta_{v}}(b)| \\
%\ll & \delta(b \in \gp_{v}^{-f})q_{v}^{-f}(\log q_{v})\{f+\delta(b \in \gp_{v})(f+\ord_{v}(b))
%+\delta(b \in \go_{v}^{\times})(f+\ord_{v}(b+1)) +\delta(b \notin \go_{v})f\} \\
%\ll & \delta(b \in \gp_{v}^{-f})q_{v}^{-f}(\log q_{v})\{f+\delta(b \in \gp_{v})\ord_{v}(b)
%+\delta(b \in \go_{v}^{\times})\ord_{v}(b+1)\} \\
%\ll & \delta(b \in \gp_{v}^{-f})q_{v}^{-f}(\log q_{v})\{f+\delta(b \in \go_{v})\ord_{v}(b(b+1))\} \\
%\ll & \delta(b \in \gp_{v}^{-f})q_{v}^{-f}(\log q_{v}^f)\{1+\delta(b \in \go_{v})\log \frac{1}{|b(b+1)|_{v}}\}
%\end{align*}
%\end{proof}

%%%%%%%%%%%%%%%%%%%%%
\subsection{} 
%%%%%%%%%%%%%%%%%%%%%
Let $v\in \Sigma_\infty$ and fix an identification $F_v\cong\R$. In this paragraph, we abbreviate $l_v$ to $l$ omitting the subscript $v$. Let $\varepsilon:\R^\times \rightarrow \{\pm 1\}$ be a character; thus $\varepsilon$ is the sign character or the trivial one. 
%Define
%$$W_{v}^{\varepsilon}(b) = \int_{\R^{\times}}
%\Psi^{(0)}(l; \delta_{b}
%\left[\begin{smallmatrix} t & 0 \\ 0 & 1 \end{smallmatrix}\right])
%\varepsilon(t)\, \log |t|\,d^{\times}t,$$
%where $\Psi^{(z)}(l;g)$ is the Shintani function of weight $l$ (\cite[3.2]{Sugi%yamaTsuzuki}). 
From the proof of \cite[Lemma 10.12]{SugiyamaTsuzuki}, we have
\begin{align*}
W_{v}^{\varepsilon}(b) & = \int_{\RR^{\times}}
\left(\frac{1+it}{\sqrt{t^{2}+1}}\right)^{l}\{ 1+i(bt^{-1}+t(b+1))\}^{-l/2}
\varepsilon(t) \log |t|_{v}d^{\times}t \\
& = \int_{\RR^{\times}}
(1-it)^{-l/2}(1+b+t^{-1}bi)^{-l/2}
\varepsilon(t) \log |t|_{v}d^{\times}t
\\
& = W_{+}(b) + \varepsilon(-1) \overline{W_{+}(b)},
\end{align*}
where we set 
$$W_{+}(b) = i^{l/2}(1+b)^{-l/2}\int_{0}^{\infty}
(t+i)^{-l/2}\left(t+\frac{bi}{b+1}\right)^{-l/2}t^{l/2-1}
\log t \ dt.
$$

Here is an explicit formula of $W_{+}(b)$.

\begin{lem} \label{EF-W+}
Suppose $l\ge 4$. Then, for $b \in \RR^{\times}-\{-1\}$, we have
\begin{align*}
W_{+}(b)=-\pi i\,J_{+}(l;b)-A(b)-i\,B(b), \\
\end{align*}
where
\begin{align*}
A(b)=&\sum_{k=0}^{l/2-1}\tbinom{l/2+k-1}{k}\tbinom{l/2-1}{k} \left\{
\tfrac{b^{k}}{2}
(\log|\tfrac{b}{b+1}|)^{2}-\tfrac{\theta(b)^2}{2}\,b^{k}-\tfrac{9\pi^2}{8}(-1)^{k+l/2}(b+1)^{k}\right\} \\
&+\sum_{k=0}^{l/2-1}\tbinom{l/2+k-1}{k}\sum_{j=1}^{l/2-k-1}\tbinom{l/2-1}{k+j} \tfrac{(-1)^{j}}{j} 
\left(\sum_{m=1}^{j-1}\tfrac{1}{m}\{ b^{k}+(-1)^{k+l/2}(b+1)^{k}\}-b^{k}\log|\tfrac{b}{b+1}|\right), \\
B(b)=&\sum_{k=0}^{l/2-1}\tbinom{l/2+k-1}{k}
\tbinom{l/2-1}{k}\,b^{k}\log|\tfrac{b}{b+1}|\,\theta(b)
\\
&-\sum_{k=0}^{l/2-1}\tbinom{l/2+k-1}{k}\sum_{j=1}^{l/2-k-1}\tbinom{l/2-1}{k+j}\tfrac{(-1)^{j}}{j}
\{\tfrac{3\pi}{2}(-1)^{k+l/2}(b+1)^{k}+b^{k}\theta(b)\},
\end{align*}
%\begin{align*}
%& W_{+}(b) \\
%= & -\pi i J_{+}(l; b) -
%\bigg[ \frac{1}{2}\sum_{k=0}^{l/2-1}\left(\begin{matrix}l/2-1\\ k\end{matrix}\right)
%(-1)^{k}i^{l}
%\left\{\sum_{m=0}^{k}\left(\begin{matrix}k-m+l/2-1\\ k-m\end{matrix}\right)
%(b+1)^{k-m}\sum_{j=0}^{m}i^{m}a_{j}a_{m-j}
%\right\} \\
%& +
%\frac{1}{2}\sum_{k=0}^{l/2-1}\left(\begin{matrix}l/2-1\\ k\end{matrix}\right)
%\left\{\sum_{m=0}^{k}\left(\begin{matrix}k-m+l/2-1\\ k-m\end{matrix}\right)
%b^{k-m}\sum_{j=0}^{m} d'_{j}d'_{m-j}
%\right\} \bigg]
%\end{align*}
 $\theta(b)=\pi/2$ if $b(b+1)<0$, $\theta(b)=3\pi/2$ if $b(b+1)>0$
and $J_{+}(l; b)$ is the function defined in \cite[Lemma 10.13]{SugiyamaTsuzuki}.
%$a_{0} = \log(-i) = 3\pi i/2$,
%$a_{n}=\frac{-i^{-n}}{n}$ $(n \ge 1)$,
%$d'_{0} = \log(\frac{-ib}{b+1})$
%and
%$d'_{n} = \frac{(-1)^{n-1}}{n}$ $(n \ge 1)$.
\end{lem}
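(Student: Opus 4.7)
The plan is to realize $W_+(b)$ as an $s$-derivative of a meromorphic family of Mellin integrals. Introduce
$$\tilde W_+(b;s)=i^{l/2}(1+b)^{-l/2}\int_0^\infty (t+i)^{-l/2}\left(t+\tfrac{bi}{b+1}\right)^{-l/2} t^{s-1}\,dt,$$
which is holomorphic for $0<\Re s<l$, so that $W_+(b)=\frac{\partial}{\partial s}\tilde W_+(b;s)\big|_{s=l/2}$. Next, applying partial fractions to the rational factor, with $\alpha=bi/(b+1)$ and the useful identity $\alpha-i=-i/(b+1)$, one obtains
$$\frac{1}{(t+i)^{l/2}(t+\alpha)^{l/2}}=\sum_{k=0}^{l/2-1}\binom{l/2+k-1}{k}\left\{\frac{(-1)^{l/2-k}}{(\alpha-i)^{l/2+k}(t+i)^{l/2-k}}+\frac{(-1)^{l/2-k}}{(i-\alpha)^{l/2+k}(t+\alpha)^{l/2-k}}\right\},$$
and for each summand the Euler identity
$$\int_0^\infty (t+\beta)^{-(l/2-k)}\,t^{s-1}dt=\beta^{s-l/2+k}\,\frac{\Gamma(s)\Gamma(l/2-k-s)}{\Gamma(l/2-k)}$$
evaluates the Mellin transform, with branch of $\beta^{w}$ chosen by the cut along $(-\infty,0]$.

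Each individual term has a simple pole in $s$ at $s=l/2$ through the factor $\Gamma(l/2-k-s)$; these poles must and do cancel between the two partial-fraction families, so the sum is holomorphic at $s=l/2$. Computing $\partial/\partial s$ at $s=l/2$ using the Laurent expansion
$$\Gamma(l/2-k-s)=\frac{(-1)^{k+1}}{(s-l/2)\,k!}\bigl(1-(s-l/2)(\psi(k+1)+\gamma)+O((s-l/2)^2)\bigr),$$
combined with $\beta^{s-l/2+k}= \beta^{k}\bigl(1+(s-l/2)(\log|\beta|+i\arg\beta)+\cdots\bigr)$ for the two choices $\beta\in\{i,\alpha\}$, yields three types of contributions. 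The non-singular part of the summation (when the difference $\psi(l/2-k)-\psi(l/2-k-j)=\sum_{m=1}^{j-1}\tfrac{1}{m}$ arises from inspecting $\Gamma'/\Gamma$ at positive integers) produces exactly the $j$-sums in $A(b)$ and $B(b)$. The double-pole cancellation between the two families produces the quadratic pieces $(\log|b/(b+1)|)^2$, $\theta(b)^2$, and $\log|b/(b+1)|\,\theta(b)$.

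Here the argument $\arg i=\pi/2$ is uniform, while $\arg\alpha=\arg(bi/(b+1))$ depends on the sign of $b(b+1)$ and forces the case split $\theta(b)=\pi/2$ or $3\pi/2$, thereby producing the $(9\pi^2/8)$ and $(3\pi/2)$ constants appearing in $A(b)$, $B(b)$. Finally, the imaginary contribution of order $\pi$ (coming from $\arg\alpha+\arg i$ in the leading pole cancellation) matches, by direct inspection of the coefficients $\binom{l/2+k-1}{k}\binom{l/2-1}{k}$ (which arise from the $k=j$ terms after cancellation), the formula for $J_+(l;b)$ recorded in \cite[Lemma 10.13]{SugiyamaTsuzuki}, giving the isolated $-\pi i J_+(l;b)$ summand. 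Separating real and imaginary parts of the remaining expression yields $-A(b)-iB(b)$ in the stated form.

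The principal obstacle will be tracking the cancellations between the two partial-fraction families at the double pole $s=l/2$ with consistent branch choices for $(\alpha-i)^{l/2+k}$ and $(i-\alpha)^{l/2+k}$, since the sign of $b(b+1)$ governs the argument of $\alpha$ and hence which of $\theta(b)=\pi/2$ or $3\pi/2$ appears; a careful bookkeeping of the two regimes and their contributions to the $\pi i J_+$ term is what links the computation to the pre-existing evaluation in \cite[Lemma 10.13]{SugiyamaTsuzuki}.
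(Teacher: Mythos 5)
Your proposal follows a genuinely different route from the paper. The paper constructs the auxiliary function
$g(z) = i^{l/2}(1+b)^{-l/2}(z+i)^{-l/2}\bigl(z+\tfrac{bi}{b+1}\bigr)^{-l/2}z^{l/2-1}(\log z)^{2}$
and applies Cauchy's theorem on a keyhole contour around $\RR_{\geq 0}$; the jump of $(\log z)^2$ across the cut, $(\log t+2\pi i)^2-(\log t)^2=4\pi i\log t-4\pi^2$, immediately produces the identity
$-4\pi i\,W_{+}(b)+4\pi^{2}J_{+}(l;b)=2\pi i\{\Res_{z=-i}+\Res_{z=-bi/(b+1)}\}g(z)$,
so that $J_{+}$ is cleanly separated from the residues, and the remaining $-A(b)-iB(b)$ is just the residue sum. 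You instead introduce a Mellin family $\tilde W_{+}(b;s)$ with $W_{+}=\partial_s\tilde W_{+}|_{s=l/2}$, perform partial fractions, invoke Euler's beta identity for each piece, and then Taylor-expand at the removable singularity $s=l/2$. Both ideas are legitimate (and are in fact two faces of the same keyhole phenomenon: the paper's $(\log z)^2$ keyhole is the second $s$-derivative at $s=l/2$ of the $t^{s-1}$ keyhole that underlies the Euler identity). What the paper's route buys is that $J_{+}$ drops out automatically as the coefficient of $4\pi^2$, so the link to \cite[Lemma 10.13]{SugiyamaTsuzuki} is structural rather than something to be \emph{recognized}; your route requires matching the constant Taylor coefficient of $\tilde W_{+}$ against an explicit pre-existing formula for $J_{+}$, which is the step you gloss over with ``direct inspection of the coefficients.''

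A few bookkeeping points you should fix before the method can be executed reliably. First, your partial-fraction identity has the sign $(-1)^{l/2-k}$; the correct coefficient is $(-1)^{k}$, so the two differ by a global factor $(-1)^{l/2}$ and your version is wrong whenever $l\equiv 2\pmod 4$. Second, the local expansion of $\Gamma(l/2-k-s)$ at $s=l/2$ should involve $\psi(k+1)$, not $\psi(k+1)+\gamma$ (you may have meant $H_k=\psi(k+1)+\gamma$, but then you should carry the adjustment consistently through the $\Gamma(s)$ factor). Third, each summand has only a \emph{simple} pole at $s=l/2$, not a double pole; the quadratic logs arise from the $\epsilon^2$ term of $\beta^{\epsilon+k}$ and the cross-terms, not from a double-pole cancellation. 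None of these is fatal, but they need to be repaired for the calculation to close, and the final identification of $-\pi i J_{+}$ as an isolated summand is the genuine labour in this approach, precisely the labour the paper's $(\log z)^2$ contour avoids.
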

\begin{proof}
For $b \in \RR^{\times}-\{-1\}$,
put $g(z) = i^{l/2}(1+b)^{-l/2}(z+i)^{-l/2}\left(z+\frac{bi}{b+1}\right)^{-l/2}z^{l/2-1}(\log z)^{2},$
where
$\log z = \log|z| + i \arg(z)$ with $\arg(z) \in [0, 2\pi)$.
Then, $g(z)$ is holomorphic on $\CC -(\RR_{\ge 0}\cup \{-i, \frac{-bi}{b+1}\})$.
We note $\frac{-bi}{b+1} \in i \RR-\{0, -i\}$.
By Cauchy's integral theorem, we have
\begin{align*}
2\pi i\{\Res_{z=-i}+\Res_{z=\frac{-bi}{b+1}}\} g(z) =
\int_{\e}^{R}g(t)dt
+\oint_{|z|=R}g(z) dz -\int_{\e}^{R}g(te^{2\pi i}) -\oint_{|z|=\e}g(z)dz.
\end{align*}
with $R$ sufficiently large and $\e>0$ sufficiently small.
By $\lim_{R \rightarrow \infty}\oint_{|z|=R}g(z) dz = 0$,
$\lim_{\e \rightarrow +0}\oint_{|z|=\e}g(z) dz = 0$ and
$(\log t +2\pi i)^{2}=(\log t)^{2}+4\pi i\log t -4\pi^{2}$,
we also have
\begin{align*}
2\pi i\{\Res_{z=-i}+\Res_{z=\frac{-bi}{b+1}}\} g(z) =
-4\pi iW_{+}(b) +4\pi^{2}J_{+}(l; b).
\end{align*}
Hence, we obtain
\begin{align*}
W_{+}(b)=
-\frac{1}{2}\{\Res_{z=-i}+\Res_{z=\frac{-bi}{b+1}}\} g(z)
-\pi i\, J_{+}(l; b).
\end{align*}
Furthermore, a direct computation gives us
%\begin{align*}
%& i^{-l/2}(b+1)^{l/2}\Res_{z=-i}g(z) \\
%= &
%\sum_{k=0}^{l/2-1}\left(\begin{matrix}l/2-1\\ k\end{matrix}\right)
%(-i)^{k}\{i(b+1)\}^{l/2}
%\left\{\sum_{m=0}^{k}\left(\begin{matrix}k-m+l/2-1\\ k-m\end{matrix}\right)
%\left(\frac{b+1}{i}\right)^{k-m}\sum_{j=0}^{m}a_{j}a_{m-j}
%\right\}
%\end{align*}
%and
%\begin{align*}
%& i^{-l/2}(b+1)^{l/2}\Res_{z=-\frac{-bi}{b+1}}g(z) \\
%= &
%\sum_{k=0}^{l/2-1}\left(\begin{matrix}l/2-1\\ k\end{matrix}\right)
%(\frac{-bi}{b+1})^{k}\{\frac{b+1}{i}\}^{l/2}
%\left\{\sum_{m=0}^{k}\left(\begin{matrix}k-m+l/2-1\\ k-m\end{matrix}\right)
%\{i(b+1)\}^{k-m}\sum_{j=0}^{m}d_{j}d_{m-j}
%\right\},
%\end{align*}
%where $d_{n} = (\frac{b+1}{-bi})^{n} d'_{n}$.
%= (\frac{b+1}{-bi})^{n}\frac{(-1)^{n-1}}{n}$ $(n \ge 1)$ and $d_{0}=d'_{0}$.
%This completes the proof.
%
%By a direct computation, we have
\begin{align*}
\Res_{z=-i}g(z)=&\sum_{k=0}^{l/2-1}\tbinom{l/2+k-1}{k} (-1)^{k+l/2}(b+1)^{k}\\
&\times 
\biggl\{\tbinom{l/2-1}{k} \tfrac{-9\pi^2}{4}+
2\sum_{j=1}^{l/2-k-1}\tbinom{l/2-1}{k+j} \tfrac{(-1)^{j}}{j} \biggl(\sum_{m=1}^{j-1}\tfrac{1}{m}-\tfrac{3\pi}{2}i\biggr)\biggr\}
\end{align*}
and
\begin{align*}
\Res_{z=\frac{ib}{b+1}}g(z)
=& \sum_{k=0}^{l/2-1}\tbinom{l/2+k-1}{k} b^{k} \times 
\biggl\{\tbinom{l/2-1}{k}\left(\log|\tfrac{b}{b+1}|+\theta(b)i\right)^2 \\
& + 2\sum_{j=1}^{l/2-k-1}\tbinom{l/2-1}{k+j} \tfrac{(-1)^{j}}{j} \biggl(\sum_{m=1}^{j-1}\tfrac{1}{m}-\log|\tfrac{b}{b+1}|-\theta(b)i\biggr)\biggr\}.
\end{align*}
This completes the proof.
\end{proof}

\begin{lem} \label{Est-arci-log}
Suppose $l>4$. For any $\epsilon>0$, we have
\begin{align*}
|b(b+1)|^{\e}\,|W_{v}^{\e}(b)|\ll_{\e, l} (1+|b|)^{-l/2+2\e}, \quad b\in \R-\{0,-1\}.
\end{align*}
\end{lem}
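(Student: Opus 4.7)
The plan is to bound $|W_+(b)|$ directly from its integral representation given in the proof of Lemma~\ref{EF-W+}, since the decomposition $W_v^\varepsilon(b)=W_+(b)+\varepsilon(-1)\overline{W_+(b)}$ gives $|W_v^\varepsilon(b)|\le 2|W_+(b)|$. Setting $\alpha:=b/(b+1)$ and using $|(t+i)(t+\alpha i)|=\sqrt{(t^2+1)(t^2+\alpha^2)}$, I obtain the majorization
\[
|W_+(b)|\le |1+b|^{-l/2}\,I(\alpha),\qquad I(\alpha):=\int_0^\infty \bigl((t^2+1)(t^2+\alpha^2)\bigr)^{-l/4}\,t^{l/2-1}|\log t|\,dt.
\]
Thus the proof reduces to establishing the two estimates $I(\alpha)\ll_l |\alpha|^{-l/2}(1+(\log|\alpha|)^2)$ for $|\alpha|\ge 1$ and $I(\alpha)\ll_l 1+(\log|\alpha|)^2$ for $|\alpha|<1$.

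Both of these are proved by splitting the $t$-integral at the scales $|\alpha|$ and $1$ and trivially bounding only one of the two factors $(t^2+1)^{-l/4}$, $(t^2+\alpha^2)^{-l/4}$ in each subregion. In the case $|\alpha|\ge 1$, on $(0,|\alpha|]$ use $(t^2+\alpha^2)^{-l/4}\le|\alpha|^{-l/2}$, while on $(|\alpha|,\infty)$ use $((t^2+1)(t^2+\alpha^2))^{-l/4}\le t^{-l}$; the dominant contribution comes from $\int_1^{|\alpha|}t^{-1}|\log t|\,dt\asymp (\log|\alpha|)^2$. In the case $|\alpha|<1$, a three-way split at $|\alpha|$ and at $1$ isolates a $(\log|\alpha|)^2$ contribution from $\int_{|\alpha|}^1 t^{-1}|\log t|\,dt$, with the remaining pieces of size $O(1+|\log|\alpha||)$. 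The hypothesis $l>4$ guarantees convergence at both endpoints throughout.

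Combining these bounds on $I(\alpha)$ with the identity $|1+b|^{-l/2}|\alpha|^{-l/2}=|b|^{-l/2}$ yields $|W_+(b)|\ll|b|^{-l/2}(1+(\log|\alpha|)^2)$ when $|\alpha|\ge 1$ and $|W_+(b)|\ll|1+b|^{-l/2}(1+(\log|\alpha|)^2)$ when $|\alpha|<1$. Since $|\alpha|\ge 1\Leftrightarrow|b|\ge|b+1|$, in the first regime one has $1+|b|\asymp|b|$ and $|b+1|\ll|b|$, while in the second $1+|b|\asymp|b+1|$ and $|b|\ll|b+1|$ (up to absolute constants). Applying the elementary inequality $(\log x)^2\ll_\epsilon x^\epsilon+x^{-\epsilon}$ ($x>0$) with $x=|\alpha|=|b|/|b+1|$, the factor $|b(b+1)|^\epsilon=|b|^\epsilon|b+1|^\epsilon$ absorbs both the logarithmic factor and the imbalance between $|b|$ and $|b+1|$, delivering the uniform bound $(1+|b|)^{-l/2+2\epsilon}$.

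The main obstacle is essentially bookkeeping: one must verify in both regimes simultaneously that $|b|^\epsilon|b+1|^\epsilon(1+(\log|\alpha|)^2)$ is controlled by the appropriate power of $\max(|b|,|b+1|)\asymp 1+|b|$ after combining with the factor $|b|^{-l/2}$ or $|b+1|^{-l/2}$. This verification is mechanical and follows from applying $(\log x)^2\ll_\epsilon x^\epsilon$ to $x=|\alpha|\ge 1$ in one regime and to $x=|\alpha|^{-1}\ge 1$ in the other, together with $|\alpha|^{\pm\epsilon}=|b|^{\pm\epsilon}|b+1|^{\mp\epsilon}$; no additional cancellation between $W_+(b)$ and its conjugate is required.
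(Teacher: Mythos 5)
Your proof is correct, and it follows a genuinely different route from the paper's. The paper starts from the same integral representation but applies the single AM--GM inequality $t^{2}(b+1)^{2}+b^{2}\ge 2|t|\,|b(b+1)|$ to obtain the uniform bound $|W_{+}(b)|\ll_{l}|b(b+1)|^{-l/4}$; this suffices for $|b|$ large but blows up faster than $|b(b+1)|^{-\e}$ near $b=0,-1$, so the paper then invokes the explicit residue decomposition of Lemma~\ref{EF-W+} together with Lemma~\ref{ArchIntEst} to establish local boundedness of $|b(b+1)|^{\e}W_{+}(b)$ around those two points. You instead split the $t$-integral at the scales $|\alpha|=|b/(b+1)|$ and $1$, trivially bounding one of the two factors $(t^{2}+1)^{-l/4}$, $(t^{2}+\alpha^{2})^{-l/4}$ in each range, and arrive at the log-sharp estimate $|W_{+}(b)|\ll_{l}\min(|b|,|b+1|)^{-l/2}\,(1+(\log|\alpha|)^{2})$ valid uniformly for all $b\in\RR-\{0,-1\}$; passing to the claim then only requires $(\log x)^{2}\ll_{\e}x^{\e}+x^{-\e}$ and the identity $|\alpha|^{\pm\e}=|b|^{\pm\e}|b+1|^{\mp\e}$. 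The payoff is that your argument is self-contained from the integral formula and does not require the residue computation of Lemma~\ref{EF-W+} or the separate estimate of Lemma~\ref{ArchIntEst}, at the cost of a slightly longer elementary calculation in place of the one-line AM--GM step. (The constants in your intermediate bounds do depend on $l$ because the length of the interval $(1,|\alpha|]$ and the tails are $l$-dependent, consistent with the claimed $\ll_{\e,l}$.)
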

\begin{proof}
%
%
%By the triangle inequality for integral, $|W_{+}(b)|$ is bounded by the integra%l\begin{align*}
%\int_{0}^{\infty} (t^{-2}+1)^{-l/4}\{t^{2}(b+1)^2+b^{2}\}^{-l/4}|\log t|\,\frac%{\d t}{t}.
%\end{align*}
% By the variable change $t=(|b|/|b+1|)^{1/2}e^{-x}$ as in \cite[15.2]{Tsuzuki},% this becomes 
%\begin{align*}
%\int_{\R}\{1+4b(b+1)\,f_{\sgn(b(b+1))}(x) \}^{-l/4}\left|\tfrac{1}{2}\log(|b|/|%b+1|)-x\right|\,\d x,
%\end{align*}
%where $f_{+1}(x)=\ch^2 x$ and $f_{-1}(x)= -\sh^2 x$. Since 
%$$\left|\tfrac{1}{2}\log(|b|/|b+1|)-x\right|\leq \left|\tfrac{1}{2}\log(|b|/|b+%1|)\right|+|x|, \quad |x|\ll_{\e} 1+(\ch x)^\e,
%$$
%the last integral is bounded by  
%\begin{align*}
%\fm(l;b(b+1))\{\left|\tfrac{1}{2}\log(|b|/|b+1|)\right|+1\}+\tilde \fm_{\e}(l;b%(b+1)),
%\end{align*}
%where $\fm(l;n)$ is the integral studied in \cite[15.2]{Tsuzuki} and 
%\begin{align*}
%\tilde \fm_\e(l;n)&=\int_{\R}\{1+4n\,f_{\sgn(n)}(x) \}^{-l/4}\,(\ch x)^\e \,\d %x.
%\end{align*}
%From \cite[Lemma 15.5]{Tsuzuki}, the integral $\fm(l;n)$ for small $|n|$ is est%imated as $|n|^{\e}\fm(l;n)\ll_{\e, l} 1$. From Lemma~\ref{tildefm}, the other %integral $|n|^{\e}\tilde\fm_\e(l;n)$ is locally bounded for small $|n|$. Theref%ore, $|b(b+1)|^{\e}|W_{+}(b)|$ is bounded in a neighborhood of $\{0, -1\}$.
%
From Lemmas \ref{ArchIntEst} and \ref{EF-W+}, for any $\e>0$, $|b(b+1)|^{\e}W_{+}(b)$ is locally bounded around the points $b=0,-1$. For $b$ away from the set $\{0,-1\}$, we have
\begin{align*}
|W_+(b)|\leq |2b(b+1)|^{-l/4}\int_{0}^{\infty}t^{l/4}(t^{2}+1)^{-l/4}|\log t|\tfrac{\d t}{t}
\end{align*}
by $t^{2}(b+1)^2+b^{2}\geq 2|t||b(b+1)|$. Since $l>4$, the last integral is convergent; hence the above inequality gives us $|b(b+1)|^{\e}|W_+(b)|\ll_{\e, l} (1+|b|)^{-l/2+2\e}$ for large $|b|$.  
\end{proof}

%\begin{lem} \label{tildefm}
%Let $\e>0$ be sufficiently small.
%%($0<\e<1/3$)
%If $n>0$, 
%\begin{align*}
%\tilde \fm_\e(l;n)&=\frac{1}{2\pi i }\int_{(c)} \sqrt{\pi} \frac{\Gamma(\lambda%)\Gamma(l/4-\l)\Gamma(\l-\e/2)}{\Gamma(\l-\e/2+1/2)\Gamma(l/4)}|4n|^{-\l}\,\d \%l,
%\end{align*}
%where $\e/2<c<l/4$. If $n<0$, 
%\begin{align*}
%\tilde \fm_\e(l;b)&=\frac{1}{2\pi i }\int_{(c)} \frac{\Gamma(l/4 -\l) \Gamma(\l%ambda)
%\Gamma(\l-\e/2)\Gamma(1/2-\l)}{\Gamma(1/2-\e/2)\Gamma(l/4)}|4n|^{-\l}\,\d \l,
%\end{align*}
%where $\e/2<c<1/2$. The function $|n|^{\e/2}\tilde \fm_\e(l;n)$ is locally boun%ded in a neighborhood of $n=0$. 
%\end{lem}
%\begin{proof}
%The evaluation of the integral $\tilde \fm_\e(l;n)$ is done in the same way as %\cite[Lemma 15.4]{Tsuzuki}. To show the last assertion, we follow the argument %of \cite[Lemma 15.5]{Tsuzuki}; by shifting the contour $\Re(\l)=c$, we express %the integral as a sum of the residue at $\l=0$, that at $\e/2$ and a contour in%tegral along $\Re(\l)=-\delta$ with a small $\delta>0$. All terms multiplied by% $|n|^{\e/2}$ are easily seen to be locally bounded. 
%\end{proof}
%

%%%%%%%%%%%%%%%%%%%%%%%%%%%%%%%%%%%%%%%%%%%%%%%%%%%%%%%%%
\subsection{The proof of Proposition~\ref{WWhypglobal}}
%%%%%%%%%%%%%%%%%%%%%%%%%%%%%%%%%%%%%%%%%%%%%%%%%%%%%%%%%
We start from the formula \eqref{WWhypglobal-series} taking $\a$ to be $\a_\fa$ defined by \eqref{HeckefunctAA}. If we set
\begin{align}
\WW(T)=\sum_{b\in F-\{0,-1\}} \sum_{w\in T} \{\prod_{v\in \Sigma_F-\{w\}}J_v(b)\}\,W_w(b)
 \label{WWT}
\end{align}
for any subset $T\subset \Sigma_F$, then \eqref{WWhypglobal-series} can be written in the form
$$
\WW_{\rm{hyp}}^{\eta}(l,\fn|\a_\fa)
=\WW(\Sigma_\infty)+\WW(S(\fa))+\WW(S(\fn))+\WW(S(\gf_{\eta}))+\WW(\Sigma_\fin-
S(\fn\fa\gf_{\eta})). 
$$
We shall estimate each term in the right-hand side of this equality explicating  the dependence on $\fn$ and $\fa=\prod_{v\in S(\fa)}\fp_v^{n_v}$. Set $c=(\underline{l}/2-1)/d_{F}$. For convenience, we collect here all the estimates used below (other than these, we also need Lemma~\ref{Est-arci-log}). Let $w_1\in S(\fa)$, $w_2\in S(\fn)$, $w_3\in S(\ff_\eta)$, $w_4\in \Sigma_\fin-S(\fa\ff_\eta)$, and $w_5\in \Sigma_\infty$. Let $\e>0$ be a small number. Then,
\begin{align}
&|J_{w_1} (b)|\ll\delta(b\in \fa^{-1}\cO_{w_1})\,q_{w_1}\{1+\Lambda_{w_{1}}(b)\},
\qquad &
&|J_{w_2}(b)|\leq \delta(b\in \fn\cO_{w_2}),
 \label{Jhyouka2}
\\
&|J_{w_3}(b)| \ll \delta(b\in \ff_\eta^{-1}\cO_{w_3}),
\qquad &
&|J_{w_4}(b)|\leq \delta(b\in \cO_{w_4})\,\Lambda_{w_4}(b), \label{Jhyouka4} \\
&|b(b+1)|_{w_5}^{\e}|J_{w_5}(b)| \ll_{\e, l_{w_{5}}} (1+|b|_{w_5})^{-l_{w_5}/2+2\e}
\label{Jhyouka5}
\end{align}
(note the difference of $\ll$ and $\leq$ ), and  
\begin{align}
|W_{w_1}(b)|&\ll (\log q_{w_1})\,q_{w_1}\,\delta(b\in \fa^{-1}\cO_{w_1})
\{ 2n_{w_{1}} +\ord_{w_1}(b(b+1))+1\}^{2}, 
 \label{Whyouka1}
\\
|W_{w_2}(b)|&\ll (\log q_{w_2})\,\delta(b\in \fn\cO_{w_2})\,\{\ord_{w_2}(b)+\ord_{w_2}(\fn)+1\}^2, 
 \label{Whyouka2}
\\
%|W_{w_3}(b)|&\ll (\log q_{w_3}^{f(\eta_{w_{3}})})\,\delta(b\in \ff_\eta^{-1}\cO_{w_3})\,\{1+\Lambda_{w_3}(b)\} ,
|W_{w_3}(b)|&\ll (\log q_{w_3})\,\delta(b\in \ff_\eta^{-1}\cO_{w_3})\,\{ 2f(\eta_{w_{3}})
+\ord_{w_3}(b(b+1))+1 \} ,
 \label{Whyouka3}
\\
|W_{w_4}(b)|&\ll (\log q_{w_4})\,\delta(|b(b+1)|_{w_4}<1)\,\Lambda_{w_4}(b)^2
 \label{Whyouka4}
\end{align}
for $b\in F^\times$, where all the constants implied by the Vinogradov symbol are independent of the ideals $\fn$, $\fa$ and the places $w_i$ $(1\leq i\leq 5)$. Indeed, the second estimate in \eqref{Jhyouka2} and the both estimates of \eqref{Jhyouka4} follow from \cite[Lemmas 10.5, 10.4 and Corollary 10.11]{SugiyamaTsuzuki} immediately. The estimate \eqref{Jhyouka5} is from Lemma~\ref{ArchIntEst}. The first estimate in \eqref{Jhyouka2} is obtained in the proof of Proposition~\ref{HYPERBOLIC-EST}. The estimate \eqref{Whyouka1} follows from Lemma~\ref{esti of W_gq}, \eqref{Whyouka2} is from Lemma~\ref{esti of W_n}, \eqref{Whyouka3} is from Lemma~\ref{esti of W_eta}, and \eqref{Whyouka4} is from Lemma~\ref{esti of W_rest}. 

In the remaining part of this section, all the constants implied by Vinogradov symbol are independent of $\fn$ and $\fa$ (but may depend on $l$, $\eta$ and a given small number $\e>0$).
 
\begin{lem}\label{WWhyp-archi-part}
We have
\begin{align*}
|\WW(\Sigma_\infty)|\ll_{\e, l, \eta}
% \prod_{v\in S(\fa)}q_v^{n_v({\underline l}/2-1)/d_F+n_v+\delta(n_v>0) +n_v\e}\,\nr(\fn)^{(1-{\underline l}/2)/d_F}\le 
 \nr(\fa)^{c+2+\e}\,\nr(\fn)^{-c+\e}.
\end{align*}
\end{lem}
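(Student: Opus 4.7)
The plan is to follow the strategy used in the proof of Proposition~\ref{HYPERBOLIC-EST}, replacing one of the archimedean $|J_v|$ factors by $|W_w|$. Since $\#\Sigma_\infty = d_F$ is independent of $\fn$ and $\fa$, it suffices to prove the stated bound for each individual summand
$$\sum_{b \in F-\{0,-1\}} \{\prod_{v \in \Sigma_F-\{w\}} J_v(b)\}\, W_w(b)$$
with $w \in \Sigma_\infty$ fixed.

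For fixed $w \in \Sigma_\infty$, I would expand the Tchebyshev polynomials $\alpha_{\fp_v^{n_v}}$ at each $v \in S(\fa)$ via \eqref{HYPERBOLIC-EST-1}, producing a sum over subsets $I \subset S(\fa)$. Using the local estimate \eqref{HYPERBOLIC-EST-2} at places in $S(\fa)$ together with the support bounds \eqref{Jhyouka2}--\eqref{Jhyouka4} at the remaining finite places, the integrand is controlled by $C^{\#S(\fa)}\,\nr(\fa)\,\delta(b \in \fn\fa_I^{-1}\ff_\eta^{-1})\,\tau^{S(\fa_I\ff_\eta)}(b)$ times the archimedean contribution, where $\fa_I = \prod_{v\in I}\fp_v^{n_v}$. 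This is exactly the bookkeeping carried out in the final estimates of Proposition~\ref{HYPERBOLIC-EST}.

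The key observation at the archimedean places is that the $W_w$-estimate provided by Lemma~\ref{Est-arci-log},
$$|b(b+1)|^\e\,|W_w^{\eta_w}(b)| \ll_{\e,l_w} (1+|b|)^{-l_w/2+2\e},$$
has exactly the same form as the $J_v$-estimate from Lemma~\ref{ArchIntEst} which underlies Proposition~\ref{fI-EST}. Consequently the archimedean sum over $b \in \fn\fa_I^{-1}\ff_\eta^{-1}-\{0,-1\}$ is dominated by the same theta-function quantity controlling $\fI^\eta_\e(l,\fn,\fa_I\ff_\eta)$, and Proposition~\ref{fI-EST} gives the bound $\ll \nr(\fa_I\ff_\eta)^{1+c+\e}\nr(\fn)^{-c+\e}$. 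Carrying out the summation over $I\subset S(\fa)$ and absorbing $2^{\#S(\fa)}C^{\#S(\fa)}$ into $\nr(\fa)^\e$, exactly as at the end of the proof of Proposition~\ref{HYPERBOLIC-EST}, yields $|\WW(\{w\})| \ll_{\e,l,\eta} \nr(\fa)^{c+2+\e}\nr(\fn)^{-c+\e}$. Summing over the $d_F$ archimedean places $w$ preserves this bound.

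The main obstacle is purely bookkeeping: confirming that the archimedean estimate for $W_w$ matches that of $J_v$ in the precise form required by the theta-function analysis of \S 9 underlying Proposition~\ref{fI-EST}. This match is guaranteed by the standing hypothesis $\underline{l} \geq 6$, which in particular gives $l_w > 4$ at every $w \in \Sigma_\infty$ and thus validates Lemma~\ref{Est-arci-log}.
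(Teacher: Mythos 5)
Your proposal is correct and follows essentially the same route as the paper: bound each $w\in\Sigma_\infty$ summand by substituting the Lemma~\ref{Est-arci-log} estimate for $W_w$, which has exactly the same shape as the Lemma~\ref{ArchIntEst} bound on $J_v$, reduce to the quantity $\fI^\eta_\cdot(l,\fn,\ff_\eta\prod_{v\in I}\fp_v^{n_v})$ via the bookkeeping of Proposition~\ref{HYPERBOLIC-EST}'s proof, and conclude with Proposition~\ref{fI-EST}. The only cosmetic difference is that the paper writes the reduction with $\fI^\eta_0$ rather than $\fI^\eta_\e$, which makes no difference since Proposition~\ref{fI-EST} covers all $\e\geq 0$.
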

\begin{proof}
Similarly to the proof of Proposition \ref{HYPERBOLIC-EST},
by Lemma \ref{Est-arci-log}, we have
$|\WW(\Sigma_\infty)|\ll_{\e, l, \eta} C^{\#S(\ga)}\nr(\ga) \sum_{I \subset S(\ga)}\fI^{\eta}_{0}(l, \gn, \gf_{\eta}\prod_{v \in I}\gp_{v}^{n_{v}})$.
%$|\WW(\Sigma_\infty)|\ll_{\e, l, \eta} \{ \prod_{v\in S(\fa)} C q_v
%^{\delta(n_v>0)}\}\,\fI^{\eta}_\e(l, \gn, \fa\gf_{\eta})$.
Then, the desired estimate is given by Proposition \ref{fI-EST}.
\end{proof}

\begin{lem}\label{WWhyp-q-part}
We have
$$|\WW(S(\fa))| \ll_{\e, l, \eta} \nr(\fa)^{c+2+\e}\,\nr(\fn)^{-c+\e}.$$
\end{lem}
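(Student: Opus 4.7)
The plan is to follow the scheme of Proposition~\ref{HYPERBOLIC-EST} and Lemma~\ref{WWhyp-archi-part}, with the distinguished place $w$ now ranging over $S(\fa)$. From \eqref{WWT} and the triangle inequality,
\begin{equation*}
|\WW(S(\fa))| \le \sum_{w \in S(\fa)}\sum_{b\in F-\{0,-1\}}|W_w(b)|\prod_{v \in \Sigma_F-\{w\}}|J_v(b)|,
\end{equation*}
and I would apply (\ref{Whyouka1}) at $w$, together with (\ref{Jhyouka2})--(\ref{Jhyouka4}) at the other finite places and (\ref{Jhyouka5}) at the archimedean places. The set-up is identical to that of Lemma~\ref{WWhyp-archi-part}, except that the distinguished factor is now the finite bound $|W_w(b)|\ll (\log q_w)q_w\delta(b\in \fp_w^{-n_w}\cO_w)(2n_w+\ord_w(b(b+1))+1)^2$ instead of an archimedean integral.

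The key technical point is disposing of the polynomial factor $(2n_w+\ord_w(b(b+1))+1)^2$. For $b\in \fp_w^{-n_w}\cO_w$, I split into two subcases: if $b\in \cO_w$, then $\Lambda_w(b)=\ord_w(b(b+1))+1\ge 1$ and $(2n_w+\Lambda_w(b))^2\le (2n_w+1)^2\Lambda_w(b)^2$; if $-n_w\le \ord_w(b)<0$, then $\ord_w(b+1)=\ord_w(b)$ forces $2n_w+\ord_w(b(b+1))+1=2(n_w+\ord_w(b))+1\le 2n_w+1$. In either subcase, the polynomial is majorized by $(2n_w+1)^2(1+\Lambda_w(b)^2)$; the constant $(2n_w+1)^2$ is absorbed into $q_w^{\e n_w}\le\nr(\fa)^{\e}$, and the remaining $1+\Lambda_w(b)^2$ naturally matches against $\tau^{S(\fb)}(b)^2$: the $1$-contribution corresponds to placing $w$ inside the auxiliary ideal $\fb$ (so that the $\delta$-factor at $w$ comes from $\tau^{S(\fb)}$), while the $\Lambda_w(b)^2$-contribution corresponds to $w\notin S(\fb)$, where $\tau^{S(\fb)}(b)^2$ automatically carries a factor $\Lambda_w(b)^2$ in its product-over-$(\Sigma_\fin-S(\fb))$ definition.

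Expanding the remaining factors $\prod_{v\in S(\fa)-\{w\}}(1+\Lambda_v(b))$ into sums over subsets $I\subset S(\fa)-\{w\}$ exactly as in Proposition~\ref{HYPERBOLIC-EST}, the $b$-sum reduces to a finite number of expressions each dominated by $\fI^\eta_\e(l,\fn,\ff_\eta\prod_{v\in I'}\fp_v^{n_v})$ for some $I'\subset S(\fa)$. By Proposition~\ref{fI-EST}, each such sum is $\ll_{\e,l,\eta}\nr(\fa)^{1+c+\e}\nr(\fn)^{-c+\e}$. The prefactor from the local bounds is $(\log q_w)q_w\prod_{v\in S(\fa)-\{w\}}q_v\le (\log q_w)\nr(\fa)$; summing over $w\in S(\fa)$ and absorbing the combinatorial constant $C^{\#S(\fa)}\cdot 2^{\#S(\fa)}\cdot\#S(\fa)\cdot\log\nr(\fa)\ll_\e\nr(\fa)^\e$ yields the claimed bound $\nr(\fa)^{c+2+\e}\nr(\fn)^{-c+\e}$. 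The main obstacle is the careful handling of the polynomial factor as above; everything else is a routine adaptation of the archimedean case already treated in Lemma~\ref{WWhyp-archi-part}.
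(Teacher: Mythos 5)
Your argument is correct and achieves the stated bound, but it differs from the paper's proof in how the polynomial factor at the distinguished place $w$ is handled. The paper does not bound $(2n_w+\ord_w(b(b+1))+1)^2$ locally at each $w$; instead it sums this factor against $\log q_w$ over all $w\in S(\fa)$, notes that for $b\in\gn\fa^{-1}\ff_\eta^{-1}$ the ideal $b(b+1)\fa^2\ff_\eta^2$ is integral so that $\sum_{w\in S(\fa)}\{2n_w+\ord_w(b(b+1))+1\}\log q_w\le\log\nr(b(b+1)\fa^2\ff_\eta^2)+\log\nr(\fa)\ll_\e|\nr(b(b+1))|^{\e/2}\nr(\fa)^\e$, and then feeds this $\nr(b(b+1))^\e$-factor into $\fI^\eta_\e(l,\fn,\cdot)$ with $\e>0$. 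Your route is to bound the factor \emph{at each fixed} $w$ by $(2n_w+1)^2(1+\Lambda_w(b)^2)$ via the case split on whether $b\in\cO_w$ or $-n_w\le\ord_w(b)<0$, and then to match $1+\Lambda_w(b)^2$ against the $\tau^{S(\fb)}(b)^2$ structure in the lattice sum by the $I\mapsto\fb$ expansion — this lands on $\fI^\eta_0(l,\fn,\cdot)$, not $\fI^\eta_\e$ as you wrote (you have no $|\nr(b(b+1))|^\e$-factor to spare), but Proposition~\ref{fI-EST} handles $\e=0$ anyway so the conclusion is the same. Both routes yield $\nr(\fa)^{c+2+\e}\nr(\fn)^{-c+\e}$: the paper's is more uniform and is re-used verbatim in Lemma~\ref{WWhyp-eta-part}, whereas your local bound is more elementary but requires the case analysis and the observation $\Lambda_v(b)\le\Lambda_v(b)^2$ when matching against $\tau^2$; your pointwise inequality $(2n_w+\ord_w(b(b+1))+1)^2\le(2n_w+1)^2(1+\Lambda_w(b)^2)$ for $b\in\fp_w^{-n_w}\cO_w$ is correct in both subcases (in the second, $\ord_w(b+1)=\ord_w(b)$ so the expression collapses to $2(n_w+\ord_w(b))+1\le 2n_w+1$ and $\Lambda_w(b)=0$).
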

\begin{proof}
%Let $w \in S(\fa)$. We recall that for $m >0$,
%$$|W_{w}^{\eta_{w}}(b; \a_{w}^{(m)})| \ll (\log q_{w})\, m\, q_{w}^{1-m/2}\delt%a(|b|_{w}\le q_{w}^{m-1})\{\ord_{w}(b(b+1))+ 2m\}^{2}$$
%and
%$$|W_{w}^{\eta_{w}}(b; \a_{w}^{(0)})| \ll (\log q_{w})\delta(|b|_{w}\le 1)\{\or%d_{w}(b(b+1))+ 1\}^{2}$$
%hold by Lemma \ref{esti of W_gq}.
%From these, we have
%$$|W_{w}(\a_{\fp_w^{n}})| \ll (\log q_{w})q_{w}\delta(|b|_{w}\le q_{w}^{n-1})(\%ord_{w}(b(b+1))+2n)^{2}$$
%for $n>0$, and
%$$|W_{w}(1)| \ll (\log q_{w})\delta(|b|_{w}\le 1)(\ord_{w}(b(b+1))+1)^{2}.
%$$
By the estimates recalled above, the range of $b$ in the summation \eqref{WWT} with $T=S(\fa)$ can be restricted to $\gn\ga^{-1}\gf_{\eta}^{-1}-\{0,-1\}$. If $b \in \gn\fa^{-1}\ff_{\eta}^{-1}$, then $b(b+1)\fa^2\ff_{\eta}^2$ is an ideal of $\go$. From this, noting that $\eta$ is unramified over $S(\fa)$, we have the equality $\ord_{w}(b(b+1)\fa^2\ff_{\eta}^2) = 2n_w+\ord_{w}(b(b+1))$ for any $w\in S(\fa)$; by taking summation over $w\in S(\fa)$,  
\begin{align*}
\sum_{w\in S(\ga)}\{2n_w+\ord_{w}(b(b+1)) + 1\}\, \log q_w 
&\le \log \nr\bigl(b(b+1)\fa^2\ff_{\eta}^2\bigr) + \log \nr(\ga)
\ll_{\e, \eta} |\nr(b(b+1))|^{\e/2} \nr(\fa)^{\e}.
\end{align*}
Using this, from \eqref{Whyouka1}, \eqref{Jhyouka2}, and \eqref{Jhyouka4}, we obtain
\begin{align*}
\WW(S(\fa)) 
\ll & \sum_{b\in \gn\gf_{\eta}^{-1}\fa^{-1}-\{0,-1\}}
\sum_{w_{1}\in S(\ga)} \{\prod_{v\in \Sigma_{F}-\{w_1\}}|J_v(b)|\} \, (\log q_{w_1})\,q_{w_1}\,\{\ord_{w_1}(b(b+1))+2n_{w_1}+1\}^{2}
\\
\ll_{\e, \eta} & \,C^{\# S(\fa)}\,
\nr(\fa)^{1+2\e}
\sum_{b \in \gn\gf_{\eta}^{-1}\fa^{-1}} |\nr(b(b+1))|^{\e}\,\prod_{v\in \Sigma_{\infty}}|J_{v}(b)|\prod_{v\in \Sigma_{\fin}-S(\ga\gf_{\eta})}\Lambda_{v}(b)
\prod_{v \in S(\ga)}\{1+\Lambda_{v}(b)\}
\\ \leq & C^{\# S(\fa)}\nr(\fa)^{2\e+1} \, \sum_{I\subset S(\ga)}\fI^{\eta}_{\e}(l, \gn, \gf_{\eta}\prod_{v \in I}\gp_{v}^{n_{v}}),
\end{align*}
where $C$ is the implied constant in the first estimate of \eqref{Jhyouka2}
and (\ref{Whyouka1}). Noting $C^{\# S(\fa)}\ll_{\e} \nr(\fa)^{\e}$, we obtain the assertion by Proposition \ref{fI-EST}.  
\end{proof}

\begin{lem} \label{WWhyp-n-part}
We have
$$|\WW(S(\gn))| \ll_{\e, l, \eta} 
%\nr(\fa)^{({\underline l}/2-1)/d_F+n+\delta(n>0) +n\e}\,\nr(\fn)^{(1-{\underline %l}/2)/d_F+\e}\le 
\nr(\fa)^{c+2+\e}\,\nr(\fn)^{-c+\e}.$$
\end{lem}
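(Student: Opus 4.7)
The argument would mirror that of Lemma~\ref{WWhyp-q-part}, with $w_2 \in S(\gn)$ playing the role of $w_1 \in S(\ga)$. Starting from \eqref{WWT}, I would apply \eqref{Whyouka2} to the distinguished factor $W_{w_2}(b)$ and \eqref{Jhyouka2}--\eqref{Jhyouka5} to the remaining local factors; the support conditions restrict $b$ to $\gn\ga^{-1}\gf_{\eta}^{-1}-\{0,-1\}$. At each $w_2 \in S(\gn)$, the condition $b \in \gn\go_{w_2}$ together with $\ord_{w_2}(\gn) \geq 1$ forces $b+1$ to be a unit at $w_2$, hence $\ord_{w_2}(b(b+1)) = \ord_{w_2}(b) \geq \ord_{w_2}(\gn) \geq 1$. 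Unlike in Lemma~\ref{WWhyp-q-part}, the bound \eqref{Whyouka2} carries no extra $q_{w_2}$ prefactor, so the overall $\nr(\ga)$ factor arises entirely from $\prod_{v \in S(\ga)}|J_v(b)|$.

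The key technical step is to prove
\begin{equation*}
E(b) := \sum_{w_2 \in S(\gn)}(\log q_{w_2})\{\ord_{w_2}(b) + \ord_{w_2}(\gn) + 1\}^2 \ll_{\e,\eta} \nr(\ga)^{\e} + \nr(\gn)^{\e} + |\nr(b(b+1))|^\e,
\end{equation*}
which is the analogue of the (implicit) quadratic estimate underlying Lemma~\ref{WWhyp-q-part}. Since $\ord_{w_2}(b) \geq \ord_{w_2}(\gn) \geq 1$, the brace is at most $3\ord_{w_2}(b)$, and the elementary inequality $(\log q)k^2 \leq (\log 2)^{-1}(k\log q)^2$ (for $k \in \NN$ and prime power $q \geq 2$) together with $\sum_i y_i^2 \leq (\sum_i y_i)^2$ reduces matters to bounding
$$\sum_{w_2 \in S(\gn)}(\log q_{w_2})\ord_{w_2}(b) = \log\prod_{w_2 \in S(\gn)}q_{w_2}^{\ord_{w_2}(b)}.$$
Using $\ord_{w_2}(b) = \ord_{w_2}(b(b+1))$ on $S(\gn)$, the product formula $\prod_v |b(b+1)|_v = 1$, and the bound $\ord_v(b(b+1)) \geq -2\ord_v(\ga\gf_\eta)$ on $S(\ga\gf_\eta)$ (with $\ord_v(b(b+1)) \geq 0$ on $\Sigma_\fin - S(\ga\gf_\eta\gn)$), I would obtain
$$\prod_{w_2 \in S(\gn)}q_{w_2}^{\ord_{w_2}(b)} \leq \nr(\ga\gf_\eta)^{2}\cdot |\nr(b(b+1))|,$$
from which the bound on $E(b)$ follows upon squaring and applying $\log x \ll_\e x^\e$ for $x \geq 1$.

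With $E(b)$ bounded, the remainder of the proof is formally parallel to Lemma~\ref{WWhyp-q-part}. The bound $\prod_{v\in S(\ga)}|J_v(b)| \ll \nr(\ga)\prod_{v \in S(\ga)}(1+\Lambda_v(b))$ provides a $\nr(\ga)^{1+\e}$ prefactor (absorbing the $\nr(\ga)^\e$ coming from $E(b)$). Expanding $\prod_{v \in S(\ga)}(1+\Lambda_v(b)) = \sum_{I \subset S(\ga)}\prod_{v \in I}\Lambda_v(b)$ and splitting the right-hand side of the bound on $E(b)$ into its constant and $|\nr(b(b+1))|^\e$ pieces, the resulting $b$-sums are majorized by $\fI^\eta_0(l,\gn,\gf_\eta\prod_{v \in I'}\fp_v^{n_v})$ and $\fI^\eta_\e(l,\gn,\gf_\eta\prod_{v \in I'}\fp_v^{n_v})$ respectively. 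Proposition~\ref{fI-EST} gives each such term $\ll_{\e,\eta} \nr(\ga)^{1+c+\e}\nr(\gn)^{-c+\e}$, and summation over subsets together with $2^{\# S(\ga)} \ll_\e \nr(\ga)^\e$ yields the desired estimate $\nr(\ga)^{c+2+\e}\nr(\gn)^{-c+\e}$. The main obstacle is the bound on $E(b)$, whose crucial input is the ideal-theoretic inequality on the $S(\gn)$-part of the numerator of $(b(b+1))$ derived from the product formula.
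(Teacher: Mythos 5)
Your argument is correct and reaches the same final reduction to Proposition~\ref{fI-EST}, but handles the $w_2$-sum $E(b)=\sum_{w_2\in S(\gn)}(\log q_{w_2})\{\ord_{w_2}(b)+\ord_{w_2}(\gn)+1\}^2$ by a genuinely different route. You observe $\ord_{w_2}(b(b+1))=\ord_{w_2}(b)$ on $S(\gn)$ and then invoke the product formula over all places, converting $E(b)$ into a power of $|\nr(b(b+1))|$, so that $\WW(S(\gn))$ is ultimately controlled by $\fI_{\e}^{\eta}$; this mirrors the structure of Lemma~\ref{WWhyp-q-part} exactly. The paper instead keeps the local data: it writes $(\ord_{w_2}(\gn)+\ord_{w_2}(b)+1)^2\ll\ord_{w_2}(\gn)^2+\Lambda_{w_2}(b)^2$, bounds $\sum_{w_2}\ord_{w_2}(\gn)^2\log q_{w_2}\ll_\e\nr(\gn)^\e$, and uses $\Lambda_{w_2}(b)\geq 1$ on $S(\gn)$ to absorb $\sum_{w_2}\Lambda_{w_2}(b)^2\log q_{w_2}\leq(\log\nr(\gn))\prod_{v\in S(\gn)}\Lambda_v(b)^2$, arriving at $E(b)\ll_\e\nr(\gn)^\e\prod_{v\in S(\gn)}\Lambda_v(b)^2$. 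This product then fits directly into $\tau^{S(\fb)}(b)^2$, so the paper works with $\fI_0^{\eta}$ and a factor $\nr(\gn)^\e$, needing no product-formula step. Your version discards the $S(\gn)$-part of $\tau^{S(\fb)}(b)^2$ (which is $\geq 1$, so the direction of the inequality is safe) and compensates with the extra $|\nr(b(b+1))|^\e$; this is slightly more wasteful and brings in an avoidable dependence on $\ga$ and $\gf_\eta$ through the product formula, whereas the paper's version only pays $\nr(\gn)^\e$, but both give a bound of the stated form. I note two very minor points: the term $\nr(\gn)^\e$ in your stated bound on $E(b)$ is superfluous (what you in fact get is $E(b)\ll_\e\nr(\ga\gf_\eta)^{2\e}+|\nr(b(b+1))|^\e$), and it would be cleaner to state your Cauchy-Schwarz step with $\log^+$ rather than $\log$ so that the case $|\nr(b(b+1))|<1$ is handled without comment; neither affects the conclusion.
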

\begin{proof}
From the estimations recalled above, 
\begin{align*}
& |\WW(S(\gn))|\\
\ll_{\e, \eta} & C^{\# S(\fa)} \nr(\fa)\,\sum_{b \in \gn\gf_{\eta}^{-1}\ga^{-1}-\{0, -1\}}
\prod_{v \in \Sigma_{\infty}}|J_{v}(b)|\prod_{v \in \Sigma_{\fin}-S(\ga\gn\gf_{\eta})}\Lambda_{v}(b) \prod_{v \in S(\ga)}\{1+\Lambda_{v}(b)\}
\sum_{w_2 \in S(\gn)}|W_{w_2}(b)|,
\end{align*}
where $C$ is the implied constant in the first estimate of \eqref{Jhyouka2}. By \eqref{Whyouka2},
\begin{align*}
\sum_{w_2 \in S(\gn)} |W_{w}^{\eta_{w}}(b)|
\ll & \sum_{w_2\in S(\gn)} (\log q_{w_2})(\ord_{w_2}(\gn)+\ord_{w_2}(b)+1)^{2}
\\
\ll & \sum_{w_2 \in S(\gn)}\ord_{w_2}(\gn)^{2}(\log q_{w_2})
 +\sum_{w_2 \in S(\gn)}(\log q_{w_2}) \Lambda_{w_2}(b)^{2}
 \ll_{\e}  \nr(\gn)^{\e} \prod_{v\in S(\fn)}\Lambda_v(b)^2
\end{align*}
for $b\in \gn\gf_{\eta}^{-1}\ga^{-1}$. From this, we obtain
\begin{align*}
|\WW(S(\gn))|
\ll_{\e, \eta} & C^{\# S(\fa)}\nr(\ga)\nr(\gn)^{\e} \sum_{b \in \gn\gf_{\eta}^{-1}\ga^{-1}-\{0, -1\}}\prod_{v \in \Sigma_{\infty}}|J_{v}(b)|
\prod_{v \in \Sigma_{\fin}-S(\ga\gf_{\eta})}\Lambda_{v}(b)^{2}\prod_{v\in S(\ga)}\{1+\Lambda_{v}(b)\} \\
=& C^{\# S(\fa)}\nr(\ga)\nr(\gn)^{\e}\, \sum_{I \subset S(\ga)}\fI^{\eta}_{0}(l, \gn, \gf_{\eta}\prod_{v \in I}\gp_{v}^{n_{v}}).
\end{align*}
Then, the desired estimate is given by Proposition \ref{fI-EST}.
\end{proof}

\begin{lem}\label{WWhyp-eta-part}
We have
$$\WW(S(\gf_{\eta})) \ll_{\e, l, \eta} 
%\nr(\fq)^{n({\underline l}/2-1)/d_F+n+\delta(n>0) +n\e}\,\nr(\fn)^{(1-{\underline l}/2)/d_F}\le 
 \nr(\fa)^{c+2+\e}\,\nr(\fn)^{-c+\e}.$$
\end{lem}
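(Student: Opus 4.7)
The plan is to mirror the arguments used for Lemmas \ref{WWhyp-q-part} and \ref{WWhyp-n-part}, splitting the work at the place $w_3 \in S(\gf_{\eta})$ where the $W$-factor sits. First I would use \eqref{Jhyouka2}--\eqref{Jhyouka5} at all $v \neq w_3$ to restrict $b$ to the fractional ideal $\gn\gf_{\eta}^{-1}\ga^{-1}$ and to replace each $|J_{v}(b)|$ by its standard dominant factor: the arithmetic function $\Lambda_{v}(b)$ at unramified finite places outside $S(\ga)$, $q_{v}\{1+\Lambda_{v}(b)\}$ at $v \in S(\ga)$, the indicator $\delta(b\in \gn\cO_v)$ at $v\in S(\gn)$, and the archimedean factor $(1+|b|_v)^{-l_v/2+2\e}|b(b+1)|_v^{-\e}$ at $v\in \Sigma_\infty$.

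Next I would sum the $W$-factors using \eqref{Whyouka3}. The key elementary observation is that if $b \in \gn\gf_{\eta}^{-1}\ga^{-1}$, then $b(b+1)\ga^{2}\gf_{\eta}^{2}$ is an integral ideal, so that
\begin{align*}
\sum_{w_3 \in S(\gf_{\eta})}(\log q_{w_3})\{2f(\eta_{w_3})+\ord_{w_3}(b(b+1))+1\}
&\leq \log \nr\bigl(b(b+1)\ga^{2}\gf_{\eta}^{2}\bigr)+O_{\eta}(1) \\
&\ll_{\e,\eta} |\nr(b(b+1))|^{\e}\,\nr(\ga)^{2\e},
\end{align*}
using $\log \nr(\mathfrak{m}) \ll_{\e} \nr(\mathfrak{m})^{\e}$. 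The factor $|\nr(b(b+1))|^{\e}$ is exactly what feeds into the definition of $\fI^{\eta}_{\e}(l,\gn,\gb)$.

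Combining these two steps, and expanding $\prod_{v\in S(\ga)}\{1+\Lambda_{v}(b)\}=\sum_{I\subset S(\ga)}\prod_{v\in I}\Lambda_v(b)$ as in the previous lemmas, I arrive at
\begin{align*}
|\WW(S(\gf_{\eta}))|
\ll_{\e,\eta} C^{\#S(\ga)}\,\nr(\ga)^{1+2\e}\,
\sum_{I\subset S(\ga)}\fI^{\eta}_{\e}\!\bigl(l,\gn,\gf_{\eta}\prod_{v\in I}\gp_{v}^{n_v}\bigr),
\end{align*}
where $C$ absorbs the implied constants in \eqref{Jhyouka2} and \eqref{Whyouka3}. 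Applying Proposition~\ref{fI-EST} to each summand and using the standard bounds $C^{\#S(\ga)}\ll_{\e}\nr(\ga)^{\e}$ and $2^{\#S(\ga)}\ll_{\e}\nr(\ga)^{\e}$ then yields the asserted estimate $\nr(\ga)^{c+2+\e}\nr(\gn)^{-c+\e}$.

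The argument is essentially routine bookkeeping once the identity $\ord_{w_3}(b(b+1)\gf_{\eta}^{2})=\ord_{w_3}(b(b+1))+2f(\eta_{w_3})$ is observed; there is no genuine obstacle beyond keeping track of the dependence on $\ga$ and $\gn$, since the ramification set $S(\gf_{\eta})$ is fixed (depends only on $\eta$) and so any implicit constants involving $\#S(\gf_{\eta})$ or $\sum_{w_3\in S(\gf_\eta)}(\log q_{w_3})(2f(\eta_{w_3})+1)$ are absorbed into the $O_{\e,l,\eta}$-symbol.
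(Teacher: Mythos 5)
Your proposal is correct and follows essentially the same route as the paper: restrict $b$ to $\gn\gf_{\eta}^{-1}\ga^{-1}$ via the $J_v$-bounds, sum the $W_{w_3}$-factors over $S(\gf_\eta)$ and dominate that sum by $\log \nr\bigl(b(b+1)\ga^2\gf_\eta^2\bigr) + O_\eta(1) \ll_{\e,\eta} |\nr(b(b+1))|^{\e}\nr(\ga)^{2\e}$, reduce to $\sum_{I\subset S(\ga)}\fI^{\eta}_{\e}(l,\gn,\gf_\eta\prod_{v\in I}\gp_v^{n_v})$, and finish with Proposition~\ref{fI-EST}.
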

%\begin{proof} From the estimations recalled above, we have
%\begin{align*}
%\WW(S(\gf_{\eta})) \le & \sum_{b \in \gn\gf_{\eta}^{-1}\ga^{-1}}
%\sum_{w_3 \in S(\gf_{\eta})}
%\{\prod_{v \in \Sigma_{F}-\{ w_3 \}}|J_{v}(b)|\}\, |W_{w_3}(b)| \\
%\ll_{\e, l, \eta}\, & C^{\#S(\fa)}\nr(\fa)\,
%\sum_{b\in \gn\gf_{\eta}^{-1}\ga^{-1}}
%\prod_{v \in \Sigma_{\infty}}|J_{v}(b)| \{\sum_{w_3 \in S(\gf_{\eta})}\log q_{w_3}(1+\Lambda_{w_3}(b))\}\prod_{v\in \Sigma_{\fin}-S(\fa\ff_\eta)}\Lambda_{v}(b) \\
%\ll_{\e, l, \eta} & C^{\#S(\fa)}\nr(\fa)\,
%\sum_{b\in \gn\gf_{\eta}^{-1}\ga^{-1}}\prod_{v \in \Sigma_{\infty}}|J_{v}
%(b)| \prod_{v \in \Sigma_{\fin}-S(\gf_{\eta}\ga)}\Lambda_{v}(b)^2 \\
%\ll_{\e, l, \eta} &  C^{\#S(\fa)}\nr(\fa)\,\fI^{\eta}_0(l, \gn, \gf_{\eta}\ga).
%\end{align*}
%Then, the desired estimate is given by Proposition \ref{fI-EST}.
%\end{proof}

\begin{proof}
By the same argument as in the proof of Lemma \ref{WWhyp-q-part}, we have
\begin{align*}
\sum_{w\in S(\gf_{\eta})}\{2f(\eta_{w})+\ord_{w}(b(b+1))+1\}\, \log q_w 
&\le \log \nr\bigl(b(b+1)\fa^2\ff_{\eta}^2\bigr) + \log\nr(\gf_{\eta})\\
&\ll_{\e, \eta} |\nr(b(b+1))|^{\e} \nr(\fa)^{2\e}
\end{align*}
for $b \in \gn\ga^{-1}\gf_{\eta}^{-1}$. From the estimations recalled as above, we obtain
\begin{align*}
\WW(S(\gf_{\eta})) \le & \sum_{b \in \gn\gf_{\eta}^{-1}\ga^{-1}}
\sum_{w_3 \in S(\gf_{\eta})}
\{\prod_{v \in \Sigma_{F}-\{ w_3 \}}|J_{v}(b)|\}\, (\log q_{w_3})\{2f(\eta_{w_{3}})+\ord_{w_{3}}(b(b+1))+1\} \\
\ll_{\e, l, \eta}\, & C^{\#S(\fa)}\nr(\fa)\,
\sum_{b\in \gn\gf_{\eta}^{-1}\ga^{-1}}|\nr(b(b+1))|^{\e} \nr(\ga)^{2\e}
\prod_{v \in \Sigma_{\infty}}|J_{v}(b)|
\prod_{v\in \Sigma_{\fin}-S(\fa\ff_\eta)}\Lambda_{v}(b) \prod_{v \in S(\ga)}\{1+\Lambda_{v}(b)\}\\
%\ll_{\e, l, \eta} & C^{\#S(\fa)}\nr(\fa)^{1+2\e}\,
%\sum_{b\in \gn\gf_{\eta}^{-1}\ga^{-1}}\prod_{v \in \Sigma_{\infty}}|J_{v}
%(b)| \prod_{v \in \Sigma_{\fin}-S(\gf_{\eta}\ga)}\Lambda_{v}(b)^2 \\
\ll_{\e, l, \eta} &  C^{\#S(\fa)}\nr(\fa)^{1+2\e}\sum_{I\subset S(\ga)}\fI^{\eta}_{\e}(l, \gn, \gf_{\eta}\prod_{v \in I}\gp_{v}^{n_{v}}).
\end{align*}
Then, the desired estimate is given by Proposition \ref{fI-EST}.
\end{proof}

\begin{lem}\label{WWhyp-rest-part}
We have
$$\WW(\Sigma_{\fin}-S(\ga\gn\gf_{\eta})) \ll_{\e, l, \eta}
%\nr(\fq)^{n({\underline l}/2-1)/d_F+n+\delta(n>0) +n\e}\,\nr(\fn)^{(1-{\underline l}/2)/d_F}\le 
\nr(\fa)^{c+2+\e}\,\nr(\fn)^{-c+\e}.$$
\end{lem}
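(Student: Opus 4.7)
The plan is to follow the template of Lemmas~\ref{WWhyp-q-part}--\ref{WWhyp-eta-part}, with the distinguished place $w_4$ now ranging over the cofinite set $\Sigma_\fin-S(\fa\fn\ff_\eta)$. I would first use \eqref{Jhyouka2}, \eqref{Jhyouka4}, and the support condition $\delta(|b(b+1)|_{w_4}<1)$ in \eqref{Whyouka4} to restrict the series to $b\in\fn\fa^{-1}\ff_\eta^{-1}-\{0,-1\}$ and to those $w_4$ with $\ord_{w_4}(b(b+1))\ge 1$. Since such $w_4$ lie outside $S(\fa\fn\ff_\eta)$, one has $b\in\cO_{w_4}$ and hence $\Lambda_{w_4}(b)=\ord_{w_4}(b(b+1))+1$; combining \eqref{Whyouka4} with the missing $|J_{w_4}(b)|$ factor yields
\[
\sum_{w_4}|W_{w_4}(b)|\prod_{\substack{v\in\Sigma_\fin-S(\fa\fn\ff_\eta)\\ v\ne w_4}}|J_v(b)|\;\ll\;\Bigl(\prod_{v\in\Sigma_\fin-S(\fa\fn\ff_\eta)}\Lambda_v(b)\Bigr)\cdot T(b),
\]
where $T(b):=\sum_{w_4}(\log q_{w_4})(\ord_{w_4}(b(b+1))+1)$, the sum being over $w_4\in\Sigma_\fin-S(\fa\fn\ff_\eta)$ with $\ord_{w_4}(b(b+1))\ge 1$.

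The heart of the argument is a uniform bound for $T(b)$. I would introduce the integral part $D_b^+:=\prod_{v:\ord_v(b(b+1))\ge 1}\fp_v^{\ord_v(b(b+1))}$ of the fractional ideal $(b(b+1))$, so that $T(b)\le 2\log\nr(D_b^+)$. The product formula $\sum_{v\in\Sigma_\fin}(\log q_v)\ord_v(b(b+1))=\sum_{v\in\Sigma_\infty}\log|b(b+1)|_v$, together with the a priori bound $\ord_v(b(b+1))\ge -2(\ord_v(\fa)+\ord_v(\ff_\eta))$ at $v\in S(\fa\ff_\eta)$ (forced by $b\in\fn\fa^{-1}\ff_\eta^{-1}$), then yields $\log\nr(D_b^+)\le 2\log\nr(\fa\ff_\eta)+2\sum_{v\in\Sigma_\infty}\log(1+|b|_v)$. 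The constant term $\log\nr(\fa\ff_\eta)$ is absorbed as $\nr(\fa\ff_\eta)^\e$, while $\log(1+|b|_v)\ll_\e(1+|b|_v)^\e$ is swallowed by the archimedean decay \eqref{Jhyouka5} at the cost of replacing $\e$ by $3\e$ in the exponent, so that $\prod_\infty|J_v(b)|\cdot T(b)\ll_\e\nr(\fa\ff_\eta)^\e\prod_\infty(1+|b|_v)^{-l_v/2+3\e}$.

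The remainder is a routine repetition of the argument in Lemma~\ref{WWhyp-q-part}: expanding $\prod_{v\in S(\fa)}\{1+\Lambda_v(b)\}=\sum_{I\subset S(\fa)}\prod_{v\in I}\Lambda_v(b)$ and noting that $1+\Lambda_v(b)$ for $v\in S(\fa)\cap(\Sigma_\fin-S(\fa\fn\ff_\eta))$ is accounted for together with the baseline $|J_v|$-estimate \eqref{Jhyouka2}, the whole sum is majorized by
\[
|\WW(\Sigma_\fin-S(\fa\fn\ff_\eta))|\ll_{\e,l,\eta}C^{\#S(\fa)}\nr(\fa)^{1+\e}\sum_{I\subset S(\fa)}\fI^\eta_\e\Bigl(l,\fn,\ff_\eta\prod_{v\in I}\fp_v^{n_v}\Bigr),
\]
whereupon Proposition~\ref{fI-EST} and the elementary estimate $C^{\#S(\fa)}\ll_\e\nr(\fa)^\e$ deliver the claimed bound $\nr(\fa)^{c+2+\e}\nr(\fn)^{-c+\e}$.

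The principal obstacle I anticipate is the uniform control of $T(b)$: although it reduces to a single application of the product formula, the key observation is that the negative contributions to $\sum_v(\log q_v)\ord_v(b(b+1))$ are confined to the finite set $S(\fa\ff_\eta)$ and bounded by $\log\nr(\fa\ff_\eta)$, so that the positive contributions are pinned by the archimedean size of $b$---precisely the quantity already dominated by the rapid decay in \eqref{Jhyouka5} under the standing hypothesis $\underline l\ge 6$.
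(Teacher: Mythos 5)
Your proposal is correct and follows essentially the same line of argument as the paper: restrict $b$ to $\fn\fa^{-1}\ff_\eta^{-1}$, bound the inner sum over the distinguished place $w_4$ by a multiplicative factor of the form $\nr(\fa\ff_\eta)^{\e}|\nr(b(b+1))|^{\e}$ (or an equivalent expression), and reduce to $\fI^\eta_\e$ and Proposition~\ref{fI-EST}. The only cosmetic difference is in how you control the sum over $w_4$: the paper folds the factor $\Lambda_{w_4}(b)$ into $\tau^{S(\cdot)}(b)^2$ and bounds $\sum_{w_4}\log q_{w_4}$ directly by $\log\nr(b(b+1)\fa^2\ff_\eta^2)\ll_\e\nr(b(b+1)\fa^2\ff_\eta^2)^{\e}$, whereas you retain $\Lambda_{w_4}(b)$ inside $T(b)$ and use the product formula to bound $\log\nr(D_b^+)$ by its archimedean side plus $\log\nr(\fa\ff_\eta)$; these are two equivalent ways of arriving at the same estimate, since $D_b^+$ divides $b(b+1)\fa^2\ff_\eta^2$.
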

\begin{proof} In the summation on the left hand side of \eqref{WWT} with $T=\Sigma_\fin-S(\ga\gn\gf_\eta)$, the range of $(b,w)$ is restricted to $b \in \gn\gf_{\eta}^{-1}\ga^{-1}$ and $w \in S(b(b+1)\go \cap \go)\cap T$, due to the estimations recalled above. Thus,
\begin{align*}
&\WW(\Sigma_{\fin}-S(\ga\gn\gf_{\eta})) 
 \\
= &
\sum_{b\in \gn\gf_{\eta}^{-1}\ga^{-1}-\{0, -1\}}\sum_{w \in S(b(b+1)\go\cap\go)-S(\ga\gn \gf_{\eta})}
\{\prod_{v \in \Sigma_{F}-\{w_4\}}|J_{v}(b)|\}\, |W_{w_4}(b)|\\
\ll_{\e, \eta}\, & C^{\# S(\fa)} \nr(\fa)\,
\sum_{b\in \gn\gf_{\eta}^{-1}\ga^{-1}-\{0, -1\}}
\prod_{v \in \Sigma_{\infty}}|J_{v}(b)|\prod_{v \in S(\ga)}\{1+\Lambda_{v}(b)\} \\
& \times \sum_{w_4 \in S(b(b+1)\go \cap \go)-S(\ga\gn \gf_{\eta})}
\{\prod_{\substack{v \in \Sigma_{\fin}-S(\ga\gf_{\eta})\\ v \neq w_4}}\Lambda_{v}(b)\}\,
(\log q_{w_4})\Lambda_{w_4}(b)^2 
\\
\ll_{\e, \eta}&\, C^{\# S(\fa)} \nr(\fa)
\sum_{b \in \gn\gf_{\eta}^{-1}\ga^{-1}-\{0, -1\}}\prod_{v \in \Sigma_{\infty}}|J_{v}(b)|
\prod_{v \in S(\ga)}\{1+\Lambda_{v}(b)\} \\
& \times \{\sum_{w_4\in S(b(b+1)\go \cap \go)-S(\ga\gn \gf_{\eta})}\log q_{w_4}\}
\prod_{v \in \Sigma_{\fin}-S(\ga\gf_{\eta})}\Lambda_{v}(b)^{2} \\
\ll_{\e, \eta}\, &  C^{\# S(\fa)} \nr(\fa)
\sum_{b \in \gn\gf_{\eta}^{-1}\ga^{-1}-\{0, -1\}}\prod_{v \in \Sigma_{\infty}}|J_{v}(b)|
\prod_{v \in S(\ga)}\{1+\Lambda_{v}(b)\} \\
& \times\prod_{v \in \Sigma_{\fin}-S(\ga\gf_{\eta})}\Lambda_{v}(b)^{2}
\times \nr(\ga)^{2\e}|\nr(b(b+1))|^{\e}
\\
= & C^{\# S(\fa)} \nr(\fa)^{1+2\e}\sum_{I \subset S(\ga)}\fI_{\e}^{\eta}(l,\fn,\ff_\eta\prod_{v \in I}\gp_{v}^{n_{v}}).
%C^{\# S(\fa)} \nr(\fa)^{1+2\e}
%\sum_{b \in \gn\gf_{\eta}^{-1}\ga^{-1}-\{0, -1\}}
%|\nr(b(b+1))|^{\e}\{\prod_{v \in \Sigma_{\infty}}|J_{v}(b)|\}\, \tau^{S(\gf_{\eta}\ga)}(b)^{2} \\ 
\end{align*}
Here we note 
$$\sum_{w_4\in S(b(b+1)\go \cap \go)-S(\ga\gn \gf_{\eta})}\log q_{w_4} \ll_{\e, \eta} \nr(\ga)^{2\e}|\nr(b(b+1))|^{\e}, \quad b \in \gn\gf_{\eta}^{-1}\ga^{-1}-\{0, -1\}.$$
Indeed, if $b\in \gn\gf_{\eta}^{-1}\ga^{-1}$, we have $b(b+1)\gf_{\eta}^{2}\ga^{2} \subset \go$ and $S(b(b+1)\go\cap\go)- S(\ga\gn \gf_{\eta})\subset S(b(b+1)\gf_{\eta}^{2}\ga^{2})$. Hence, 
\begin{align*}
\sum_{w_4\in S(b(b+1)\go \cap \go)-S(\ga\gn \gf_{\eta})}\log q_{w_4}
\le\, &
\sum_{w_4\in S(b(b+1)\gf_{\eta}^{2}\ga^{2})}\log q_{w_4}
\le \log \nr(b(b+1)\gf_{\eta}^{2}\ga^{2}) \\
\ll_{\e}\,& |\nr(b(b+1))\nr(\gf_{\eta})^{2}\nr(\ga)^2|^{\e}
%=\, & \nr(\ga)^{2\e}\nr(\gf_{\eta})^{2\e}\prod_{v \in \Sigma_{\infty}}|b(b+1)|_{v}^{\e}\, 
, \hspace{5mm} b \in \gn\gf_{\eta}^{-1}\ga
-\{ 0, -1\}.
\end{align*}
Therefore, the assertion follows from Proposition \ref{fI-EST} and from $C^{\#S(\fa)}\ll_{\e}\nr(\fa)^{\e}$.
\end{proof}

As a consequence, Proposition \ref{WWhypglobal} follows from
Lemmas \ref{WWhyp-archi-part}, \ref{WWhyp-q-part},
\ref{WWhyp-n-part}, \ref{WWhyp-eta-part} and \ref{WWhyp-rest-part}.

%%%%%%%%%%%%%%%%%%%%%%%%%%%%%%%%%%%%%%%%%%%%%%%%%%%%%%%%%%%%%%%
\subsection{Unipotent terms}
%%%%%%%%%%%%%%%%%%%%%%%%%%%%%%%%%%%%%%%%%%%%%%%%%%%%%%%%%%%%%%%

We compute the local terms for $\tilde \WW_{\rm{u}}^{\eta}(l,\fn|\a)$ at a place $v\in S$. For $\a_v\in \ccA_v$, set
\begin{align}
U_v^{\eta_v}(\alpha_v)&=\frac{1}{2\pi i} \int_{\s-i \infty}^{\s+i \infty} \frac{1}{(1-\eta_v(\varpi_v)q_v^{-(s+1)/2})(1-q_v^{(s+1)/2})}\,\a(s)\,\d\mu_{v}(s), 
 \label{Unipterm-1}
\\
\tilde U_v^{\eta_v}(\alpha_v)&=\frac{1}{2\pi i} \int_{\s-i \infty}^{\s + i \infty} \frac{\eta_v(\varpi_v)\,\log q_v}{(1-\eta_v(\varpi_v)q_v^{-(s+1)/2})^2(1-q_v^{-(s+1)/2})\,q_v^{s+1}}\,\a(s)\,\d\mu_{v}(s)
 \label{Unipterm-2}
\end{align}
with $\d\mu_v(s)=2^{-1}\log q_v\,(q_v^{(1+s)/2}-q_v^{(1-s)/2}) \d s$
and $\s>0$. The integral $U_v^{\eta_v}$ is already computed in \cite[Proposition 11.1]{SugiyamaTsuzuki}. In the same way, we have the following lemma easily. 

\begin{lem} \label{DUNIP}
 For any $m\in \N_0$, we have 
\begin{align*}
\tilde U_v^{\eta_v}(\a_v^{(m)})&=-\delta(m>0)\,q_v^{-m/2}(\log q_v)\,
\begin{cases}
\left\{\frac{q_v-1}{2}m(-1)^m-\frac{3q_v+1}{4}(-1)^{m}+\frac{1-q_v}{4}\right\}, \quad &(\eta_v(\varpi_v)=-1), \\
\left\{\frac{(m-1)(m-2)}{2}q_{v}-\frac{m(m+1)}{2} \right\}, \quad &(\eta_v(\varpi_v)=+1).
\end{cases}
\end{align*}
\end{lem}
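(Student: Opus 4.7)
The plan is to follow the residue-calculus template used for $U_v^{\eta_v}$ in \cite[Proposition 11.1]{SugiyamaTsuzuki}. I would introduce the change of variable $z=q_v^{s/2}$: then $ds=2\,dz/(z\log q_v)$, the measure becomes $d\mu_v(s)=q_v^{1/2}(z^2-1)z^{-2}\,dz$, and since every $\alpha\in\ccA_v$ is periodic in $s$ with period $4\pi i/\log q_v$, the integral $\frac{1}{2\pi i}\int_{L_\sigma}$ in the definition of $\tilde{U}_v^{\eta_v}$ translates into a single counterclockwise loop integral around the circle $|z|=q_v^{\sigma/2}$. After combining factors, the integrand becomes $G(z)\,dz$ with
\begin{equation*}
G(z)=\frac{\eta_v(\varpi_v)\log q_v\,(z^m+z^{-m})(z^2-1)}{q_v^{1/2}\,z\,(zq_v^{1/2}-\eta_v(\varpi_v))^2(zq_v^{1/2}-1)},
\end{equation*}
so $\tilde{U}_v^{\eta_v}(\alpha_v^{(m)})=\sum_{z_0}\operatorname{Res}_{z_0}G$, the sum running over the finite poles of $G$; these all lie inside $|z|=q_v^{\sigma/2}$ for $\sigma>0$.

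The finite poles are $z=0$ (of order $m+1$ when $m\geq 1$, absent when $m=0$), $z=\eta_v(\varpi_v)q_v^{-1/2}$ (double), and $z=q_v^{-1/2}$ (simple); the latter two merge into a triple pole at $z=q_v^{-1/2}$ precisely when $\eta_v(\varpi_v)=+1$. For $m=0$ the rational function $G$ decays like $z^{-2}$ at infinity, so its residues sum to $-\operatorname{Res}_\infty G=0$, producing the $\delta(m>0)$ factor. For $m\geq 1$ I would evaluate each finite residue directly: at $z=0$ by partial-fraction expansion of $(z^2-1)/[(zq_v^{1/2}-\eta_v(\varpi_v))^2(zq_v^{1/2}-1)]$ and reading off the coefficient of $z^{m-1}$, and at $z=\pm q_v^{-1/2}$ (or the triple pole at $z=q_v^{-1/2}$ when $\eta_v(\varpi_v)=+1$) through the standard formula $\operatorname{Res}_{z_0}G=\frac{1}{k!}\lim_{z\to z_0}\frac{d^k}{dz^k}[(z-z_0)^{k+1}G(z)]$, conveniently carried out by logarithmic differentiation.

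The main bookkeeping difficulty is to verify that, after summing all contributions, every term carrying $q_v^{m/2}$ cancels, leaving a net $q_v^{-m/2}$ prefactor in agreement with the stated formula. This cancellation reflects the invariance $\alpha_v^{(m)}(-s)=\alpha_v^{(m)}(s)$ (equivalently, the $z\leftrightarrow z^{-1}$ symmetry of $G(z)\,dz$), but in the residue-by-residue calculation it must be checked by explicit algebra: splitting on $\eta_v(\varpi_v)=\pm 1$ and tracking the $(-1)^m$ factors that arise at $z=-q_v^{-1/2}$. Once that cancellation is confirmed, the coefficients that remain assemble into the two displayed formulas; in particular, the simple pole at $z=q_v^{-1/2}$ contributes the $(q_v-1)/4$ constant term in the $\eta_v(\varpi_v)=-1$ case, while the derivative at the double pole produces the $(3q_v+1)/4$ and the $m(q_v-1)/2$ terms.
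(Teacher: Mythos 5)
Your approach — substitution $z=q_v^{s/2}$ followed by residue calculus on the closed contour $|z|=q_v^{\sigma/2}$ — is indeed the paper's intended method, which it delegates to \cite[Proposition 11.1]{SugiyamaTsuzuki}, and you have the measure $\d\mu_v$ right. However, your displayed $G(z)$ has an arithmetic slip in the constant prefactor. Tracking the factors carefully, $q_v^{-(s+1)/2}=1/(q_v^{1/2}z)$ gives $(1-\eta_v(\varpi_v)q_v^{-(s+1)/2})^2 = (q_v^{1/2}z-\eta_v(\varpi_v))^2/(q_v z^2)$, $(1-q_v^{-(s+1)/2})=(q_v^{1/2}z-1)/(q_v^{1/2}z)$, and $q_v^{s+1}=q_v z^2$, so the rational factor in the integrand of \eqref{Unipterm-2} becomes $\eta_v(\varpi_v)q_v^{1/2}z\log q_v/\bigl((q_v^{1/2}z-\eta_v(\varpi_v))^2(q_v^{1/2}z-1)\bigr)$. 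Combined with $\d\mu_v=q_v^{1/2}(z^2-1)z^{-2}\,\d z$, the correct integrand is
\begin{equation*}
G(z)=\frac{q_v\,\eta_v(\varpi_v)\,\log q_v\,(z^m+z^{-m})(z^2-1)}{z\,(q_v^{1/2}z-\eta_v(\varpi_v))^2\,(q_v^{1/2}z-1)},
\end{equation*}
which is $q_v^{3/2}$ times what you wrote. If the residue computation is carried out with your $G$, the answer will come out uniformly off by $q_v^{-3/2}$ (for instance, for $m=1$, $\eta_v(\varpi_v)=-1$ one finds $-q_v^{-1/2}\log q_v$ from the correct $G$, as the lemma asserts, but $-q_v^{-2}\log q_v$ from yours).

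One smaller imprecision: at $z=0$ the pole is \emph{not} absent when $m=0$; since the denominator of $G$ still carries the factor $z$, there is a simple pole there (whose residue cancels against the others). Your $m=0$ conclusion is nevertheless correct, because your argument actually relies on $G=O(z^{-2})$ at infinity so that $\sum_{\text{finite}}\operatorname{Res}=-\operatorname{Res}_\infty G=0$, not on the pole at $0$ being absent. Beyond these two issues, your plan is sound: the poles you list for $m\geq1$ are the right ones (with the indicated confluence into a triple pole when $\eta_v(\varpi_v)=+1$), and the final cancellation that leaves only the $q_v^{-m/2}$ power is a genuine algebraic fact that has to be checked by hand — it is not forced by any elementary $z\leftrightarrow z^{-1}$ symmetry of $G(z)\,\d z$, so you should not expect it to follow for free from $\a_v^{(m)}(-s)=\a_v^{(m)}(s)$.
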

%
%\begin{proof}
%We give a proof when $\eta_v(\varpi_v)=-1$; the other case is similar. Making t%he variable change $z=q_v^{s/2}$ and then applying the reside theorem, we have
%\begin{align*}
%\tilde U_{v}^{\eta_{v}}(\a^{(m)}_v)&=
%\frac{-q_v^{-1/2}\log q_v}{2\pi i } \oint_{|z|=q_v^{c/2}} \phi(z)\,\d z
%\\
%&=-q_v^{-1/2}\log q_v\{\Res_{z=0}+\Res_{z=q_v^{-1/2}}+\Res_{z=-q_v^{-1/2}}\}\,\%phi(z)
%\end{align*}
%where 
%$$
%\phi(z)=\frac{(z^m+z^{-m})(z^2-1)}{z(z+q_v^{-1/2})^2(z-q_v^{-1/2})}. 
%$$ 
%By a direct computation, 
%\begin{align*}
%\Res_{z=0}\phi(z)&
%= 2^{\delta(m=0)}(-1)^{m}q_{v}^{(m+1)/2}\left\{ (q_{v}-1)(\tfrac{m}{2}+\tfrac{3%+(-1)^{m}}{4})+1 \right\} \\
%\Res_{z=q_v^{-1/2}}\phi(z)&=\tfrac{1-q_v}{4}(q_v^{(1+m)/2}+q_v^{(1-m)/2}), \\
%\Res_{z=-q_v^{-1/2}}\phi(z)&=\tfrac{q_v^{1/2}}{4}(-1)^m
%\biggl\{q_v^{m/2} \left(
%2m(1-q_v)-1-3q_v\right)
%+q_v^{-m/2}\left(2m(q_v-1)-1-3q_v\right)\biggr\}.
%\end{align*}
%Substituting these, we are done. 
%\end{proof}
%

%%%%%%%%%%%%%%%%%%%%%%%%%%%%%%%%%%%%%%%%%%%%%%%%%%%%%%%%%%%%%
\section{Appendix: an estimation of a certain lattice sum}
%%%%%%%%%%%%%%%%%%%%%%%%%%%%%%%%%%%%%%%%%%%%%%%%%%%%%%%%%%%%%
Let $d\geq 1$ be an integer. We fix $l=(l_j)_{1\leq j\leq d}\in \R^{d}$ such that $l_d\geq \dots \geq l_1\geq 4$, and consider a positive function $f(x)$ on $\R^{d}$ defined by
\begin{align*}
f(x)=\prod_{j=1}^{d}(1+|x_j|)^{-l_j/2}, \quad x=(x_j)_{1\leq j\leq d}\in \R^{d}.\end{align*}
Given a $\Z$-lattice $\Lambda \subset \R^{d}$ (of full rank), we define   
\begin{align*}
\theta(\Lambda)&=\sum_{b\in \Lambda-\{0\}}f(b). 
\end{align*}
Viewing this as a function in $\Lambda$, we need to compare its asymptotic size with a certain power of $D(\Lambda)$, the Euclidean volume of a fundamental domain of $\R^{d}/\Lambda$. To state the main result of this section, we need another quantity $r(\Lambda)$ given by 
$$ r(\Lambda)=\frac{1}{2}\min_{b\in \Lambda-\{0\}}\|b\|. $$ 
 
\begin{thm} \label{theta-est}
Let $F$ be a totally real number field of degree $d$. Let $\Lambda_0$ and $\Lambda$ be fractional ideals such that $\Lambda\subset \Lambda_0$; we regard them as a $\Z$-lattice in $\R^{d}$ by the embedding $F\rightarrow \R^{{\rm Hom}(F,\R)}=\R^{d}$. Then, 
\begin{align*}
\theta(\Lambda)\ll \{1+r(\Lambda_0)\}^{dl_d/2}\,D(\Lambda_0)^{-1}\,D(\Lambda)^{(1-{l_1}/2)/d}
\end{align*}
with the implied constant independent of $\Lambda$ and $\Lambda_0$. 
\end{thm}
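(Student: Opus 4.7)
The plan is to split the summand $\Phi(b):=\prod_j(1+|b_j|)^{-l_j/2}$ into a ``diagonal'' factor capturing the sparsity of $\Lambda$ as a fractional ideal and a ``residual'' factor handled by a standard disjoint-cube integral comparison. The key tool is the elementary symmetric-function inequality
\begin{equation*}
\prod_{j=1}^d(1+x_j)\;\geq\;\bigl(1+(x_1\cdots x_d)^{1/d}\bigr)^{d},\qquad x_j\geq 0,
\end{equation*}
which I would establish by expanding both sides as $\sum_{k=0}^d e_k(x)$ and $\sum_k\binom{d}{k}(x_1\cdots x_d)^{k/d}$, and applying AM-GM within each degree $k$ to obtain $e_k(x)\geq\binom{d}{k}(x_1\cdots x_d)^{k/d}$. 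Applied to $x_j=|b_j|$ for $b\in\Lambda-\{0\}$, combined with the fundamental ideal estimate $|\nr(b)|\geq\nr(\Lambda)=D(\Lambda)/\sqrt{D_F}$ (valid because $b\cO\subset\Lambda$), this gives the uniform lower bound $\prod_j(1+|b_j|)\geq\bigl(1+(D(\Lambda)/\sqrt{D_F})^{1/d}\bigr)^{d}$ for every $b\in\Lambda-\{0\}$.

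Assuming $d\geq 2$, I would then fix $\eta=(l_1-2)/(2d)$ and factor $\Phi(b)=\Phi^{(1)}(b)\Phi^{(2)}(b)$ with $\Phi^{(1)}(b)=\bigl(\prod_j(1+|b_j|)\bigr)^{-\eta}$ and $\Phi^{(2)}(b)=\prod_j(1+|b_j|)^{-(l_j/2-\eta)}$. The uniform lower bound above yields $\Phi^{(1)}(b)\ll_F D(\Lambda)^{-\eta}=D(\Lambda)^{(1-l_1/2)/d}$, a constant depending only on $\Lambda$ that can be pulled out of the sum. For the residual $\sum\Phi^{(2)}(b)$ I would apply the cube disjointness method: setting $\rho:=r(\Lambda_0)/\sqrt{d}$, the cubes $C_b=b+[-\rho,\rho]^d$ for $b\in\Lambda-\{0\}$ are pairwise disjoint because distinct points of $\Lambda_0$ have $\ell^2$-distance $\geq 2r(\Lambda_0)$ and hence $\ell^\infty$-distance $\geq 2\rho$. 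The elementary estimate $1+|y_j|\leq(1+\rho)(1+|b_j|)$ on $C_b$ gives $\Phi^{(2)}(b)\leq(1+\rho)^{dl_d/2}(2\rho)^{-d}\int_{C_b}\Phi^{(2)}(y)\,dy$, so summing with the disjointness yields
\begin{equation*}
\sum_{b\in\Lambda-\{0\}}\Phi^{(2)}(b)\;\ll_{l}\;(1+r(\Lambda_0))^{dl_d/2}r(\Lambda_0)^{-d}\prod_j\int_{\R}(1+|y|)^{-(l_j/2-\eta)}dy,
\end{equation*}
where the last integral converges precisely because $l_j/2-\eta>1$, equivalent to $\eta<l_1/2-1$ and satisfied in our range since $l_1\geq 4$. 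Finally, the same AM-GM estimate applied to $\Lambda_0$ itself (i.e.\ $\|b\|_2^2\geq d|\nr(b)|^{2/d}\geq d\,\nr(\Lambda_0)^{2/d}$ for $b\in\Lambda_0-\{0\}$) yields $r(\Lambda_0)\gg_F\nr(\Lambda_0)^{1/d}$ and hence $r(\Lambda_0)^{-d}\ll_F D(\Lambda_0)^{-1}$. Combining these estimates assembles the target bound.

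The main technical obstacle is the borderline case $d=1$, in which the natural choice $\eta=(l_1-2)/2$ makes $\int_{\R}(1+|y|)^{-1}dy$ divergent. However $d=1$ means $F=\Q$ and both $\Lambda_0,\Lambda$ are principal one-dimensional lattices $\alpha_0\Z\supset\alpha\Z$ with $\alpha/\alpha_0\in\N$, so the theorem reduces to an explicit estimate of $2\sum_{k\geq 1}(1+\alpha k)^{-l_1/2}$: isolating the leading $k=1$ term $(1+\alpha)^{-l_1/2}$ and comparing the tail to $\int_1^\infty(1+\alpha y)^{-l_1/2}dy=[(l_1/2-1)\alpha]^{-1}(1+\alpha)^{1-l_1/2}$ recovers the claimed bound in both regimes $\alpha\geq 1$ and $\alpha<1$.
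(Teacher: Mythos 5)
Your proof is correct, but it takes a genuinely different route from the paper's. The paper proves the volume-to-sum comparison directly for the full weight $f$ via the fact that $f(x+y)\geq f(x)f(y)$ and disjointness of $\ell^{2}$-ball translates, yielding $\theta(\Lambda)\leq I(B_{\Lambda_{0}})^{-1}\,I(\R^{d}\setminus B_{\Lambda})$; the near- and far-ball integrals are then estimated in polar coordinates, with the angular factor controlled by a multidimensional Mellin transform expressed through Gamma and Beta functions (Lemma~\ref{Upperbound}), and finally $r(\Lambda)^{d}\asymp D(\Lambda)$ is applied on both sides. You instead split the weight multiplicatively into a diagonal factor $\Phi^{(1)}$, which you uniformly bound by $D(\Lambda)^{(1-l_{1}/2)/d}$ using the Maclaurin-type inequality $\prod_{j}(1+x_{j})\geq(1+(\prod_{j}x_{j})^{1/d})^{d}$ together with $|\nr(b)|\geq\nr(\Lambda)$, and a residual factor $\Phi^{(2)}$, which you sum by $\ell^{\infty}$-cube disjointness against a product of one-dimensional convergent integrals. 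What you buy is a considerably more elementary argument: no Mellin transform, no shifted contour, no special-function asymptotics. What you pay for is the borderline case $d=1$, where your exponent $\eta=(l_{1}/2-1)/d$ makes $\int_{\R}(1+|y|)^{-(l_{1}/2-\eta)}\,dy$ diverge and you must run a separate elementary estimate over $\alpha\ZZ\subset\alpha_{0}\ZZ$; the paper's method treats all $d\geq 1$ uniformly because the far-integral $\int_{r(\Lambda)}^{\infty}\rho^{-l_{1}/2}\,d\rho$ converges as long as $l_{1}>2$. Both proofs exploit the same two structural facts — the norm bound $|\nr(b)|\geq\nr(\Lambda)$ for fractional-ideal lattices and a packing/disjointness comparison against an integral — and both reach the same conclusion; yours is a valid alternative.
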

The proof is given at the last part of the next subsection after several lemmas. 

\subsection{The proof of Theorem~\ref{theta-est}}
Let $\d\mu(\omega)$ denote the Euclidean measure on the sphere
$\bS^{d-1}= \{ x=(x_{j})_{1 \le j \le d}\in \RR^{d} \ | \ \sum_{j=1}^{d}x_{j}^{2}=1 \}$. 

\begin{lem} \label{integralI}
For any $\lambda=(\lambda_j)\in \C^{d}$ such that $\Re(\lambda_j)<1$, we have
$$
I(\lambda)=\int_{\bS^{d-1}}\prod_{j=1}^{d}|\omega_j|^{-\lambda_j}\,\d \mu(\omega)=2\Gamma\left(\sum_{j=1}^{d}\tfrac{1-\lambda_j}{2}\right)^{-1}\prod_{j=1}^{d}\Gamma\left(\tfrac{1-\lambda_j}{2}\right).
$$
\end{lem}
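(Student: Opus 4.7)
The plan is to evaluate $I(\lambda)$ by computing the Gaussian-weighted integral
\[
\Phi(\lambda)=\int_{\RR^{d}}\prod_{j=1}^{d}|x_{j}|^{-\lambda_{j}}\,e^{-\|x\|^{2}}\,dx
\]
in two different ways. The hypothesis $\Re(\lambda_j)<1$ guarantees local integrability at each coordinate hyperplane $\{x_{j}=0\}$, while the Gaussian factor handles decay at infinity, so absolute convergence holds throughout.

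First I would use Fubini's theorem to factor $\Phi(\lambda)$ as a product of one-dimensional integrals. The substitution $u=x_{j}^{2}$ turns each factor into a Gamma integral, giving
\[
\int_{-\infty}^{\infty}|x_{j}|^{-\lambda_{j}}\,e^{-x_{j}^{2}}\,dx_{j}
=\Gamma\!\left(\tfrac{1-\lambda_{j}}{2}\right),
\]
so that $\Phi(\lambda)=\prod_{j=1}^{d}\Gamma\!\left(\tfrac{1-\lambda_{j}}{2}\right)$.

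Second, I would pass to polar coordinates $x=r\omega$ with $r=\|x\|>0$ and $\omega\in \bS^{d-1}$, using $dx=r^{d-1}\,dr\,d\mu(\omega)$ and the homogeneity $\prod_{j}|x_{j}|^{-\lambda_{j}}=r^{-\sum_{j}\lambda_{j}}\prod_{j}|\omega_{j}|^{-\lambda_{j}}$. This factorises the integral as
\[
\Phi(\lambda)=\left(\int_{0}^{\infty}r^{d-1-\sum_{j}\lambda_{j}}\,e^{-r^{2}}\,dr\right)\cdot I(\lambda)
=\tfrac{1}{2}\Gamma\!\left(\sum_{j=1}^{d}\tfrac{1-\lambda_{j}}{2}\right)\,I(\lambda).
\]
Equating the two expressions for $\Phi(\lambda)$ and solving for $I(\lambda)$ yields the claimed formula. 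The only subtlety is checking absolute convergence to justify Fubini and the polar change of variables, which follows directly from the assumption $\Re(\lambda_{j})<1$ together with the fast decay of the Gaussian; no serious obstacle arises.
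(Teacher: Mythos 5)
Your proof is correct and takes essentially the same approach as the paper: both evaluate a Gaussian-weighted integral over $\RR^d$ once by Fubini as a product of one-dimensional Gamma integrals and once in polar coordinates, then solve for $I(\lambda)$. (The paper introduces an auxiliary scale parameter $\epsilon>0$ in the Gaussian, but since it cancels in the end, setting $\epsilon=1$ as you do is a harmless simplification.)
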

\begin{proof}
The formula is obtained by computing the integral 
\begin{align}
\int_{\R^{d}}\exp(-\epsilon\|x\|^2)\prod_{j=1}^{d}|x_j|^{-\lambda_j}\,\d x
 \label{tildeI}
\end{align}
in two different ways, where $\epsilon>0$ and $\Re(\lambda_j)<1$ for the absolute convergence of the integral. By expressing \eqref{tildeI} as an iterating integral, we compute it as
\begin{align*}
\prod_{j=1}^{d}\int_{\R}e^{-\epsilon x_j^2}|x_j|^{-\lambda_j}\d x_j
=\prod_{j=1}^{d}\epsilon^{(\lambda_j-1)/2}\Gamma\left(\tfrac{1-\lambda_j}{2}\right)= \epsilon^{(\sum_{j=1}^{d}\lambda_j -d)/2}\prod_{j=1}^{d}\Gamma\left(\tfrac{1-\lambda_j}{2}\right)
\end{align*}
on one hand. On the other hand, by the polar decomposition, \eqref{tildeI} becomes
\begin{align*}
&\int_{0}^{\infty} \int_{\bS^{d-1}} e^{-\epsilon \rho^2} \prod_{j=1}^{d}|\rho\omega_j|^{-\lambda_j}\,\rho^{d-1}\,\d \rho\,\d\mu(\omega)
\\
&=\left(
\int_{\bS^{d-1}}\prod_{j=1}^{d}|\omega_j|^{-\lambda_j}\,\d \mu(\omega)\right)\,\left(\int_{0}^{\infty} e^{-\epsilon \rho^2}\rho^{-\sum_{j=1}^{d}\lambda_j +d-1}\,\d\rho\right)
\\
&=I(\lambda)\,2^{-1}{\epsilon^{(\sum_{j=1}^{d}\lambda_j -d)/2}}\,\Gamma\left(\sum_{j=1}^{d}\tfrac{1-\lambda_j}{2}\right).
\end{align*}
\end{proof}

\begin{lem} \label{Upperbound}
For $t=(t_j)_{1\leq j\leq d}\in [1,\infty)^{d}$, set 
$$
\varphi(t_1,\dots,t_{d})=\int_{\bS^{d-1}}f(t_1\omega_1,\dots,t_d\omega_d)\,\d\mu(\omega).
$$
For $t>1$, let $\underline t$ denote the diagonal element $(t_{j})$ defined by $t_j=t$ $(1\leq j\leq d)$. Then, 
$$
\varphi(\underline{t})=O(t^{1-d-l_1/2}), \quad t \in [1,\infty). 
$$
\end{lem}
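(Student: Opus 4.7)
The plan is to exploit the fact that on $\bS^{d-1}$ at least one coordinate must satisfy $|\omega_k| \ge 1/\sqrt{d}$, producing a power saving of $t^{-l_k/2}$ in the corresponding factor while the remaining factors integrate to a decaying quantity in $t$. Concretely, I decompose the sphere as $\bS^{d-1} = \bigcup_{k=1}^{d} \Sigma_k$, where $\Sigma_k = \{\omega \in \bS^{d-1} \ | \ |\omega_k| = \max_{1 \le j \le d} |\omega_j|\}$, so that $|\omega_k| \ge 1/\sqrt{d}$ on $\Sigma_k$.

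On $\Sigma_k$, the $k$-th factor is uniformly controlled by $(1+t|\omega_k|)^{-l_k/2} \le (1 + t/\sqrt{d})^{-l_k/2} \ll t^{-l_k/2}$ for $t \ge 1$. I will parameterize $\Sigma_k$ by the remaining coordinates $(\omega_j)_{j \ne k}$ ranging in a subset of $[-1,1]^{d-1}$ via $\omega_k = \pm\sqrt{1-\sum_{j \ne k}\omega_j^2}$, under which the spherical measure satisfies $\d\mu(\omega) = |\omega_k|^{-1}\prod_{j \ne k}\d\omega_j \le \sqrt{d}\prod_{j \ne k}\d\omega_j$. Consequently,
\[
\int_{\Sigma_k} f(\underline{t}\,\omega)\,\d\mu(\omega) \ll t^{-l_k/2} \prod_{j \ne k} \int_{-1}^{1}(1+t|\omega_j|)^{-l_j/2}\,\d\omega_j.
\]

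Each one-dimensional factor equals, after the substitution $v = t|\omega_j|$, $\tfrac{2}{t}\int_{0}^{t}(1+v)^{-l_j/2}\,\d v$, which is bounded by $\tfrac{2}{(l_j/2-1)\,t}$ uniformly in $t \ge 1$ thanks to the hypothesis $l_j \ge 4 > 2$. Hence $\int_{\Sigma_k} f(\underline{t}\,\omega)\,\d\mu(\omega) \ll t^{-l_k/2-(d-1)} = t^{1-d-l_k/2}$, and summing over $k$ together with $l_k \ge l_1$ yields $\varphi(\underline{t}) \ll t^{1-d-l_1/2}$. There is no serious obstacle: the only minor care required is uniformity at $t = 1$, which is immediate since all intermediate estimates are valid throughout $t \ge 1$ and the target bound is of order $1$ there; the slight overlap of the $\Sigma_k$'s along measure-zero sets (or, if desired, a smooth partition of unity) has no effect on the final bound.
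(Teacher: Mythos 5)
Your proof is correct, and it takes a genuinely different route from the paper. The paper computes the multi-dimensional Mellin transform $\tilde\varphi(\lambda)$, which factors as the spherical integral $I(\lambda)$ of Lemma~\ref{integralI} times a product of one-dimensional Beta integrals, and then recovers $\varphi(\underline t)$ by Mellin inversion and contour shifting, identifying the leading residue at $\lambda_1=l_1/2$, $\lambda_j=1$ ($j\ge 2$). You instead decompose $\bS^{d-1}$ into the pieces $\Sigma_k$ where $|\omega_k|$ is maximal (hence $\ge 1/\sqrt d$), which pins down a $t^{-l_k/2}$ saving in the $k$-th factor, bound the spherical density by $|\omega_k|^{-1}\le\sqrt d$, and reduce to $d-1$ independent one-dimensional integrals each of size $O(1/t)$ thanks to $l_j\ge 4>2$; summing over $k$ and using $l_k\ge l_1$ gives $O(t^{1-d-l_1/2})$. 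The paper's Mellin approach yields the full asymptotic expansion (not just an $O$-bound) and handles general $t=(t_j)$, at the cost of Stirling estimates and care with contour shifts and the Gamma factor in the denominator; your approach is purely real-variable, shorter, and entirely sufficient for the stated $O$-estimate on the diagonal, which is all that is ever used in the error analysis of Theorem~\ref{theta-est}. One small note of care: the pieces $\Sigma_k$ overlap only along the measure-zero sets where two coordinates agree in absolute value, so no partition of unity is actually needed; and your estimate is uniform for $t\ge 1$ since every step holds there, as you observe.
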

\begin{proof}
For $\lambda=(\lambda_j)\in\C^{d}$ such that $0<\Re(\lambda_j)<1$, we compute the multiple Mellin-transform 
$$
\tilde \varphi(\lambda)=\int_{0}^{\infty}\cdots\int_{0}^{\infty} \varphi(t_1,\dots,t_d)\,\prod_{j=1}^{d}t_j^{\lambda}\,\frac{\d t_j}{t_j}. 
$$
By Lemma~\ref{integralI}, we compute this in the following manner. 
\begin{align*}
\tilde\varphi(\lambda)&=
\int_{\bS^{d-1}} \{\prod_{j=1}^{d} \int_{0}^{\infty} (1+t_j|\omega_j|)^{-l_j/2} t_j^{\lambda_j-1}\,\d t_j\}\,\d\mu(\omega)
\\
&=\int_{\bS^{d-1}} \{\prod_{j=1}^{d}|\omega_j|^{-\lambda_j}\, \int_{0}^{\infty} (1+t_j)^{-l_j/2}t_j^{\lambda_j-1}\,\d t_j\}\,\d\mu(\omega) \\
&=\left(\int_{\bS^{d-1}}\prod_{j=1}^{d}|\omega_j|^{-\lambda_j}\,\d\mu(\omega)\right) \,\left(\prod_{j=1}^{d}\int_0^{\infty} (1+t_j)^{-l_j/2}t_j^{\lambda_j-1}\,\d t_j\right)\\
&=I(\lambda)\,\{\prod_{j=1}^{d}\Gamma(l_j/2)^{-1}\,\Gamma(l_j/2-\lambda_j)\,\Gamma(\lambda_j)\}
\\
&=2\Gamma\left(\sum_{j=1}^{d}\tfrac{1-\lambda_j}{2}\right)^{-1}\,\{\prod_{j=1}^{d} \Gamma(l_j/2)^{-1}\,\Gamma((1-\lambda_j)/2)\,\Gamma(l_j/2-\lambda_j)\,\Gamma(\lambda_j)\}.
\end{align*}
By Stirling's formula, this is bounded by a constant multiple of $P(\Im \lambda)\,\exp(-\pi\sum_{j=1}^{d}|\Im(\lambda_j)|)$ with some polynomial $P(x_1,\dots,x_d)$ which can be taken uniformly with $\Re(\lambda)$ varied compactly. Thus, by  a successive application of the Mellin inversion formula, we obtain  
\begin{align*}
\varphi(\underline t)=\left(\frac{1}{2\pi i}\right)^{d}\int_{(\sigma_1)} \dots\int_{(\sigma_d)} 2\,\{\prod_{j=1}^{d}\frac{\Gamma\left(\frac{1-\lambda_j}{2}\right)\,\Gamma\left(\frac{l_j}{2}-\lambda_j\right)\,\Gamma(\lambda_j)}{\Gamma({l_j}/{2})} \}\,\frac{t^{-\sum_{j=1}^{d}\lambda_j}}{
\Gamma\left(\sum_{j=1}^{d}\tfrac{1-\lambda_j}{2}\right)}
 \,\prod_{j=1}^{d}\d\lambda_j,
\end{align*}
where the contour $(\sigma_j)=\{\Re(\lambda)=\sigma_j\}$ should be contained in the band $0<\Re(\lambda_j)<1$. We shift the contours $(\sigma_j)$ in some order far to the right. The residues arise when the moving contour $(\sigma_j)$ passes the points in $(1+2\Z_{\geq 0}) \cup (l_j/2+\Z_{\geq 0})$. Among those residues, the one with the smallest possible power of $t^{-1}$ comes from the pole at $\lambda_1=l_1/2$, $\lambda_j=1 \,(2\leq j\leq d)$ if $l_2>l_1$, which we assume for simplicity in the rest of the proof of this lemma. (When $l_2=l_1$, there are several terms giving the same power in $t^{-1}$.) The residue term is $O(t^{-(d-1+l_2/2)})$, by which the contribution from the remaining terms are majorized. This completes the proof. 
\end{proof}

\begin{lem}\label{L1}
\begin{itemize}
\item[(1)] 
\begin{align}
f(x+y) \geq f(x)\,f(y), \qquad x,\,y\in \R^{d}
 \label{L1-1}
\end{align}
\item[(2)]  
$$
\vol(\bS^{d-1})\,(1+\rho)^{-dl_d/2}\leq \int_{\bS^{d-1}} f(\rho\,\omega)\,\d\mu(\omega) \ll (1+\rho)^{1-d-l_1/2}, \quad \rho>0,
$$
with the implied constant depending on $l$ and $d$.  
\end{itemize}
\end{lem}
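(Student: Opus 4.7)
The lemma splits into two essentially independent statements, each of which reduces to elementary inequalities once the right decompositions are set up. My plan is as follows.

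For (1), I would argue coordinate by coordinate. Since $f(x) = \prod_{j=1}^{d}(1+|x_j|)^{-l_j/2}$, the desired bound $f(x+y)\ge f(x)f(y)$ reduces to showing
\[
(1+|x_j+y_j|)^{-l_j/2} \ge (1+|x_j|)^{-l_j/2}(1+|y_j|)^{-l_j/2}
\]
for each $j$. After inverting and taking $(l_j/2)$-th roots (legitimate since $l_j \ge 4>0$), this becomes
\[
(1+|x_j|)(1+|y_j|) \ge 1 + |x_j+y_j|,
\]
which is immediate from $|x_j+y_j|\le |x_j|+|y_j|\le |x_j|+|y_j|+|x_j||y_j|$.

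For the lower bound in (2), I would use that $|\omega_j|\le 1$ for $\omega\in\mathbb S^{d-1}$, so each factor $(1+\rho|\omega_j|)^{-l_j/2}$ is at least $(1+\rho)^{-l_j/2}\ge (1+\rho)^{-l_d/2}$ since $l_j\le l_d$. Multiplying across $j$ gives $f(\rho\omega)\ge (1+\rho)^{-dl_d/2}$ pointwise on the sphere, and integrating yields the stated lower bound.

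For the upper bound in (2), I would split into $\rho\ge 1$ and $0<\rho<1$. In the regime $\rho\ge 1$, the integral is exactly $\varphi(\underline{\rho})$ in the notation of Lemma~\ref{Upperbound}, so that lemma gives $\int_{\mathbb S^{d-1}}f(\rho\omega)\,d\mu(\omega)\ll \rho^{1-d-l_1/2}\asymp (1+\rho)^{1-d-l_1/2}$. In the regime $0<\rho<1$, we have $f(\rho\omega)\le 1$ pointwise, so the integral is bounded by $\vol(\mathbb S^{d-1})$, and since $(1+\rho)^{1-d-l_1/2}$ stays bounded between two positive constants on $[0,1]$, the bound $\ll (1+\rho)^{1-d-l_1/2}$ holds trivially. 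Combining the two regimes finishes the proof.

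The only non-trivial step is the upper bound for large $\rho$, which is however already handled by Lemma~\ref{Upperbound} via Mellin inversion, so no further obstacle arises here.
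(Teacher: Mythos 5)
Your proof is correct and follows essentially the same route as the paper: part (1) by the coordinatewise inequality $1+|x_j+y_j|\leq(1+|x_j|)(1+|y_j|)$, the lower bound in (2) by replacing each exponent $l_j$ with $l_d$ and using $|\omega_j|\le 1$, and the upper bound in (2) by invoking Lemma~\ref{Upperbound}. The one small improvement is your explicit treatment of $0<\rho<1$, which the paper glosses over (its Lemma~\ref{Upperbound} is stated only for $t\ge 1$); your observation that $(1+\rho)^{1-d-l_1/2}$ is bounded below on $[0,1]$ properly closes that gap.
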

\begin{proof} 
(1) is immediate from the inequality $1+|x_j+y_{j}|\leq (1+|x_j|)(1+|y_j|)$. As for (2), we first note the inequality $0\leq |\omega_j|\leq 1$ for $\omega \in \bS^{d-1}$. Using this, we have $\prod_{j=1}^{d}(1+|\rho\omega_j|)\leq(1+\rho)^{d}$. By this,   
\begin{align*}
f(\rho\omega)&\geq \bigl\{\prod_{j=1}^{d}(1+|\rho\omega_j|)\bigr\}^{-{l_d}/2}\geq (1+\rho)^{-dl_d/2}. 
\end{align*}
Taking integral in $\omega$, we have the estimation from below as desired. The upper bound is provided by Lemma~\ref{Upperbound}. 
\end{proof}

We compare $\theta(\Lambda)$ with the integral of $f(x)$ on the ball $B_{\Lambda}=\{x\in \R^{d}|\,\|x\|\leq r(\Lambda)\,\}$. For convenience, we set 
$I(D)=\int_{D} f(x)\,\d x
$ for any Borel set $D$ in $\R^{d}$. 

\begin{lem} \label{L2}
Let $\Lambda_0$ and $\Lambda$ be $\Z$-lattices such that $\Lambda\subset \Lambda_0$. Then, we have the inequality
$$
\theta(\Lambda)\leq I(B_{\Lambda_0})^{-1}\,I(\R^{d}-B_{\Lambda})
$$
\end{lem}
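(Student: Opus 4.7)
The strategy is to convert the discrete sum defining $\theta(\Lambda)$ into an integral by replacing each term $f(b)$ with a (lower bound on an) integral of $f$ over a small ball translated to $b$. The submultiplicativity \eqref{L1-1} is precisely what allows this replacement, and the hypothesis $\Lambda\subset\Lambda_0$ provides enough room to keep those translated balls essentially disjoint from each other and from $B_\Lambda$.

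First I would fix $b\in \Lambda-\{0\}$ and, applying \eqref{L1-1} pointwise to $y=b+x$, integrate over $x\in B_{\Lambda_0}$ to obtain
\begin{align*}
f(b)\,I(B_{\Lambda_0}) \;=\; f(b)\int_{B_{\Lambda_0}} f(x)\,dx \;\leq\; \int_{B_{\Lambda_0}} f(b+x)\,dx \;=\; \int_{b+B_{\Lambda_0}} f(y)\,dy.
\end{align*}
This is the only analytic input; everything else is geometric.

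Next I would verify two elementary geometric facts that follow from $\Lambda\subset\Lambda_0$ and hence $r(\Lambda_0)\leq r(\Lambda)$. (i) For distinct $b,b'\in\Lambda$, we have $\|b-b'\|\geq 2r(\Lambda)\geq 2r(\Lambda_0)$, so the open balls $b+B_{\Lambda_0}^{\circ}$ are pairwise disjoint as $b$ ranges over $\Lambda$. (ii) For $b\in\Lambda-\{0\}$ and $x\in B_{\Lambda_0}$, the reverse triangle inequality gives $\|b+x\|\geq 2r(\Lambda)-r(\Lambda_0)\geq r(\Lambda)$, so $b+B_{\Lambda_0}$ meets $B_\Lambda$ only on its boundary (a null set).

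Finally, summing the pointwise estimate over $b\in \Lambda-\{0\}$ and invoking (i) and (ii) to recognise the sum of integrals as an integral over a subset of $\R^{d}-B_\Lambda$ (up to a measure-zero overlap), I obtain
\begin{align*}
I(B_{\Lambda_0})\,\theta(\Lambda)\;\leq\;\sum_{b\in\Lambda-\{0\}}\int_{b+B_{\Lambda_0}}f(y)\,dy\;\leq\;\int_{\R^{d}-B_\Lambda}f(y)\,dy\;=\;I(\R^{d}-B_\Lambda),
\end{align*}
and dividing by the positive quantity $I(B_{\Lambda_0})$ yields the claim. No step here is a serious obstacle; the only thing to be careful about is that the disjointness in (i) holds only for the open balls, but since the spherical boundaries have Lebesgue measure zero this is harmless.
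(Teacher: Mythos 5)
Your proof is correct and follows essentially the same route as the paper: both convert $\theta(\Lambda)$ to a sum of integrals via the submultiplicativity \eqref{L1-1}, then use the lattice geometry $r(\Lambda_0)\leq r(\Lambda)$ to get pairwise-disjoint translated balls lying outside $B_\Lambda$. The only cosmetic difference is that you translate by the smaller ball $B_{\Lambda_0}$ directly, whereas the paper translates by $B_\Lambda$ and then inserts the inequality $I(B_{\Lambda_0})\leq I(B_\Lambda)$ at the end.
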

\begin{proof} 
The inequality \eqref{L1-1} gives us
\begin{align*}
I(B_{\Lambda})\,\theta(\Lambda)\leq \sum_{b\in \Lambda-\{0\}}\int_{B_\Lambda} f(b+x)\,\d x. 
\end{align*}
Since $\Lambda\subset \Lambda_0$, we have $B_{\Lambda_0}\subset B_{\Lambda}$, from which $I(B_{\Lambda_0})\leq I(B_{\Lambda})$ is obtained by the non-negativity of $f(x)$. Since $(B_\Lambda+B_\Lambda)\cap \Lambda=\{0\}$, the translated sets $B_{\Lambda}+b\,(b\in \Lambda-\{0\})$ are mutually disjoint. From this remark,  
$$
\sum_{b\in \Lambda-\{0\}}\int_{B_\Lambda} f(b+x)\,\d x\leq \int_{\R^{d}-B_\Lambda} f(x)\,\d x=I(\R^{d}-B_\Lambda).
$$
Putting altogether, we are done.
\end{proof}

\begin{lem} \label{L3}
Let $\Lambda$ be a $\Z$-lattice. 
\begin{align*}
I(B_\Lambda)&\geq \vol(\bS^{d-1})\,(1+r(\Lambda))^{-d{l_d}/2}\,r(\Lambda)^{d}/d, \\
I(\R^{d}-B_\Lambda)&\ll r(\Lambda)^{1-{l_1}/2}
\end{align*}
with the implied constant independent of $\Lambda$. 
\end{lem}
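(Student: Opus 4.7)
The plan is to reduce both inequalities to one-dimensional radial integrals via polar coordinates and then apply the two-sided spherical-average bound from Lemma~\ref{L1}(2). Setting $\varphi_*(\rho):=\int_{\bS^{d-1}} f(\rho\omega)\,\d\mu(\omega)$, we have $I(B_\Lambda)=\int_0^{r(\Lambda)}\varphi_*(\rho)\,\rho^{d-1}\,\d\rho$ and $I(\R^d-B_\Lambda)=\int_{r(\Lambda)}^{\infty}\varphi_*(\rho)\,\rho^{d-1}\,\d\rho$, so both estimates reduce to elementary one-variable computations in $\rho$.

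For the lower bound, I will invoke the estimate $\varphi_*(\rho)\geq \vol(\bS^{d-1})(1+\rho)^{-dl_d/2}$ from Lemma~\ref{L1}(2). Since $-dl_d/2<0$, the function $\rho\mapsto (1+\rho)^{-dl_d/2}$ is decreasing, so on the range $0\leq \rho\leq r(\Lambda)$ this spherical average is further bounded below by $\vol(\bS^{d-1})(1+r(\Lambda))^{-dl_d/2}$. Pulling this constant out of the radial integral and computing $\int_0^{r(\Lambda)}\rho^{d-1}\,\d\rho=r(\Lambda)^d/d$ immediately yields the first inequality.

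For the upper bound, I will use $\varphi_*(\rho)\ll (1+\rho)^{1-d-l_1/2}$ from Lemma~\ref{L1}(2) together with the trivial bound $\rho^{d-1}\leq (1+\rho)^{d-1}$, which collapses the integrand in the radial form of $I(\R^d-B_\Lambda)$ to $(1+\rho)^{-l_1/2}$ up to a constant depending only on $l$ and $d$. The standing hypothesis $l_1\geq 4>2$ makes the tail integral $\int_{r(\Lambda)}^\infty (1+\rho)^{-l_1/2}\,\d\rho$ convergent, and a direct computation gives $\frac{2}{l_1-2}(1+r(\Lambda))^{1-l_1/2}$. Since $1-l_1/2<0$ and $0<r(\Lambda)\leq 1+r(\Lambda)$, the function $x\mapsto x^{1-l_1/2}$ is decreasing, so $(1+r(\Lambda))^{1-l_1/2}\leq r(\Lambda)^{1-l_1/2}$, which delivers the stated bound with an implied constant depending only on $l$ and $d$. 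The argument is essentially immediate once Lemma~\ref{L1}(2) is in hand; the only point requiring any care is this final comparison, which hinges on the negativity of the exponent (and thus on $l_1>2$), but no genuine obstacle arises.
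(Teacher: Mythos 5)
Your proof is correct and follows essentially the same route as the paper: polar decomposition of $I(B_\Lambda)$ and $I(\R^d-B_\Lambda)$ into radial integrals, then the two-sided spherical-average bounds from Lemma~\ref{L1}(2), with the only difference being a trivial rearrangement in the upper-bound step (you bound $\rho^{d-1}\leq(1+\rho)^{d-1}$ and then compare the tail, while the paper bounds $(1+\rho)^{1-d-l_1/2}\rho^{d-1}\leq\rho^{-l_1/2}$ directly), which leads to the identical constant $2/(l_1-2)=(l_1/2-1)^{-1}$.
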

\begin{proof}
By Lemma~\ref{L1} (2), 
\begin{align*}
I(B_\Lambda)&=\int_{0}^{r(\Lambda)} \int_{\bS^{d-1}} f(\rho\omega)\,\d\omega\,\rho^{d-1}\,\d \rho
\\
&\geq \vol(\bS^{d-1})\, \int_{0}^{r(\Lambda)} (1+\rho)^{-d{l_d}/2}\,\rho^{d-1}\,\d \rho
\\
&\geq \vol(\bS^{d-1})(1+r(\Lambda))^{-d{l_d}/2} \int_{0}^{r(\Lambda)}\rho^{d-1}\,\d \rho 
&=\vol(\bS^{d-1})(1+r(\Lambda))^{-d{l_d}/2}\,r(\Lambda)^{d}/d.
\end{align*}
In a similar way, 
\begin{align*}
I(\R^{d}-B_{\Lambda})&=\int_{r(\Lambda)}^{\infty}\int_{\bS^{d-1}} f(\rho\omega)\,\d\omega\,\rho^{d-1}\,\d \rho 
\\
&\ll \int_{r(\Lambda)}^{\infty}(1+\rho)^{1-d-{l_1}/2}\,\rho^{d-1}\,\d \rho
\leq
\int_{r(\Lambda)}^{\infty}\rho^{-{l_1}/2}\,\d \rho
=(l_{1}/2-1)^{-1}r(\Lambda)^{1-{l_1}/2}. 
\end{align*}
\end{proof}

\begin{lem} \label{L4} 
Let $F$ be a totally real number field of degree $d$. There exist constants $C_d$ and $C'_d$ such that $C_d\,r(\Lambda)^{d}\leq D(\Lambda)\leq C'_{d}\,r(\Lambda)^{d}$ for any fractional ideal $\Lambda$.
\end{lem}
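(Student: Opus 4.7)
The plan is to treat the two inequalities separately, with completely different tools: the lower bound $C_d\,r(\Lambda)^d \le D(\Lambda)$ follows from Minkowski's theorem and works for any $\Z$-lattice in $\R^d$, while the upper bound $D(\Lambda)\le C'_d\,r(\Lambda)^d$ uses the multiplicativity of ideal norms and would fail for a general lattice (think of long thin lattices).

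For the lower bound, I would apply Minkowski's convex body theorem to a centrally symmetric ball: in any full-rank lattice in $\R^d$ the shortest nonzero vector has Euclidean norm at most $2\,V_d^{-1/d}\,D(\Lambda)^{1/d}$, where $V_d$ is the volume of the unit ball. Written in terms of $r(\Lambda)$, this reads $r(\Lambda)^d \le V_d^{-1}\,D(\Lambda)$, which is the claimed inequality.

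For the upper bound, I would exploit the fact that $\Lambda$ is a fractional ideal of $F$. Let $\alpha\in\Lambda$ realize the shortest nonzero length, so that $\|\alpha\|=2r(\Lambda)$ under the standard embedding $F\hookrightarrow F\otimes_\Q\R\cong\R^d$. Since the principal fractional ideal $(\alpha)=\alpha\cO$ is contained in $\Lambda$, multiplicativity of the norm of fractional ideals yields $\nr(\Lambda)\le|N_{F/\Q}(\alpha)|$. Combining AM–GM applied to the positive reals $|\alpha_v|$ and the Cauchy–Schwarz inequality gives
\[
|N_{F/\Q}(\alpha)|^{1/d}=\Bigl(\prod_{v\in\Sigma_\infty}|\alpha_v|\Bigr)^{1/d}\le\frac{1}{d}\sum_{v\in\Sigma_\infty}|\alpha_v|\le\frac{1}{\sqrt d}\,\|\alpha\|,
\]
so $|N_{F/\Q}(\alpha)|\le d^{-d/2}\|\alpha\|^d$. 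The standard formula $D(\Lambda)=\nr(\Lambda)\sqrt{|D_F|}$ for the covolume of a fractional ideal then yields $D(\Lambda)\le d^{-d/2}2^d\sqrt{|D_F|}\,r(\Lambda)^d$, completing the proof.

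There is essentially no obstacle: both steps are classical, and the only arithmetic input is the norm divisibility $\nr(\Lambda)\mid|N_{F/\Q}(\alpha)|$ for $\alpha\in\Lambda\setminus\{0\}$, which is immediate from $(\alpha)\subset\Lambda$. I note that what the statement calls $C_d,C'_d$ depend only on $d$ and on $D_F$ (via the covolume normalization), so for a fixed field $F$ they are genuine absolute constants, consistent with the use made of this lemma in the proof of Theorem~\ref{theta-est}.
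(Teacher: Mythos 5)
Your proof is correct and follows essentially the same route as the paper: Minkowski's convex body theorem for the lower bound, and for the upper bound the observation that a shortest nonzero $\alpha\in\Lambda$ satisfies $(\alpha)\subset\Lambda$ so $\nr(\Lambda)\le|N_{F/\Q}(\alpha)|$, followed by AM--GM to compare the norm with $\|\alpha\|$. The only cosmetic difference is that you use AM--GM on $|\alpha_v|$ followed by Cauchy--Schwarz, while the paper applies AM--GM directly to $|\alpha_v|^2$; and you are slightly more careful in recording the factor $\sqrt{|D_F|}$ in the relation between $D(\Lambda)$ and $\nr(\Lambda)$, which the paper suppresses since $F$ is fixed.
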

\begin{proof}
 The first inequality follows from Minkowski's convex body theorem. The second inequality is proved as follows. For any $b\in \Lambda-\{0\}$, there exists an ideal $\fa\subset \cO$ such that $(b)=\fa\Lambda$; hence $|\nr(b)|=\nr(\Lambda)\nr(\fa)\geq \nr(\Lambda)$. Thus, by the arithmetic-geometric mean inequality,
\begin{align*}
D(\Lambda)^{1/d}=\nr(\Lambda)^{1/d} \leq \{\prod_{j=1}^{d}|b_j|^2\}^{1/(2d)}
\leq \bigl\{{\sum_{j=1}^{d}|b_j|^2}/{d} \bigr\}^{1/2}=d^{-1/2}\|b\| 
\end{align*}
Hence, $D(\Lambda)^{1/d} \leq 2d^{-1/2}r(\Lambda)$. This shows $D(\Lambda)\leq C_d'r(\Lambda)^{d}$ with $C_d'=(2d^{-1/2})^{d}$. 
\end{proof}

Theorem~\ref{theta-est} follows from Lemmas~\ref{L2}, \ref{L3} and \ref{L4} immediately.

%%%%%%%%%%%%%%%%%%%%%%%%%%%%
\section*{Acknowledgements}
%%%%%%%%%%%%%%%%%%%%%%%%%%%%
The first author was supported by Grant-in-Aid for JSPS Fellows (25$\cdot$668).
The second author was supported by Grant-in-Aid for Scientific Research (C) 22540033.

%%%%%%%%%%%%%%%%%%%%%%%%%%%%%%%%%%%%%%%%%%%%%%%%%%%%%%%%%%%%%%%%%%%%%%%%%%%%%%%%

\medskip
\noindent
{Shingo SUGIYAMA \\ Department of Mathematics, Graduate School of Science, Osaka University, Toyonaka, Osaka 560-0043, Japan} \\
{\it E-mail} : {\tt s-sugiyama@cr.math.sci.osaka-u.ac.jp}  

\medskip
\noindent
{Masao TSUZUKI \\ Department of Science and Technology, Sophia University, Kioi-cho 7-1 Chiyoda-ku Tokyo, 102-8554, Japan} \\
{\it E-mail} : {\tt m-tsuduk@sophia.ac.jp}

\end{document}